\documentclass[11pt,a4paper,leqno]{amsart}

\title
[The filter of singularities in global anisotropic microlocal analysis]
{The filter of singularities in global anisotropic microlocal analysis}

\author[L. Rodino]{Luigi Rodino}

\address{Department of Mathematics, Universit\`a di Torino, Via Carlo Alberto 10,
10123 Torino, Italy}

\email{luigi.rodino[AT]unito.it}

\author[P. Wahlberg]{Patrik Wahlberg}

\address{Dipartimento di Scienze Matematiche, Politecnico di Torino, Corso Duca degli Abruzzi 24,
10129 Torino, Italy}

\email{patrik.wahlberg[AT]polito.it}

\usepackage{latexsym}
\usepackage[a4paper]{geometry}
\usepackage{amsmath}
\usepackage{amssymb}
\usepackage{amsthm}
\usepackage{bbm}
\usepackage{amsfonts}
\usepackage{mathrsfs}
\usepackage{calc}
\usepackage{cite}
\usepackage{color}
\usepackage{graphicx}
\usepackage{subcaption}
\usepackage{caption}
\usepackage{enumerate}
\usepackage{stackrel}

\numberwithin{equation}{section}          

\newtheorem{thm}{Theorem}
\numberwithin{thm}{section}

\newcommand{\rubrik}{}
\newtheorem{prop}[thm]{Proposition}
\newtheorem{cor}[thm]{Corollary}
\newtheorem{lem}[thm]{Lemma}

\theoremstyle{definition}

\newtheorem{defn}[thm]{Definition}
\newtheorem{example}[thm]{Example}

\theoremstyle{remark}

\newtheorem{rem}[thm]{Remark}              

\newcommand{\scal}[2]{\langle #1,#2\rangle}
\newcommand{\pd}[1] {\partial ^#1}
\newcommand{\pdd}[2] {\partial_{#1} ^{#2}}

\newcommand{\ro}{\mathbf R}
\newcommand{\no}{\mathbf N}
\newcommand{\qo}{\mathbf Q}
\newcommand{\rr}[1]{\mathbf R^{#1}}
\newcommand{\sr}[1]{\mathbf S^{#1}}
\newcommand{\sro}[1]{\mathbf S}
\newcommand{\nn}[1]{\mathbf N^{#1}}

\newcommand{\co}{\mathbf C}

\newcommand{\dd}{\mathrm {d}}

\newcommand{\ep}{\varepsilon}
\newcommand{\fy}{\varphi}

\newcommand{\cdo}{\, \cdot \, }

\newcommand{\supp}{\operatorname{supp}}

\newcommand{\wpr}{{\text{\footnotesize $\#$}}}

\newcommand{\eabs}[1]{\langle #1\rangle}

\newcommand{\Sp}{\operatorname{Sp}}

\newcommand{\GL}{\operatorname{GL}}

\newcommand{\Mp}{\operatorname{Mp}}
\newcommand{\charac}{\operatorname{char}}

\newcommand{\rB}{\operatorname{B}}

\newcommand{\WF}{\mathrm{WF}}
\newcommand{\WFg}{\mathrm{WF_{\rm g}}}
\newcommand{\WFgs}{\mathrm{WF_{g}^{\it \sigma}}}

\newcommand{\rP}{\mathbb P}

\newcommand{\cS}{\mathscr{S}}

\newcommand{\cF}{\mathscr{F}}
\newcommand{\fF}{\mathcal{F}}
\newcommand{\gG}{\mathcal{G}}

\newcommand{\cK}{\mathscr{K}}

\newcommand{\J}{\mathcal{J}}
\newcommand{\wt}{\widetilde}
\newcommand{\wh}{\widehat}

\newcommand{\im}{{\rm Im}}
\newcommand{\dom}{{\rm dom}}
\def\la{\langle}
\def\ra{\rangle}
\newcommand{\leqs}{\leqslant}
\newcommand{\geqs}{\geqslant}

\begin{document}

\begin{abstract}
We define a filter of time-frequency anisotropic global singularities
of phase space for tempered distributions. 
The filter contains information from the corresponding anisotropic Gabor wave front set
and admits propagation results for the Cauchy problem for 
certain linear evolution equations of Schr\"odinger type that generalize the 
harmonic oscillator.
\end{abstract}

\keywords{Tempered distributions, global wave front sets, microlocal analysis, phase space, anisotropy, propagation of singularities, evolution equations}
\subjclass[2010]{47D06, 46F12, 35A18, 47G30, 58J47, 35A27, 81S30, 35S05, 46F05}

\maketitle

\section{Introduction}\label{sec:intro}

In this paper we develop tools for linear evolution equations of the form 
\begin{equation}\label{eq:LinEvEq}
\begin{cases} 
\partial_t u (t,x)+ i  a^w(x,D_x) u (t,x )= 0, \qquad x \in \rr d, \qquad t \in [-T,T] \setminus 0,  \quad T > 0, \\ 
u(0,\cdot) = u_0 \in \cS'(\rr d)
\end{cases}. 
\end{equation}	
where $a^w(x,D_x)$ is a Weyl pseudodifferential operator acting on $x \in \rr d$. 
We address propagation of singularities in the $x$-variables from the initial datum $u_0$
to the solution $u(t,\cdot)$ at time $t \in [-T,T]$. 

The harmonic oscillator hamiltonian $a^w(x,D) = |x|^2 - \Delta_x$ has Weyl symbol $a(x,\xi) = |x|^2 + |\xi|^2$. 
For a more general homogeneous real-valued quadratic form $a(x,\xi) = \la (x,\xi), Q (x,\xi) \ra$ 
defined by a symmetric matrix $Q \in \rr {2d \times 2d}$,
a natural concept of singularities is the Gabor wave front set $\WFg(u) \subseteq T^* \rr d \setminus \{ 0 \}$
of a tempered distribution $u \in \cS'(\rr d)$. 
Indeed we have \cite{Hormander2,PravdaStarov1}
\begin{equation}\label{eq:PropWFg}
\WFg \left( u(t,\cdot)\right) = \chi_t \left(  \WFg( u_0) \right), \quad t \in \ro,
\end{equation}
where $\chi_t(x,\xi)$ is the hamiltonian flow, that is the solution to Hamilton's system of equations
\begin{equation*}
\begin{cases} 
x'(t)  = \nabla_\xi a( x(t), \xi(t) ) \\ 
\xi'(t)  = -\nabla_x a( x(t), \xi(t) )  \\ 
x(0)=x  \\ 
\xi(0)=\xi
\end{cases} 
\end{equation*}
with $(x,\xi) \in T^* \rr d$. 
If $a(x,\xi) = \la (x,\xi), Q (x,\xi) \ra$ then the flow is linear as $\chi_t = e^{2 t \J Q}$ 
with
\begin{equation*}
\J =
\left(
\begin{array}{cc}
0 & I_d \\
- I_d & 0
\end{array}
\right) \in \rr {2d \times 2d}.
\end{equation*}

The Gabor wave front set $\WFg(u)$ of $u \in \cS'(\rr d)$ is the closed conic subset of $T^* \rr d \setminus \{ 0 \}$ in the complement 
of which each point has a conic neighborhood where the short-time Fourier transform 
$T^* \rr d \ni (x,\xi) \mapsto \cF\left( u T_x \fy \right)(\xi)$
decays super-polynomially.
Here $\fy \in \cS(\rr d)$ is a nonzero Schwartz function and $T_x$ denotes translation.

Our aim is to study equations with hamiltonian Weyl symbols that are more general than $a(x,\xi) = |x|^2 + |\xi|^2$. 
In particular we are interested in anisotropic Weyl symbols $a(x,\xi) = |x|^{2 k} + |\xi|^{2 m}$ with $k,m \in \no \setminus \{ 0 \}$
where particularly $m=1$ attracts interest in mathematical physics under the term anharmonic oscillator \cite{Bambusi1,Turbiner1}.
It is then natural to use an anisotropic version of the Gabor wave front set \cite{Rodino4,Wahlberg4}.

Then the cones in phase space are replaced by sets invariant under the anisotropic dilation
\begin{equation}\label{eq:anisodilation}
T^* \rr d \setminus \{ 0 \} \ni (x,\xi) \mapsto  (\lambda x, \lambda^\sigma \xi), \quad \lambda > 0,
\end{equation}
for a real anisotropy parameter $\sigma > 0$. The anisotropic Gabor wave front set $\WFgs(u)$ generalizes the Gabor wave front where $\sigma = 1$. 

In \cite{Cappiello6,Cappiello7} we have studied propagation of anisotropic Gabor wave front sets for correspondingly 
anisotropic hamiltonian Weyl symbols. 
In particular we have found a generalization of \eqref{eq:PropWFg} as
\begin{equation}\label{eq:PropWFgs}
\WFgs \left( u(t,\cdot)\right) = \chi_t \left(  \WFgs( u_0) \right), \quad t \in \ro, 
\end{equation}
where $\chi_t$ the hamilton flow corresponding to a hamiltonian Weyl symbol
\begin{equation}\label{eq:anisohampower}
a(x,\xi) = \left( |x|^{2 k} + |\xi|^{2 m} \right)^p,
\end{equation}
with critical exponent
\begin{equation}\label{eq:pcrit0}
p = p_c = \frac12 \left( \frac{1}{k} + \frac{1}{m}\right)
\end{equation}
and $\sigma = \frac{k}{m}$. 
We have also shown the non-propagation result 
$\WFgs \left( u(t,\cdot)\right) = \WFgs( u_0)$
when $0 < p < p_c$.

Similar to many other wave front sets the anisotropic Gabor wave front set can be defined as the intersection
\begin{equation}\label{eq:WFGaniso}
\WFgs(u) = \bigcap_{a^w(x,D) u \in \cS} \charac (a).
\end{equation}
Here $a$ is an anisotropic Shubin symbol
and the characteristic set $\charac (a)$ is the complement of points in $T^* \rr d \setminus \{ 0 \}$ in a neighborhood invariant as in \eqref{eq:anisodilation} of which $|a|$ is lower bounded by its weight
\begin{equation*}
(1 + |x| + |\xi|^{\frac1\sigma} )^r,
\end{equation*}
where $r \in \ro$ is the order, for large $|x| + |\xi|$. 

Thus regular subsets $\Gamma \subseteq T^* \rr d \setminus \left( \WFgs(u) \cup \{ 0\} \right)$ 
are open sets invariant as in \eqref{eq:anisodilation} such that there exists a symbol $a$ 
so that $a^w(x,D) u \in \cS$ and $a$ is elliptic in $\Gamma$ in the sense of 
\begin{equation*}
|a(x,\xi)| \geqs C(1 + |x| + |\xi|^{\frac1\sigma} )^r, \quad (x,\xi) \in \Gamma \setminus \rB_R
\end{equation*}
for some $C, R > 0$ where $\rB_R$ denotes a ball of radius $R$.

This paper introduces a new concept of global microlocal singularities for tempered distributions and positive rational anisotropy parameter. 
It is a \emph{filter} of subsets of phase space $T^* \rr d$ as opposed to notions of wave front sets that are closed subsets of phase space. 
In the definition of filter we imitate the regular subsets $\Gamma \subseteq T^* \rr d$ determined by the anisotropic wave front set above, but we relax the invariance \eqref{eq:anisodilation}. 
The filter of anisotropic Gabor singularities thus consists of subsets $T^* \rr d \setminus \Omega$ such that 
there exists a symbol $a$ so that $a^w(x,D) u \in \cS$ and $a$ is elliptic in $\Omega$ as
\begin{equation*}
|a(x,\xi)| \geqs C(1 + |x| + |\xi|^{\frac1\sigma} )^r, \quad (x,\xi) \in \Omega \setminus \rB_R
\end{equation*}
for some $C, R > 0$. 
Here a filter means a non-empty class of subsets closed under enlargement and finite intersection. 

The filter of global anisotropic singularities is more flexible than the anisotropic Gabor wave front set
and contains more information.
Indeed we may reconstruct the anisotropic Gabor wave front set as the intersection of filter sets that are closed and invariant as 
 \eqref{eq:anisodilation} (see Proposition \ref{prop:filterWF}).
It also allows us to relax the conditions of the power $p$ of the hamiltonian 
\eqref{eq:anisohampower} and still obtain propagation results corresponding to \eqref{eq:PropWFgs}. 

The filter of global anisotropic singularities is a relaxation of the filter of anisotropic singularities that we have studied in \cite{Cappiello7}. There the sets in the filter are restricted to have the anisotropic annular form 
\begin{equation*}
\{ (x,\xi) \in \rr {2d}: \ |x|^{2k} + |\xi|^{2m} \in \wt \Sigma \}
\end{equation*}
where $\wt \Sigma \subseteq \ro_+$. This filter is invariant under propagation for hamiltonian Weyl symbols of the form 
\eqref{eq:anisohampower} with $p = 1$ provided $\frac1k + \frac1m > \frac23$, cf. \cite[Theorem~1.5]{Cappiello7}. 

The idea to study singularities as filters goes back to 
\cite{Rodino1} where it is used mostly for the frequency variables (covariables), 
with further results in this direction in \cite{Garello1}, 
as opposed to our filters which are classes of subsets of phase space $T^* \rr d$.
Wave front sets anisotropic in the covariables only were introduced in \cite{Lascar1}.

Our main results are on one hand generalized versions of \cite[Theorem~8.3]{Cappiello6} and \cite[Theorem~1.4]{Cappiello7}
respectively
which we formulate for filters of singularities. See Theorems 6.2 and 6.5.
On the other hand we deduce a new result on propagation of filters of singularities for $d=1$
and hamiltonian Weyl symbols of the form \eqref{eq:anisohampower}
in the supercritical case $p_c < p \leqs p_c + \frac14 \min \left( \frac{1}{k}, \frac{1}{m} \right)$.
See Theorem \ref{thm:propagationfilter2}. 

As preparation for these results we devote a large part of the paper to the study of filters of anisotropic global singularities, which we hope can be of independent interest. 
A main result is the characterization of the filter by means of the short-time Fourier transform (see Theorem \ref{thm:characsing}). 

The paper is organized as follows. 
Section \ref{sec:prelim} specifies notation, and the framework of the short-time Fourier transform, 
pseudodifferential calculus with anisotropic Shubin symbols and corresponding modulation spaces,
and anisotropic Gabor wave front sets. 
Section \ref{sec:microlocalcutoff} introduces anisotropic neighborhoods in phase space and 
shows existence of microlocal excision symbols when the anisotropy parameter $\sigma > 0$ is rational. 
In Section \ref{sec:filtersing} we define filters of global anisotropic singularities and deduce some of their properties. 
Section \ref{sec:microlocmicroell} is devoted to notions of microlocality and microellipticity in this framework, 
and the final Section \ref{sec:schrodinger} to applications to propagation of singularities for evolution equations.

\section{Preliminaries}\label{sec:prelim}

The power set of a set $\Omega$ is denoted $\rP(\Omega)$.
The symbols $\ro_+$ and $\qo_+$ denote the positive real and rational numbers respectively, 
and the unit sphere in $\rr d$ is denoted $\sr {d-1} \subseteq \rr d$. 
An open ball of radius $r > 0$ centered in $x \in \rr d$ is denoted $\rB_r (x)$, and $\rB_r(0) = \rB_r$.  
We write $f (x) \lesssim g (x)$ provided there exists $C>0$ such that $f (x) \leqs C \, g(x)$ for all $x \in \dom (f) \cap \dom (g)$. 
If $f (x) \lesssim g (x) \lesssim f(x)$ then we write $f \asymp g$. 
We use the partial derivative $D_j = - i \partial_j$, $1 \leqs j \leqs d$, acting on functions and distributions on $\rr d$, 
with extension to multi-indices. 
The bracket $\eabs{x} = (1 + |x|^2)^{\frac12}$ for $x \in \rr d$ satisfies
Peetre's inequality \cite[Lemma~2.1]{Rodino3}, that is
\begin{equation}\label{eq:Peetre}
\eabs{x+y}^s \leqs \left( \frac{2}{\sqrt{3}} \right)^{|s|} \eabs{x}^s\eabs{y}^{|s|}\qquad x,y \in \rr d, \quad s \in \ro. 
\end{equation}
%

\subsection{Anisotropic weight functions}\label{subsec:weights}

For $\sigma \in \ro_+$ we use 
\begin{equation}\label{eq:quasitriangleineq}
|x + y|^{\frac1\sigma} \leqs C_\sigma ( |x|^{\frac1\sigma} + |y|^{\frac1\sigma}), \quad x,y \in \rr d, 
\end{equation}
where 
\begin{equation}\label{eq:qtrieqconstant}
C_\sigma= 
\left\{
\begin{array}{ll}
1 & \mbox{if} \ \sigma \geqs 1 \\
2^{\frac1\sigma-1} & \mbox{if} \ 0 < \sigma < 1
\end{array}
\right. .
\end{equation}

If $\sigma \in \ro_+$ the anisotropic weight function is
\begin{equation}\label{eq:weightanisotrop1}
\theta_\sigma(x,\xi) = 1 + |x| + |\xi|^{\frac1\sigma}, \quad x, \xi \in \rr d, 
\end{equation}
and if $k,m \in \no \setminus 0$ we set
\begin{equation}\label{eq:weightanisotrop2}
w_{k,m} (x, \xi) 
= \left( 1 + | x |^{2k} + | \xi |^{2 m} \right)^{\frac12}, \quad x , \xi \in \rr d. 
\end{equation}
When $k,m \in \no \setminus 0$ are given by the context we abbreviate 
$w = w_{k,m}$. 
If $k,m \in \no \setminus 0$ and $\sigma = \frac{k}{m}$ then
by \eqref{eq:quasitriangleineq} there exists $c_k \geqs 1$ such that 
\begin{equation}\label{eq:weightequivalence}
c_k^{-1} \theta_{\sigma}^{k} (x,\xi) \leqs w_{k,m} (x,\xi) \leqs \theta_{\sigma}^{k} (x,\xi), \quad x,\xi \in \rr d. 
\end{equation}
In fact by \eqref{eq:qtrieqconstant} we may take $c_k = 2^{2k-1}$. 
The weight $w_{k,m}$ is smooth everywhere as opposed to $\theta_\sigma$, in general. 

By \cite[Lemma~2.1]{Rodino4} we have the anisotropic Peetre inequality for any $\sigma \in \ro_+$
\begin{equation}\label{eq:Peetreaniso}
\theta_\sigma(X+Y)^s 
\lesssim  \theta_\sigma (X)^s \theta_\sigma (Y)^{|s|}, \qquad X,Y \in \rr {2d}, \quad s \in \ro. 
\end{equation}

We also need the following inequality.

\begin{lem}\label{lem:triangleineq}
If $\sigma \in \ro_+$ then there exists $B_\sigma > 0$ such that 
\begin{equation}\label{eq:triangle}
\theta_\sigma(x + y,\xi + \eta) 
\leqs B_\sigma \Big(  \theta_\sigma(x,\xi) + \theta_\sigma(y,\eta) \Big), \quad x,y,\xi,\eta \in \rr d.
\end{equation}
\end{lem}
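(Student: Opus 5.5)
The plan is to estimate the three terms of $\theta_\sigma(x+y,\xi+\eta) = 1 + |x+y| + |\xi+\eta|^{\frac1\sigma}$ separately and then collect constants. No Peetre-type argument is needed: the inequality is a direct consequence of the ordinary triangle inequality in the $x$-variables together with the quasi-triangle inequality \eqref{eq:quasitriangleineq} in the $\xi$-variables.

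First, the constant term is trivially bounded by $1 \leqs 1 + 1$, which is at most the sum of the constant terms of $\theta_\sigma(x,\xi)$ and $\theta_\sigma(y,\eta)$. Second, the ordinary triangle inequality gives $|x+y| \leqs |x| + |y|$. Third, \eqref{eq:quasitriangleineq} applied with $\xi,\eta$ in place of $x,y$ gives
\begin{equation*}
|\xi+\eta|^{\frac1\sigma} \leqs C_\sigma \left( |\xi|^{\frac1\sigma} + |\eta|^{\frac1\sigma} \right),
\end{equation*}
with $C_\sigma \geqs 1$ given by \eqref{eq:qtrieqconstant}.

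Adding the three estimates and using $C_\sigma \geqs 1$ to absorb the constant and the $x,y$ terms, we obtain
\begin{equation*}
\theta_\sigma(x+y,\xi+\eta) \leqs C_\sigma \Big( 2 + |x| + |y| + |\xi|^{\frac1\sigma} + |\eta|^{\frac1\sigma} \Big) = C_\sigma \Big( \theta_\sigma(x,\xi) + \theta_\sigma(y,\eta) \Big).
\end{equation*}
This proves \eqref{eq:triangle} with $B_\sigma = C_\sigma$.

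There is no genuine obstacle. The only point needing care is the nonconvex case $\sigma > 1$, where $|\cdot|^{\frac1\sigma}$ is not convex. There one must check that \eqref{eq:quasitriangleineq} still holds with constant $1$, which follows from subadditivity of $t \mapsto t^{\frac1\sigma}$ on $[0,\infty)$ for $\frac1\sigma \leqs 1$ together with $|\xi+\eta| \leqs |\xi| + |\eta|$. This inequality is already stated in the paper, so it can simply be invoked.
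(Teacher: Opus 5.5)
Your proof is correct and matches the paper's argument: the ordinary triangle inequality in $x$ plus the quasi-triangle inequality \eqref{eq:quasitriangleineq} in $\xi$, with $C_\sigma \geqs 1$ absorbing the remaining terms. You even arrive at the same constant, since the paper's choice $B_\sigma = 2^{\max(0,\frac1\sigma-1)}$ is exactly $C_\sigma$ from \eqref{eq:qtrieqconstant}.
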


\begin{proof}
This is an immediate consequence of the triangle inequality, \eqref{eq:quasitriangleineq} and \eqref{eq:qtrieqconstant}.
The constant may be chosen as $B_\sigma = 2^{\max \left(0, \frac1\sigma - 1\right)}$.
\end{proof}

The weight $\theta_\sigma$, $\sigma \in \ro_+$, is related to $\eabs{\cdot}$ as \cite[Eq.~(3.4)]{Rodino4}
\begin{equation}\label{eq:sobolevweightestimate1}
\eabs{(x,\xi)}^{\min\left( 1, \frac1\sigma\right)} \lesssim \theta_\sigma (x,\xi) \lesssim \eabs{(x,\xi)}^{\max\left( 1, \frac1\sigma\right)}, \quad (x,\xi) \in T^* \rr d. 
\end{equation}
%

\subsection{The short-time Fourier transform and a class of modulation spaces}\label{subsec:STFT}

The Fourier transform is 
\begin{equation*}
 \cF f (\xi )= \widehat f(\xi ) = (2\pi )^{-\frac d2} \int _{\rr
{d}} f(x)e^{-i\scal  x\xi } \, \dd x, \qquad \xi \in \rr d, 
\end{equation*}
for $f\in \cS(\rr d)$ (the Schwartz space), where $\scal \cdo \cdo$ denotes the scalar product on $\rr d$. 
The conjugate linear action of a distribution $u$ on a test function $\phi$ is written $(u,\phi)$, consistent with the $L^2$ inner product $(\cdo ,\cdo ) = (\cdo ,\cdo )_{L^2}$ which is conjugate linear in the second argument. 

Denote translation by $T_x f(y) = f( y-x )$ and modulation by $M_\xi f(y) = e^{i \scal y \xi} f(y)$ 
for $x,y,\xi \in \rr d$, where $f$ is a function or distribution defined on $\rr d$. 
Their composition is denoted $\Pi(x,\xi) = M_\xi T_x$ for $(x, \xi) \in \rr {2d}$. 
If $\fy \in \cS(\rr d) \setminus \{0\}$ (``window function'') then
the short-time Fourier transform (STFT) of a tempered distribution $u \in \cS'(\rr d)$ is defined as
\begin{equation*}
V_\fy u (x,\xi) = (2\pi )^{-\frac d2} (u, M_\xi T_x \fy) = \cF (u T_x \overline \fy)(\xi), \quad x,\xi \in \rr d. 
\end{equation*}
Then $V_\fy u \in C^\infty(\rr {2d})$ and by \cite[Theorem~11.2.3]{Grochenig1}
there exists $r \geqs 0$ such that 
\begin{equation}\label{eq:STFTtempered}
|V_\fy u (x,\xi)| \lesssim \eabs{(x,\xi)}^{r}, \quad (x,\xi) \in T^* \rr d.  
\end{equation}
We have $u \in \cS(\rr d)$ if and only if
\begin{equation}\label{eq:STFTschwartz}
|V_\fy u (x,\xi)| \lesssim \eabs{(x,\xi)}^{-N}, \quad (x,\xi) \in T^* \rr d, \quad \forall N \geqs 0.  
\end{equation}

The inverse transform to the STFT is given by
\begin{equation}\label{eq:STFTinverse}
u = (2\pi )^{-\frac d2} \| \fy \|_{L^2}^{-2} \iint_{\rr {2d}} V_\fy u (x,\xi) M_\xi T_x \fy \, \dd x \, \dd \xi
\end{equation}
with action under the integral understood, that is 
\begin{equation}\label{eq:moyal}
(u, f) = \| \fy \|_{L^2}^{-2} (V_\fy u, V_\fy f)_{L^2(\rr {2d})} = \| \fy \|_{L^2}^{-2} (V_\fy^* V_\fy u, f)
\end{equation}
for $u \in \cS'(\rr d)$ and $f \in \cS(\rr d)$, cf. \cite[Theorem~11.2.5]{Grochenig1}. 
With the same interpretation we write consistently for a polynomially bounded measurable function $F$
defined on $\rr {2d}$
\begin{equation}\label{eq:STFTadjoint}
V_\fy^* F = (2\pi )^{-\frac d2} \int_{\rr {2d}} F(z) \Pi(z) \fy \, \dd z
\end{equation}
which is an element in $\cS'(\rr d)$ \cite{Grochenig1}.

We use the following parametrized family of Hilbert modulation spaces \cite{Cappiello6,Grochenig1,Rodino4}.

\begin{defn}\label{def:Sobolevaniso}
Let $\fy \in \cS(\rr d) \setminus 0$. 
The anisotropic Shubin--Sobolev modulation space $M_{\sigma,s} (\rr d)$ with anisotropy parameter $\sigma \in \ro_+$ and order $s \in \ro$ is the Hilbert subspace of $\cS'(\rr d)$
defined by the norm
\begin{equation}\label{eq:SSmodnorm}
\| u \|_{M_{\sigma,s}} = \left( \iint_{\rr {2 d}} |V_\fy u (x,\xi)|^2 \, \theta_\sigma (x,\xi)^{2 s} \, \dd x \, \dd \xi \right)^{\frac12}. 
\end{equation}
\end{defn}

For any $\sigma \in \ro_+$ we have $M_{\sigma,0} (\rr d) = L^2(\rr d)$ \cite{Grochenig1}, 
and $M_{\sigma,s_1} (\rr d) \subseteq M_{\sigma,s_2}(\rr d)$ is a continuous inclusion when $s_1 \geqs s_2$. 
It holds
\begin{equation}\label{eq:SSSchwartz}
\cS(\rr d) = \bigcap_{s \in \ro} M_{\sigma,s} (\rr d), \quad \cS'(\rr d) = \bigcup_{s \in \ro} M_{\sigma,s} (\rr d),
\end{equation}
and $\{ \| \cdot \|_{M_{\sigma,s}}, s \geqs 0\}$ is a family of seminorms that defines the Fr\'echet space topology on $\cS(\rr d)$ \cite{Grochenig1}.

\subsection{Pseudodifferential calculus with anisotropic symbols}\label{subsec:psidiocalc}

In this paper we use anisotropic Shubin symbols defined as follows, generalizing \cite[Definition~3.1]{Rodino4}. 
Cf. also \cite{Chatzakou1}.

\begin{defn}\label{def:symbol}
Let $\sigma \in \ro_+$, $0 < \rho \leqs 1$ and $r \in \ro$. 
The space of ($\sigma$-)anisotropic Shubin symbols $G_\rho^{r,\sigma}$ of order $r$ and parameter $\rho$ consists of functions $a \in C^\infty(\rr {2d})$ 
that satisfy the estimates
\begin{equation}\label{eq:symbolderivative1}
|\pdd x \alpha \pdd \xi \beta a(x,\xi)|
\lesssim ( 1 + |x| + |\xi|^{\frac1\sigma} )^{r - \rho \left(  |\alpha| + \sigma |\beta| \right)}, \quad (x,\xi) \in T^* \rr d, \quad \alpha, \beta \in \nn d. 
\end{equation}
\end{defn}

The space $G_\rho^{r,\sigma}$ is a Fr\'echet space with respect to the seminorms on $a \in G^{r,\sigma}$ indexed by $j \in \no$
\begin{equation*}
\| a \|_j = \max_{|\alpha + \beta| \leqs j}
\sup_{(x,\xi) \in \rr {2d} } \theta_\sigma(x,\xi)^{- r + \rho \left( |\alpha| + \sigma |\beta| \right)} \left| \pdd x \alpha \pdd \xi \beta a(x,\xi ) \right|. 
\end{equation*}

If $\sigma = 1$ then $G_\rho^{r,\sigma} = G_\rho^r$ is the space of isotropic Shubin symbols with parameter $\rho$ \cite{Nicola1,Shubin1}.
Recall that the isotropic Shubin symbols of order $m$ and parameter $0 \leqs \rho \leqs 1$, denoted $a\in G_\rho^r$, satisfy
\begin{equation*}
|\partial_x^\alpha \partial_\xi^\beta a(x,\xi)| \lesssim \langle (x,\xi)\rangle^{r - \rho|\alpha + \beta|}, \quad (x,\xi) \in T^* \rr d, \quad \alpha, \beta \in \nn d.
\end{equation*}
We write $G_1^{r,\sigma} = G^{r,\sigma}$ and $G_1^r = G^r$. 

We have $G_\rho^{r,\sigma} \subseteq G_{\rho_0}^{r_0}$,  
where $r_0 = \max \left(r, \frac{r}{\sigma} \right)$ and $\rho_0 = \rho \min \left( \sigma, \frac1\sigma \right)$,  
and
\begin{equation}\label{eq:symbolintersection}
\bigcap_{r \in \ro} G_\rho^{r,\sigma} = \cS(\rr {2d}). 
\end{equation}

Given a sequence of symbols $a_j \in G_\rho^{r_j,\sigma}$, $j=1,2,\dots$, such that $r_j \to - \infty$ as $j \to \infty$
we write 
\begin{equation*}
a \sim \sum_{j = 1}^\infty a_j
\end{equation*}
provided that for any $n \geqs 2$
\begin{equation*}
a - \sum_{j = 1}^{n-1} a_j \in G_\rho^{t_n,\sigma}
\end{equation*}
where $t_n = \max_{j \geqs n} r_j$. 
By a natural generalization of the proof of \cite[Lemma~3.2]{Rodino4}, which treats the case $\rho = 1$, there exists a symbol
$a \in G_\rho^{r,\sigma}$ where $r = \max_{j \geqs 1} r_j$ such that $a \sim \sum_{j = 1}^\infty a_j$
under the stated assumptions. 
The symbol $a$ is unique modulo $\cS(\rr {2d})$. 

\begin{rem}\label{rem:WeylHormander}
The symbol class $G_\rho^{r,\sigma}$ defined in Definition \ref{def:symbol} may be considered 
as a particular case of more general classes $G_{\rho_1, \rho_2}^{r,\sigma}$ defined by estimates 
\begin{equation}\label{eq:symbolderivative2}
|\pdd x \alpha \pdd \xi \beta a(x,\xi)|
\lesssim ( 1 + |x| + |\xi|^{\frac1\sigma} )^{r - \rho_1  |\alpha| - \rho_2 |\beta|}, \quad (x,\xi) \in T^* \rr d, \quad \alpha, \beta \in \nn d.  
\end{equation}
Then $G_\rho^{r,\sigma} = G_{\rho, \rho \sigma}^{r,\sigma}$. 
The symbol class $G_{\rho_1, \rho_2}^{r,\sigma}$ may be seen 
as a Weyl--H\"ormander class $S(m,g)$ \cite[Chapter~18.4]{Hormander1}, \cite{Lerner1} with metric 
\begin{equation*}
g = \frac{\dd x^2}{ \theta_\sigma(x,\xi)^ {2 \rho_1} } + \frac{\dd \xi^2}{ \theta_\sigma(x,\xi)^{2 \rho_2} }
\end{equation*}
and weight $m(x,\xi) = \theta_\sigma (x,\xi)^r$.

Then $g$ is \emph{slowly varying} \cite[Definition~18.4.1]{Hormander1} and $m$ is $g$ continuous \cite[Definition~18.4.2]{Hormander1}
provided $\rho_1 \leqs 1$ and $\rho_2 \leqs \sigma$.
The \emph{Planck function} \cite[Eq.~18.4.19]{Hormander1} is 
\begin{equation*}
h(x,\xi) = \theta_\sigma(x,\xi)^{- \rho_1 - \rho_2}.
\end{equation*}
We have $g \leqs g^{\rm symp}$ (where $g^{\rm symp}$ is the symplectically dual metric cf. \cite{Lerner1}) if and only if $\rho_1 + \rho_2 \geqs 0$, 
and the \emph{strong uncertainty principle} \cite[Eq.~(1.1.10)]{Nicola1} is satisfied 
if and only if $\rho_1 + \rho_2 > 0$. Note that the conditions 
$\rho_1 \leqs 1$, $\rho_2 \leqs \sigma$ and $\rho_1 + \rho_2 > 0$
admit $\rho_1 < 0 < \rho_2$ or vice versa. 

Our symbol classes $G_\rho^{r,\sigma}$ have $\rho_1 + \rho_2 = \rho(1+\sigma) > 0$
due to the assumption $\rho > 0$. 
The strong uncertainty principle is hence valid. 
\end{rem}

For $a \in G_\rho^{r,\sigma}$ and $\tau \in \ro$ a pseudodifferential operator in the $\tau$-quantization is defined by
\begin{equation}\label{eq:tquantization}
a_\tau(x,D) f(x)
= (2\pi)^{-d}  \int_{\rr {2d}} e^{i \langle x-y, \xi \rangle} a ( (1-\tau) x + \tau y,\xi ) \, f(y) \, \dd y \, \dd \xi, \quad f \in \cS(\rr d),
\end{equation}
when $r < - d \sigma$. The definition extends to $r \in \ro$ if the integral is viewed as an oscillatory integral.
If $\tau = 0$ we get the Kohn--Nirenberg quantization $a_0(x,D) = a(x,D)$ and if $\tau = \frac12$ the Weyl quantization $a_{1/2}(x,D) = a^w(x,D)$. 
The Weyl quantization enjoys a simple formal adjoint relation: $a^w(x,D)^* = \overline{a}^w(x,D)$.  
For brevity we often write $a^w = a^w(x,D)$.

By a miniscule modification of the proof of \cite[Proposition~3.3 (i)]{Rodino4} the symbol classes $G_\rho^{r,\sigma}$ are homeomorphically invariant under change of quantization parameter $\tau \in \ro$, for any $\sigma \in \ro_+$, $r \in \ro$ and $0 \leqs \rho \leqs 1$. 
If $a \in G_\rho^{r,\sigma}$
then the operator $a^w(x,D)$ acts continuously on $\cS(\rr d)$ and extends uniquely by duality to a continuous operator on $\cS'(\rr d)$ \cite{Rodino4,Shubin1}. 
If $a \in \cS'(\rr {2d})$ then $a^w(x,D)$ extends to a continuous operator $a^w(x,D): \cS(\rr d) \to \cS'(\rr d)$, 
and if $a \in \cS(\rr {2d})$ then $a^w(x,D)$ is regularizing, in the sense that it is continuous $a^w(x,D): \cS'(\rr d) \to \cS(\rr d)$ with $\cS'(\rr d)$ equipped with the strong topology \cite{Cappiello5}. 

The bilinear Weyl product $a \wpr b$ of two symbols $a \in G_\rho^{r,\sigma}$ and $b \in G_\rho^{t,\sigma}$
is defined as the product of symbols corresponding to operator composition in the Weyl quantization: 
$( a \wpr b)^w(x,D) = a^w(x,D) b^w (x,D)$. 
Again by a small modification of the proof of \cite[Proposition~3.3 (ii)]{Rodino4} the Weyl product is continuous $\wpr: G_\rho^{r,\sigma} \times G_\rho^{t,\sigma} \to G_\rho^{r+t,\sigma}$ when $0 \leqs \rho \leqs 1$.
If $0 < \rho \leqs 1$ the asymptotic expansion formula for the Weyl product \cite{Hormander1,Shubin1} is
\begin{equation}\label{eq:calculuscomposition1}
a \wpr b(x,\xi) \sim \sum_{\alpha, \beta \geqs 0} \frac{(-1)^{|\beta|}}{\alpha! \beta!} \ 2^{-|\alpha+\beta|}
D_x^\beta \pdd \xi \alpha a(x,\xi) \, D_x^\alpha \pdd \xi \beta b(x,\xi). 
\end{equation}
If $a \in G_\rho^{r,\sigma}$ and $b \in G_\rho^{t,\sigma}$
then each term in the sum belongs to $G_\rho^{r + t - \rho(1+\sigma)|\alpha+\beta|,\sigma}$. 

To each symplectic matrix $\chi \in \Sp(d,\ro)$ is associated a unitary operator $\mu(\chi)$ on $L^2(\rr d)$, determined up to a complex factor of modulus one, such that
\begin{equation}\label{symplecticoperator}
\mu(\chi)^{-1} a^w(x,D) \, \mu(\chi) = (a \circ \chi)^w(x,D), \quad a \in \cS'(\rr {2d})
\end{equation}
(cf. \cite{Folland1,Hormander1}).
The operator $\mu(\chi)$ is a homeomorphism on $\cS$ and on $\cS'$.

The mapping $\Sp(d,\ro) \ni \chi \mapsto \mu(\chi)$ is called the \emph{metaplectic representation} \cite{Folland1}.
More precisely it is a representation of the so called $2$-fold covering group of $\Sp(d,\ro)$, which is called the metaplectic group 
and denoted $\Mp(d,\ro)$.
The metaplectic representation satisfies the homomorphism relation modulo a change of sign:
\begin{equation*}
\mu( \chi_1 \chi_2) = \pm \mu(\chi_1 ) \mu(\chi_2 ), \quad \chi_1, \chi_2 \in \Sp(d,\ro).
\end{equation*}

There is an asymptotic expansion formula connecting $\tau$-quantizations for different $\tau \in \ro$ \cite[Theorem~1.2.4]{Nicola1}. 
In fact let $0 \leqs \rho \leqs 1$, $\tau_1, \tau_2 \in \ro$, 
$a_{\tau_1} \in G_\rho^{r,\sigma}$ and $a_{\tau_1}(x,D) = a_{\tau_2}(x,D)$. 
Then $a_{\tau_2} \in G_\rho^{r,\sigma}$, and
provided $0 < \rho \leqs 1$ we have
\begin{equation}\label{eq:calculusquantization}
a_{\tau_2}(x,\xi) \sim \sum_{\alpha \geqs 0} \frac{ ( \tau_1 - \tau_2 )^{|\alpha|} }{\alpha!} 
D_x^\alpha \pdd \xi \alpha a_{\tau_1}(x,\xi). 
\end{equation}

For $\sigma \in \ro_+$ a $\sigma$-conic subset $\Omega \subseteq T^* \rr d \setminus 0$  is invariant as
\begin{equation}\label{eq:sigmaconic}
(x,\xi) \in \Omega 
\quad \Longrightarrow \quad 
( \lambda x, \lambda^\sigma \xi ) \in \Omega \quad \forall \lambda \in \ro_+. 
\end{equation}
If $\sigma = 1$ this reduces to conic subsets. 

By \cite[Definition~3.4 and Lemma~3.5]{Rodino4} (cf. also \cite[Remark~3.4]{Wahlberg4}) it is possible to construct $\sigma$-conic open subsets of given points in $T^* \rr d \setminus 0$, and corresponding cutoff functions.

Let $a \in G_\rho^{r,\sigma}$ for $r \in \ro$, $\sigma \in \ro_+$ and $0 \leqs \rho \leqs 1$. 
By \cite[Proposition~4.2]{Cappiello6} the map
\begin{equation}\label{eq:ShubinSobolevCont}
a^w(x,D) : M_{\sigma,s + r} (\rr d) \to M_{\sigma,s} (\rr d)
\end{equation}
is continuous for all $s \in \ro$. 
More precisely the proof of \cite[Proposition~4.2]{Cappiello6} treats only the case $\rho = 1$, but 
the argument extends to $0 \leqs \rho \leqs 1$. 
From \eqref{eq:ShubinSobolevCont}, \eqref{eq:SSSchwartz} and the ensuing phrase about seminorms on $\cS(\rr d)$ we obtain another proof of the continuity of the operator
\begin{equation}\label{eq:SchwartzCont}
a^w(x,D) : \cS (\rr d) \to \cS (\rr d), \quad a \in G_\rho^{r,\sigma}, \quad r \in \ro.
\end{equation}
%

\subsection{Anisotropic Gabor wave front sets}\label{subsec:WFgaboraniso}

For $\sigma \in \ro_+$ the anisotropic Gabor wave front set of $u \in \cS'(\rr d)$ is a closed $\sigma$-conic subset of $T^* \rr d \setminus 0$ defined as follows, cf. \cite[Definition~4.1]{Rodino4}. 
Let $\fy \in \cS(\rr d) \setminus 0$ be arbitrary. 
For $z_0 \in T^* \rr d \setminus 0$ we have $z_0 \notin \WFgs(u)$ if there exists an open set $U \subseteq T^* \rr d$ such that $z_0 \in U$ and 
\begin{equation*}
\sup_{(x,\xi) \in U, \ \lambda > 0} \lambda^N | V_\fy u( \lambda x, \lambda^\sigma \xi)| < + \infty \quad \forall N \geqs 0.
\end{equation*}
Thus the anisotropic Gabor wave front set is a closed subset of $T^* \rr d \setminus 0$ consisting of $\sigma$-conic curves 
$\ro_+ \ni \lambda \mapsto (\lambda x, \lambda^\sigma \xi)$ where the STFT does not decay superpolynomially. 
For $u \in \cS'(\rr d)$ we have $\WFgs(u) = \emptyset$ if and only if $u \in \cS(\rr d)$ \cite{Rodino4}. 
When $\sigma = 1$ then $\WFgs(u) = \WFg(u)$ which denotes the (isotropic) Gabor wave front set \cite{Hormander2,Rodino2}.

By \cite[Proposition~5.1]{Rodino4} we have the microlocal inclusion
\begin{equation}\label{eq:microlocalityWFs}
\WFgs( a^w(x,D) u) \subseteq \WFgs( u), \quad u \in \cS'(\rr d), 
\end{equation}
for $a \in G_\rho^{r,\sigma}$ for any $r \in \ro$, $0 < \rho \leqs 1$ and $\sigma \in \ro_+$. 
Under the same assumptions the microelliptic inclusion \cite[Theorem~6.4]{Rodino4} implies 
\begin{equation}\label{eq:microellipticityWFs}
\WFgs( u) \subseteq  \WFgs( a^w(x,D) u) \bigcup \charac_\sigma (a), 
\quad u \in \cS'(\rr d), 
\end{equation}
where the characteristic set $\charac_\sigma(a) \subseteq T^* \rr d \setminus 0$ of the symbol $a$ is defined as follows: 
$0 \neq z_0 \notin \charac_\sigma(a)$ if there exists an open $\sigma$-conic neighborhood $\Gamma \subseteq T^* \rr d \setminus 0$ and $C, R > 0$ such that $z_0 \in \Gamma$ and 
\begin{equation*}
|a(x,\xi)| \geqs C \theta_\sigma(x,\xi)^r, \quad (x,\xi) \in \Gamma \setminus \rB_R.
\end{equation*}
More precisely the quoted result \cite[Theorem~6.4]{Rodino4} treats the case $\rho = 1$ but the proof extends to $0 < \rho \leqs 1$.

\section{Microlocal excision symbols}\label{sec:microlocalcutoff}

Let $\emptyset \neq \Omega \subseteq \rr {2d}$, let 
$\sigma \in \ro_+$, 
let $0 < \rho \leqs 1$ and let $\ep > 0$. 
We use anisotropic open neighborhoods of $\Omega$ defined as
\begin{equation}\label{eq:anisoneighb}
\begin{aligned}
\Omega_{\rho,\ep} 
& = \{ (x, \xi) \in \rr {2d}: \exists (y,\eta) \in \Omega: |x - y| < \ep \theta_\sigma^\rho(y,\eta) \mbox{ and } |\xi - \eta| < \ep \theta_\sigma^{\rho \sigma}(y,\eta) \} \\
& = \bigcup_{(y,\eta) \in \Omega} \rB_{\ep \theta_\sigma^\rho(y,\eta)}(y) \times \rB_{\ep \theta_\sigma^{\rho \sigma}(y,\eta)} (\eta).
\end{aligned}
\end{equation}
The dependence of $\Omega_{\rho,\ep}$ on $\sigma$ is thus suppressed in the notation for simplicity. 
When we need it we write $\Omega_{\rho,\ep}^\sigma$. 

If $\Omega$ is $\sigma$-conic, cf. \eqref{eq:sigmaconic}, 
then we have the invariance
\begin{align*}
(x,\xi) \in \Omega_{\rho,\ep} 
\quad \Longrightarrow \quad 
 ( \lambda x, \lambda^\sigma \xi ) \in \Omega_{\rho, \ep} \quad \forall \lambda \in ( 0, 1].
\end{align*}
Thus $\Omega_{\rho,\ep}$ is $\sigma$-conic ``inwards''.

First we show a few results that will be useful. 

\begin{lem}\label{lem:boundaniso}
Let $\sigma \in \ro_+$, $0 < \rho \leqs 1$ and define $C_\sigma$ by \eqref{eq:qtrieqconstant}. 
Suppose $\ep > 0$ satisfies
\begin{align}
& \ep + \ep^{\frac1\sigma} C_\sigma < 1 \label{eq:cond1ep},  \\
& \frac{\ep}{( 1 - \ep )^\sigma} C_{\frac1\sigma}^2 < 1 \label{eq:cond2ep}, 
\end{align}
and suppose $x,y,\xi,\eta \in \rr d$ satisfy 
\begin{equation}\label{eq:neibhassump}
|x - y| < \ep \theta_\sigma^\rho(y,\eta) 
\quad \mbox{ and } \quad
|\xi - \eta| < \ep \theta_\sigma^{\rho \sigma}(y,\eta). 
\end{equation}
Then there exists $C = C_{\sigma,\ep} \geqs 1$ such that 
\begin{equation}\label{eq:boundyeta}
\theta_\sigma (y, \eta) \leqs C \theta_\sigma(x,\xi). 
\end{equation}
\end{lem}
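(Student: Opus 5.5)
The plan is a direct absorption argument that uses only the quasi-triangle inequality \eqref{eq:quasitriangleineq} and the smallness condition \eqref{eq:cond1ep}. The idea is to bound each of the three terms of $\theta_\sigma(y,\eta) = 1 + |y| + |\eta|^{\frac1\sigma}$ by the corresponding term of $\theta_\sigma(x,\xi)$, plus a small multiple of $\theta_\sigma(y,\eta)$ itself. That small multiple is then moved to the left-hand side.

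\emph{Space variable.} By the triangle inequality and \eqref{eq:neibhassump},
\begin{equation*}
|y| \leqs |x| + |x-y| < |x| + \ep \, \theta_\sigma^\rho(y,\eta) \leqs |x| + \ep \, \theta_\sigma(y,\eta).
\end{equation*}
The last step uses $\theta_\sigma \geqs 1$ and $0 < \rho \leqs 1$.

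\emph{Frequency variable.} By \eqref{eq:quasitriangleineq} and \eqref{eq:neibhassump},
\begin{equation*}
|\eta|^{\frac1\sigma} \leqs C_\sigma \big( |\xi|^{\frac1\sigma} + |\xi-\eta|^{\frac1\sigma} \big) < C_\sigma |\xi|^{\frac1\sigma} + C_\sigma \ep^{\frac1\sigma} \theta_\sigma^{\rho}(y,\eta) \leqs C_\sigma |\xi|^{\frac1\sigma} + C_\sigma \ep^{\frac1\sigma} \theta_\sigma(y,\eta).
\end{equation*}
Here the power $\frac1\sigma$ turns $\theta_\sigma^{\rho\sigma}$ into $\theta_\sigma^{\rho}$, and again $\rho \leqs 1$ and $\theta_\sigma \geqs 1$ are used.

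\emph{Absorption.} Adding $1$ and the two estimates gives
\begin{equation*}
\theta_\sigma(y,\eta) \leqs 1 + |x| + C_\sigma |\xi|^{\frac1\sigma} + \big( \ep + \ep^{\frac1\sigma} C_\sigma \big) \theta_\sigma(y,\eta) \leqs C_\sigma \theta_\sigma(x,\xi) + \big( \ep + \ep^{\frac1\sigma} C_\sigma \big) \theta_\sigma(y,\eta),
\end{equation*}
where the second inequality uses $C_\sigma \geqs 1$. Since $\theta_\sigma(y,\eta) < \infty$ and \eqref{eq:cond1ep} holds, the last term can be absorbed into the left-hand side. This yields \eqref{eq:boundyeta} with
\begin{equation*}
C = \frac{C_\sigma}{1 - \ep - \ep^{\frac1\sigma} C_\sigma} \geqs 1 .
\end{equation*}

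There is no real obstacle. The only point needing care is that $\theta_\sigma^\rho(y,\eta)$ and $\theta_\sigma^{\rho\sigma}(y,\eta)^{\frac1\sigma}$ are both dominated by $\theta_\sigma(y,\eta)$, which holds because $\rho \leqs 1$ and $\theta_\sigma \geqs 1$. Condition \eqref{eq:cond2ep} is not needed for this direction. I expect it is meant for a companion reverse estimate, of the form $\theta_\sigma(x,\xi) \lesssim \theta_\sigma(y,\eta)$, or for later uses of the neighborhoods $\Omega_{\rho,\ep}$.
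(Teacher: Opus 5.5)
Your proof is correct, and it is shorter than the paper's. The paper estimates $|y|$ and $|\eta|$ separately. It raises the $\eta$-inequality to the power $\frac1\sigma$ as you do, but keeps $|y|$ on the right and substitutes into the $y$-inequality to obtain \eqref{eq:boundy2}. For the frequency part it then bounds $|y|$ by $|x|$ and $|\eta|^{\frac1\sigma}$, and inserts this into the $\eta$-inequality at the scale $|\eta|$ rather than $|\eta|^{\frac1\sigma}$. It splits $(1+|x|+|\eta|^{\frac1\sigma})^\sigma$ with the constant $C_{\frac1\sigma}^2$ and absorbs the resulting $|\eta|$ term; that absorption is the only place where \eqref{eq:cond2ep} enters. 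You absorb the full weight $\theta_\sigma(y,\eta)$ in one step, so you need only \eqref{eq:cond1ep}. Your argument therefore shows that \eqref{eq:cond2ep} is superfluous. It also gives the explicit constant $C_\sigma/(1-\ep-\ep^{\frac1\sigma}C_\sigma)$, which is increasing in $\ep$, so Remark~\ref{rem:constant} follows at once. Your guess about the purpose of \eqref{eq:cond2ep} is off: it serves only the paper's proof of this lemma. The reverse bound $\theta_\sigma(x,\xi) \lesssim \theta_\sigma(y,\eta)$ used in Proposition~\ref{prop:ellipticlarger} is derived there directly, without it. The paper's route yields separate bounds for $|y|$ and $|\eta|^{\frac1\sigma}$, but nothing beyond the conclusion \eqref{eq:boundyeta} is used later, so your version loses nothing.
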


\begin{proof}
First we combine the assumption \eqref{eq:neibhassump} and $\rho \leqs 1$ 
which gives
\begin{align}
|y| & \leqs |x| + \ep ( 1 + |y| + |\eta|^{\frac1\sigma}), \label{eq:boundy1}\\
|\eta| & \leqs |\xi| + \ep ( 1 + |y| + |\eta|^{\frac1\sigma})^{\sigma}. \label{eq:boundeta1}
\end{align}
From \eqref{eq:quasitriangleineq} and \eqref{eq:boundeta1} we obtain 
\begin{equation*}
\ep |\eta|^{\frac1\sigma} 
\leqs \ep C_\sigma \left( |\xi|^{\frac1\sigma} + \ep^{\frac1\sigma} ( 1 + |y| + |\eta|^{\frac1\sigma}) \right)
\end{equation*}
that is 
\begin{equation*}
\ep |\eta|^{\frac1\sigma} \left( 1 - \ep^{\frac1\sigma} C_\sigma \right)
\leqs \ep C_\sigma \left( |\xi|^{\frac1\sigma} + \ep^{\frac1\sigma} ( 1 + |y| ) \right)
\end{equation*}
where $1 - \ep^{\frac1\sigma} C_\sigma > 0$ is a consequence of the assumption \eqref{eq:cond1ep}.
Insertion into \eqref{eq:boundy1} yields
\begin{equation}\label{eq:boundy2}
|y| \left( 1 - \ep - \frac{\ep^{1 + \frac1\sigma} C_\sigma}{1 - \ep^{\frac1\sigma} C_\sigma} \right)
\leqs 
|x| + \ep + 
\frac{\ep C_\sigma}{1 - \ep^{\frac1\sigma} C_\sigma} | \xi |^{\frac1\sigma} 
+ \frac{\ep^{1 + \frac1\sigma} C_\sigma}{1 - \ep^{\frac1\sigma}C_\sigma}
\end{equation}
where $1 - \ep - \ep^{1 + \frac1\sigma} C_\sigma (1 - \ep^{\frac1\sigma} C_\sigma)^{-1} > 0$ again follows from \eqref{eq:cond1ep}. 
  
Likewise \eqref{eq:boundy1} gives $|y| (1 - \ep  ) \leqs |x| + \ep ( 1 + |\eta|^{\frac1\sigma})$ with $1-\ep > 0$ due to \eqref{eq:cond1ep}. 
We insert into \eqref{eq:boundeta1} and use \eqref{eq:quasitriangleineq} to infer
\begin{align*}
|\eta| & \leqs |\xi| + \ep \left( 1 + \frac{1}{1-\ep} \left( |x| + \ep ( 1 + |\eta|^{\frac1\sigma}) \right) + |\eta|^{\frac1\sigma} \right)^\sigma \\
& = |\xi| + \frac{\ep}{(1-\ep)^\sigma} \left( 1 + |x| + |\eta|^{\frac1\sigma} \right)^\sigma \\
& \leqs |\xi| + \frac{\ep}{(1-\ep)^\sigma} C_{\frac1\sigma}^2 \left( 1 + |x|^\sigma + |\eta| \right)
\end{align*}
which finally gives, again using \eqref{eq:quasitriangleineq}
\begin{equation}\label{eq:boundeta2}
\begin{aligned}
|\eta|^{\frac1\sigma} \left( 1 - \frac{\ep}{(1-\ep)^\sigma} C_{\frac1\sigma}^2 \right)^{\frac1\sigma}
& \leqs C_\sigma \left( |\xi|^{\frac1\sigma} + \frac{\ep^{\frac1\sigma}}{1-\ep} C_{\frac1\sigma}^{\frac2\sigma} \left( 1 + |x|^\sigma \right)^{\frac1\sigma} \right) \\
& \leqs C_\sigma \left( |\xi|^{\frac1\sigma} + \frac{\ep^{\frac1\sigma}}{1-\ep} C_{\frac1\sigma}^{\frac2\sigma} C_\sigma \left( 1 + |x| \right) \right). 
\end{aligned}
\end{equation}
We have $1 - \frac{\ep}{(1-\ep)^\sigma} C_{\frac1\sigma}^2 > 0$ due to assumption \eqref{eq:cond2ep}. 
The claim \eqref{eq:boundyeta} now follows from \eqref{eq:boundy2} and \eqref{eq:boundeta2}. 
\end{proof}

\begin{rem}\label{rem:constant}
It follows from the proof that the constant $C_{\sigma,\ep} > 0$ decreases as $\ep > 0$ decreases. 
\end{rem}

\begin{rem}\label{rem:lowerboundsmall}
There exists a constant $0 < b_\sigma < 1$ that depends on $\sigma$ 
such that $\ep < b_\sigma$ implies that \eqref{eq:cond1ep} and \eqref{eq:cond2ep}
are fulfilled. 
Indeed it suffices to take (cf. \eqref{eq:qtrieqconstant})
\begin{equation}\label{eq:bsigma}
b_\sigma = \left( 1 + \max \left( C_\sigma, C_\frac1\sigma^\frac2\sigma \right) \right)^{- \max(1,\sigma)}
= 
\left\{
\begin{array}{ll}
\left( 1 + 2^{\frac1\sigma-1} \right)^{-1} & \mbox{if} \ \  0 < \sigma \leqs 1 \\
\left( 1 + 2^{2 \left( 1 - \frac1\sigma \right)} \right)^{-\sigma} & \mbox{if} \ \ \sigma > 1
\end{array}
\right. .
\end{equation}
\end{rem}

\begin{lem}\label{lem:closureinclusion}
Suppose $\emptyset \neq \Omega \subseteq \rr {2d}$,  $\sigma \in \ro_+$, 
$0 < \rho \leqs 1$, and let $0 < b_\sigma < 1$ be defined by \eqref{eq:bsigma}. 
If $0 < \ep < \delta < 1$
and $\ep < b_\sigma$ then 
\begin{equation}\label{eq:closincl}
\overline{\left( \Omega_{\rho,\ep} \right)}
\subseteq \Omega_{\rho,\delta}.
\end{equation}
\end{lem}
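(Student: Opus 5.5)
The plan is to argue directly from the definition \eqref{eq:anisoneighb} with a sequence argument, using only the elementary lower bound $\theta_\sigma \geqs 1$. Let $(x,\xi) \in \overline{\left( \Omega_{\rho,\ep} \right)}$. Then there is a sequence $(x_n,\xi_n) \in \Omega_{\rho,\ep}$ with $(x_n,\xi_n) \to (x,\xi)$. By definition there are points $(y_n,\eta_n) \in \Omega$ such that
\begin{equation*}
|x_n - y_n| < \ep \theta_\sigma^\rho(y_n,\eta_n), \qquad |\xi_n - \eta_n| < \ep \theta_\sigma^{\rho\sigma}(y_n,\eta_n).
\end{equation*}

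I do not try to pass to a limit of $(y_n,\eta_n)$, since $\Omega$ need not be closed. Instead I fix a single index $n$ and show that the same point $(y_n,\eta_n) \in \Omega$ already witnesses $(x,\xi) \in \Omega_{\rho,\delta}$. Because $\theta_\sigma \geqs 1$ and $\rho, \rho\sigma > 0$, we have $\theta_\sigma^\rho(y_n,\eta_n) \geqs 1$ and $\theta_\sigma^{\rho\sigma}(y_n,\eta_n) \geqs 1$. Choose $n$ so large that $|x - x_n| < \delta - \ep$ and $|\xi - \xi_n| < \delta - \ep$, which is possible since $\delta > \ep$. The triangle inequality then gives
\begin{equation*}
|x - y_n| \leqs |x - x_n| + |x_n - y_n| < (\delta - \ep) + \ep \theta_\sigma^\rho(y_n,\eta_n) \leqs \delta\, \theta_\sigma^\rho(y_n,\eta_n),
\end{equation*}
and in the same way $|\xi - \eta_n| < \delta\, \theta_\sigma^{\rho\sigma}(y_n,\eta_n)$. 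Hence $(x,\xi) \in \rB_{\delta \theta_\sigma^\rho(y_n,\eta_n)}(y_n) \times \rB_{\delta \theta_\sigma^{\rho\sigma}(y_n,\eta_n)}(\eta_n) \subseteq \Omega_{\rho,\delta}$, which proves \eqref{eq:closincl}.

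The only point requiring care is avoiding a limit of the centers $(y_n,\eta_n)$, since such a limit could leave $\Omega$. Using the uniform lower bound $1$ on the radii sidesteps this: a fixed gain $\delta - \ep$ absorbs the approximation error. With this argument the hypothesis $\ep < b_\sigma$ is not actually used. If one instead wanted a limiting argument, Lemma~\ref{lem:boundaniso}, via Remark~\ref{rem:lowerboundsmall}, would provide boundedness of $\theta_\sigma(y_n,\eta_n)$ in terms of $\theta_\sigma(x_n,\xi_n)$, and hence a uniform control of the radii. That control is presumably the reason the hypothesis is included, but the direct argument above does not need it.
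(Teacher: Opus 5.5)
Your proof is correct, and it takes a genuinely different and simpler route than the paper.

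The paper also starts from $(x_n,\xi_n)\in\Omega_{\rho,\ep}$ with centers $(y_n,\eta_n)\in\Omega$, but it then passes to a limit of the centers. It invokes Lemma~\ref{lem:boundaniso}, which is exactly where $\ep<b_\sigma$ enters via Remark~\ref{rem:lowerboundsmall}, to get $\theta_\sigma(y_n,\eta_n)\lesssim\theta_\sigma(x_n,\xi_n)$ and hence boundedness. It extracts a subsequence converging to some $(y,\eta)\in\overline\Omega$ and proves the intermediate inclusion $\overline{(\Omega_{\rho,\ep})}\subseteq(\overline\Omega)_{\rho,\mu}$ for $\ep<\mu<\delta$. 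Finally it uses continuity of $\theta_\sigma$ to move the center from $\overline\Omega$ back into $\Omega$, giving $(\overline\Omega)_{\rho,\mu}\subseteq\Omega_{\rho,\delta}$.

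You instead keep one fixed center $(y_n,\eta_n)\in\Omega$ and absorb the approximation error into the margin $\delta-\ep$, using only $\theta_\sigma\geqs 1$. The estimates $(\delta-\ep)+\ep\,\theta_\sigma^{\rho}(y_n,\eta_n)\leqs\delta\,\theta_\sigma^{\rho}(y_n,\eta_n)$ and the analogue with exponent $\rho\sigma$ are valid because $\rho,\rho\sigma>0$.

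Your route gives a stronger result. The inclusion holds for all $0<\ep<\delta$, with neither $\ep<b_\sigma$ nor $\delta<1$. In fact you have shown a quantitative version: any point within componentwise Euclidean distance $\delta-\ep$ of $\Omega_{\rho,\ep}$ already lies in $\Omega_{\rho,\delta}$. The paper's compactness argument yields nothing extra for this statement; its only by-product, the inclusion through $(\overline\Omega)_{\rho,\mu}$, is not used elsewhere. So wherever the paper imposes $\delta<b_\sigma$ solely to invoke this lemma, for instance in Remark~\ref{rem:supportintersection} and Proposition~\ref{prop:smoothorderzero}, that restriction could be dropped.
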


\begin{proof}
Let $0 < \ep < \mu < \delta < 1$. 
First we show 
\begin{equation}\label{eq:closincl1}
\overline{\left( \Omega_{\rho,\ep} \right)}
\subseteq \left( \overline{\Omega} \right)_{\rho,\mu}.
\end{equation}
If $(x,\xi) \in \overline{\left( \Omega_{\rho,\ep} \right)}$ then there exists a sequence 
$(x_n,\xi_n)_{n \in \no} \subseteq \Omega_{\rho,\ep}$ such that $\lim_{n \to \infty} (x_n, \xi_n) = (x,\xi)$. 
For each $n \in \no$ there exists $(y_n, \eta_n) \in \Omega$ such that 
\begin{equation}\label{eq:ballsn}
|x_n - y_n| < \ep \theta_\sigma^\rho(y_n,\eta_n)
\quad \mbox{and} \quad 
|\xi_n - \eta_n| < \ep \theta_\sigma^{\rho \sigma}(y_n,\eta_n). 
\end{equation}

If $0 < \ep < b_\sigma$ then it follows from Lemma \ref{lem:boundaniso} and Remark \ref{rem:lowerboundsmall} that 
\begin{equation*}
\theta_\sigma (y_n, \eta_n) \leqs C_1 \theta_\sigma(x_n,\xi_n) \leqs C_2
\end{equation*}
for some $C_1, C_2 > 0$, in the second inequality using the boundedness of $(x_n,\xi_n)_{n \in \no}$.
Hence also the sequence $(y_n,\eta_n)_{n \in \no}$ is bounded. 
A subsequence, still denoted $(y_n,\eta_n)_{n \in \no}$, then converges, that is 
$\lim_{n \to \infty} (y_n, \eta_n) = (y,\eta) \in \overline{\Omega}$.

It follows that 
\begin{align*}
\frac{|x - y|}{\theta_\sigma^\rho(y,\eta)}
& = \limsup_{n \to \infty} \frac{|x_n - y_n|}{\theta_\sigma^\rho(y_n,\eta_n)}
\leqs \ep < \mu, \\
\frac{|\xi - \eta|}{\theta_\sigma^{\rho \sigma}(y,\eta)}
& = \limsup_{n \to \infty} \frac{|\xi_n - \eta_n|}{\theta_\sigma^{\rho \sigma}(y_n,\eta_n)}
\leqs \ep < \mu, 
\end{align*}
which shows that $(x,\xi) \in \left( \overline{\Omega} \right)_{\rho,\mu}$.
We have shown \eqref{eq:closincl1}.

To show \eqref{eq:closincl} it remains to show
\begin{equation}\label{eq:closincl2}
\left( \overline{\Omega} \right)_{\rho,\mu} 
\subseteq \Omega_{\rho,\delta}.
\end{equation}
If $(x,\xi) \in \left( \overline{\Omega} \right)_{\rho,\mu}$ then
there exists $(y,\eta) \in \overline \Omega$ such that 
\begin{equation*}
|x - y| < \mu \theta_\sigma^\rho(y,\eta)
\quad \mbox{and} \quad 
|\xi - \eta| < \mu \theta_\sigma^{\rho \sigma}(y,\eta). 
\end{equation*}
Due to the continuity of $\theta_\sigma$ and the assumption $\mu < \delta$ there exists $(y',\eta') \in \Omega$
such that 
\begin{equation*}
|x - y'| < \delta \theta_\sigma^\rho(y',\eta')
\quad \mbox{and} \quad 
|\xi - \eta'| < \delta \theta_\sigma^{\rho \sigma}(y',\eta') 
\end{equation*}
which shows that $(x,\xi) \in \Omega_{\rho,\delta}$. 
This proves \eqref{eq:closincl2}.
\end{proof}

\begin{prop}\label{prop:disjoint}
Suppose $\emptyset \neq \Omega \subseteq \rr {2d}$, $\sigma \in \ro_+$, $0 < \rho \leqs 1$ and $0 < \ep < \delta < 1$. 
There exists $\mu > 0$ such that 
\begin{equation}\label{eq:emptyintersection1}
\left( \rr {2d} \setminus \Omega_{\rho,\delta} \right)_{\rho,\mu} \bigcap \left( \Omega_{\rho,\ep} \right)_{\rho,\mu} = \emptyset.
\end{equation}
\end{prop}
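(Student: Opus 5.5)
Argue by contradiction. Suppose that for every $\mu>0$ (in particular every small $\mu < b_\sigma$) there is a point $(x,\xi)$ in both sets. Then there exist $(y,\eta) \notin \Omega_{\rho,\delta}$ and $(y',\eta') \in \Omega_{\rho,\ep}$ with
\begin{equation*}
|x-y| < \mu \theta_\sigma^\rho(y,\eta), \quad |\xi-\eta| < \mu \theta_\sigma^{\rho\sigma}(y,\eta), \quad
|x-y'| < \mu \theta_\sigma^\rho(y',\eta'), \quad |\xi-\eta'| < \mu \theta_\sigma^{\rho\sigma}(y',\eta').
\end{equation*}
Moreover, since $(y',\eta') \in \Omega_{\rho,\ep}$, there is $(v,\nu)\in\Omega$ with $|y'-v|<\ep\theta_\sigma^\rho(v,\nu)$ and $|\eta'-\nu|<\ep\theta_\sigma^{\rho\sigma}(v,\nu)$. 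The aim is to show that for $\mu$ small, depending only on $\sigma,\ep,\delta$, these give $|y-v|<\delta\theta_\sigma^\rho(v,\nu)$ and $|\eta-\nu|<\delta\theta_\sigma^{\rho\sigma}(v,\nu)$. That would put $(y,\eta)\in\Omega_{\rho,\delta}$, a contradiction. So the proof is a direct choice of $\mu$ rather than a genuine limiting argument.

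\textbf{Comparability of weights.} I will first make all the weights comparable, with constants uniform in the points. Take $\mu<b_\sigma$. Lemma \ref{lem:boundaniso} then gives $\theta_\sigma(y,\eta)\leqs C_{\sigma,\mu}\theta_\sigma(x,\xi)$ and $\theta_\sigma(y',\eta')\leqs C_{\sigma,\mu}\theta_\sigma(x,\xi)$. I also need the reverse direction, $\theta_\sigma(x,\xi)\lesssim\theta_\sigma(y',\eta')$. This follows from Lemma \ref{lem:triangleineq}: the differences satisfy $\theta_\sigma(x-y',\xi-\eta')\leqs 1+\mu\theta_\sigma^\rho(y',\eta')+\mu^{1/\sigma}\theta_\sigma^{\rho}(y',\eta')\lesssim \theta_\sigma(y',\eta')$, using $\rho\leqs 1$. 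By Remark \ref{rem:constant} the constants $C_{\sigma,\mu}$ stay bounded as $\mu\to0$. Hence $\theta_\sigma(y,\eta)\asymp\theta_\sigma(x,\xi)\asymp\theta_\sigma(y',\eta')$ with constants independent of $\mu\leqs\mu_0$. Similarly, if $\ep<b_\sigma$, Lemma \ref{lem:boundaniso} gives $\theta_\sigma(v,\nu)\leqs C\theta_\sigma(y',\eta')$, and the same triangle argument gives $\theta_\sigma(y',\eta')\lesssim\theta_\sigma(v,\nu)$. If $\ep\geqs b_\sigma$ this step needs a substitute. The triangle-inequality direction $\theta_\sigma(y',\eta')\lesssim\theta_\sigma(v,\nu)$ holds for every $\ep<1$, and that is the direction actually needed below. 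So altogether $\theta_\sigma(y,\eta)\leqs K\,\theta_\sigma(v,\nu)$ with $K$ independent of $\mu\leqs\mu_0$.

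\textbf{Conclusion.} By the triangle inequality,
\begin{equation*}
|y-v|\leqs |y-x|+|x-y'|+|y'-v| < \mu\big(\theta_\sigma^\rho(y,\eta)+\theta_\sigma^\rho(y',\eta')\big)+\ep\theta_\sigma^\rho(v,\nu)\leqs (2\mu K^\rho+\ep)\,\theta_\sigma^\rho(v,\nu),
\end{equation*}
and in the same way $|\eta-\nu|<(2\mu K^{\rho\sigma}+\ep)\,\theta_\sigma^{\rho\sigma}(v,\nu)$. Choosing $\mu$ so small that $2\mu\max(K^\rho,K^{\rho\sigma})<\delta-\ep$ yields the contradiction.

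\textbf{Main obstacle.} The delicate point is the chain of weight comparisons in the second paragraph. In particular I must bound $\theta_\sigma(y,\eta)$, where $(y,\eta)$ is the centre of a ball containing $x$, in terms of $\theta_\sigma(v,\nu)$, with constants uniform as $\mu\to0$. The route is Lemma \ref{lem:boundaniso} (with Remark \ref{rem:constant}) for the centre-by-point bounds, and Lemma \ref{lem:triangleineq} for the point-by-centre bounds.
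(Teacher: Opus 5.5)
Your proof is correct, and it is organised differently from the paper's.

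\textbf{The paper's route.} It inserts an intermediate $\ep<\gamma<\delta$ and proves two separate facts.
\begin{enumerate}
\item[(a)] $(\Omega_{\rho,\ep})_{\rho,\mu}\subseteq\Omega_{\rho,\gamma}$ for small $\mu$. This is a direct estimate that uses only the point-by-centre comparison from Lemma~\ref{lem:triangleineq}, the same bound you use for $\theta_\sigma(y',\eta')\lesssim\theta_\sigma(v,\nu)$.
\item[(b)] $(\rr{2d}\setminus\Omega_{\rho,\delta})_{\rho,\mu}$ does not meet $\Omega_{\rho,\gamma}$. This is a sequence argument: Lemma~\ref{lem:boundaniso} forces centres $(y_n,\eta_n)\notin\Omega_{\rho,\delta}$ to converge to a point of $\Omega_{\rho,\gamma}$, which contradicts the closedness of $\rr{2d}\setminus\Omega_{\rho,\delta}$.
\end{enumerate}

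\textbf{Your route.} You use Lemma~\ref{lem:boundaniso} with Remark~\ref{rem:constant} to get the centre-by-point bound $\theta_\sigma(y,\eta)\le C_{\sigma,\mu_0}\theta_\sigma(x,\xi)$, uniformly in $\mu\le\mu_0$. Chaining this with two triangle-inequality bounds gives $\theta_\sigma(y,\eta),\theta_\sigma(y',\eta')\le K\theta_\sigma(v,\nu)$. A single three-term triangle inequality then finishes the proof.

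\textbf{What your approach buys.} You obtain an explicit $\mu$, depending only on $\sigma,\ep,\delta$, that works for all points at once. The paper's step (b), as written, fixes one $(x,\xi)$ for all $n$. That only shows each point of $\Omega_{\rho,\gamma}$ avoids $(\rr{2d}\setminus\Omega_{\rho,\delta})_{\rho,1/n}$ for $n$ large depending on the point. The correct negation of the claim allows the points $(x_n,\xi_n)$ to vary and be unbounded, and then the compactness argument is not available. Your uniform weight comparison supplies exactly this missing uniformity.

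Your side remark about the case $\ep<b_\sigma$ is unnecessary, as you observe yourself.
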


\begin{proof}
Let $\ep < \gamma < \delta$. 
First we show that there exists $\mu_0 > 0$ such that
\begin{equation}\label{eq:doubleinclusion}
\left( \Omega_{\rho,\ep} \right)_{\rho,\mu} \subseteq \Omega_{\rho,\gamma}
\end{equation}
provided $\mu \leqs \mu_0$. 

Let $(x,\xi) \in \left( \Omega_{\rho,\ep} \right)_{\rho,\mu}$. Then there exists $(y,\eta) \in \Omega_{\rho,\ep}$ such that 
\begin{align}
|x - y| & < \mu \theta_\sigma^\rho(y,\eta) \label{eq:neibhassump1a}, \\
|\xi - \eta| & < \mu \theta_\sigma^{\rho \sigma}(y,\eta) \label{eq:neibhassump1b}, 
\end{align}
and there exists $(z,\nu) \in \Omega$ such that 
\begin{align}
|y-z| & < \ep \theta_\sigma^{\rho}(z,\nu) \label{eq:neibhassump2a}, \\
|\eta - \nu| & < \ep \theta_\sigma^{\rho \sigma}(z,\nu). \label{eq:neibhassump2b}
\end{align}

We obtain from \eqref{eq:neibhassump2a} and \eqref{eq:neibhassump2b}
\begin{equation*}
\theta_\sigma(y-z,\eta-\nu) 
= 1 + |y-z| + |\eta-\nu|^{\frac1\sigma}
< \theta_\sigma^\rho(z,\nu) \left( 1+ \ep + \ep^{\frac1\sigma} \right). 
\end{equation*}
From this we deduce by means of \eqref{eq:neibhassump1a}, \eqref{eq:neibhassump2a}, 
\eqref{eq:quasitriangleineq}, 
Lemma \ref{lem:triangleineq} and $\rho \leqs 1$
\begin{align*}
|x - z| & < \ep \theta_\sigma^\rho (z,\nu) + \mu \theta_\sigma^\rho (y,\eta) \\
& \leqs \ep \theta_\sigma^\rho (z,\nu) + \mu B_\sigma^\rho \Big( \theta_\sigma(z,\nu) + \theta_\sigma(y-z,\eta-\nu) \Big)^\rho \\
& \leqs \ep \theta_\sigma^\rho (z,\nu) + \mu B_\sigma^\rho C_{\frac1\rho} \Big( \theta_\sigma^\rho(z,\nu) + \theta_\sigma(y-z,\eta-\nu) \Big) \\
& \leqs \theta_\sigma^\rho (z,\nu) \left( \ep + \mu B_\sigma^\rho C_{\frac1\rho} \Big( 2 + \ep + \ep^{\frac1\sigma} \Big) \right) \\
& < \gamma \theta_\sigma^\rho (z,\nu)
\end{align*}
provided $\mu \leqs \mu_0$ for $\mu_0$ sufficiently small, where we use $\gamma > \ep$.

Likewise we infer starting from \eqref{eq:neibhassump1b} and \eqref{eq:neibhassump2b}
\begin{align*}
|\xi - \nu| & < \ep \theta_\sigma^{\rho \sigma} (z,\nu) + \mu \theta_\sigma^{\rho \sigma}(y,\eta) \\
& \leqs \ep \theta_\sigma^{\rho \sigma} (z,\nu) + \mu B_\sigma^{\rho \sigma} \Big( \theta_\sigma(z,\nu) + \theta_\sigma(y-z,\eta -\nu)\Big)^{\rho \sigma} \\
& \leqs \ep \theta_\sigma^{\rho \sigma} (z,\nu) + \mu B_\sigma^{\rho \sigma} C_{\frac{1}{\rho \sigma}} \Big( \theta_\sigma^{\rho \sigma}(z,\nu) + \theta_\sigma^\sigma (y-z,\eta -\nu)\Big) \\
& \leqs \ep \theta_\sigma^{\rho \sigma} (z,\nu) + \mu B_\sigma^{\rho \sigma} C_{\frac{1}{\rho \sigma}} \Big( \theta_\sigma^{\rho \sigma}(z,\nu) + 
\theta_\sigma^{\rho \sigma}(z,\nu) \left( 1+ \ep + \ep^{\frac1\sigma} \right)^\sigma \Big) \\
& = \theta_\sigma^{\rho \sigma} (z,\nu) \left( \ep + \mu B_\sigma^{\rho \sigma} C_{\frac{1}{\rho \sigma}} \left( 1 + \left( 1+ \ep + \ep^{\frac1\sigma} \right)^\sigma \right) \right) \\
& < \gamma \theta_\sigma^{\rho \sigma}  (z,\nu)
\end{align*}
again provided $\mu \leqs \mu_0$ for $\mu_0$ sufficiently small, using $\gamma > \ep$.
We have shown the inclusion \eqref{eq:doubleinclusion} for $\mu \leqs \mu_0$ and $\mu_0$ sufficiently small. 

Let $\mu \leqs \mu_0$. 
To prove the claim it suffices to show 
\begin{equation*}
\left( \rr {2d} \setminus \Omega_{\rho,\delta} \right)_{\rho,\mu} \bigcap \Omega_{\rho,\gamma} = \emptyset
\end{equation*}
or equivalently 
\begin{equation}\label{eq:inclusionclaim1}
\Omega_{\rho,\gamma} \subseteq \rr {2d} \setminus \left( \rr {2d} \setminus \Omega_{\rho,\delta} \right)_{\rho,\mu} 
\end{equation}
for some small $\mu > 0$.
We show this inclusion by contradiction. Thus suppose that there exists $(x,\xi) \in \Omega_{\rho,\gamma}$ 
such that
\begin{equation*}
(x,\xi) \in \left( \rr {2d} \setminus \Omega_{\rho,\delta} \right)_{\rho,\frac1n}
\end{equation*}
for all $n \in \no \setminus 0$.
Then for all $n \in \no \setminus 0$ there exists $(y_n,\eta_n) \in \rr {2d} \setminus \Omega_{\rho,\delta}$ such that 
\begin{equation*}
|x - y_n| < \frac1n \theta_\sigma^\rho (y_n,\eta_n) 
\quad \mbox{and} \quad
|\xi - \eta_n| < \frac1n \theta_\sigma^{\rho \sigma} (y_n,\eta_n).  
\end{equation*}

By Lemma \ref{lem:boundaniso} we have $\theta_\sigma (y_n,\eta_n) \leqs C \theta_\sigma (x,\xi)$
if $n \geqs N$ for $N \in \no$ sufficiently large, where $C > 0$ does not depend on $n$, cf. Remark \ref{rem:constant}. 
Hence
\begin{align*}
|x - y_n| & < \frac1n C^\rho \theta_\sigma^\rho (x,\xi) , \\
|\xi - \eta_n| & < \frac1n C^{\rho \sigma} \theta_\sigma^{\rho \sigma} (x,\xi), 
\end{align*}
and it follows that 
\begin{equation*}
(x,\xi) = \lim_{n \to \infty} (y_n, \eta_n).
\end{equation*}

The set $\rr {2d} \setminus \Omega_{\rho,\delta}$ is closed in $\rr {2d}$ so we have $(x,\xi) \in \rr {2d} \setminus \Omega_{\rho,\delta}$.
Since $(x,\xi) \in \Omega_{\rho,\gamma} \subseteq \Omega_{\rho,\delta}$ due to $\gamma < \delta$ we get a contradiction. 
It follows that \eqref{eq:inclusionclaim1} must hold for some $\mu > 0$.  
\end{proof}

In the sequel we introduce cutoff symbols in $G_\rho^{0,\sigma}$ adapted to neighborhoods of the form \eqref{eq:anisoneighb}.
Here we restrict to rational anisotropy parameters $\sigma \in \qo_+$. 
Let $\psi \in C_c^\infty(\ro)$ satisfy $\psi \geqs 0$, $\supp \psi \subseteq [-\frac14,\frac14]$ and $\int_{\rr d} \psi(|x|^2) \dd x = 1$. 
Then $\fy(x) = \psi(|x|^2)$ for $x \in \rr d$ satisfies $\fy \in C_c^\infty(\rr d)$ , $\fy \geqs 0$, $\supp \fy \subseteq \rB_{\frac12}$
and $\int_{\rr d} \fy (x) \dd x = 1$. 

Given $\emptyset \neq \Omega \subseteq \rr {2d}$, $\sigma \in \qo_+$, $0 < \rho \leqs 1$ and $0 < \ep < \delta < 1$.
Then $\sigma = \frac{k}{m}$ with $k,m \in \no \setminus 0$ coprime. 
Take $0 < \mu \leqs 1$ so that \eqref{eq:emptyintersection1} is satisfied which is possible by Proposition \ref{prop:disjoint}. 
We define the function $q_{\ep,\delta,\rho,\Omega}: \rr {2d} \to \ro$ as 
\begin{equation}\label{eq:cutoffsymboldef}
\begin{aligned}
& q_{\ep,\delta,\rho,\Omega}(x,\xi) \\ 
& = \mu^{-2d} w^{- d \rho \left( \frac1k + \frac1m \right) }(x,\xi) 
\iint_{\left( \Omega_{\rho,\ep} \right)_{\rho,\mu} }
\psi \left( \left| \frac{x-y}{\mu w^\frac{\rho}{k} (x,\xi)} \right|^2 \right) 
\psi \left( \left| \frac{\xi-\eta}{\mu w^\frac{\rho}{m} (x,\xi)} \right|^2 \right) 
\dd y \, \dd \eta
\end{aligned}
\end{equation}
where $w = w_{k,m}$ is defined by \eqref{eq:weightanisotrop2}.
For simplicity of notation we here suppress the dependence on $\mu$ and $\sigma$, that is  
$q_{\ep,\delta,\rho,\Omega} = q_{\ep,\delta,\rho,\sigma,\mu,\Omega}$, unless it is explicitly needed. 

\begin{prop}\label{prop:cutoffsymbol}
Suppose $\emptyset \neq \Omega \subseteq \rr {2d}$, 
$\sigma \in \qo_+$, 
$0 < \rho \leqs 1$ and $0 < \ep < \delta < 1$. 
If $q = q_{\ep,\delta,\rho,\Omega}$ is defined by \eqref{eq:cutoffsymboldef} 
with $\sigma = \frac{k}{m}$ and $k,m \in \no \setminus 0$ coprime,
then 
$0 \leqs q \leqs 1$, 
$q \in G_\rho^{0,\sigma}$, 
$q|_{\rr {2d} \setminus \Omega_{\rho,\delta}} \equiv 0$,
$\supp q \subseteq \overline{\left(\Omega_{\rho,\delta} \right)}$ and $q|_{\Omega_{\rho,\ep}} \equiv 1$. 
\end{prop}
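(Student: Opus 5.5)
Substituting $y = x - \mu w^{\rho/k}(x,\xi) s$ and $\eta = \xi - \mu w^{\rho/m}(x,\xi) t$ rewrites $q$ as
\begin{equation*}
q(x,\xi) = \iint_{\rr{2d}} \fy(s)\fy(t)\, \mathbf 1_{(\Omega_{\rho,\ep})_{\rho,\mu}}\big(x - \mu w^{\frac\rho k}(x,\xi) s,\ \xi - \mu w^{\frac\rho m}(x,\xi) t\big)\, \dd s\, \dd t .
\end{equation*}
The Jacobian of the substitution is $\mu^{2d} w^{d\rho(\frac1k+\frac1m)}$, which cancels the prefactor. Since $\fy \geqs 0$, $\int \fy = 1$ and the integrand lies between $0$ and $1$, we get $0 \leqs q \leqs 1$ at once.

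\textbf{Support statements.} These will be proved with the comparison $w^{1/k} \asymp \theta_\sigma$ from \eqref{eq:weightequivalence}, combined with Lemma \ref{lem:boundaniso} to move between $\theta_\sigma(x,\xi)$ and $\theta_\sigma(y,\eta)$ at nearby points.

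For $q|_{\Omega_{\rho,\ep}} \equiv 1$, take $(x,\xi) \in \Omega_{\rho,\ep}$. The integration variables satisfy $|x-y| < \frac12 \mu w^{\rho/k}(x,\xi) \leqs \frac12 \mu \theta_\sigma^\rho(x,\xi)$ and similarly $|\xi - \eta| < \frac12 \mu \theta_\sigma^{\rho\sigma}(x,\xi)$. We need $(y,\eta)$ in $(\Omega_{\rho,\ep})_{\rho,\mu}$, and the definition of that set measures radii by $\theta_\sigma(x,\xi)$ itself at the center $(x,\xi) \in \Omega_{\rho,\ep}$. So every $(y,\eta)$ in the support of the integrand lies in $(\Omega_{\rho,\ep})_{\rho,\mu}$, the integrand equals $\fy(s)\fy(t)$, and $q = 1$.

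For vanishing outside $\Omega_{\rho,\delta}$, take $(x,\xi) \notin \Omega_{\rho,\delta}$. The same bound puts the support of the integrand inside $(\rr{2d}\setminus\Omega_{\rho,\delta})_{\rho,\mu}$. By \eqref{eq:emptyintersection1} this set is disjoint from $(\Omega_{\rho,\ep})_{\rho,\mu}$, so $q(x,\xi) = 0$. Then $\supp q \subseteq \overline{\Omega_{\rho,\delta}}$ follows.

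A constant mismatch may appear (for example $c_k^{-1}$ or the factor $\frac12$). If it does, I would shrink $\mu$ in Proposition \ref{prop:disjoint}, which is allowed, or use that the radii there are relative to the centers together with Lemma \ref{lem:boundaniso}.

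\textbf{Symbol estimates $q \in G_\rho^{0,\sigma}$.} This is the main obstacle, because the indicator function is not smooth, so the derivatives must fall on the smooth dependence through $w$. I would go back to the unsubstituted form \eqref{eq:cutoffsymboldef}, where $(x,\xi)$ enters only through the smooth factors $w^{-d\rho(\frac1k+\frac1m)}$, $\psi(|x-y|^2/(\mu^2 w^{2\rho/k}))$ and $\psi(|\xi-\eta|^2/(\mu^2 w^{2\rho/m}))$. Rationality of $\sigma$ is what makes $w = w_{k,m}$ smooth, and $w^2$ is a polynomial.

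The derivative count works as follows:
\begin{itemize}
\item each $\partial_x$ applied to $(x-y)/w^{\rho/k}$ gives a gain of $w^{-\rho/k} \asymp \theta_\sigma^{-\rho}$;
\item each $\partial_\xi$ applied to $(\xi-\eta)/w^{\rho/m}$ gives $w^{-\rho/m} \asymp \theta_\sigma^{-\rho\sigma}$;
\item derivatives of powers of $w$ satisfy $|\partial_x^\alpha\partial_\xi^\beta w^s| \lesssim w^{s}\theta_\sigma^{-|\alpha| - \sigma|\beta|}$, since $w^2 \in G^{2k,\sigma}$ is elliptic, and this is even better than needed because $\rho \leqs 1$;
\item on the support, $|x-y|/w^{\rho/k}$ and $|\xi-\eta|/w^{\rho/m}$ are bounded.
\end{itemize}
Using the Fa\`a di Bruno/Leibniz formula, each term of $\partial_x^\alpha\partial_\xi^\beta$ of the integrand is therefore bounded by $\theta_\sigma^{-\rho(|\alpha|+\sigma|\beta|)}$ times $\mu^{-2d} w^{-d\rho(\frac1k+\frac1m)}$ times an indicator of a set of measure at most $C\mu^{2d} w^{d\rho(\frac1k+\frac1m)}$. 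Integrating gives $|\partial_x^\alpha\partial_\xi^\beta q| \lesssim \theta_\sigma^{-\rho(|\alpha|+\sigma|\beta|)}$, which is the required estimate.
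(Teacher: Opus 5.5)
Your proposal is correct and follows essentially the same route as the paper. You use the change of variables to get $0\leqs q\leqs 1$, and the support of the integrand lies in anisotropic balls centred at $(x,\xi)$ itself, so membership in $(\Omega_{\rho,\ep})_{\rho,\mu}$, respectively $(\rr{2d}\setminus\Omega_{\rho,\delta})_{\rho,\mu}$ together with \eqref{eq:emptyintersection1}, is immediate. The symbol estimates come from Leibniz/Fa\`a di Bruno combined with the measure bound $\lesssim w^{d\rho(\frac1k+\frac1m)}$ on the support.

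Your hedge about a possible constant mismatch is unnecessary. The upper bound $w^{1/k}\leqs\theta_\sigma$ in \eqref{eq:weightequivalence} directly gives $\frac12\mu w^{\rho/k}<\mu\theta_\sigma^\rho$ and $\frac12\mu w^{\rho/m}<\mu\theta_\sigma^{\rho\sigma}$. So you need neither Lemma \ref{lem:boundaniso} nor a smaller $\mu$.
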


\begin{proof}
The claim $q \geqs 0$ everywhere is an immediate consequence of the construction. 
By a change of variables we also have for all $(x,\xi) \in \rr {2d}$
\begin{align*}
q(x,\xi) 
& \leqs 
\mu^{-2d} w^{- d \rho \left( \frac1k + \frac1m \right)}(x,\xi) \iint_{\rr {2d}} \psi \left( \left| \frac{x-y}{\mu w^\frac{\rho}{k} (x,\xi)} \right|^2 \right) \psi \left( \left| \frac{\xi-\eta}{\mu w^\frac{\rho}{m} (x,\xi)} \right|^2 \right) \dd y \, \dd \eta \\
& = \left( \int_{\rr d} \psi(|y|^2) \dd y \right)^2 = 1
\end{align*}
which is the claimed upper bound. 

Let $(x,\xi) \in \Omega_{\rho,\ep}$. The function
\begin{equation}\label{eq:integrand1}
\rr {2d} \ni (y,\eta) \mapsto 
\psi \left( \left| \frac{x-y}{\mu w^\frac{\rho}{k} (x,\xi)} \right|^2 \right) \psi \left( \left| \frac{\xi-\eta}{\mu w^\frac{\rho}{m} (x,\xi)} \right|^2 \right)
\end{equation}
is supported in $\overline{\rB_{\frac12 \mu w^\frac{\rho}{k} (x,\xi)} (x)} \times \overline{\rB_{\frac12 \mu w^\frac{\rho}{m} (x,\xi)} (\xi)}\subseteq \left( \Omega_{\rho,\ep} \right)_{\rho,\mu}$
due to \eqref{eq:weightequivalence} and \eqref{eq:anisoneighb}. 
It follows that 
\begin{equation*}
q(x,\xi) 
= \mu^{-2d} w^{- d \rho \left( \frac1k + \frac1m \right)}(x,\xi) \iint_{\rr {2d}}
\psi \left( \left| \frac{x-y}{\mu w^\frac{\rho}{k} (x,\xi)} \right|^2 \right) \psi \left( \left| \frac{\xi-\eta}{\mu w^\frac{\rho}{m} (x,\xi)} \right|^2 \right) \dd y \dd \eta 
= 1
\end{equation*}
as above, which proves the claim $q|_{\Omega_{\rho,\ep}} \equiv 1$. 

Next let 
$(x,\xi) \in \rr {2d} \setminus \Omega_{\rho,\delta}$. The integrand \eqref{eq:integrand1} is then supported in 
$\overline{\rB_{\frac12 \mu w^\frac{\rho}{k} (x,\xi)} (x)} \times \overline{\rB_{\frac12 \mu w^\frac{\rho}{m} (x,\xi)} (\xi)}\subseteq
\left( \rr {2d} \setminus \Omega_{\rho,\delta} \right)_{\rho,\mu}$. 
From \eqref{eq:emptyintersection1} it follows that $q(x,\xi) = 0$ which shows the claims 
$q|_{\rr {2d} \setminus \Omega_{\rho,\delta}} \equiv 0$
and $\supp q \subseteq \overline{\left(\Omega_{\rho,\delta} \right)}$. 

It remains to prove $q \in G_\rho^{0,\sigma}$. 
According to \cite[Lemma~3.6]{Cappiello6} we have $w^{- d \rho \left( \frac1k + \frac1m \right)} \in G^{- d \rho (1+\sigma),\sigma} \subseteq G_\rho^{- d \rho (1+\sigma),\sigma}$ so it suffices to show 
$a \in G_\rho^{d \rho (1+\sigma),\sigma}$ where 
\begin{equation}\label{eq:adef}
a(x,\xi) = \iint_{\left( \Omega_{\rho,\ep} \right)_{\rho,\mu} } 
\psi \left( \left| \frac{x-y}{\mu w^\frac{\rho}{k} (x,\xi)} \right|^2 \right) \psi \left( \left| \frac{\xi-\eta}{\mu w^\frac{\rho}{m} (x,\xi)} \right|^2 \right) \dd y \, \dd \eta.  
\end{equation}

Leibniz' rule yields
\begin{align*}
& \left| \partial_{x,\xi}^{\alpha,\beta} \left(w^{-\frac{2 \rho}{k} }(x,\xi) | x - y |^2 \right) \right|
= \left| \sum_{\gamma \leqs \alpha, \, |\gamma| \leqs 2} \binom{\alpha}{\gamma} 
\partial_{x,\xi}^{\alpha-\gamma ,\beta} \left(w^{-\frac{2 \rho}{k}}(x,\xi) \right) \partial_x^\gamma \left( | x - y |^2 \right) \right| \\
& \lesssim \sum_{\gamma \leqs \alpha, \, |\gamma| \leqs 2} \binom{\alpha}{\gamma} 
\left| \partial_{x,\xi}^{\alpha-\gamma ,\beta} \left(w^{-\frac{2 \rho}{k}}(x,\xi) \right) \right| \left( \frac{| x - y |}{\mu w^\frac{\rho}{k}(x,\xi) }  \right)^{2 - |\gamma|} 
w^{\frac{\rho}{k} \left(2 - |\gamma| \right)} (x,\xi). 
\end{align*}
Again by \cite[Lemma~3.6]{Cappiello6} we have $w^{-\frac{2 \rho}{k}} \in G_\rho^{-2 \rho,\sigma}$ which inserted above 
gives the estimate 
\begin{equation}\label{eq:derivativeestimate1}
\left| \partial_{x,\xi}^{\alpha,\beta} \left(w^{ -\frac{2 \rho}{k} }(x,\xi) | x - y |^2 \right) \right|
\lesssim 
\theta_\sigma(x,\xi)^{- \rho \left(|\alpha| + \sigma |\beta| \right)} 
\end{equation}
provided that
\begin{equation}\label{eq:factorbound1}
| x - y | \leqs \mu w^\frac{\rho}{k}(x,\xi).
\end{equation}

Likewise we have 
\begin{align*}
& \left| \partial_{x,\xi}^{\alpha,\beta} \left(w^{-\frac{2 \rho}{m}}(x,\xi) | \xi - \eta |^2 \right) \right| \\
& \lesssim \sum_{\gamma \leqs \beta, \, |\gamma| \leqs 2} \binom{\beta}{\gamma} 
\left| \partial_{x,\xi}^{\alpha,\beta -\gamma } \left(w^{-\frac{2 \rho}{m}}(x,\xi) \right) \right| \left( \frac{| \xi - \eta |}{\mu w^\frac{\rho}{m} (x,\xi) }  \right)^{2-|\gamma|} 
w^{\frac{\rho}{m} \left(2-|\gamma| \right)} (x,\xi)
\end{align*}
which gives the estimate 
\begin{equation}\label{eq:derivativeestimate2}
\left| \partial_{x,\xi}^{\alpha,\beta} \left(w^{-\frac{2 \rho}{m}}(x,\xi) | \xi - \eta |^2 \right) \right|
\lesssim 
\theta_\sigma(x,\xi)^{- \rho \left(|\alpha| + \sigma |\beta| \right)} 
\end{equation}
if
\begin{equation}\label{eq:factorbound2}
| \xi - \eta | \leqs \mu w^\frac{\rho}{m}(x,\xi).
\end{equation}

Next we invoke Fa\` a di Bruno's formula in the following form (see \cite[Eq. (2.3)]{Gramchev1}).  
If $\psi: \ro \to \ro$ and $g: \rr {2d} \to \ro$
with $\psi \in C^\infty(\ro)$ and $g \in C^\infty(\rr {2d})$ then for any $(\alpha,\beta) \in (\nn d \oplus \nn d) \setminus \{ 0 \}$
we have 
\begin{equation}\label{eq:faadibruno1}
\begin{aligned}
\partial_{x,\xi}^{\alpha,\beta} \big( \psi (g(x,\xi) ) \big)
= \sum_{j = 1}^{|\alpha+\beta|} \frac{ \psi^{(j)}( g(x,\xi) )}{j!} \sum_{ \stackrel[ | \alpha_\ell + \beta_\ell | \geqs 1, \ 1 \leqs \ell \leqs j ]{ \alpha_1 + \cdots + \alpha_{j} = \alpha }{ \beta_1 + \cdots + \beta_{j} = \beta } } \frac{\alpha!\beta!}{\alpha_1! \cdots \alpha_j! \beta_1! \cdots \beta_j!} \prod_{\ell = 1}^{j} \partial_{x,\xi}^{\alpha_\ell,\beta_\ell} g(x,\xi). 
\end{aligned}
\end{equation}

Thus under the assumptions \eqref{eq:factorbound1} and \eqref{eq:factorbound2} we infer from \eqref{eq:faadibruno1}, \eqref{eq:derivativeestimate1}, and 
$ \psi^{(j)} \in L^\infty(\ro)$ for any $j \in \no$,
for $(\alpha,\beta) \in (\nn d \oplus \nn d) \setminus \{ 0 \}$
\begin{equation}\label{eq:estimatefactor1}
\begin{aligned}
& \left| \partial_{x,\xi}^{\alpha,\beta} \left( \psi \left( \left| \frac{x-y}{\mu w^\frac{\rho}{k} (x,\xi)} \right|^2 \right) \right) \right| \\
& \lesssim \sum_{j = 1}^{|\alpha+\beta|} \left| \psi^{(j)} \left( \left| \frac{x-y}{\mu w^\frac{\rho}{k} (x,\xi)} \right|^2 \right) \right| \sum_{ \stackrel[ | \alpha_\ell + \beta_\ell | \geqs 1, \ 1 \leqs \ell \leqs j ]{ \alpha_1 + \cdots + \alpha_{j} = \alpha }{ \beta_1 + \cdots + \beta_{j} = \beta } } 
\prod_{\ell = 1}^{j} \left| \partial_{x,\xi}^{\alpha_\ell,\beta_\ell} \left(w^{- \frac{2 \rho}{k} }(x,\xi) | x - y |^2 \right) \right| \\
& \lesssim \theta_\sigma(x,\xi)^{- \rho \left( |\alpha| + \sigma |\beta| \right) }
\end{aligned}
\end{equation}
and likewise using \eqref{eq:derivativeestimate2}
\begin{equation}\label{eq:estimatefactor2}
\begin{aligned}
& \left| \partial_{x,\xi}^{\alpha,\beta} \left( \psi \left( \left| \frac{\xi-\eta}{\mu w^\frac{\rho}{m} (x,\xi)} \right|^2 \right) \right) \right| \\
& \lesssim \sum_{j = 1}^{|\alpha+\beta|} \left| \psi^{(j)} \left( \left| \frac{\xi-\eta}{\mu w^\frac{\rho}{m} (x,\xi)} \right|^2 \right) \right| \sum_{ \stackrel[ | \alpha_\ell + \beta_\ell | \geqs 1, \ 1 \leqs \ell \leqs j ]{ \alpha_1 + \cdots + \alpha_{j} = \alpha }{ \beta_1 + \cdots + \beta_{j} = \beta } } 
\prod_{\ell = 1}^{j} \left| \partial_{x,\xi}^{\alpha_\ell,\beta_\ell} \left(w^{-\frac{2 \rho}{m}}(x,\xi) | \xi - \eta |^2 \right) \right| \\
& \lesssim \theta_\sigma(x,\xi)^{- \rho \left( |\alpha| + \sigma |\beta| \right)}. 
\end{aligned}
\end{equation}
The estimates \eqref{eq:estimatefactor1} and \eqref{eq:estimatefactor2} clearly hold also when $(\alpha,\beta) = (0,0)$.

Finally, using \eqref{eq:estimatefactor1} and \eqref{eq:estimatefactor2} in \eqref{eq:adef}, 
and the support properties of $\psi$ which guarantee \eqref{eq:factorbound1} and \eqref{eq:factorbound2},
it follows from Leibniz' rule and the fact that the support of the function \eqref{eq:integrand1}
has Lebesgue measure upper bounded by $C w^{d \rho \left( \frac1k + \frac1m \right)}(x,\xi)$ for some $C > 0$
that we get for $(\alpha,\beta) \in \nn d \oplus \nn d$
\begin{align*}
\left| \partial_{x,\xi}^{\alpha,\beta} a(x,\xi) \right| 
& \leqs 
\iint_{\left( \Omega_{\rho,\ep} \right)_{\rho,\mu}} 
\left|
\partial_{x,\xi}^{\alpha,\beta} \left( \psi \left( \left| \frac{x-y}{\mu w^\frac{\rho}{k} (x,\xi)} \right|^2 \right) \psi \left( \left| \frac{\xi-\eta}{\mu w^\frac{\rho}{m} (x,\xi)} \right|^2 \right) \right)
\right| \dd y \, \dd \eta \\
& \lesssim \theta_\sigma(x,\xi)^{- \rho \left( |\alpha| + \sigma |\beta| \right)}
w^{d \rho \left( \frac1k + \frac1m \right)}(x,\xi) \\
& \lesssim \theta_\sigma(x,\xi)^{d \rho (1+\sigma) - \rho \left( |\alpha| + \sigma |\beta| \right) }. 
\end{align*}
We have shown that $a \in G_\rho^{d \rho (1+\sigma), \sigma}$ which as observed implies the final claim $q \in G_\rho^{0,\sigma}$. 
\end{proof}

\begin{rem}\label{rem:supportintersection}
Since $\Omega_{\rho,\ep} \subseteq \rr {2d}$ is open, it follows from the 
the proof that 
$\supp (1-q) \subseteq \rr {2d} \setminus \Omega_{\rho,\ep}$. 
Suppose that $\emptyset \neq \Omega \subseteq \rr {2d}$
and define $q_j = q_{\ep_j,\delta_j,\rho,\Omega}$ for $j = 1,2$, where 
$0 < \ep_2 < \delta_2 < \ep_1 < \delta_1 < 1$ and $\delta_2 < b_\sigma$ where $b_\sigma$ is the constant \eqref{eq:bsigma}.
Then by Lemma \ref{lem:closureinclusion} and Proposition \ref{prop:cutoffsymbol}
\begin{equation*}
\supp q_2 \cap \supp (1 - q_1)
\subseteq \overline{\left( \Omega_{\rho,\delta_2} \right)} \cap \left( \rr {2d} \setminus \Omega_{\rho,\ep_1} \right)
\subseteq \Omega_{\rho,\ep_1} \cap \left( \rr {2d} \setminus \Omega_{\rho,\ep_1} \right)
= \emptyset
\end{equation*}
which will be useful in Sections \ref{sec:microlocmicroell} and  \ref{sec:schrodinger}.
\end{rem}

\section{The filter of global anisotropic singularities}\label{sec:filtersing}

\subsection{Elliptic symbols and tempered distributions that are smooth in a subset of phase space}\label{subsec:ellipticsmooth}

Let $\sigma \in \ro_+$ and $0 < \rho \leqs 1$. 
Given $\emptyset \neq \Omega \subseteq \rr {2d}$ and $r \in \ro$ we say that $a \in G_\rho^{r,\sigma}$ is elliptic in $\Omega$ if 
\begin{equation}\label{eq:ellipticLambda}
|a(x,\xi)| \geqs C \, \theta_\sigma (x,\xi)^r, 
\qquad (x,\xi) \in \Omega \setminus \rB_R,
\end{equation}
for some $C, R >0$ that are independent of $(x,\xi) \in \Omega$. 
As a convention we say that any symbol in $\cS'(\rr {2d})$ is elliptic in $\emptyset$. 
A symbol which is elliptic in $\rr {2d}$ is called elliptic. 

We need the following lemma.

\begin{lem}\label{lem:distanceaniso}
If $\sigma \in \ro_+$ then there exist $\mu > 0$ such that 
\begin{equation}\label{eq:lowerbounddiff}
\inf_{ \stackrel{|y| + |\eta|^{\frac1\sigma} = 1, \, |x| \leqs 2 \mu, \, |\xi| \leqs 2^\sigma \mu}{y, \eta , x, \xi \in \rr d}} \left| y - x \right| + \left| \eta - \xi \right|^{\frac1\sigma} \geqs \frac12. 
\end{equation}
\end{lem}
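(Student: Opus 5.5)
We argue directly with the quasi-triangle inequality \eqref{eq:quasitriangleineq}, choosing $\mu$ small. The idea is that the constraint $|y| + |\eta|^{\frac1\sigma} = 1$ keeps $(y,\eta)$ at unit anisotropic size, while $(x,\xi)$ lies in an anisotropic ball whose size tends to zero with $\mu$. Hence the anisotropic distance between them must stay close to $1$.

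Fix $y,\eta,x,\xi$ as in the infimum. By the ordinary triangle inequality, $|y| \leqs |y-x| + |x| \leqs |y-x| + 2\mu$. By \eqref{eq:quasitriangleineq},
\begin{equation*}
|\eta|^{\frac1\sigma} \leqs C_\sigma \left( |\eta - \xi|^{\frac1\sigma} + |\xi|^{\frac1\sigma} \right) \leqs C_\sigma \left( |\eta - \xi|^{\frac1\sigma} + 2 \mu^{\frac1\sigma} \right).
\end{equation*}
Adding the two bounds and using $|y| + |\eta|^{\frac1\sigma} = 1$ together with $C_\sigma \geqs 1$ gives
\begin{equation*}
1 \leqs C_\sigma \left( |y-x| + |\eta-\xi|^{\frac1\sigma} \right) + 2\mu + 2 C_\sigma \mu^{\frac1\sigma}.
\end{equation*}
Therefore
\begin{equation*}
|y-x| + |\eta - \xi|^{\frac1\sigma} \geqs C_\sigma^{-1} \left( 1 - 2\mu - 2 C_\sigma \mu^{\frac1\sigma} \right).
\end{equation*}

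The constant $C_\sigma$ is the obstacle. When $\sigma < 1$ we have $C_\sigma = 2^{\frac1\sigma - 1} > 1$, and then the factor $C_\sigma^{-1}$ spoils the lower bound $\frac12$. We therefore treat the $\eta$-term more carefully, without passing through \eqref{eq:quasitriangleineq}.

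Set $s = |\eta - \xi|$, so that $|\eta| \leqs s + 2^\sigma \mu$. The function $t \mapsto t^{\frac1\sigma}$ is uniformly continuous on bounded intervals. If $s \geqs 1$, then already $s^{\frac1\sigma} \geqs 1 \geqs \frac12$ and we are done. If $s < 1$, then $s$ and $s + 2^\sigma\mu$ both lie in $[0, 2]$ once $\mu$ is small, so
\begin{equation*}
(s + 2^\sigma \mu)^{\frac1\sigma} \leqs s^{\frac1\sigma} + \omega(\mu),
\end{equation*}
where $\omega(\mu) \to 0$ as $\mu \to 0$ is a modulus of continuity of $t^{\frac1\sigma}$ on $[0,2]$. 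This gives
\begin{equation*}
1 = |y| + |\eta|^{\frac1\sigma} \leqs |y - x| + 2\mu + |\eta - \xi|^{\frac1\sigma} + \omega(\mu).
\end{equation*}

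Finally, choose $\mu > 0$ so small that $2\mu + \omega(\mu) \leqs \frac12$. Then $|y-x| + |\eta-\xi|^{\frac1\sigma} \geqs \frac12$ for every admissible quadruple, which is \eqref{eq:lowerbounddiff}. Only the case split on $s$ and the choice of $\mu$ through the modulus of continuity need care; everything else is routine.
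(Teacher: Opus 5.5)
Your proof is correct, and it takes a different route from the paper. The paper argues by contradiction and compactness. Assuming the bound fails for $\mu = 1/n$ for every $n$, it picks violating quadruples with $|x_n| \leqs 2/n$ and $|\xi_n| \leqs 2^\sigma/n$. It then extracts a convergent subsequence of $(y_n,\eta_n)$ on the compact set $\{|y| + |\eta|^{\frac1\sigma} = 1\}$ and passes to the limit, obtaining $1 \leqs \frac12$.

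You instead give a direct pointwise estimate. The triangle inequality handles $y$. For $\eta$, after disposing of the case $|\eta-\xi| \geqs 1$, you use monotonicity of $t \mapsto t^{\frac1\sigma}$ together with its uniform continuity on $[0,2]$. Two small details are implicit: you need $2^\sigma \mu \leqs 1$ to stay in $[0,2]$, and your $\omega(\mu)$ should be read as the modulus of continuity evaluated at $2^\sigma\mu$. Both are harmless.

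The paper's argument is shorter. It never has to confront the failure of subadditivity of $t^{\frac1\sigma}$ for $\sigma<1$, which is exactly what blocked your first attempt through the constant $C_\sigma$.

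Your argument is constructive and yields an explicit $\mu$. When $\sigma \geqs 1$ you can use $(s+h)^{\frac1\sigma} \leqs s^{\frac1\sigma} + h^{\frac1\sigma}$; in that case $C_\sigma = 1$, so your first computation already suffices. When $\sigma<1$ you can use the mean value bound $(s+h)^{\frac1\sigma} - s^{\frac1\sigma} \leqs \frac1\sigma 2^{\frac1\sigma-1} h$ for $s, s+h \in [0,2]$.
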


\begin{proof}
Suppose that there exist sequences $\left( (y_n, \eta_n) \right)_{n \in \no \setminus 0} \subseteq \rr {2d}$ and $\left( (x_n, \xi_n) \right)_{n \in \no \setminus 0} \subseteq \rr {2d}$ such that $|y_n| + |\eta_n|^{\frac1\sigma} = 1$, 
$|x_n| \leqs \frac2{n}$, $|\xi_n| \leqs \frac{2^\sigma}{n}$, and 
\begin{equation*}
\left| y_n - x_n \right| + \left| \eta_n - \xi_n \right|^{\frac1\sigma} < \frac12
\end{equation*}
for all $n \geqs 1$.
Then a subsequence of $\left( (y_n, \eta_n) \right)_{n \in \no \setminus 0}$ converges, that is 
$\lim_{n \to \infty} (y_n, \eta_n) = (y,\eta)$ for some $(y, \eta) \in \rr {2d}$ that satisfies $|y| + |\eta|^{\frac1\sigma} = 1$. 
(The notation for the subsequence is the same as the original sequence.)
We get the contradiction 
\begin{equation*}
1 = \limsup_{n \to \infty} \left| y_n - x_n \right| + \left| \eta_n - \xi_n \right|^{\frac1\sigma} \leqs \frac12.
\end{equation*}
\end{proof}

Next result says that ellipticity in a subset implies ellipticity in a neighborhood of the subset. 

\begin{prop}\label{prop:ellipticlarger}
Let $\sigma \in \ro_+$ and $0 < \rho \leqs 1$. 
If $r \in \ro$, $a \in G_\rho^{r,\sigma}$ and $a$ is elliptic in $\emptyset \neq \Omega \subseteq \rr {2d}$,
then there exists $\mu > 0$ such that $a$ is elliptic in $\Omega_{\rho,\mu} \subseteq \rr {2d}$ defined by \eqref{eq:anisoneighb}. 
\end{prop}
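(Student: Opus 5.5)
The strategy is a first-order Taylor (mean value) argument in anisotropic form. For $(x,\xi) \in \Omega_{\rho,\mu}$ we pick $(y,\eta) \in \Omega$ with $|x-y| < \mu \theta_\sigma^\rho(y,\eta)$ and $|\xi-\eta| < \mu \theta_\sigma^{\rho\sigma}(y,\eta)$. We then write
\begin{equation*}
a(x,\xi) = a(y,\eta) + \int_0^1 \Big( \langle \nabla_x a(Z_t), x-y\rangle + \langle \nabla_\xi a(Z_t), \xi - \eta \rangle \Big) \dd t ,
\end{equation*}
where $Z_t = (y,\eta) + t(x-y,\xi-\eta)$. The goal is to show that the integral term is at most $\frac{C}{2}\theta_\sigma(y,\eta)^r$ once $\mu$ is small. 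Then $|a(x,\xi)| \geqs \frac C2 \theta_\sigma(y,\eta)^r$ whenever $(y,\eta)\notin \rB_R$, and we transfer this to $\theta_\sigma(x,\xi)^r$.

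First we fix $\mu < b_\sigma$, so that Lemma \ref{lem:boundaniso} applies. Every point $Z_t$ on the segment also satisfies \eqref{eq:neibhassump} relative to $(y,\eta)$ with the same $\mu$. Conversely, $\theta_\sigma(Z_t) \leqs B_\sigma(\theta_\sigma(y,\eta) + \theta_\sigma(x-y,\xi-\eta)) \lesssim \theta_\sigma(y,\eta)$, by Lemma \ref{lem:triangleineq} together with $\theta_\sigma(x-y,\xi-\eta) \leqs 1+\mu+\mu^{1/\sigma}$ times $\theta_\sigma^\rho(y,\eta) \leqs \theta_\sigma(y,\eta)$. Hence $\theta_\sigma(Z_t) \asymp \theta_\sigma(y,\eta) \asymp \theta_\sigma(x,\xi)$, with constants uniform for $\mu \leqs \mu_1$ (Remark \ref{rem:constant}).

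Next we apply the symbol estimates \eqref{eq:symbolderivative1} with $|\alpha|=1$ and with $|\beta|=1$. They give $|\nabla_x a(Z_t)| \lesssim \theta_\sigma(y,\eta)^{r-\rho}$ and $|\nabla_\xi a(Z_t)| \lesssim \theta_\sigma(y,\eta)^{r-\rho\sigma}$; for negative exponents we use the two-sided equivalence. The integral term is therefore bounded by $C'\mu\,\theta_\sigma(y,\eta)^r$, where $C'$ is independent of $\mu \leqs \mu_1$. Choosing $\mu$ with $C'\mu \leqs C/2$ does the job. These weights are exactly why the neighborhood radii scale as $\theta_\sigma^\rho$ and $\theta_\sigma^{\rho\sigma}$.

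The main obstacle is the bookkeeping of the radius $R$, i.e.\ ensuring $(y,\eta) \notin \rB_R$ when $(x,\xi)$ lies outside a larger ball. Since $\theta_\sigma(x,\xi) \lesssim \theta_\sigma(y,\eta)$ by the equivalence above, $|(x,\xi)|$ large forces $\theta_\sigma(x,\xi)$ large, which forces $\theta_\sigma(y,\eta)$ large, hence $|(y,\eta)| \geqs R$, via \eqref{eq:sobolevweightestimate1}. So there is $R'$ with $(x,\xi) \in \Omega_{\rho,\mu}\setminus \rB_{R'}$ implying $(y,\eta) \in \Omega \setminus \rB_R$, and then $|a(x,\xi)| \geqs \frac C2 \theta_\sigma(y,\eta)^r \gtrsim \theta_\sigma(x,\xi)^r$. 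The last step uses the upper or lower comparison of $\theta_\sigma$ according to the sign of $r$. Lemma \ref{lem:distanceaniso} could alternatively be used to organize this comparison near small scales, but it should not be essential.
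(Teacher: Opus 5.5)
Your proposal is correct and follows essentially the paper's argument. You pick $(y,\eta)\in\Omega$, get $\theta_\sigma(x,\xi)\asymp\theta_\sigma(y,\eta)$ uniformly for small $\mu$ from Lemma \ref{lem:boundaniso} and the triangle inequality, apply the mean value theorem with the first-order symbol estimates to get $|a(x,\xi)-a(y,\eta)|\lesssim \mu\,\theta_\sigma^r$, and then shrink $\mu$. The only deviation is the lower bound for $\theta_\sigma$ on the segment: you note that each $Z_t$ lies in the same anisotropic box around $(y,\eta)$ and reapply Lemma \ref{lem:boundaniso}, whereas the paper uses the compactness result Lemma \ref{lem:distanceaniso}, so your route is slightly more economical and avoids that lemma.
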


\begin{proof}
Suppose $(x,\xi) \in \Omega_{\rho,\mu}$ for $\mu > 0$.  
Let $(y,\eta) \in \Omega$ satisfy 
\begin{equation}\label{eq:anisoneighb2}
|x-y| < \mu \theta_\sigma^\rho (y,\eta) \quad \mbox{and} \quad  |\xi-\eta| < \mu \theta_\sigma^{\rho \sigma}(y,\eta)
\end{equation}
as in \eqref{eq:anisoneighb}.
Taking $\mu > 0$ sufficiently small we may by Lemma \ref{lem:boundaniso}
assume that 
\begin{equation}\label{eq:boundyetaupper}
\theta_\sigma (y, \eta) \leqs C_1 \theta_\sigma(x,\xi)
\end{equation}
for some constant $C_1 \geqs 1$ that does not depend on $(x,\xi) \in \Omega_{\rho,\mu}$. 

From 
\eqref{eq:anisoneighb2},
\eqref{eq:quasitriangleineq} and $\rho \leqs 1$ we get
\begin{align*}
|x| & \leqs |y| + \mu \theta_\sigma^\rho (y,\eta) \leqs (1 + \mu) \theta_\sigma(y,\eta), \\
|\xi|^\frac1\sigma & \leqs C_\sigma \left( |\eta|^\frac1\sigma + \mu^\frac1\sigma \theta_\sigma^{\rho} (y,\eta) \right)
\leqs C_\sigma \left( 1 + \mu^\frac1\sigma \right) \theta_\sigma(y,\eta). 
\end{align*}
Thus by possibly increasing $C_1 = C_1(\sigma,\mu)$, still independent of $(x,\xi)$, 
we may strengthen \eqref{eq:boundyetaupper} into
\begin{equation}\label{eq:propyetaxxi}
C_1^{-1} \theta_\sigma(x,\xi) \leqs \theta_\sigma (y, \eta) \leqs C_1 \theta_\sigma(x,\xi).
\end{equation}
By the assumed ellipticity we obtain
\begin{equation}\label{eq:elliptic1}
| a(y,\eta) | \geqs C_2 \, \theta_\sigma (y,\eta)^r \geqs C_2 C_1^{- |r| } \, \theta_\sigma (x,\xi)^r
\end{equation}
with $C_2 > 0$ if $|(x,\xi)| \geqs R$ for some $R > 0$. 

If $0 \leqs t \leqs 1$ then by Lemma \ref{lem:triangleineq} and \eqref{eq:propyetaxxi} 
\begin{equation}\label{eq:weightanisoupper}
\theta_\sigma \Big( t (x,\xi) + (1-t) (y,\eta) \Big)
\lesssim \theta_\sigma (x,\xi). 
\end{equation}
If $|(x,\xi)| \geqs R$ we may due to  \eqref{eq:propyetaxxi} 
assume that $|y| + |\eta|^{\frac1\sigma} \geqs 1$, after possibly increasing $R > 0$.
Now \eqref{eq:anisoneighb2}, $\rho \leqs 1$
and $|y| + |\eta|^{\frac1\sigma} \geqs 1$ 
yield
\begin{align*}
| x - y | & < \mu \theta_\sigma(y,\eta) \leqs 2 \mu \left( |y| + |\eta|^{\frac1\sigma} \right), \\
| \xi - \eta | & < \mu \theta_\sigma^\sigma (y,\eta) \leqs 2^\sigma \mu \left( |y| + |\eta|^{\frac1\sigma} \right)^{\sigma}
\end{align*}
from which we may infer using
Lemma \ref{lem:distanceaniso} the lower bound
\begin{equation}\label{eq:weightanisolower}
\begin{aligned}
& \theta_\sigma \Big( t (x,\xi) + (1-t) (y,\eta) \Big) 
= 1 + | y - t(y-x) | + | \eta - t(\eta-\xi) |^{\frac1\sigma} \\
& = 1 + \left( |y| + |\eta|^{\frac1\sigma} \right) \left( \left| \frac{y}{|y| + |\eta|^{\frac1\sigma}} - \frac{t(y-x)}{|y| + |\eta|^{\frac1\sigma}} \right| + \left| \frac{\eta}{ \left( |y| + |\eta|^{\frac1\sigma} \right)^\sigma} - \frac{t(\eta-\xi)}{ \left( |y| + |\eta|^{\frac1\sigma} \right)^\sigma} \right|^{\frac1\sigma} \right) \\
& \geqs \frac12 \theta_\sigma(y,\eta) 
\end{aligned}
\end{equation}
after possibly decreasing $\mu > 0$.

The mean value theorem combined with \eqref{eq:anisoneighb2} 
yields for some $0 \leqs t \leqs 1$
\begin{align*}
& \left| a(x,\xi) - a(y,\eta) \right| \\
& \leqs \left| \la \nabla_x a \big( t (x,\xi) + (1-t) (y,\eta) \big) , x - y \ra + \la \nabla_\xi a \big( t (x,\xi) + (1-t) (y,\eta) \big) , \xi - \eta \ra  \right| \\
& \leqs C_3 \theta_\sigma (x,\xi)^{r-\rho} \mu \theta_\sigma^\rho(y,\eta) + C_3 \theta_\sigma (x,\xi)^{r - \rho \sigma} \mu \theta_\sigma^{\rho \sigma}(y,\eta) \\
& \leqs C \mu \, \theta_\sigma (x,\xi)^r
\end{align*}
where $C_3,C > 0$ depends on $a, \sigma, \rho, \mu$ only, 
in the second inequality using 
\eqref{eq:symbolderivative1}, \eqref{eq:propyetaxxi}, \eqref{eq:weightanisoupper} and \eqref{eq:weightanisolower}.

Taking into account \eqref{eq:elliptic1} we obtain finally 
\begin{align*}
\left| a(x,\xi) \right| 
& \geqs 
\left| a(y,\eta) \right| -  \left| a(x,\xi) - a(y,\eta) \right| \\
& \geqs \left( C_2 C_1^{-|r|} - C \mu \right) \theta_\sigma (x,\xi)^r
\geqs \frac12 C_2 C_1^{-|r|} \theta_\sigma (x,\xi)^r
\end{align*}
provided $\mu > 0$ is sufficiently small, and provided $|(x,\xi)| \geqs R$. 
We have shown that $a$ is elliptic in $\Omega_{\rho,\mu}$. 
\end{proof}

The following definition concerns the behavior of a tempered distribution in a subset of phase space. 

\begin{defn}\label{def:smooth}
Let $\sigma \in \ro_+$, $0 < \rho \leqs 1$ and let $\Omega \subseteq \rr {2d}$. 
The distribution $u \in \cS'(\rr d)$ is $(\sigma,\rho)$-smooth in $\Omega$ if there exists $r \in \ro$ and $a \in G_\rho^{r,\sigma}$, elliptic in $\Omega$, such that $a^w(x,D) u \in \cS(\rr d)$.
\end{defn}

To simplify we often write smooth rather than $(\sigma,\rho)$-smooth when the parameters are understood from the context. 

\begin{rem}\label{rem:smooth}
We make a few observations.

\begin{enumerate}

\item By the invariance of the symbols classes $G_\rho^{r,\sigma}$ with respect to change of quantization parameter $\tau \in \ro$ in \eqref{eq:tquantization}, and \eqref{eq:calculusquantization}, Definition \ref{def:smooth} is invariant with respect to replacement by $a^w(x,D)$ by $a_\tau(x,D)$ for any $\tau \in \ro$. 

\item Any $a \in \cS(\rr {2d})$ is elliptic on $\emptyset$, and $a^w(x,D) u \in \cS(\rr d)$ for all $a \in \cS(\rr {2d})$ and all $u \in \cS'(\rr d)$. 
Thus any $u \in \cS'(\rr d)$ is smooth in $\emptyset$.

\item
If $\emptyset \neq \Omega \subseteq \rr {2d}$ is bounded
then there exists $r > 0$ such that 
$\Omega \subseteq \rB_r$. 
Let $a \in C_c^\infty(\rr {2d})$ be a cutoff function such that $\supp a \subseteq \rB_{2 r}$
and $a |_{\rB_r} \equiv 1$. 
Then $a \in G_\rho^{0,\sigma}$, $a^w(x,D) u \in \cS(\rr d)$ for any $u \in \cS'(\rr d)$ and $a$ is elliptic in $\Omega$. 
It follows that any $u \in \cS'(\rr d)$ is smooth in a bounded set $\Omega$. 	
	
\item
If $u \in \cS(\rr d)$ and $a = 1 \in G_\rho^{0,\sigma}$ then $a^w(x,D) u = u \in \cS(\rr d)$ and $a$ is elliptic in any subset $\Omega \subseteq \rr {2d}$. 
Thus $u \in \cS(\rr d)$ is smooth in any set in $\rP(\rr {2d})$. 
\item If $u \in \cS'(\rr d)$, $\emptyset \neq \Omega \subseteq T^* \rr d$, $0 < \rho_1 \leqs \rho_2 \leqs 1$ and 
$u$ is $(\sigma,\rho_2)$-smooth in $\Omega$ then it is also $(\sigma,\rho_1)$-smooth in $\Omega$, due to $G_{\rho_2}^{r,\sigma} \subseteq G_{\rho_1}^{r,\sigma}$. 

\item If $u \in \cS'(\rr d)$ is smooth in $\emptyset \neq \Omega \subseteq T^* \rr d$ then by Proposition \ref{prop:ellipticlarger}
it is automatically smooth in $\Omega_{\rho,\mu}$ defined by \eqref{eq:anisoneighb} for some $\mu > 0$. 

\end{enumerate}
\end{rem}

The following result will be essential. It says that smoothness, with rational anisotropy parameter $\sigma$, in a nonempty subset of $\rr {2d}$ 
may be defined by a cutoff symbol of order zero as in Proposition \ref{prop:cutoffsymbol}.

\begin{prop}\label{prop:smoothorderzero}
Let $\sigma \in \qo_+$, $0 < \rho \leqs 1$, $\lambda > 0$, 
$\emptyset \neq \Omega \subseteq \rr {2d}$, 
let $\Omega_{\rho,\lambda}$ be defined by \eqref{eq:anisoneighb}, 
and suppose that
$u \in \cS'(\rr d)$ is smooth in $\Omega_{\rho,\lambda}$. 
If $0 < \delta < b_\sigma$  where $b_\sigma$ is defined by \eqref{eq:bsigma} and $0 < \ep < \delta < \min (1,\lambda)$ then
$q_{\ep ,\delta,\rho,\Omega}^w(x,D)u \in \cS(\rr d)$
where $q_{\ep, \delta,\rho,\Omega} = q_{\ep, \delta,\rho,\sigma,\mu,\Omega} \in G^{0,\sigma}$ is defined by \eqref{eq:cutoffsymboldef},
$\sigma = \frac{k}{m}$ with $k, m \in \no \setminus 0$ coprime, 
$\mu$ satisfies \eqref{eq:emptyintersection1},
and $q_{\ep, \delta,\rho,\Omega}$ is
elliptic in $\Omega_{\rho,\ep}$. 
\end{prop}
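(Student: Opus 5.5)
The argument is a standard parametrix construction, localized by the cutoff $q = q_{\ep,\delta,\rho,\Omega}$ of Proposition \ref{prop:cutoffsymbol}. By that proposition, $q \in G_\rho^{0,\sigma}$, $0 \leqs q \leqs 1$, $q \equiv 1$ on $\Omega_{\rho,\ep}$ and $\supp q \subseteq \overline{(\Omega_{\rho,\delta})}$. In particular $q$ is elliptic in $\Omega_{\rho,\ep}$, with $C = 1$ and any $R$, so the last claim is immediate. For the main claim, let $a \in G_\rho^{r,\sigma}$ be elliptic in $\Omega_{\rho,\lambda}$ with $a^w u \in \cS$.

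The first step is to place $\supp q$ inside the region where $a$ is elliptic. Pick $\delta < \delta' < \min(1,\lambda)$. Since $\delta < b_\sigma$, Lemma \ref{lem:closureinclusion} gives $\overline{(\Omega_{\rho,\delta})} \subseteq \Omega_{\rho,\delta'} \subseteq \Omega_{\rho,\lambda}$. Hence $a$ is elliptic on $\supp q$. By Proposition \ref{prop:ellipticlarger} it is even elliptic on a slightly larger neighbourhood, which I will use to handle derivatives.

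The second step builds a local parametrix. Take a second cutoff $\tilde q = q_{\ep',\delta',\rho,\Omega}$ with $\delta < \ep' < \delta' < \lambda$, so that $\tilde q \equiv 1$ on a neighbourhood of $\supp q$ and $\supp \tilde q \subseteq \Omega_{\rho,\lambda}$ (again via Lemma \ref{lem:closureinclusion}). Set $b_0 = \tilde q\, \chi / a$, where $\chi$ is a smooth cutoff vanishing near the origin (outside $\rB_R$). Using ellipticity of $a$ on $\supp \tilde q$ and the Fa\`a di Bruno formula \eqref{eq:faadibruno1} for $1/a$, one gets $b_0 \in G_\rho^{-r,\sigma}$. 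By the composition formula \eqref{eq:calculuscomposition1},
\begin{equation*}
b_0 \wpr a = \tilde q \chi + s_1, \qquad s_1 \in G_\rho^{-\rho(1+\sigma),\sigma},
\end{equation*}
where every term of $s_1$ involves derivatives of $1/a$ multiplied by $\tilde q$, or derivatives of $\tilde q$. I then iterate in the usual Neumann-series way on the set where $\tilde q \equiv 1$. This produces $b \in G_\rho^{-r,\sigma}$ with
\begin{equation*}
b \wpr a = e + s, \qquad s \in \cS(\rr{2d}),
\end{equation*}
where $e \in G_\rho^{0,\sigma}$ and $e - 1$ is supported away from a neighbourhood of $\supp q$, modulo a compact set.

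The final step composes with $q$:
\begin{equation*}
q \wpr b \wpr a = q \wpr e + q \wpr s.
\end{equation*}
Here $q \wpr e - q$ is a Weyl product whose asymptotic terms all vanish, because $\supp q \cap \supp(1-e)$ is compact (cf.\ Remark \ref{rem:supportintersection}). Hence $q \wpr e - q \in \cS$. Then
\begin{equation*}
q^w u = (q \wpr b)^w a^w u + (\text{regularizing})\, u \in \cS,
\end{equation*}
using \eqref{eq:SchwartzCont} and the fact that $\cS(\rr{2d})$ symbols are regularizing.

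The main obstacle is the claim that a Weyl product of symbols with disjoint supports lies in $\cS$. The asymptotic expansion only shows that the terms vanish. Concluding that the remainder is $O(\theta_\sigma^{-N})$ requires the supports to be separated at the scale $\theta_\sigma^\rho$ in $x$ and $\theta_\sigma^{\rho\sigma}$ in $\xi$, which is exactly the slowly varying metric of Remark \ref{rem:WeylHormander}. I would obtain this separation from Proposition \ref{prop:disjoint}, by nesting $\ep < \delta < \ep' < \delta'$ and using $\mu$-neighbourhoods. I would then invoke the Weyl--H\"ormander calculus result that products of symbols with $g$-uniformly separated supports are in $S(h^\infty m, g)$. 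Since $h = \theta_\sigma^{-\rho(1+\sigma)}$, this class equals $\cS(\rr{2d})$.
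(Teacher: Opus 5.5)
Your proof is correct, but it takes a different route from the paper.

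\textbf{Your route.} You build a microlocal parametrix directly: $b_0=\tilde q\chi/a$, followed by the Neumann iteration (e.g.\ $b_j=-\tilde q\chi\,s_j/a$). This gives $b\wpr a=e+s$ with $s\in\cS(\rr{2d})$ and $\supp(1-e)\subseteq\supp(1-\tilde q\chi)$. In effect you prove Lemma \ref{lem:microellipticparametrix}, which the paper only states later and without proof.

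\textbf{The paper's route.} The paper avoids any microlocal parametrix by a positivity trick. It passes to $a_1=\overline{a}\wpr a=|a|^2+a_2$, whose principal part is nonnegative. It then glues
$b=a_1 q_{\gamma,\nu,\rho,\Omega}+(1-q_{\gamma,\nu,\rho,\Omega})C_1w_{k,m}^{2r/k}$.
Both pieces are $\gtrsim\theta_\sigma^{2r}$ and enter as a convex combination, so no cancellation occurs. Gluing $a$ itself could fail, since $a$ need not be real and positive. Hence $b$ is globally elliptic, and the global parametrix $c\wpr b=1+r$ of \cite[Lemma~6.3]{Rodino4} applies. One then writes $q=q\wpr c\wpr a_1+q\wpr c\wpr(b-a_1)-q\wpr r$, where $\supp q\cap\supp(b-a_1)=\emptyset$.

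\textbf{Comparison.} The paper's route is shorter because it reduces to a known global result. Yours is self-contained and yields the more general microlocal parametrix. The cost is the Fa\`a di Bruno estimates for $1/a$ on $\supp(\tilde q\chi)$ and the bookkeeping of the iteration.

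\textbf{The ``main obstacle'' is not one.} With $\sim$ as defined in Section \ref{subsec:psidiocalc}, $q\wpr(e-1)$ minus any finite partial sum of \eqref{eq:calculuscomposition1} lies in $G_\rho^{t_n,\sigma}$, with $t_n\to-\infty$. Every term of that expansion is supported in the compact set $\supp q\cap\supp(1-e)$, so each term is in $\cS$. Hence $q\wpr(e-1)\in\cS(\rr{2d})$ by \eqref{eq:symbolintersection}, with no separation needed at the scale of the metric. This is exactly how the paper handles $q\wpr c\wpr(b-a_1)$. Your detour through Proposition \ref{prop:disjoint} and the separated-support result of the Weyl--H\"ormander calculus is valid but unnecessary.

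\textbf{Minor point.} To get $\supp\tilde q\subseteq\Omega_{\rho,\lambda}$ from Lemma \ref{lem:closureinclusion}, you also need $\delta'<b_\sigma$; choose $\delta'$ close to $\delta$. Alternatively, observe that the ellipticity inequality passes to $\overline{\Omega_{\rho,\delta'}}$ by continuity.
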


\begin{proof}
By assumption there exist $r \in \ro$, $a \in G_\rho^{r, \sigma}$ and $C,R > 0$ such that $a^w u \in \cS$ and $|a (x,\xi)| \geqs C \theta_\sigma(x,\xi)^r$
when $(x,\xi) \in \Omega_{\rho,\lambda} \setminus \rB_R$. 
	
We use the pseudodifferential calculus for the anisotropic Shubin symbols $G_\rho^{r, \sigma}$, 
modified as indicated in Section \ref{subsec:psidiocalc} from $\rho = 1$ \cite{Rodino4} into $0 < \rho \leqs 1$. 
Defining $a_1 = \overline a \wpr a = |a|^2 + a_2 \in G_\rho^{2 r, \sigma}$ we thus have $a_2 \in G_\rho^{2 r- \rho(1+\sigma), \sigma}$. 
Let $0 < \ep < \delta < \min (1,\lambda)$ 
and let $\gamma,\nu \in \ro$ satisfy $\delta < \gamma < \nu < \min (1,\lambda)$. 
If $\chi = q_{\gamma, \nu, \rho,\sigma,\wt \mu,\Omega}$ 
according to \eqref{eq:cutoffsymboldef}, 
with $\sigma=\frac{k}{m}$ 
and $\wt \mu > 0$ sufficiently small,
then $\chi \in G_\rho^{0, \sigma}$ by Proposition \ref{prop:cutoffsymbol}. 
We set for $C_1 > 0$
\begin{equation*}
b = a_1 \chi + (1 - \chi) C_1 w_{k,m}^{\frac{2r}{k}}. 
\end{equation*}
By \cite[Lemma~3.6]{Cappiello6} we have $b \in G_\rho^{2 r, \sigma}$. 
If $z \notin \Omega_{\rho,\nu}$ then $\chi(z) = 0$ so $b (z) = C_1 w_{k,m}(z)^{\frac{2 r}{k}} \asymp \theta_\sigma(z)^{2 r}$
using \eqref{eq:weightequivalence}. 
If $z \in \Omega_{\rho,\nu} \setminus \rB_R \subseteq \Omega_{\rho,\lambda} \setminus \rB_R$ then for some and $C_3 > 0$ and $C_2 = C_2(C_1) > 0$, 
again using \eqref{eq:weightequivalence}, we have
\begin{align*}
|b (z)| & = \left| a_1(z) \chi (z) + (1 - \chi(z)) C_1 w_{k,m}^{\frac{2 r}{k}}(z)  \right| \\
& \geqs \chi(z) |a(z)|^2 + (1 - \chi(z)) C_1 w_{k,m}^{\frac{2 r}{k}}(z) - \chi(z) |a_2(z)| \\
& \geqs \chi(z) C^2 \theta_\sigma(z)^{2 r} + (1-\chi(z)) C_2 \theta_\sigma(z)^{2 r} - C_3 \theta_\sigma(z)^{2 r - \rho (1+\sigma)}. 
\end{align*}
We can pick $C_1 > 0$ such that $C_2 \leqs C^2$ which gives
\begin{align*}
|b (z)| & \geqs C_2 \theta_\sigma(z)^{2 r} \left( 1 - C_3 C_2^{-1} \theta_\sigma(z)^{- \rho (1+\sigma)} \right) 
\geqs \frac12 C_2 \theta_\sigma(z)^{2 r}
\end{align*}
for $z \in \Omega_{\rho,\nu} \setminus \rB_R$
after possibly increasing $R > 0$. 

This argument shows that $b  \in G_\rho^{2 r, \sigma}$ is an elliptic symbol, and then 
the proof of 
\cite[Lemma~6.3]{Rodino4},
with the proof modified straightforwardly from the assumption $\rho = 1$ into $0 < \rho \leqs 1$, 
implies that there exists a parametrix $c \in G_\rho^{-2 r, \sigma}$ that satisfies
$c \wpr b = 1 + r$ with $r \in \cS(\rr {2d})$. 
	
If $q = q_{\ep, \delta, \rho, \sigma, \mu, \Omega}$ is defined by \eqref{eq:cutoffsymboldef} 
then $q \in G_\rho^{0, \sigma}$ by Proposition \ref{prop:cutoffsymbol}. 
The latter proposition combined with Lemma \ref{lem:closureinclusion} 
and the assumption $\delta < b_\sigma$
implies that $\supp q \subseteq \overline{\left( \Omega_{\rho,\delta} \right)} \subseteq \Omega_{\rho,\gamma}$, cf. Remark \ref{rem:lowerboundsmall}.
From $1 = c \wpr b - r$ we obtain
\begin{equation*}
q = q \wpr c \wpr a_1 + q \wpr c \wpr (b - a_1) - q \wpr r
\end{equation*}
where $q \wpr r \in \cS(\rr {2d})$. 
Since $b(z) - a_1(z) = 0$ when $z \in \Omega_{\rho,\gamma}$ we have
$\supp (b - a_1) \subseteq \rr {2d} \setminus \Omega_{\rho,\gamma}$
which gives
$\supp q \cap \supp (b - a_1) = \emptyset$. 
The calculus then implies $q \wpr c \wpr (b - a_1) \in \cS(\rr {2d})$. 
If we set $r_1 = q \wpr c \wpr (b - a_1) - q \wpr r \in \cS(\rr {2d})$ then
\eqref{eq:SchwartzCont}
gives
\begin{align*}
q^w u 
& = q^w c^w  a_1^w u + r_1^w u \\
& = q^w c^w  {\overline a}^w a^w u + r_1^w u \in \cS
\end{align*}
due to $r_1 \in \cS(\rr {2d})$ and $a^w u \in \cS$. 
Finally we observe that $q$ is elliptic in $\Omega_{\rho,\ep}$ by construction. 
\end{proof}

The same proof adapted to the Kohn--Nirenberg calculus gives the following result.
The proof uses a version of \cite[Lemma~6.3]{Rodino4} adapted to the Kohn--Nirenberg calculus. 

\begin{cor}\label{cor:smoothorderzero}
Let $\sigma \in \qo_+$, $0 < \rho \leqs 1$, $\lambda > 0$, 
$\emptyset \neq \Omega \subseteq \rr {2d}$, let $\Omega_{\rho,\lambda}$
be defined by \eqref{eq:anisoneighb}, and suppose that
$u \in \cS'(\rr d)$ is smooth in $\Omega_{\rho,\lambda}$. 
If $0 < \delta < b_\sigma$  where $b_\sigma$ is defined by \eqref{eq:bsigma} and $0 < \ep < \delta < \min (1,\lambda)$ then
$q_{\ep ,\delta,\rho,\Omega}(x,D)u \in \cS(\rr d)$
where $q_{\ep, \delta,\rho,\Omega} = q_{\ep, \delta,\rho,\sigma, \mu,\Omega} \in G^{0,\sigma}$ is defined by \eqref{eq:cutoffsymboldef},
$\sigma = \frac{k}{m}$ with $k, m \in \no \setminus 0$ coprime, 
$\mu$ satisfies \eqref{eq:emptyintersection1},
and $q_{\ep, \delta,\rho,\Omega}$ is
elliptic in $\Omega_{\rho,\ep}$. 
\end{cor}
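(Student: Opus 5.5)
The plan is to rerun the proof of Proposition \ref{prop:smoothorderzero} verbatim in the Kohn--Nirenberg calculus. There are two possible routes.

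\emph{Direct route.} By Remark \ref{rem:smooth} (1), smoothness in $\Omega_{\rho,\lambda}$ may equally be witnessed in Kohn--Nirenberg form: there is $a \in G_\rho^{r,\sigma}$ with $a(x,D)u \in \cS$, elliptic in $\Omega_{\rho,\lambda}$. Indeed, if $a^w u\in\cS$, then the Kohn--Nirenberg symbol $\tilde a$ of $a^w$ lies in $G_\rho^{r,\sigma}$ and, by \eqref{eq:calculusquantization}, differs from $a$ by a term in $G_\rho^{r-\rho(1+\sigma),\sigma}$. Hence it is still elliptic in $\Omega_{\rho,\lambda}$ outside a larger ball.

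\emph{Main steps.} I then proceed as before.
\begin{enumerate}
\item Set $a_1$ equal to the Kohn--Nirenberg symbol of $a(x,D)^*a(x,D)$. The calculus gives $a_1=|a|^2+a_2$ with $a_2\in G_\rho^{2r-\rho(1+\sigma),\sigma}$.
\item With $\chi=q_{\gamma,\nu,\rho,\sigma,\wt\mu,\Omega}$ and $\delta<\gamma<\nu<\min(1,\lambda)$, form $b=a_1\chi+(1-\chi)C_1w_{k,m}^{2r/k}$. The same estimates show $b\in G_\rho^{2r,\sigma}$ is elliptic.
\item The Kohn--Nirenberg version of \cite[Lemma~6.3]{Rodino4} yields $c\in G_\rho^{-2r,\sigma}$ with $c(x,D)b(x,D)=I+R$, where $R$ is regularizing.
\item Then $q(x,D)=q(x,D)c(x,D)a(x,D)^*a(x,D)+q(x,D)c(x,D)(b-a_1)(x,D)-q(x,D)R$.
\end{enumerate}

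\emph{Main obstacle.} The key point is that $\supp q\cap\supp(b-a_1)=\emptyset$, by Lemma \ref{lem:closureinclusion} and $\delta<b_\sigma$. In the Kohn--Nirenberg asymptotic composition formula every term is a product of derivatives of $q$ and of $b-a_1$, so every term vanishes. Hence the composition $q(x,D)c(x,D)(b-a_1)(x,D)$ has symbol in $\bigcap_r G_\rho^{r,\sigma}=\cS(\rr{2d})$, by \eqref{eq:symbolintersection}. (Associativity lets me first compose $q$ with $c$, whose support is contained in $\supp q$ modulo $\cS$.)

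\emph{Conclusion.} Applying this to $u$ and using \eqref{eq:SchwartzCont} together with $a(x,D)u\in\cS$ gives $q(x,D)u\in\cS$. Ellipticity of $q$ in $\Omega_{\rho,\ep}$ holds because $q\equiv1$ there, by Proposition \ref{prop:cutoffsymbol}.

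\emph{Alternative route.} One could instead deduce the corollary from Proposition \ref{prop:smoothorderzero} itself. Write $q(x,D)=\tilde q^w$ with $\tilde q- q\sim\sum_{\alpha\neq0}\frac{(-1/2)^{|\alpha|}}{\alpha!}D_x^\alpha\partial_\xi^\alpha q$, using \eqref{eq:calculusquantization}. Every term is supported in $\supp q\subseteq\Omega_{\rho,\gamma}$. One then needs a Weyl cutoff $q'$ with $q'^w u\in\cS$ and $q'\equiv1$ on $\Omega_{\rho,\gamma}$, which Proposition \ref{prop:smoothorderzero} applied with parameters $\gamma<\gamma'<\min(1,\lambda,b_\sigma)$ provides. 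Then $\tilde q^w u=\tilde q^w q'^w u+\tilde q^w(1-q')^w u$, and the second operator has Schwartz symbol by disjointness of supports. This route needs $\gamma'<b_\sigma$, which is achievable since $\delta<b_\sigma$. I would present the direct route as the main proof and mention this alternative as a cross-check.
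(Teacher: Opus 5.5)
Your direct route is exactly the paper's argument. The paper states only that the proof of Proposition~\ref{prop:smoothorderzero} carries over to the Kohn--Nirenberg calculus using a Kohn--Nirenberg version of \cite[Lemma~6.3]{Rodino4}, and your steps supply those details: converting the witness symbol, forming $a_1$, $b$ and the parametrix $c$, and showing $q(x,D)c(x,D)(b-a_1)(x,D)$ is regularizing by disjoint supports. Your alternative route via \eqref{eq:calculusquantization} also works. The one caveat is that the Weyl symbol of $q(x,D)$ is supported in $\supp q$ only modulo $\cS(\rr{2d})$, which is still enough for the disjoint-support argument.
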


\subsection{The filter of anisotropic singularities}\label{subsec:filter}

\begin{defn}\label{def:generalfilter}
A filter of subsets of $\rr {2d}$ is denoted by $\fF \subseteq \rP(\rr {2d})$ and defined by the following properties
\cite{Schaefer1}. 

\begin{enumerate}
	
\item $\fF \neq \emptyset$;
		
\item If $F \in \fF$ and $F \subseteq G \subseteq \rr {2d}$ then $G \in \fF$; 
	
\item If $F, G \in \fF$ then $F \cap G \in \fF$.
	
\end{enumerate}
\end{defn}

The following concept generalizes \cite[Definition~6.9]{Cappiello7} where we use subsets $\Omega_\Lambda \subseteq T^* \rr d$ that are anisotropically annular in the sense of 
\begin{equation}\label{eq:AnisoAnnular}
\Omega_\Lambda = \{ (x,\xi) \in T^* \rr d: \ |x|^{2k} + |\xi|^{2m} \in \Lambda \}
\end{equation}
where $\Lambda \subseteq \ro_+$ is given and $\sigma = \frac{k}{m}$.
Here we relax this condition to arbitrary subsets of phase space. 

\begin{defn}\label{def:anisofilter}
Let 
$\sigma \in \qo_+$ and $0 < \rho \leqs 1$.
The \emph{filter of $(\sigma,\rho)$-anisotropic global singularities} of $u \in \cS'(\rr d)$ is defined as 
\begin{equation*}
\fF_{\sigma,\rho}(u) = \{ \Omega \subseteq \rr {2d}: \ u \mbox{ is $(\sigma,\rho)$-smooth in } \rr {2d}\setminus \Omega \} \subseteq \rP(\rr {2d}).
\end{equation*}
\end{defn}

In order to simplify notation we write $\fF_\rho(u) = \fF_{\sigma,\rho}(u)$ when the parameter $\sigma$ is given by the context.

First we show that the assumption of rational anisotropy parameter $\sigma$ implies that $\fF_{\sigma,\rho} (u) \subseteq \rP(\rr {2d})$ is indeed a filter. 

\begin{lem}\label{lem:filter}
If $\sigma \in \qo_+$, $0 < \rho \leqs 1$, $u \in \cS'(\rr d)$ and $\fF_{\sigma,\rho} (u) \subseteq \rP(\rr {2d})$ is defined as in Definition \ref{def:anisofilter}
then $\fF_{\sigma,\rho} (u)$ is a filter. 
\end{lem}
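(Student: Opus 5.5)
We verify the three filter axioms of Definition \ref{def:generalfilter} in turn; the first two are immediate and the whole content lies in closure under finite intersection.

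\emph{Axiom (1).} We have $\rr{2d} \in \fF_{\sigma,\rho}(u)$, since $u$ is smooth in $\emptyset$ by Remark \ref{rem:smooth} (2). Hence $\fF_{\sigma,\rho}(u) \neq \emptyset$.

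\emph{Axiom (2).} Let $F \subseteq G$ with $F \in \fF_{\sigma,\rho}(u)$. Then $\rr{2d}\setminus G \subseteq \rr{2d}\setminus F$. A symbol elliptic in a set is elliptic in every subset, so the same $a$ that witnesses smoothness of $u$ in $\rr{2d}\setminus F$ also witnesses smoothness in $\rr{2d}\setminus G$. (If $\rr{2d}\setminus G=\emptyset$, use (2) again.)

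\emph{Axiom (3).} Let $F,G\in\fF_{\sigma,\rho}(u)$ and set $\Omega_1=\rr{2d}\setminus F$, $\Omega_2=\rr{2d}\setminus G$. We must show that $u$ is smooth in $\Omega_1\cup\Omega_2 = \rr{2d}\setminus(F\cap G)$. If either set is empty the claim is trivial, so assume both are nonempty. Simply adding the two given symbols $a_1,a_2$ fails: they may have different orders, and $a_1+a_2$ may cancel. Instead we pass to order-zero nonnegative cutoffs, which is where rationality of $\sigma$ enters.
\begin{enumerate}
\item By Remark \ref{rem:smooth} (6), i.e.\ Proposition \ref{prop:ellipticlarger}, there is $\lambda>0$ such that $u$ is smooth in $(\Omega_j)_{\rho,\lambda}$ for $j=1,2$; shrinking one value, we may take a common $\lambda$.
\item Fix $0<\ep<\delta<\min(1,\lambda,b_\sigma)$. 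Proposition \ref{prop:smoothorderzero} gives cutoffs $q_j = q_{\ep,\delta,\rho,\Omega_j} \in G_\rho^{0,\sigma}$ with $q_j^w u \in \cS$. By Proposition \ref{prop:cutoffsymbol}, $0\leqs q_j\leqs 1$ and $q_j\equiv 1$ on $(\Omega_j)_{\rho,\ep}\supseteq \Omega_j$.
\item Put $q=q_1+q_2\in G_\rho^{0,\sigma}$. Then $q^w u = q_1^w u + q_2^w u \in \cS$. Since both $q_j\geqs 0$, there is no cancellation, so $q\geqs 1$ on $\Omega_1\cup\Omega_2$. Because $\theta_\sigma^0=1$, this means $q$ is elliptic of order $0$ on $\Omega_1\cup\Omega_2$.
\end{enumerate}
Therefore $u$ is smooth in $\rr{2d}\setminus(F\cap G)$, i.e.\ $F\cap G\in\fF_{\sigma,\rho}(u)$.

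The main obstacle is exactly this normalization to order-zero nonnegative symbols, which needs Proposition \ref{prop:smoothorderzero} and hence $\sigma\in\qo_+$. Once it is available, the intersection property reduces to the elementary observation that a sum of nonnegative cutoffs, each equal to $1$ on its own set, is elliptic on the union.
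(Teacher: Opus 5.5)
Your proposal is correct and follows the paper's proof essentially verbatim: axioms (1) and (2) are elementary, and for (3) you use Proposition \ref{prop:smoothorderzero} to replace the given symbols by nonnegative order-zero cutoffs $0 \leqs q_j \leqs 1$ equal to $1$ on $\Omega_j$, so that their sum is elliptic on the union. The differences are cosmetic. You get (1) from smoothness in $\emptyset$ rather than in a bounded set, and you make explicit the step through Proposition \ref{prop:ellipticlarger} to smoothness in $(\Omega_j)_{\rho,\lambda}$, which the paper leaves implicit.
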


\begin{proof}
In Remark \ref{rem:smooth} (3) we have showed that $\rr {2d} \setminus \Omega \in \fF_\rho(u)$ for any bounded set $\Omega \subseteq \rr {2d}$ and any $u \in \cS'(\rr d)$. 
Thus $\fF_\rho(u) \neq \emptyset$
for any $u \in \cS'(\rr d)$ so property $(1)$ of a filter holds (cf. Definition \ref{def:generalfilter}). 
	
Suppose $\Omega_1 \in \fF_\rho (u)$ and $\Omega_1 \subseteq \Omega_2$. 
Then there exists $a \in G_\rho^{r,\sigma}$ such that $a^w u \in \cS(\rr d)$ and $a$ is elliptic in 
$\rr {2d} \setminus \Omega_1$.
Due to 
$\rr {2d} \setminus \Omega_2 \subseteq \rr {2d} \setminus \Omega_1$
the symbol $a$ is also elliptic in $\rr {2d} \setminus \Omega_2$. 
Thus $u$ is smooth in $\rr {2d} \setminus \Omega_2$ and $\Omega_2 \in \fF_\rho (u)$, 
which shows property $(2)$ of a filter. 
	
Finally we prove property $(3)$ of a filter, and then we need the assumption that $\sigma$ is rational. 
Assume $\Omega_1, \Omega_2 \in \fF_\rho (u)$. 
By Proposition \ref{prop:smoothorderzero} there exist 
$a,b \in G_\rho^{0,\sigma}$ such that $a^w u \in \cS(\rr d)$, $b^w u \in \cS(\rr d)$, $0 \leqs a, b \leqs 1$, $a$ is elliptic in 
$\rr {2d} \setminus \Omega_1$ and $b$ is elliptic in $\rr {2d} \setminus \Omega_2$.
Then $a+b \in G_\rho^{0,\sigma}$ is elliptic in $\rr {2d} \setminus \left( \Omega_1 \cap \Omega_2 \right)$
and $(a+b)^w(x,D) u \in \cS(\rr d)$. It follows that $\Omega_1 \cap \Omega_2 \in \mathcal  F_\rho (u)$
which proves property $(3)$ of a filter. 
\end{proof}

\begin{rem}\label{rem:filtersing1}
Definition \ref{def:generalfilter} (1) and (2) imply that $\rr {2d} \in \fF_\rho(u)$ for any $u \in \cS'(\rr d)$. 
\end{rem}

\begin{rem}\label{rem:filtersing2}
By Remark \ref{rem:smooth} (3) we have $\rr {2d} \setminus \rB_R \in \fF_\rho(u)$ for any $R > 0$ and any $u \in \cS'(\rr d)$.
\end{rem}

\begin{rem}\label{rem:filtersing3}
Suppose $u \in \cS'(\rr d)$ and $\emptyset \neq \Omega \in \fF_\rho (u)$. 
Then $u$ is smooth in $\rr {2d} \setminus \Omega$. 
By Proposition \ref{prop:ellipticlarger} $u$ is also smooth in the open set $(\rr {2d} \setminus \Omega)_{\rho,\mu} \supseteq \rr {2d} \setminus \Omega$
for some $\mu > 0$. It follows that $\rr {2d} \setminus \left(\rr {2d} \setminus \Omega \right)_{\rho,\mu} \in \fF_\rho (u)$. 
Hence for every nonempty $\Omega \in \fF_\rho (u)$ there exists a closed set $\Omega_1 \subseteq \Omega$ such that $\Omega_1 \in \fF_\rho(u)$. 
\end{rem}

\begin{rem}\label{rem:filtersing4}
If $0 < \rho_1 \leqs \rho_2 \leqs 1$ then by 
Remark \ref{rem:smooth} (5) we have $\fF_{\rho_2}(u) \subseteq \fF_{\rho_1}(u)$.
\end{rem}

\begin{lem}\label{lem:charschwartz}
Let $\sigma \in \qo_+$ and $0 < \rho \leqs 1$.
If $u \in \cS'(\rr d)$ then $\emptyset \in \fF_\rho(u)$ if and only if $u \in \cS(\rr d)$. 
\end{lem}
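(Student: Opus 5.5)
The plan is to prove the two implications separately. The implication ``$u \in \cS$ implies $\emptyset \in \fF_\rho(u)$'' is immediate: by Remark \ref{rem:smooth} (4), $u \in \cS(\rr d)$ is smooth in every subset of $\rr {2d}$, in particular in $\rr {2d} = \rr {2d} \setminus \emptyset$. Hence $\emptyset \in \fF_\rho(u)$ by Definition \ref{def:anisofilter}.

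For the converse, suppose $\emptyset \in \fF_\rho(u)$. Then $u$ is $(\sigma,\rho)$-smooth in $\rr {2d}$, so there exist $r \in \ro$ and $a \in G_\rho^{r,\sigma}$, elliptic in $\rr {2d}$, with $a^w u \in \cS(\rr d)$. Ellipticity in $\rr {2d}$ means that $|a(x,\xi)| \geqs C \theta_\sigma(x,\xi)^r$ outside a ball $\rB_R$. This is exactly the hypothesis used for the parametrix construction, i.e. \cite[Lemma~6.3]{Rodino4} adapted to $0<\rho\leqs 1$, as already invoked in the proof of Proposition \ref{prop:smoothorderzero}. It yields $c \in G_\rho^{-r,\sigma}$ with $c \wpr a = 1 + s$, where $s \in \cS(\rr {2d})$. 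Then
\begin{equation*}
u = c^w a^w u - s^w u .
\end{equation*}
Here $c^w a^w u \in \cS$ by \eqref{eq:SchwartzCont}, since $a^w u \in \cS$. Moreover $s^w u \in \cS$ because $s^w$ is regularizing $\cS' \to \cS$. Hence $u \in \cS(\rr d)$.

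Alternatively, one could avoid the parametrix and use the microelliptic inclusion \eqref{eq:microellipticityWFs}. Since $a$ is elliptic everywhere, $\charac_\sigma(a) = \emptyset$, and by \eqref{eq:microlocalityWFs} we have $\WFgs(a^w u) = \emptyset$. Therefore $\WFgs(u) = \emptyset$, which is equivalent to $u \in \cS$.

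The only point needing care is that the parametrix lemma is stated for $\rho=1$ in \cite{Rodino4}. The paper already asserts that its proof extends to $0<\rho\leqs 1$, so there is no real obstacle. The Weyl product expansion \eqref{eq:calculuscomposition1} gains $\rho(1+\sigma)>0$ in order at each step, which is what the iterative construction requires.
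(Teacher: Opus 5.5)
Your main argument is the paper's proof: Remark \ref{rem:smooth} (4) for one direction, and for the converse a parametrix $c \in G_\rho^{-r,\sigma}$ with $c \wpr a = 1 + s$, $s \in \cS(\rr {2d})$, giving $u = c^w a^w u - s^w u \in \cS$. In your alternative route, $\WFgs(a^w u) = \emptyset$ follows from $a^w u \in \cS$ together with the characterization of $\cS$ by an empty anisotropic Gabor wave front set, not from the microlocal inclusion \eqref{eq:microlocalityWFs}, which only gives $\WFgs(a^w u) \subseteq \WFgs(u)$.
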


\begin{proof}
In Remark \ref{rem:smooth} (4) we have shown that $u \in \cS(\rr d)$ implies $\emptyset \in \fF_\rho (u)$. 
	
Suppose on the other hand $\emptyset \in \fF_\rho (u)$. 
Then there exists $a \in G_\rho^{r,\sigma}$ such that $a^w(x,D) u \in \cS(\rr d)$ and $a$ is elliptic in $\rr {2d}$. 
There exists a parametrix $b \in G_\rho^{-r,\sigma}$ which satisfies 
$b \wpr a = 1 + r$ where $r \in \cS(\rr {2d})$, cf. \cite[Lemma~6.3]{Rodino4}. 
We conclude
\begin{equation*}
u = b^w a^w u - r^w u \in \cS
\end{equation*}
due to \eqref{eq:SchwartzCont}, 
and $r^w: \cS' \to \cS$ being regularizing.
\end{proof}

\begin{rem}\label{rem:property4}
In \cite{Schaefer1} a filter $\fF$ is assumed to satisfy properties (1), (2), (3) in Definition \ref{def:generalfilter}, and furthermore 
$\emptyset \notin \fF$. 
We exclude the last property from our definition of filter
in order to allow $u \in \cS(\rr d)$, cf. 
Lemma \ref{lem:charschwartz}.
\end{rem}

By the proof of Lemma \ref{lem:charschwartz} we also have the following criterion of when a tempered distribution is a Schwartz function.

\begin{cor}\label{cor:charschwartz}
Let $\sigma \in \qo_+$ and $0 < \rho \leqs 1$.
If $u \in \cS'(\rr d)$ then $\rB_R \in \fF_\rho(u)$ for some $R > 0$ if and only if $u \in \cS(\rr d)$. 
\end{cor}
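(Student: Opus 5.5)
The "if" direction is immediate. If $u \in \cS(\rr d)$ then Remark \ref{rem:smooth} (4) gives $\emptyset \in \fF_\rho(u)$. Since $\emptyset \subseteq \rB_R$, property (2) of Definition \ref{def:generalfilter} (proved in Lemma \ref{lem:filter}) then gives $\rB_R \in \fF_\rho(u)$ for every $R > 0$.

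For the "only if" direction I would reduce to Lemma \ref{lem:charschwartz}. Suppose $\rB_R \in \fF_\rho(u)$. By Definition \ref{def:anisofilter} this means $u$ is smooth in $\rr {2d} \setminus \rB_R$. So there are $r \in \ro$ and $a \in G_\rho^{r,\sigma}$ with $a^w u \in \cS(\rr d)$ and constants $C, R_1 > 0$ such that
\begin{equation*}
|a(x,\xi)| \geqs C \theta_\sigma(x,\xi)^r, \qquad (x,\xi) \in (\rr {2d} \setminus \rB_R) \setminus \rB_{R_1}.
\end{equation*}
This set contains $\rr {2d} \setminus \rB_{\max(R,R_1)}$. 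Ellipticity in the sense of \eqref{eq:ellipticLambda} only requires a lower bound outside some ball, so $a$ is elliptic in all of $\rr {2d}$ with the constant $R$ replaced by $\max(R,R_1)$. Hence $u$ is smooth in $\rr {2d} = \rr {2d} \setminus \emptyset$, that is $\emptyset \in \fF_\rho(u)$.

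Lemma \ref{lem:charschwartz} now yields $u \in \cS(\rr d)$. Explicitly, the parametrix $b \in G_\rho^{-r,\sigma}$ of the elliptic symbol $a$ from \cite[Lemma~6.3]{Rodino4} gives $u = b^w a^w u - r^w u \in \cS$ by \eqref{eq:SchwartzCont} and because $r^w$ is regularizing.

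There is essentially no obstacle here. The only point to observe is that the definition of ellipticity in a set $\Omega$ is insensitive to removing a bounded subset, because it concerns only $\Omega \setminus \rB_R$ for large $R$. The argument is therefore the proof of Lemma \ref{lem:charschwartz} verbatim, which is presumably what the phrase ``By the proof of Lemma \ref{lem:charschwartz}'' refers to.
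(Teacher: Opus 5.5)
Your proposal is correct and follows the paper's intended argument: ellipticity in $\rr{2d}\setminus\rB_R$ coincides with global ellipticity, because \eqref{eq:ellipticLambda} only constrains the symbol outside a large ball, so the parametrix argument of Lemma \ref{lem:charschwartz} applies. Your converse, via Remark \ref{rem:smooth} (4) and filter property (2), is equivalent to citing Remark \ref{rem:Schwartzfilter}.
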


\begin{rem}\label{rem:Schwartzfilter}
Let $u \in \cS'(\rr d)$. Then $\fF_\rho (u) = \rP(\rr {2d})$ if and only if $u \in \cS(\rr d)$. 
This is a consequence of
Lemma \ref{lem:charschwartz} and property (2) in Definition \ref{def:generalfilter}.
\end{rem}

\subsection{Intersection of filter sets}\label{subsec:intersection}

Let $\sigma \in \qo_+$ and $0 < \rho \leqs 1$.
By Lemma \ref{lem:charschwartz} we have 
\begin{equation}\label{eq:emptyintersection2}
\bigcap_{\Omega \in \fF_\rho(u)} \Omega = \emptyset
\end{equation}
if $u \in \cS(\rr d)$. 
Remark \ref{rem:filtersing2} shows that \eqref{eq:emptyintersection2} in fact holds for any $u \in \cS'(\rr d)$.

We investigate strengthenings  of \eqref{eq:emptyintersection2} that implies that $u \in \cS(\rr d)$. 
Note first that Remark \ref{rem:filtersing3} and \eqref{eq:emptyintersection2} imply
that \eqref{eq:emptyintersection2} is equivalent to
\begin{equation}\label{eq:emptyintersection3}
\bigcap_{\stackbin{\Omega \in \fF_\rho(u)}{\Omega \ \rm{closed}}} \Omega = \emptyset. 
\end{equation}

Then we strengthen \eqref{eq:emptyintersection3} into 
\begin{equation}\label{eq:emptyintersection4}
\bigcap_{\stackbin{\Omega \in \gG}{\Omega \ \rm{closed}}} \Omega = \emptyset
\end{equation}
where $\gG \subseteq \fF_\rho(u)$ is a subset of the filter of singularities. 

For a collection of subsets $\{ A_j \}_{j \in J} \subseteq \rP(\rr {2d})$ the \emph{finite intersection property} \cite{Simmons1} 
means that
\begin{equation*}
\bigcap_{j \in J_F} A_j \neq \emptyset \quad \mbox{ for all finite } J_F \subseteq J.
\end{equation*}
We assume that the finite intersection property implies non-empty intersection: 
\begin{equation*}
\bigcap_{j \in J_F} A_j \neq \emptyset \quad \mbox{ for all finite } J_F \subseteq J \quad \Longrightarrow \quad \bigcap_{j \in J} A_j \neq \emptyset. 
\end{equation*}
This property holds for a collection of closed subsets of a compact set in a topological space \cite{Simmons1}. 

We will use the contrapositive implication for the subset $\gG \subseteq \fF_\rho(u)$ of 
the filter $\fF_\rho(u)$, restricted to closed subsets, of $u \in \cS'(\rr d)$.
Thus we assume that $\gG$ has the property  
\begin{equation}\label{eq:FIPclosedcompact}
\bigcap_{\stackbin{\Omega \in \gG}{\Omega \ \rm{closed}}} 
\Omega = \emptyset \quad \Longrightarrow \quad 
\bigcap_{\stackbin{\Omega \in \gG \cap \fF}{\Omega \ \rm{closed}}} 
\Omega = \emptyset \quad
\mbox{ for some finite } \fF \subseteq \gG.  
\end{equation}
This implication is true e.g. when the subsets in $\gG$ are $\sigma$-conic. 
In fact this is a consequence of the compactness of $\sr {2d-1} \subseteq \rr {2d}$.

\begin{prop}\label{prop:emptyintersection}
Suppose $\sigma \in \qo_+$, $0 < \rho \leqs 1$ and let $u \in \cS'(\rr d)$. 
If $\gG \subseteq \fF_\rho(u)$ satisfies \eqref{eq:FIPclosedcompact} 
then \eqref{eq:emptyintersection4} implies that $u \in \cS(\rr d)$. 
\end{prop}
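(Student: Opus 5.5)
The plan is to reduce the statement to Lemma \ref{lem:charschwartz}, using the filter property established in Lemma \ref{lem:filter} (this is where $\sigma \in \qo_+$ enters). First, from \eqref{eq:emptyintersection4} and the assumption \eqref{eq:FIPclosedcompact}, I obtain a finite subfamily $\fF \subseteq \gG$ with
\begin{equation*}
\bigcap_{\stackbin{\Omega \in \gG \cap \fF}{\Omega \ \rm{closed}}} \Omega = \emptyset .
\end{equation*}
Call the closed sets occurring here $\Omega_1, \dots, \Omega_n$, where $n$ is finite. If $n = 0$ the intersection is by convention all of $\rr {2d}$. This contradicts emptiness, so there is at least one set, and I may assume $n \geqs 1$.

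Next, each $\Omega_j$ belongs to $\gG \subseteq \fF_\rho(u)$. Since $\fF_\rho(u)$ is a filter by Lemma \ref{lem:filter}, repeated use of property (3) of Definition \ref{def:generalfilter} gives
\begin{equation*}
\Omega_1 \cap \cdots \cap \Omega_n \in \fF_\rho(u).
\end{equation*}
This intersection is $\emptyset$, so $\emptyset \in \fF_\rho(u)$. Lemma \ref{lem:charschwartz} then yields $u \in \cS(\rr d)$.

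For a self-contained argument one can unwind Lemma \ref{lem:filter} directly. By Proposition \ref{prop:smoothorderzero} there are symbols $a_j \in G_\rho^{0,\sigma}$ with $0 \leqs a_j \leqs 1$, $a_j^w u \in \cS$, and $a_j$ elliptic in $\rr {2d} \setminus \Omega_j$. The sum $a = \sum_j a_j$ is nonnegative and elliptic on $\bigcup_j (\rr {2d} \setminus \Omega_j) = \rr {2d}$. Hence $a$ is elliptic, and the parametrix from \cite[Lemma~6.3]{Rodino4} gives $u = b^w a^w u - r^w u \in \cS$.

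The only delicate point is interpreting \eqref{eq:FIPclosedcompact} correctly. It must produce finitely many closed members of $\gG$ whose intersection is empty, and the degenerate case $n = 0$ has to be excluded as above. The analytic content, namely combining finitely many order-zero cutoff symbols, is entirely contained in Lemma \ref{lem:filter} and Proposition \ref{prop:smoothorderzero}, so I expect no genuine obstacle.
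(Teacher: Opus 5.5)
Your proof is correct and is the paper's argument: hypothesis \eqref{eq:FIPclosedcompact} gives finitely many closed members of $\gG$ with empty intersection, property (3) of the filter $\fF_\rho(u)$ then yields $\emptyset \in \fF_\rho(u)$, and Lemma \ref{lem:charschwartz} concludes. Your extra remarks are harmless additions that the paper leaves implicit: excluding the degenerate case $n=0$, and unwinding Lemma \ref{lem:filter} through order-zero cutoff symbols and a parametrix.
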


\begin{proof}
From the assumption \eqref{eq:emptyintersection4} and \eqref{eq:FIPclosedcompact}  it follows that 
\begin{equation*}
\bigcap_{\stackbin{\Omega \in \gG \cap \fF}{\Omega \ \rm{closed}}} \Omega = \emptyset
\end{equation*}
for some finite subset $\fF \subseteq \gG$. By Property (3) of the filter $\fF_\rho(u)$, cf. Definition \ref{def:generalfilter},  we then have $\emptyset \in \fF_\rho(u)$
which by Lemma \ref{lem:charschwartz} implies that $u \in \cS(\rr d)$.
\end{proof}

The following result says that the anisotropic Gabor wave front set $\WFgs (u)$ can be recovered from the filter of anisotropic singularities $\fF_{\sigma,\rho}(u)$, 
by taking the intersection of the subsets in the filter that are $\sigma$-conic closed subsets in $\rr {2d} \setminus 0$, cf. \eqref{eq:sigmaconic}. 

\begin{prop}\label{prop:filterWF}
Let $\sigma \in \qo_+$ and $0 < \rho \leqs 1$.
If $u \in \cS'(\rr d)$ then
\begin{equation}\label{eq:filterWF}
\bigcap_{\stackbin[ \Omega \ \sigma- \rm conic ]{\Omega \in \fF_\rho(u) \cap \rP(\rr {2d} \setminus 0)}{ \Omega \ \rm{ closed \ in } \ \rr {2d} \setminus 0 }} \Omega \quad = \WFgs(u). 
\end{equation}
\end{prop}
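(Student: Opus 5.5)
We prove the two inclusions separately, writing $I$ for the left-hand side of \eqref{eq:filterWF}.

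\emph{Inclusion $\WFgs(u) \subseteq I$.} Let $\Omega$ belong to the indexing family: $\Omega \in \fF_\rho(u)$, $\Omega \subseteq \rr {2d}\setminus 0$, $\Omega$ closed in $\rr{2d}\setminus 0$ and $\sigma$-conic. By definition there is $a \in G_\rho^{r,\sigma}$ with $a^w u \in \cS$ and $a$ elliptic in $\rr{2d}\setminus\Omega$. The set $\Gamma = (\rr{2d}\setminus 0)\setminus \Omega$ is open and $\sigma$-conic, and $a$ is elliptic in $\Gamma$ uniformly outside $\rB_R$. Hence every $z_0 \in \Gamma$ lies outside $\charac_\sigma(a)$, with $\Gamma$ itself serving as the $\sigma$-conic neighborhood. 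The microelliptic inclusion \eqref{eq:microellipticityWFs} then gives
\begin{equation*}
\WFgs(u) \subseteq \WFgs(a^w u) \cup \charac_\sigma(a) = \charac_\sigma(a) \subseteq \Omega,
\end{equation*}
since $\WFgs$ of a Schwartz function is empty. Intersecting over all such $\Omega$ yields $\WFgs(u) \subseteq I$.

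\emph{Inclusion $I \subseteq \WFgs(u)$.} Since $\WFgs(u)$ is itself closed in $\rr{2d}\setminus 0$ and $\sigma$-conic, it suffices to show $\WFgs(u) \in \fF_\rho(u)$, i.e.\ that $u$ is $(\sigma,\rho)$-smooth in $\rr{2d}\setminus \WFgs(u)$. Then $\WFgs(u)$ is one of the sets in the intersection and $I \subseteq \WFgs(u)$ follows.

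For each $z_0 \in (\rr{2d}\setminus 0)\setminus\WFgs(u)$ we use the characterization \eqref{eq:WFGaniso}, i.e.\ \cite{Rodino4}, to pick an open $\sigma$-conic neighborhood $\Gamma_{z_0}$ and a symbol $a_{z_0}$ with $a_{z_0}^w u \in \cS$ and $a_{z_0}$ elliptic in $\Gamma_{z_0}$. Equivalently, $\rr{2d}\setminus \Gamma_{z_0} \in \fF_\rho(u)$. The case $\rho<1$ follows from Remark \ref{rem:smooth} (5).

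Next we use compactness. The complement of $\WFgs(u)$ intersected with the anisotropic unit sphere $\{|x|+|\xi|^{1/\sigma}=1\}$ is not compact, so we shrink first. Fix a closed $\sigma$-conic set $K$ contained in the complement of $\WFgs(u)$. The sets $\Gamma_{z_0}$ cover $K \cap \{|x|+|\xi|^{1/\sigma}=1\}$, so finitely many suffice. Filter property (3), i.e.\ Lemma \ref{lem:filter}, gives $\rr{2d}\setminus K \in \fF_\rho(u)$, up to a bounded set, which is harmless by Remark \ref{rem:filtersing2}.

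\emph{Main obstacle.} Passing from arbitrary closed cones $K$ to the full open complement requires a single symbol elliptic on all of $\rr{2d}\setminus\WFgs(u)$ outside a ball. Ellipticity must hold uniformly near the boundary of $\WFgs(u)$, and a finite cover cannot provide that. So the intermediate sets $K$ alone may not suffice.

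To get around this we avoid needing $\WFgs(u)$ itself in the family. For any point $z \notin \WFgs(u)$, choose a closed $\sigma$-conic neighborhood $K_z$ of $z$ disjoint from $\WFgs(u)$, take its interior $\Gamma$, and set $\Omega_z = (\rr{2d}\setminus 0)\setminus\Gamma$. This set is closed in $\rr{2d}\setminus 0$, is $\sigma$-conic, and omits $z$. By the compactness argument above applied to $K_z$, it belongs to $\fF_\rho(u)$. Hence $z \notin I$, which proves $I \subseteq \WFgs(u)$ without ever establishing $\WFgs(u) \in \fF_\rho(u)$.
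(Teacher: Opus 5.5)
Your proof is correct. It differs from the paper's mainly in the inclusion $\WFgs(u)\subseteq I$.

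For that inclusion, the paper first converts the smoothness of $u$ on $\rr{2d}\setminus\Omega$ into an order-zero cutoff. Via Propositions \ref{prop:cutoffsymbol} and \ref{prop:smoothorderzero} (where rationality of $\sigma$ enters), it produces $a\in G_\rho^{0,\sigma}$ with $0\leqs a\leqs 1$, $a^w u\in\cS$ and $a\equiv 1$ off $\Omega$. It then reruns the proof of \cite[Proposition~3.5]{Wahlberg4}.

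You instead feed the given symbol, of arbitrary order, directly into the microelliptic inclusion \eqref{eq:microellipticityWFs}. The key observation is that the open $\sigma$-conic set $(\rr{2d}\setminus 0)\setminus\Omega$ witnesses $\charac_\sigma(a)\subseteq\Omega$. This is shorter, bypasses the cutoff construction, and does not use the rationality of $\sigma$ in this direction. The paper's route has the merit of using a single tool, the cutoff characterization from \cite{Wahlberg4}, for both inclusions.

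For $I\subseteq\WFgs(u)$ your final argument coincides with the paper's, except that the compactness step is superfluous. The single open $\sigma$-conic neighbourhood $\Gamma_z$ supplied by \eqref{eq:WFGaniso} already gives $\rr{2d}\setminus(\Gamma_z\cup\{0\})\in\fF_\rho(u)$. Adding the origin costs nothing, since ellipticity is only required outside a ball. This set is closed in $\rr{2d}\setminus 0$, is $\sigma$-conic, and omits $z$. So the detour through $K_z$, finite subcovers and Lemma \ref{lem:filter} can be dropped. Your diagnosis that $\WFgs(u)$ itself need not belong to $\fF_\rho(u)$ is right, but the proof never needs it.
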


\begin{proof}
We prove the equality of complements in $\rr {2d} \setminus 0$
\begin{equation}\label{eq:filterWFcompl}
\bigcup_{\stackbin[ \Omega \ \sigma- \rm conic ]{\Omega \in \fF_\rho(u) \cap \rP(\rr {2d} \setminus 0)}{ \Omega \ \rm{ closed \ in } \ \rr {2d} \setminus 0 }}\left( \rr {2d} \setminus \left( \Omega \cup \{ 0 \} \right) \right)
= \rr {2d} \setminus \left( \WFgs(u) \cup \{ 0 \} \right).  
\end{equation}

If $\WFgs(u) = \rr {2d} \setminus 0$ then 
$\rr {2d} \setminus \left( \WFgs(u) \cup \{ 0 \} \right) = \emptyset$
and trivially
\begin{equation}\label{eq:filterWFcompl1}
\rr {2d} \setminus \left( \WFgs(u) \cup \{ 0 \} \right)
\subseteq 
\bigcup_{\stackbin[ \Omega \ \sigma- \rm conic ]{\Omega \in \fF_\rho(u) \cap \rP(\rr {2d} \setminus 0)}{ \Omega \ \rm{ closed \ in } \ \rr {2d} \setminus 0 }}\left( \rr {2d} \setminus \left( \Omega \cup \{ 0 \} \right) \right). 
\end{equation}

Suppose there exists $z_0 \in \rr {2d} \setminus \left( \WFgs(u) \cup \{ 0 \} \right)$. By \cite[Proposition~3.5]{Wahlberg4} and its proof 
there exists $a \in G^{0,\sigma} \subseteq G_\rho^{0,\sigma}$ such that $0 \leqs a \leqs 1$, $a^w u \in \cS$
and 
\begin{equation*}
a(z) = 1, \quad z \in \Gamma, \quad |z| \geqs R, 
\end{equation*}
with $R > 0$, where $\Gamma \subseteq \rr {2d} \setminus 0$ is an open $\sigma$-conic subset such that $z_0 \in \Gamma$. 
We may conclude that 
$u \in \cS'(\rr d)$ is smooth in $\Gamma \cup \{ 0 \}$, that is $\Omega := \rr {2d} \setminus \left( \Gamma \cup \{ 0 \} \right)\in \fF_\rho(u)$.
Since $\Omega$ is $\sigma$-conic, closed in $\rr {2d} \setminus 0$, and $z_0 \notin \Omega$, we have shown 
\eqref{eq:filterWFcompl1}
also when the left hand side is non-empty. 

Next we take $z_0 \in \rr {2d} \setminus 0$ and assume that $z_0 \notin \Omega$ for some $\sigma$-conic subset $\Omega \subseteq \rr {2d} \setminus 0$, closed in $\rr {2d} \setminus 0$, 
such that $\Omega \in \fF_\rho(u)$. 
By Propositions \ref{prop:cutoffsymbol} and \ref{prop:smoothorderzero} there exists a symbol $a \in G_\rho^{0,\sigma}$ such that $0 \leqs a \leqs 1$, $a^w u \in \cS$ and $a(z) = 1$ when $z \in \rr {2d} \setminus \Omega$. 
By the proof of \cite[Proposition~3.5]{Wahlberg4}, generalized from $\rho = 1$ into $0 < \rho \leqs 1$, it follows that $z_0 \notin \WFgs(u)$, which proves 
\begin{equation}\label{eq:filterWFcompl2}
\rr {2d} \setminus \left( \WFgs(u) \cup \{ 0 \} \right)
\supseteq 
\bigcup_{\stackbin[ \Omega \ \sigma- \rm conic ]{\Omega \in \fF_\rho(u) \cap \rP(\rr {2d} \setminus 0)}{ \Omega \ \rm{ closed \ in } \ \rr {2d} \setminus 0 }}\left( \rr {2d} \setminus \left( \Omega \cup \{ 0 \} \right) \right). 
\end{equation}
Taken together \eqref{eq:filterWFcompl1} and \eqref{eq:filterWFcompl2} prove \eqref{eq:filterWFcompl} which is equivalent to \eqref{eq:filterWF}.
\end{proof}

\subsection{Characterization of filters of anisotropic singularities}\label{subsec:characterizationfilter}

In this subsection we characterize filters of anisotropic singularities by means of the STFT, 
in a spirit that resembles the definition of anisotropic Gabor wave front sets. 
The criterion we will use in Theorem \ref{thm:characsing} is independent of the window function as we prove in the following result. 

\begin{prop}\label{prop:windowinvariance}
Suppose $\sigma \in \qo_+$ and $0 < \rho \leqs 1$. 
Let $\emptyset \neq \Omega \subseteq \rr {2d}$, let $\ep > \delta > 0$, 
let $\Omega_{\rho,\ep}$ and $\Omega_{\rho,\delta}$ be defined by \eqref{eq:anisoneighb}, 
let $u \in \cS'(\rr d)$, and let $\fy, \psi \in \cS(\rr d) \setminus 0$. 
If 
\begin{equation}\label{eq:quickdecay1}
\sup_{z \in \Omega_{\rho,\ep}}
\eabs{z}^N |V_\fy u(z)| < \infty \quad \forall N \geqs 0
\end{equation}
then 
\begin{equation}\label{eq:quickdecay2}
\sup_{z \in \Omega_{\rho,\delta}}
\eabs{z}^N |V_\psi u(z)| < \infty \quad \forall N \geqs 0. 
\end{equation}
\end{prop}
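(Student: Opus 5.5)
The plan is to use the standard change-of-window estimate for the STFT. For $\fy, \psi \in \cS(\rr d) \setminus 0$ we have the pointwise bound
\begin{equation*}
|V_\psi u(z)| \leqs (2\pi)^{-\frac d2} \| \fy \|_{L^2}^{-2} \int_{\rr {2d}} |V_\fy u(z - w)| \, |V_\psi \fy (w)| \, \dd w, \quad z \in \rr {2d},
\end{equation*}
with $V_\psi \fy \in \cS(\rr {2d})$. This follows from the inversion formula \eqref{eq:STFTinverse}, or \eqref{eq:moyal}, applied with test function $\Pi(z)\psi$, cf. \cite{Grochenig1}. I will fix $z \in \Omega_{\rho,\delta}$ and $N \geqs 0$, and split the integral into a near region $W_z$ and a far region $\rr {2d} \setminus W_z$. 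The near region is
\begin{equation*}
W_z = \{ w = (w_1,w_2) : \ |w_1| < \kappa \theta_\sigma^\rho(z), \ |w_2| < \kappa \theta_\sigma^{\rho\sigma}(z) \},
\end{equation*}
where $\kappa > 0$ is a small constant still to be chosen.

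The key geometric step is to show that $z - W_z \subseteq \Omega_{\rho,\ep}$ once $\kappa$ is small enough. Write $z = (x,\xi)$ and pick $(y,\eta) \in \Omega$ with $|x-y| < \delta \theta_\sigma^\rho(y,\eta)$ and $|\xi - \eta| < \delta \theta_\sigma^{\rho\sigma}(y,\eta)$. Then $\theta_\sigma(z) \lesssim \theta_\sigma(y,\eta)$, by Lemma \ref{lem:triangleineq} and $\rho \leqs 1$, exactly as in the proof of Proposition \ref{prop:ellipticlarger}. For $w \in W_z$ this gives $|x - w_1 - y| < (\delta + C\kappa) \theta_\sigma^\rho(y,\eta)$, and similarly in the $\xi$ variable. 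Choosing $\kappa$ with $\delta + C \kappa < \ep$ yields $z - w \in \Omega_{\rho,\ep}$, using the strict inequality $\delta < \ep$. This is the step where the hypothesis is essential, and it is the main point to check carefully. In particular the constant $C$ must be uniform in $z$, which it is, since only the upper bound $\theta_\sigma(z) \leqs C' \theta_\sigma(y,\eta)$ is needed; that bound is elementary.

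On $W_z$ I use \eqref{eq:quickdecay1} together with Peetre's inequality \eqref{eq:Peetre}. This gives $|V_\fy u(z-w)| \lesssim \eabs{z-w}^{-N} \lesssim \eabs{z}^{-N}\eabs{w}^N$, and the factor $\eabs{w}^N$ is absorbed by $|V_\psi \fy(w)| \lesssim \eabs{w}^{-N-2d-1}$. The near contribution is therefore $\lesssim \eabs{z}^{-N}$.

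On the complement of $W_z$ I use only the temperedness bound \eqref{eq:STFTtempered}, which combined with Peetre gives $|V_\fy u(z-w)| \lesssim \eabs{z}^{r}\eabs{w}^{r}$. For $w \notin W_z$ we have either $|w_1| \geqs \kappa \theta_\sigma^\rho(z)$ or $|w_2| \geqs \kappa \theta_\sigma^{\rho\sigma}(z)$. In both cases $\eabs{w} \gtrsim \theta_\sigma(z)^{\rho \min(1,\sigma)} \gtrsim \eabs{z}^{c}$ for some $c > 0$, by \eqref{eq:sobolevweightestimate1}. Hence
\begin{equation*}
|V_\psi \fy(w)| \lesssim \eabs{w}^{-M-r-2d-1} \lesssim \eabs{z}^{-cM}\eabs{w}^{-r-2d-1},
\end{equation*}
and taking $M$ large makes the far contribution $\lesssim \eabs{z}^{-N}$. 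Combining the near and far bounds gives \eqref{eq:quickdecay2}.
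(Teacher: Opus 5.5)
Your proof is correct and follows the paper's argument. Both use the change-of-window inequality \cite[Lemma~11.3.3]{Grochenig1} and split the integral into a near region, where $z-w \in \Omega_{\rho,\ep}$ so that \eqref{eq:quickdecay1} and Peetre's inequality apply, and a far region handled by \eqref{eq:STFTtempered} together with the Schwartz decay of $V_\psi \fy$. The only difference is cosmetic: you cut with an anisotropic box $W_z$ adapted to $\theta_\sigma(z)$, while the paper cuts with the isotropic threshold $|w|^{\frac1\rho \max(\sigma,\frac1\sigma)} \leqs \mu |z|$, and both choices work equally well.
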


\begin{proof}
First we show the following claim. 
There exists $\mu > 0$ such that 
if $(x,\xi) \in \Omega_{\rho,\delta}$ and 
\begin{equation}\label{eq:smallupperbound1}
|(y,\eta)|^{ \frac{1}{\rho} \max\left( \sigma, \frac{1}{\sigma} \right)} 
\leqs \mu |(x,\xi)|
\end{equation}
then $(x-y, \xi-\eta) \in \Omega_{\rho,\ep}$. 

In fact the assumption $(x,\xi) \in \Omega_{\rho,\delta}$ implies by \eqref{eq:anisoneighb} that 
there exists $(z,\zeta) \in \Omega$ such that 
\begin{equation}\label{eq:Omega0}
|x - z| < \delta \theta_\sigma^\rho(z,\zeta) \quad \mbox{ and } \quad
|\xi - \zeta| < \delta \theta_\sigma^{\rho \sigma}(z,\zeta)
\end{equation}
which yields 
\begin{equation*}
|x| < |z| + \delta \theta_\sigma^\rho(z,\zeta) 
\quad \mbox{ and } \quad
|\xi| < |\zeta| + \delta \theta_\sigma^{\rho \sigma}(z,\zeta).
\end{equation*}

By the assumption \eqref{eq:smallupperbound1}, \eqref{eq:quasitriangleineq}, $\rho \leqs 1$,
and \eqref{eq:sobolevweightestimate1}
we obtain for some $C_\sigma > 0$
\begin{align*}
|(y,\eta)|^{\max\left( \sigma, \frac{1}{\sigma} \right)} 
& < \mu^\rho \left( 2^{\frac{\rho}{2}} \eabs{(z,\zeta)}^\rho + \left( 2 \delta \right)^\rho \left( \theta_\sigma (z,\zeta) \right)^{ \rho \max \left( 1,\sigma \right) } \right) \\
& \leqs \mu^\rho \left( C_\sigma + \left( 2 \delta \right)^\rho \right) \left( \theta_\sigma (z,\zeta) \right)^{ \rho 
\max \left( 1,\sigma \right) }
\end{align*}
which in turn implies 
\begin{align*}
|(y,\eta)|
& \leqs \mu^{\rho \, {\min\left( \sigma, \frac{1}{\sigma} \right)} } C
\left( \theta_\sigma (z,\zeta) \right)^{ \rho \min\left( 1,\sigma \right)}
\end{align*}
where $C = C_{\sigma,\rho,\delta} > 0$. 

Combined with \eqref{eq:Omega0} this gives
\begin{align*}
|x - y - z|
& \leqs |y| + | x - z |
< \left( \mu^{ \rho \, {\min\left( \sigma, \frac{1}{\sigma} \right)} } C + \delta \right) \theta_\sigma^\rho(z,\zeta)
< \ep \theta_\sigma^\rho (z,\zeta), \\
|\xi - \eta - \zeta|
& \leqs |\eta| + | \xi - \zeta |
< \left( \mu^{ \rho \, {\min\left( \sigma, \frac{1}{\sigma} \right)} } C + \delta \right) \theta_\sigma^{\rho \sigma} (z,\zeta)
< \ep \theta_\sigma^{\rho \sigma} (z,\zeta) 
\end{align*}
provided $\mu > 0$ is chosen sufficiently small. 
Hence $(x-y, \xi-\eta) \in \Omega_{\rho,\ep}$, so we have proved the claim.

We use \cite[Lemma~11.3.3]{Grochenig1} which implies the inequality
\begin{equation*}
|V_\psi u(z)| 
\leqs (2 \pi)^{-\frac{d}{2}} \| \fy \|_{L^2}^{-2}
\int_{\rr {2d}} |V_\fy u (z-w) | \, |V_\psi \fy (w) | \, \dd w, \quad z \in \rr {2d}. 
\end{equation*}
Let $N \geqs 0$. 
We estimate using \eqref{eq:Peetre}
\begin{equation*}
\eabs{z}^N |V_\psi u(z)| 
\lesssim 
\int_{\rr {2d}} \eabs{z-w}^N |V_\fy u (z-w) | \, \eabs{w}^N |V_\psi \fy (w) | \, \dd w 
= I_1 + I_2
\end{equation*}
where we split the integral as 
\begin{align*}
I_1 & = \int_{|w|^{ \frac{1}{\rho} \max\left( \sigma, \frac{1}{\sigma} \right)} \leqs \mu |z|} \eabs{z-w}^N |V_\fy u (z-w) | \, \eabs{w}^N |V_\psi \fy (w) | \, \dd w, \\
I_2 & = \int_{|w|^{ \frac{1}{\rho} \max\left( \sigma, \frac{1}{\sigma} \right)} > \mu |z|} \eabs{z-w}^N |V_\fy u (z-w) | \, \eabs{w}^N |V_\psi \fy (w) | \, \dd w.
\end{align*}
Let $z \in \Omega_{\rho,\delta}$. By the claim we have $z - w \in \Omega_{\rho,\ep}$ in $I_1$. 
The assumption \eqref{eq:quickdecay1} and $V_\psi \fy \in \cS(\rr {2d})$ imply that $I_1$ is upper bounded
uniformly over $z \in \Omega_{\rho,\delta}$.

For $I_2$ we use instead the bound \eqref{eq:STFTtempered} for some $r \geqs 0$ which 
together with \eqref{eq:Peetre}
give the estimate
\begin{align*}
I_2 
& \leqs \int_{|w|^{ \frac{1}{\rho} \max\left( \sigma, \frac{1}{\sigma} \right)} > \mu |z|} 
\eabs{z-w}^{N+ r} \, \eabs{w}^{ N} |V_\psi \fy (w) | \, \dd w \\
& \lesssim \int_{|w|^{ \frac{1}{\rho} \max\left( \sigma, \frac{1}{\sigma} \right)} > \mu |z|} 
\eabs{z}^{N+ r} \, \eabs{w}^{ 2 N + r} |V_\psi \fy (w) | \, \dd w \\
& \lesssim \int_{|w|^{ \frac{1}{\rho} \max\left( \sigma, \frac{1}{\sigma} \right)} > \mu |z|} 
\eabs{w}^{ 2 N + r + \frac{1}{\rho} (N + r) \max\left( \sigma, \frac{1}{\sigma} \right)} |V_\psi \fy (w) | \, \dd w \\
& \leqs \int_{\rr {2d}} 
\eabs{w}^{ 2 N + r + \frac{1}{\rho} (N + r) \max\left( \sigma, \frac{1}{\sigma} \right)} |V_\psi \fy (w) | \, \dd w 
\end{align*}
which is upper bounded uniformly over $z \in \Omega_{\rho,\delta}$, again due to $V_\psi \fy \in \cS(\rr {2d})$.
We have shown \eqref{eq:quickdecay2}. 
\end{proof}

\begin{lem}\label{lem:distance}
Suppose $\sigma \in \ro_+$ and $0 < \rho \leqs 1$. 
Let $\emptyset \neq \Omega \subseteq \rr {2d}$, 
let $0 < \ep < \delta$, and let $\Omega_{\rho,\ep}$ and $\Omega_{\rho,\delta}$ be defined by \eqref{eq:anisoneighb}. 
There exists $C = C_{\sigma,\rho,\ep,\delta} > 0$ such that 
\begin{equation}\label{eq:separationineq}
| (x,\xi) |^{ \rho \min\left( \sigma, \frac{1}{\sigma} \right)}
\leqs C \left| (y,\eta) - (x,\xi) \right|
\end{equation}
for all $(x,\xi) \in \Omega_{\rho,\ep}$ and all $(y,\eta) \in \rr {2d} \setminus \Omega_{\rho,\delta}$. 
\end{lem}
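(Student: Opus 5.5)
We argue by the anisotropic separation between $\Omega_{\rho,\ep}$ and the complement of $\Omega_{\rho,\delta}$, measured in the natural scales $\theta_\sigma^\rho$ and $\theta_\sigma^{\rho\sigma}$ of the point of $\Omega$ that witnesses membership, and then convert to Euclidean norms via \eqref{eq:sobolevweightestimate1}. Fix $(x,\xi) \in \Omega_{\rho,\ep}$ and $(y,\eta) \notin \Omega_{\rho,\delta}$. Pick $(z,\zeta) \in \Omega$ with $|x-z| < \ep \theta_\sigma^\rho(z,\zeta)$ and $|\xi-\zeta| < \ep\theta_\sigma^{\rho\sigma}(z,\zeta)$. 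Since $(y,\eta) \notin \Omega_{\rho,\delta}$, the same $(z,\zeta)$ fails the defining condition with $\delta$: either $|y-z| \geqs \delta\theta_\sigma^\rho(z,\zeta)$ or $|\eta-\zeta| \geqs \delta \theta_\sigma^{\rho\sigma}(z,\zeta)$. By the triangle inequality, in the first case $|y-x| \geqs (\delta-\ep)\theta_\sigma^\rho(z,\zeta)$, in the second $|\eta-\xi| \geqs (\delta-\ep)\theta_\sigma^{\rho\sigma}(z,\zeta)$. Since $\theta_\sigma \geqs 1$, in both cases
\begin{equation*}
|(y,\eta)-(x,\xi)| \geqs (\delta-\ep)\, \theta_\sigma(z,\zeta)^{\rho\min(1,\sigma)}.
\end{equation*}

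Next we bound $|(x,\xi)|$ from above by a power of $\theta_\sigma(z,\zeta)$. From the neighbourhood condition and $\rho\leqs 1$ we get $|x| \leqs (1+\ep)\theta_\sigma(z,\zeta)$ and $|\xi| \leqs |\zeta| + \ep\theta_\sigma^{\sigma}(z,\zeta) \leqs (1+\ep)\theta_\sigma(z,\zeta)^\sigma$, so $|(x,\xi)| \lesssim \theta_\sigma(z,\zeta)^{\max(1,\sigma)}$. Note that this avoids Lemma \ref{lem:boundaniso}, so no smallness of $\ep$ is required, consistent with the statement imposing only $0<\ep<\delta$. Raising to the power $\rho\min(\sigma,1/\sigma)$ gives
\begin{equation*}
|(x,\xi)|^{\rho\min(\sigma,\frac1\sigma)} \lesssim \theta_\sigma(z,\zeta)^{\rho\max(1,\sigma)\min(\sigma,\frac1\sigma)} = \theta_\sigma(z,\zeta)^{\rho \min(1,\sigma)},
\end{equation*}
where the exponent identity $\max(1,\sigma)\min(\sigma,1/\sigma) = \min(1,\sigma)$ is checked separately for $\sigma \leqs 1$ (where it reads $1\cdot\sigma=\sigma$) and $\sigma>1$ (where it reads $\sigma\cdot\sigma^{-1}=1$).

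Combining the two displays yields \eqref{eq:separationineq} with a constant depending on $\sigma,\rho,\ep,\delta$ only. The main point to get right is the exponent bookkeeping in the second step: the lower bound on the distance is naturally in the weaker of the two scales $\theta^\rho$, $\theta^{\rho\sigma}$, while $|(x,\xi)|$ is controlled by the stronger one, and the exponent $\rho\min(\sigma,1/\sigma)$ in the lemma is exactly what reconciles them. Everything else is the triangle inequality.
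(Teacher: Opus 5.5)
Your proof is correct and follows the paper's argument essentially step for step. You use the same witness $(z,\zeta)\in\Omega$, the same dichotomy giving $|(y,\eta)-(x,\xi)|\geqs(\delta-\ep)\,\theta_\sigma(z,\zeta)^{\rho\min(1,\sigma)}$, the same upper bound $|(x,\xi)|\lesssim\theta_\sigma(z,\zeta)^{\max(1,\sigma)}$ (you get it directly from $|z|\leqs\theta_\sigma$ and $|\zeta|\leqs\theta_\sigma^\sigma$ instead of via \eqref{eq:sobolevweightestimate1}, an immaterial difference), and the same exponent identity $\max(1,\sigma)\min(\sigma,\frac1\sigma)=\min(1,\sigma)$.
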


\begin{proof}
The assumption $(x,\xi) \in \Omega_{\rho,\ep}$ implies by \eqref{eq:anisoneighb} that 
there exists $(z,\zeta) \in \Omega$ such that 
\begin{equation}\label{eq:Omega1}
|x - z| < \ep \theta_\sigma^\rho(z,\zeta) 
\quad \mbox{ and } \quad
|\xi - \zeta| < \ep \theta_\sigma^{\rho \sigma} (z,\zeta), 
\end{equation}
and the assumption $(y,\eta) \in \rr {2d} \setminus \Omega_{\rho,\delta}$ implies that 
\begin{equation}\label{eq:Omega2}
|y - z| \geqs \delta \theta_\sigma^\rho (z,\zeta) 
\quad \mbox{or} \quad 
|\eta - \zeta| \geqs \delta \theta_\sigma^{\rho \sigma}(z,\zeta)
\end{equation}
must hold. 
Combining \eqref{eq:Omega1} and  \eqref{eq:Omega2} yield the consequence 
\begin{equation*}
|y - x| > (\delta - \ep) \theta_\sigma^\rho (z,\zeta) 
\quad \mbox{or} \quad 
|\eta - \xi| > (\delta - \ep) \theta_\sigma^{\rho \sigma}(z,\zeta)
\end{equation*}
from which we may conclude 
\begin{equation}\label{eq:separation1}
|(y,\eta) - (x,\xi) | > (\delta - \ep) \left( \theta_\sigma (z,\zeta) \right)^{ \rho \min \left( 1, \sigma \right)}.
\end{equation}

On the other hand we obtain from \eqref{eq:Omega1}, $0 < \rho \leqs 1$
and \eqref{eq:sobolevweightestimate1} 
for some $C_\sigma > 0$
\begin{align*}
|(x,\xi) | 
& < | (z,\zeta) | + \ep \theta_\sigma^{\rho} (z,\zeta) + \ep \theta_\sigma^{\rho \sigma} (z,\zeta) \\ 
& \leqs C_\sigma \left( \theta_\sigma (z,\zeta) \right)^{\max(1,\sigma)} + \ep \theta_\sigma^\rho (z,\zeta) + \ep \theta_\sigma^{\rho\sigma} (z,\zeta) \\
& \leqs \left( C_\sigma + 2 \ep \right) \left( \theta_\sigma (z,\zeta) \right)^{\max(1,\sigma)}. 
\end{align*}
Combining this with $\max(1,\sigma) \min \left( \sigma, \frac1\sigma \right) = \min(1,\sigma)$ and \eqref{eq:separation1} we reach the claim \eqref{eq:separationineq}. 
\end{proof}

We are now in a position where we can characterize $(\sigma,\rho)$-smoothness of a tempered distribution in terms of super-polynomial decay 
behavior of the STFT in the case of rational anisotropy parameter $\sigma > 0$. 
This result corresponds to 
\cite[Definition~4.1]{Rodino4} (cf. Section \ref{subsec:WFgaboraniso}) 
and \cite[Proposition~3.5]{Wahlberg4}
for anisotropic Gabor wave front sets. 
Indeed the latter result shows that the anisotropic Gabor wave front set, defined in \cite[Definition~4.1]{Rodino4} 
using super-polynomial decay behavior of the STFT, may equivalently be defined using the concept of smoothness.  

In this paper we have taken an opposite track: The definition of $(\sigma,\rho)$-smoothness of a tempered distribution
is conceptually similar to \cite[Proposition~3.5]{Wahlberg4}, and the following result shows that the concept may be characterized by means of 
the STFT. 

\begin{thm}\label{thm:characsing}
Suppose $\sigma \in \qo_+$ and $0 < \rho \leqs 1$. 
Let $\emptyset \neq \Omega \subseteq \rr {2d}$, let $u \in \cS'(\rr d)$, and let $\fy \in \cS(\rr d) \setminus 0$. 
If $0 < \delta < 1$, 
$\Omega_{\rho,\delta}$ is defined by \eqref{eq:anisoneighb},
and $u$ is smooth in $\Omega_{\rho,\delta}$ then for any $0 < \ep < \delta$ we have
\begin{equation}\label{eq:decayestimate1}
\sup_{z \in \Omega_{\rho,\ep}}
\eabs{z}^N |V_\fy u (z)| < \infty \quad \forall N \geqs 0.
\end{equation}
Conversely if \eqref{eq:decayestimate1} holds true with $0 < \ep < 1$
then $u$ is smooth in $\Omega_{\rho,\delta}$ for any $\delta$ such that $0 < \delta <  \min(b_\sigma,\ep)$
where $b_\sigma$ is defined in \eqref{eq:bsigma}. 
\end{thm}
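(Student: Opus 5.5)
For the forward direction I will first reduce to a cutoff symbol of order zero and then work directly with the short-time Fourier transform. Since $u$ is smooth in $\Omega_{\rho,\delta}$, Proposition \ref{prop:smoothorderzero} (with $\lambda=\delta$, and with $\ep<\ep_1<\delta_1<\min(\delta,b_\sigma)$) yields $q=q_{\ep_1,\delta_1,\rho,\Omega}\in G_\rho^{0,\sigma}$ with $q^w u\in\cS$ and $q\equiv 1$ on $\Omega_{\rho,\ep_1}$. I then split $u=q^wu+(1-q)^wu$. Because $q^wu\in\cS$, the function $V_\fy(q^wu)$ decays super-polynomially everywhere by \eqref{eq:STFTschwartz}.

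The work is in showing that $V_\fy((1-q)^wu)(z)$ decays rapidly for $z\in\Omega_{\rho,\ep}$. I write
\[
V_\fy((1-q)^w u)(z)=(2\pi)^{-d/2}\bigl(u,(1-\overline q)^w\Pi(z)\fy\bigr).
\]
It therefore suffices to show that, for $z\in\Omega_{\rho,\ep}$, every Schwartz seminorm of $(1-\overline q)^w\Pi(z)\fy$ is $O(\eabs z^{-N})$. This is the main obstacle. The symbol $1-q$ vanishes on $\Omega_{\rho,\ep_1}$, while $\Pi(z)\fy$ is microlocally concentrated near $z$. Lemma \ref{lem:distance}, applied with the pair $\ep<\ep_1$ and with roles arranged so that $z\in\Omega_{\rho,\ep}$ and $\supp(1-q)\subseteq\rr{2d}\setminus\Omega_{\rho,\ep_1}$, gives
\[
|w-z|\gtrsim\eabs z^{\rho\min(\sigma,1/\sigma)}, \qquad w\in\supp(1-q).
\]
I would conjugate by $\Pi(z)$, using $\Pi(z)^{-1}b^w\Pi(z)=b(\cdot+z)^w$, so the problem becomes estimating $b_z^w\fy$ with $b_z=(1-\overline q)(\cdot+z)$, which vanishes on a ball of radius $\gtrsim\eabs z^{\rho\min(\sigma,1/\sigma)}$ around the origin. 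Writing the Weyl kernel and integrating by parts, or expanding $b_z^w\fy$ through the STFT of $\fy$, gives decay in the distance to the support that is faster than any power, since $V\fy\in\cS(\rr{2d})$. The symbol derivatives of $b_z$ are only polynomially bounded in $z$, by \eqref{eq:sobolevweightestimate1}. These two facts together yield $O(\eabs z^{-N})$ for every $N$. An alternative is to use the known formula $V_\fy(b^wu)(z)=\int V_\fy u(w)K(z,w)\,dw$, where the kernel is rapidly decreasing off the support of $b$ relative to $|z-w|$, combined with the polynomial bound \eqref{eq:STFTtempered}.

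For the converse, assume \eqref{eq:decayestimate1}, and take $0<\delta<\delta'<\min(b_\sigma,\ep)$. I will use $q=q_{\delta,\delta',\rho,\Omega}$ from Proposition \ref{prop:cutoffsymbol}, which is elliptic in $\Omega_{\rho,\delta}$ (it equals $1$ there) and satisfies $\supp q\subseteq\overline{\Omega_{\rho,\delta'}}\subseteq\Omega_{\rho,\ep}$ by Lemma \ref{lem:closureinclusion}. It remains to show $q^wu\in\cS$, and I do this by checking rapid decay of $V_\fy(q^wu)$. I write $u=\|\fy\|^{-2}V_\fy^*V_\fy u$ via \eqref{eq:moyal} and split $V_\fy u=\chi V_\fy u+(1-\chi)V_\fy u$, where $\chi$ is the indicator of $\Omega_{\rho,\ep''}$ for some $\delta'<\ep''<\ep$.

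The first piece $\chi V_\fy u$ is rapidly decreasing, so $V_\fy^*(\chi V_\fy u)\in\cS$, and $q^w$ preserves $\cS$ by \eqref{eq:SchwartzCont}. For the second piece, $q^wV_\fy^*((1-\chi)V_\fy u)$ involves $q^w\Pi(w)\fy$ with $w\notin\Omega_{\rho,\ep''}$, while $\supp q\subseteq\overline{\Omega_{\rho,\delta'}}$. Lemma \ref{lem:distance} separates these sets by $\gtrsim\eabs{w}^{\rho\min(\sigma,1/\sigma)}$, with the roles of the two sets handled as above, and this dominates the polynomial growth of $V_\fy u$. The off-support kernel estimate from the first part then gives rapid decay jointly in $z$ and $w$ after integrating, so this second piece is Schwartz as well.
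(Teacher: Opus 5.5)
Your converse is correct. Your forward direction has a real gap: it does not cover the full range $0<\ep<\delta$ that the statement claims.

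\textbf{The gap in the forward direction.} You choose $\ep<\ep_1<\delta_1<\min(\delta,b_\sigma)$, which is only possible when $\ep<b_\sigma$. By \eqref{eq:bsigma} one always has $b_\sigma\leqs\frac12$. So the case $b_\sigma\leqs\ep<\delta<1$, which the statement explicitly covers, is not treated. It cannot be recovered from smaller $\ep$, because $\Omega_{\rho,\ep}$ increases with $\ep$.

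The restriction is built into your choice of base set. Proposition \ref{prop:smoothorderzero} with base set $\Omega$ requires $\delta_1<b_\sigma$, coming from Lemma \ref{lem:closureinclusion}. Hence such a cutoff is never identically $1$ on $\Omega_{\rho,\ep_1}$ with $\ep_1\geqs b_\sigma$.

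\textbf{The fix.} Centre the cutoff at $\Omega_{\rho,\delta}$ instead of $\Omega$. By Proposition \ref{prop:ellipticlarger}, $u$ is smooth in $(\Omega_{\rho,\delta})_{\rho,\lambda}$ for some $\lambda>0$. Apply Proposition \ref{prop:smoothorderzero} with base set $\Omega_{\rho,\delta}$ and parameters $\gamma<\nu<\min(\lambda,b_\sigma)$. This gives $q=q_{\gamma,\nu,\rho,\Omega_{\rho,\delta}}$ with $q^wu\in\cS$ and $q\equiv 1$ on $(\Omega_{\rho,\delta})_{\rho,\gamma}\supseteq\Omega_{\rho,\delta}$.

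Then $\supp(1-q)\subseteq\rr{2d}\setminus\Omega_{\rho,\delta}$. Lemma \ref{lem:distance}, applied to the pair $\ep<\delta$, gives $|w-z|\gtrsim|z|^{\rho\min(\sigma,1/\sigma)}$ for every $\ep<\delta$, with no reference to $b_\sigma$. This is exactly the paper's choice. With it, the rest of your forward argument goes through: duality against Schwartz seminorms of $(1-\overline q)^w\Pi(z)\fy$, conjugation by $\Pi(z)$, and off-support decay. That argument is essentially the paper's STFT-kernel estimate in another form.

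\textbf{The converse.} It takes a different route from the paper. The paper forms the anti-Wick operator $A_a$ with a Gaussian window and writes $A_a=b^w$ with $b=\pi^{-d}a*e^{-|\cdot|^2}$. It checks that $b\in G_\rho^{0,\sigma}$ and that $b$ is elliptic on $\Omega_{\rho,\delta}$. Then $A_au\in\cS$ is immediate, since $A_au$ only samples the STFT on $\supp a$, once Proposition \ref{prop:windowinvariance} transfers the decay to the Gaussian window. There is no far-field term, at the cost of the window change and the symbol computation.

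You keep $q$ itself as the elliptic symbol and keep the original window $\fy$, so you need no window invariance. You pay with a far-field piece that needs the same off-support estimate as the forward direction. Make two details explicit there:
\begin{itemize}
\item Lemma \ref{lem:distance} bounds the separation by the inner point, $|x|^{\rho\min(\sigma,1/\sigma)}\lesssim|w-x|$ for $x\in\supp q$. Convert this to a lower bound $\gtrsim\eabs{w}^{\rho\min(\sigma,1/\sigma)}$ on the distance from $w$ to $\supp q$ for large $|w|$, using $|w|\leqs|x|+|w-x|$ and $\rho\min(\sigma,1/\sigma)\leqs 1$.
\item $\supp q$ is only contained in $\overline{\Omega_{\rho,\delta'}}$. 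First use Lemma \ref{lem:closureinclusion} to place it inside some $\Omega_{\rho,\ep'''}$ with $\delta'<\ep'''<\ep''$, then invoke Lemma \ref{lem:distance}.
\end{itemize}
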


\begin{proof}
Suppose $0 < \delta < 1$ and $u$ is smooth in $\Omega_{\rho,\delta}$ defined by \eqref{eq:anisoneighb}. 
By Proposition \ref{prop:ellipticlarger} $u$ is automatically smooth in $\left( \Omega_{\rho,\delta} \right)_{\rho,\lambda}$
for some $0 < \lambda < 1$. Let $0 < \gamma < \nu < \lambda$ and let 
$a = q_{\gamma,\nu,\rho,\sigma,\mu,\Omega_{\rho,\delta} } \in G_\rho^{0,\sigma}$ 
be defined as in Proposition \ref{prop:cutoffsymbol}
with $\sigma = \frac{k}{m}$, $k,m \in \no \setminus 0$ coprime, 
and $\mu > 0$ satisfying 
\begin{equation*}
\left( \rr {2d} \setminus \left( \Omega_{\rho,\delta} \right)_{\rho,\nu} \right)_{\rho,\mu} \bigcap \left( \left( \Omega_{\rho,\delta} \right) _{\rho,\gamma} \right)_{\rho,\mu} = \emptyset
\end{equation*}
which is possible by Proposition \ref{prop:disjoint}.
By Corollary \ref{cor:smoothorderzero} we have $a(x,D) u \in \cS$ if $\nu$ is sufficiently small, and moreover
\begin{equation}\label{eq:suppsymbol1}
\supp (1- a) \subseteq 
\rr {2d} \setminus \left( \Omega_{\rho,\delta} \right)_{\rho,\gamma}
\subseteq \rr {2d} \setminus \Omega_{\rho,\delta}.
\end{equation}

The fact that $a(x,D) u \in \cS$ and \eqref{eq:STFTschwartz} imply
\begin{equation*}
\sup_{z \in \rr {2d}}
\eabs{z}^N |V_\fy ( a(x,D) u) (z)| < \infty \quad \forall N \geqs 0.  
\end{equation*}
To show \eqref{eq:decayestimate1} it hence suffices to show 
\begin{equation}\label{eq:reduceddecay1}
\sup_{z \in \Omega_{\rho,\ep}}
\eabs{z}^N |V_\fy \left( ( 1 - a ) (x,D) u \right) (z)| < \infty
\end{equation}
for any $N \geqs 0$. 

Let $c \in G_\rho^{r,\sigma}$ for $r \in \ro$. 
We obtain from \eqref{eq:STFTinverse} for $z \in \rr {2d}$
\begin{align*}
V_\fy \left( c(x,D) u \right) (z)
& = ( 2 \pi )^{- \frac{d}{2}} \left( c(x,D) u, \Pi(z ) \fy \right)
= ( 2 \pi )^{- \frac{d}{2}} \left( u, c(x,D)^* \Pi(z ) \fy \right) \\
& = ( 2 \pi )^{- d} \| \fy \|_{L^2}^{-2} 
\int_{\rr {2d}} V_\fy u (w)  \overline{ \left( c(x,D)^* \Pi(z ) \fy, \Pi(w) \fy \right)}  \, \dd w \\
& = ( 2 \pi )^{- d} \| \fy \|_{L^2}^{-2} 
\int_{\rr {2d}} V_\fy u (w)  \left( c(x,D) \Pi(w ) \fy , \Pi(z) \fy \right)  \, \dd w. 
\end{align*}
We may thus write 
\begin{equation}\label{eq:STFTkernel}
V_\fy \left( c (x,D) u \right) (z)
= \int_{\rr {2d}} K(z,w) V_\fy u(w) \, \dd w, \quad z \in \rr {2d}, 
\end{equation}
with integral kernel
\begin{equation}\label{eq:integralkernel1}
K(z,w) = ( 2 \pi )^{- d} \| \fy \|_{L^2}^{-2} \left( c(x,D) \Pi(w ) \fy , \Pi(z) \fy \right). 
\end{equation}

Using $\wh{M_\eta T_y \fy} = T_\eta M_{-y} \wh \fy$ we get from \eqref{eq:tquantization} with $\tau = 0$
\begin{equation*}
c(x,D) (M_\eta T_y ) \fy (u)
= ( 2 \pi )^{- \frac{d}{2} }
\int_{\rr d} e^{i \left( \la u, \zeta \ra - \la y, \zeta - \eta \ra \right)} c(u,\zeta) \wh \fy(\zeta - \eta) \, \dd \zeta
\end{equation*}
which inserted into \eqref{eq:integralkernel1} yields for $c = 1 - a$ and $x, \xi, y, \eta \in \rr d$
\begin{equation}\label{eq:integralkernel2}
\begin{aligned}
& K(x, \xi ; y, \eta) \\
& = (2 \pi)^{-\frac{3d}{2}} \| \fy \|_{L^2}^{-2} e^{i \la y, \eta \ra}
\int_{\rr {2d}} e^{i \left( \la u - y, \zeta \ra - \la u, \xi \ra \right)} \left( 1- a (u,\zeta) \right) 
\wh \fy(\zeta - \eta) \overline{\fy(u - x)} 
\, \dd u \, \dd \zeta \\
& = (2 \pi)^{-\frac{3d}{2}} \| \fy \|_{L^2}^{-2} e^{i \la x, \eta - \xi \ra}
\int_{\rr {2d}} e^{i \left( \la u, \eta - \xi + \zeta \ra + \la \zeta, x - y \ra \right)} \left( 1- a (u + x,\zeta + \eta) \right) 
\wh \fy(\zeta) \overline{\fy(u)} 
\, \dd u \, \dd \zeta. 
\end{aligned}
\end{equation}

Writing for any $L,M \in \no$
\begin{align*}
& e^{i \left( \la u, \eta - \xi + \zeta \ra + \la \zeta, x - y \ra \right)} 
=  \eabs{\eta - \xi + \zeta}^{- 2M }( 1 - \Delta_u )^M e^{i \left( \la u, \eta - \xi + \zeta \ra + \la \zeta, x - y \ra \right)} \\
& =  \eabs{ x - y }^{ - 2 L} \eabs{\eta - \xi + \zeta}^{- 2M }( 1 - \Delta_u )^M e^{i \la u, \eta - \xi + \zeta \ra} 
( 1 - \Delta_\zeta )^L e^{i \la \zeta, x - y \ra } \\
\end{align*}
and integrating by parts we get
\begin{equation}\label{eq:kernelexpression1}
|K(x, \xi ; y, \eta)| 
= C \eabs{ x - y }^{ - 2 L}  \left| \int_{\rr {2d}} 
e^{i \left( \la u, \eta-\xi \ra + \la \zeta, x - y \ra \right)}
\lambda_{L,M} (x,\xi,\eta,u,\zeta) \, \dd u \, \dd \zeta \right|
\end{equation}
where $C > 0$ and 
\begin{align*}
& \lambda_{L,M} (x,\xi,\eta,u,\zeta) \\
& = ( 1 - \Delta_\zeta )^L \left( e^{i \la u, \zeta \ra } \eabs{\eta - \xi + \zeta}^{- 2M }
( 1 - \Delta_u )^M 
\left( \left( 1- a (u + x,\zeta + \eta) \right) \wh \fy(\zeta) \overline{\fy(u)} \right) \right).
\end{align*}

Let $(x,\xi) \in \Omega_{\rho,\ep}$ and $L \in \no$.
In the integral \eqref{eq:kernelexpression1} we have $(u + x,\zeta + \eta) \in \rr {2d} \setminus \Omega_{\rho,\delta}$
due to \eqref{eq:suppsymbol1}.
By Lemma \ref{lem:distance} we thus have if $|(x,\xi)| \geqs 1$
\begin{equation*}
\eabs{ (x,\xi) }^L 
\lesssim \eabs{(u,\zeta + \eta - \xi)}^{\frac1\rho L \max \left( \sigma, \frac{1}{\sigma} \right)}.
\end{equation*}
This allows us to estimate \eqref{eq:kernelexpression1}, taking into account $a \in G_\rho^{0,\sigma}$, $\fy \in \cS(\rr d)$, 
and using \eqref{eq:Peetre}, for any $L, M, \ell \in \no$
\begin{equation}\label{eq:kernelestimate1}
\begin{aligned}
& \eabs{ (x,\xi) }^L |K(x, \xi ; y, \eta)| \\
& \lesssim \eabs{ x - y }^{ - 2 L}  
\int_{\rr {2d}}
\eabs{(u,\zeta + \eta - \xi)}^{\frac1\rho L \max \left( \sigma, \frac{1}{\sigma} \right)}
\eabs{\eta - \xi + \zeta}^{- 2M } \eabs{\zeta}^{-\ell}  \eabs{u}^{-\ell} \, \dd u \, \dd \zeta \\
& \lesssim 
\eabs{ x - y }^{ - 2 L}  
\eabs{ \xi - \eta }^{ \frac1\rho L \max \left( \sigma, \frac{1}{\sigma} \right) - 2 M}  
\int_{\rr {2d}}
 \eabs{\zeta}^{\frac1\rho L \max \left( \sigma, \frac{1}{\sigma} \right) + 2 M-\ell}  \eabs{u}^{ \frac1\rho L \max \left( \sigma, \frac{1}{\sigma} \right) - \ell} \, \dd u \, \dd \zeta \\
 & \lesssim 
\eabs{ (x - y,\xi-\eta) }^{ - L}  
\end{aligned}
\end{equation}
provided $M \geqs \frac{L}{2} \left( 1 + \frac{1}{\rho} \max \left( \sigma, \frac{1}{\sigma} \right) \right)$ and $\ell$ is sufficiently large. 

Let $N \in \no$.
Finally we obtain from \eqref{eq:STFTkernel} with $c = 1 - a$ and \eqref{eq:kernelestimate1},
using \eqref{eq:Peetre} and \eqref{eq:STFTtempered} for some $r \geqs 0$, 
for $z \in \Omega_{\rho,\ep}$
\begin{align*}
\eabs{z}^N |V_\fy \left( ( 1 - a ) (x,D) u \right) (z)|
& \lesssim \int_{\rr {2d}} \eabs{z}^N |K( z, w)| \, |V_\fy u(w)| \, \dd w \\
& \lesssim \int_{\rr {2d}} \eabs{z}^{N-L} \eabs{z-w}^{-L} \eabs{w}^{r + 2 d + 1} \eabs{w}^{- 2 d - 1} \, \dd w \\
& \lesssim \int_{\rr {2d}} \eabs{z}^{N + r + 2 d + 1 - L} \eabs{z-w}^{r + 2 d + 1 - L} \eabs{w}^{- 2 d - 1} \, \dd w < \infty
\end{align*}
if $L \in \no$ is sufficiently large. 
We have shown \eqref{eq:reduceddecay1} for all $N \geqs 0$ which proves \eqref{eq:decayestimate1}.

To prove the opposite claim we assume that \eqref{eq:decayestimate1} holds for some $0 < \ep < 1$. 
Let $0 < \delta < \min(b_\sigma,\ep)$, let $0 < \delta < \gamma  < \nu < \lambda < \min(b_\sigma,\ep)$ and define $a = q_{\gamma,\nu,\rho,\sigma,\mu,\Omega} \in G_\rho^{0,\sigma}$ as in Proposition \ref{prop:cutoffsymbol}.
Lemma \ref{lem:closureinclusion} then implies that $\supp a \subseteq \Omega_{\rho,\lambda}$. 

Define the localization (or Anti-Wick) operator \cite{Nicola1,Shubin1} $A_a$ as 
\begin{equation}\label{eq:locop}
A_a u(y) = (2 \pi)^{- \frac{d}{2} } \int_{\rr {2d}} a(x,\xi) V_\psi u(x, \xi) M_\xi T_x \psi (y) \, \dd x \, \dd \xi
\end{equation}
where $\psi(x) 
= \pi^{ - \frac{d}{4} } e^{- \frac12 |x|^2} \in \cS(\rr d)$ is a Gaussian.

By \cite[Proposition~1.7.9]{Nicola1} we have $A_a = b^w(x,D)$ where $b = \pi^{-d} a * e^{- | \cdot |^2}$
We estimate for $\alpha, \beta \in \nn d$ using
\eqref{eq:Peetreaniso} and \eqref{eq:symbolderivative1} 
\begin{align*}
\left| \pdd x \alpha \pdd \xi \beta b (x,\xi) \right|
& = \pi^{-d} \left| \int_{\rr {2d}} \pdd x \alpha \pdd \xi \beta a(x-y,\xi-\eta) e^{-| y |^2 - | \eta |^2} \, \dd y \, \dd \eta \right| \\
& \lesssim \int_{\rr {2d}} \theta_\sigma (x-y,\xi-\eta)^{- \rho ( |\alpha| + \sigma |\beta|)} e^{-| y |^2 - | \eta |^2} \, \dd y \, \dd \eta \\
& \lesssim \theta_\sigma (x,\xi)^{- \rho ( |\alpha| + \sigma |\beta|)}
\int_{\rr {2d}} \theta_\sigma (y,\eta)^{ \rho ( |\alpha| + \sigma |\beta|)} e^{-| y |^2 - | \eta |^2} \, \dd y \, \dd \eta \\
& \lesssim \theta_\sigma (x,\xi)^{- \rho ( |\alpha| + \sigma |\beta|)}
\end{align*}
which shows that $b \in G_\rho^{0,\sigma}$. 

Let $0 < r \leqs \gamma - \delta$
and let $(x,\xi) \in \Omega_{\rho,\delta}$. 
If $|(y,\eta)| \leqs r$ then $(x-y,\xi-\eta) \in \Omega_{\rho,\gamma}$ 
follows from \eqref{eq:anisoneighb},
and therefore $a(x-y,\xi-\eta) = 1$ from its construction.
Hence we get if $(x,\xi) \in \Omega_{\rho,\delta}$
\begin{align*}
\left| b (x,\xi) \right|
& = \pi^{-d} \int_{\rr {2d}} a(x-y,\xi-\eta) e^{-| y |^2 - | \eta |^2} \, \dd y \, \dd \eta \\
& \geqs \pi^{-d} \int_{\rB_r} a(x-y,\xi-\eta) e^{-| y |^2 - | \eta |^2} \, \dd y \, \dd \eta \\
& = \pi^{-d} \int_{\rB_r} e^{-| y |^2 - | \eta |^2} \, \dd y \, \dd \eta 
= C_r > 0
\end{align*}
uniformly over $(x,\xi) \in \Omega_{\rho,\delta}$. 
This shows that the symbol $b$ is elliptic in $\Omega_{\rho,\delta}$.

By the assumption \eqref{eq:decayestimate1} and Proposition \ref{prop:windowinvariance} we have 
\begin{equation*}
\sup_{z \in \Omega_{\rho,\lambda}}
\eabs{z}^N |V_\psi u(z)| < \infty \quad \forall N \geqs 0. 
\end{equation*}
From \eqref{eq:locop}, $\supp a \subseteq \Omega_{\rho,\lambda}$ and $\psi \in \cS$ we get for any $\alpha,\beta \in \nn d$
\begin{align*}
\left| y^\alpha \pdd y \beta b^w(x,D) u (y) \right|
& \lesssim \int_{\Omega_{\rho,\lambda}} \left| V_\psi u(x, \xi) \right| \eabs{y}^{|\alpha|} \left| \pdd y \beta \left( M_\xi T_x \psi (y) \right) \right| \, \dd x \, \dd \xi \\
& \lesssim \sum_{|\gamma| \leqs |\beta|} \int_{\Omega_{\rho,\lambda}} \left| V_\psi u(x, \xi) \right|
\eabs{x}^{|\alpha|} \eabs{\xi}^{|\gamma|} \eabs{y - x}^{|\alpha|} \left| \partial^{\beta-\gamma} \psi(y-x) \right|
\, \dd x \, \dd \xi \\
& < C_{\alpha,\beta}
\end{align*}
where $C_{\alpha,\beta} > 0$ for all $y \in \rr d$, which shows that $b^w(x,D) u \in \cS$.
We have shown that $u$ is smooth in $\Omega_{\rho,\delta}$ as claimed. 
\end{proof}

Combining Proposition \ref{prop:ellipticlarger} with Theorem \ref{thm:characsing} gives the following characterization of the filter of anisotropic singularities in terms of the STFT. 

\begin{cor}\label{cor:filtercharacterization}
Suppose $\sigma \in \qo_+$ and $0 < \rho \leqs 1$, let
$u \in \cS'(\rr d)$, let $\fy \in \cS(\rr d) \setminus 0$, and let $\emptyset \neq \Omega \subseteq \rr {2d}$. 
Then $\rr {2d} \setminus \Omega \in \fF_\rho(u)$
if and only if there exists $\ep > 0$ such that 
\begin{equation}\label{eq:decayestimate2}
\sup_{z \in \Omega_{\rho,\ep}}
\eabs{z}^N |V_\fy u (z)| < \infty \quad \forall N \geqs 0.
\end{equation}
\end{cor}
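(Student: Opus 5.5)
The plan is to prove the two implications separately, each time combining Theorem \ref{thm:characsing} with Proposition \ref{prop:ellipticlarger}. By Definition \ref{def:anisofilter}, $\rr {2d} \setminus \Omega \in \fF_\rho(u)$ means exactly that $u$ is $(\sigma,\rho)$-smooth in $\Omega$.

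\emph{Necessity.} Suppose $\rr {2d} \setminus \Omega \in \fF_\rho(u)$, so $u$ is smooth in $\Omega$. The first step is to enlarge the smoothness region. By Proposition \ref{prop:ellipticlarger} (cf. Remark \ref{rem:smooth} (6)), the elliptic symbol $a$ witnessing smoothness is elliptic in $\Omega_{\rho,\mu}$ for some $\mu > 0$. Since the neighborhoods \eqref{eq:anisoneighb} increase with the parameter, we may shrink $\mu$ and assume $0 < \mu < 1$. Then $u$ is smooth in $\Omega_{\rho,\mu}$. Now apply the first part of Theorem \ref{thm:characsing} with $\delta = \mu$ and any $\ep \in (0,\mu)$. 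This gives \eqref{eq:decayestimate2} with that $\ep$.

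\emph{Sufficiency.} Suppose \eqref{eq:decayestimate2} holds for some $\ep > 0$. Since $\Omega_{\rho,\ep'} \subseteq \Omega_{\rho,\ep}$ for $\ep' \leqs \ep$, we may replace $\ep$ by a smaller value and assume $0 < \ep < 1$. The converse part of Theorem \ref{thm:characsing} then shows that $u$ is smooth in $\Omega_{\rho,\delta}$ for any $0 < \delta < \min(b_\sigma,\ep)$. Clearly $\Omega \subseteq \Omega_{\rho,\delta}$, since each $(y,\eta) \in \Omega$ lies in its own ball. An elliptic symbol on $\Omega_{\rho,\delta}$ is therefore elliptic on $\Omega$, so $u$ is smooth in $\Omega$. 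That is, $\rr {2d} \setminus \Omega \in \fF_\rho(u)$.

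Both implications are essentially bookkeeping on top of Theorem \ref{thm:characsing}. The only points needing care are the parameter restrictions. In the first direction, the smoothness set must have the form $\Omega_{\rho,\delta}$ with $\delta < 1$, and this is exactly what Proposition \ref{prop:ellipticlarger} supplies. In the second direction, $\ep < 1$ is required, which is handled by the monotonicity of $\ep \mapsto \Omega_{\rho,\ep}$. No further obstacle is expected.
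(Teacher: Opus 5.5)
Your proposal is correct and follows essentially the paper's route. The paper obtains the corollary exactly by combining Proposition \ref{prop:ellipticlarger}, which upgrades smoothness in $\Omega$ to smoothness in some $\Omega_{\rho,\mu}$ with $\mu<1$, with the two directions of Theorem \ref{thm:characsing}. Your handling of the parameter restrictions via the monotonicity $\Omega \subseteq \Omega_{\rho,\ep'} \subseteq \Omega_{\rho,\ep}$ for $\ep' \leqs \ep$ is the right bookkeeping.
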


For a given subset $\Omega \subseteq T^* \rr d$ we may define the filter $\fF_\Omega = \{ \Gamma \in \rP (\rr{2d}): \ \Omega \subseteq \Gamma \} \subseteq \rP (\rr{2d})$ of subsets containing $\Omega$.
Next we show that for a given Lebesgue measurable $\Omega \subseteq T^* \rr d$ there exists $u \in \cS'(\rr d)$ such that 
$\Omega_{\rho,\ep} \in \fF_\rho(u)$ for any $\ep > 0$. 
This construction reveals an aspect of flexibility of filters.
We do however not address the question whether a filter of anisotropic singularities may be any subset of $\rP(\rr {2d})$ that satisfies 
the filter axioms in Definition \ref{def:generalfilter}. 

We use the indicator function denoted $\chi_\Omega$ of a set $\emptyset \neq \Omega \subseteq T^* \rr d$, 
assumed Lebesgue measurable,
and
\begin{equation}\label{eq:distributionindicator}
u = V_\fy^* \chi_\Omega = (2 \pi)^{- \frac{d}{2} } \int_{\Omega}  \Pi(z) \fy \, \dd z \in \cS'(\rr d)
\end{equation}
where $\fy \in \cS(\rr d) \setminus 0$, cf. \eqref{eq:STFTadjoint}.
If $\Omega$ has Lebesgue measure zero then $u = 0$.
The following result is true for any $\emptyset \neq \Omega \subseteq \rr {2d}$
but non-trivial only if the Lebesgue measure of $\Omega$ is positive.

\begin{prop}\label{prop:distributionindicator}
Suppose $\sigma \in \qo_+$ and $0 < \rho \leqs 1$. 
Let $\emptyset \neq \Omega \subseteq \rr {2d}$ be Lebesgue measurable, 
and let $\fy \in \cS(\rr d) \setminus 0$. 
There exists $u \in \cS'(\rr d)$ such that $\Omega_{\rho,\ep} \in \fF_\rho (u)$ for any $0 < \ep < 1$
where $\Omega_{\rho,\ep}$ is defined by \eqref{eq:anisoneighb}. 
\end{prop}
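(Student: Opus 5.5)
Take $u = V_\fy^* \chi_\Omega$ as in \eqref{eq:distributionindicator}. By Corollary \ref{cor:filtercharacterization}, to get $\Omega_{\rho,\ep} \in \fF_\rho(u)$ it suffices to find $\ep' > 0$ such that $V_\fy u$ decays super-polynomially on $(\rr{2d} \setminus \Omega_{\rho,\ep})_{\rho,\ep'}$. Here we use that $\Omega_{\rho,\ep}$ is open, so its complement is closed and nonempty. If the complement is empty, $\Omega_{\rho,\ep} = \rr{2d}$ and we are done by Remark \ref{rem:filtersing1}.

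\emph{Representation of the STFT.} By \eqref{eq:moyal} we have
\begin{equation*}
V_\fy u(z) = (2\pi)^{-d} \int_\Omega \big(\Pi(w)\fy, \Pi(z)\fy\big)\, \dd w,
\end{equation*}
and $|(\Pi(w)\fy,\Pi(z)\fy)| = (2\pi)^{\frac d2}|V_\fy \fy(z-w)|$, where $V_\fy\fy \in \cS(\rr{2d})$. Hence
\begin{equation*}
|V_\fy u(z)| \lesssim \int_\Omega |V_\fy\fy(z-w)|\, \dd w \lesssim_N \int_\Omega \eabs{z-w}^{-N-2d-1}\, \dd w .
\end{equation*}

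\emph{Separation estimate.} The key geometric input is Lemma \ref{lem:distance}, applied with $\Omega$ itself and radii $0 < \ep_0 < \ep$ (e.g.\ $\ep_0 = \ep/2$). For $w \in \Omega \subseteq \Omega_{\rho,\ep_0}$ and $z \in \rr{2d}\setminus\Omega_{\rho,\ep}$ it gives
\begin{equation*}
|w|^{\rho\min(\sigma,1/\sigma)} \lesssim |z-w| .
\end{equation*}
This bounds $w$, but we need decay in $z$, so we also need $|z| \lesssim |w| + |z-w|$. Then $|z| \lesssim |z-w|^{\kappa} + |z-w|$ with $\kappa = 1/(\rho\min(\sigma,1/\sigma)) \geqs 1$. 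Therefore, when $|z-w| \geqs 1$,
\begin{equation*}
\eabs{z} \lesssim \eabs{z-w}^{\kappa},
\end{equation*}
and this holds in general after adjusting constants, since $|z-w| \gtrsim \eabs{w}^{1/\kappa}$ is bounded below. With that, for $z \in \rr{2d}\setminus \Omega_{\rho,\ep}$ we get
\begin{equation*}
\eabs{z}^N |V_\fy u(z)| \lesssim \int_{\rr{2d}} \eabs{z-w}^{N\kappa - M}\, \dd w < \infty
\end{equation*}
once $M$ is large, uniformly in $z$.

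\emph{Enlarging to a neighborhood.} Corollary \ref{cor:filtercharacterization} requires decay on a neighborhood $(\rr{2d}\setminus\Omega_{\rho,\ep})_{\rho,\ep'}$, not just on the complement. To handle this, run the same argument with $\ep$ replaced by a smaller $\ep_1 \in (\ep_0, \ep)$. This gives decay on $\rr{2d}\setminus\Omega_{\rho,\ep_1}$. It then remains to show
\begin{equation*}
(\rr{2d}\setminus\Omega_{\rho,\ep})_{\rho,\ep'} \subseteq \rr{2d}\setminus\Omega_{\rho,\ep_1}
\end{equation*}
for small $\ep'$. This is exactly the disjointness statement of Proposition \ref{prop:disjoint}, together with $\Omega_{\rho,\ep_1} \subseteq (\Omega_{\rho,\ep_1})_{\rho,\mu}$. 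That proposition requires $\ep_1 < \ep < 1$, which matches the hypothesis $0 < \ep < 1$.

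\emph{Main obstacle.} I expect the separation estimate to be the delicate step. Lemma \ref{lem:distance} controls $|w|$ rather than $|z|$, and the anisotropic powers must be tracked carefully so that the exponent $\kappa$ stays finite and independent of $z$. Everything else is routine.
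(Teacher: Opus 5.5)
Your proposal is correct and is essentially the paper's proof. The paper also takes $u = V_\fy^* \chi_\Omega$, bounds $|V_\fy u(z)|$ by $\int_\Omega |V_\fy \fy(z-w)| \, \dd w$, and uses Lemma \ref{lem:distance} to get $\eabs{w} \lesssim \eabs{z-w}^{\frac1\rho \max(\sigma,\frac1\sigma)}$ for $w \in \Omega$ and $z \notin \Omega_{\rho,\delta}$ with $\delta < \ep$; it then uses Proposition \ref{prop:disjoint} to pass to the neighborhood $(\rr{2d}\setminus\Omega_{\rho,\ep})_{\rho,\mu}$ required by Theorem \ref{thm:characsing}. The only cosmetic difference is that you control $\eabs{z}$ via $|z| \leqs |w| + |z-w|$ rather than Peetre's inequality \eqref{eq:Peetre}, which gives a slightly smaller exponent and changes nothing; the step you flagged as the main obstacle is in fact routine.
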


\begin{proof}
If $0 < \delta < \ep$ then it follows from Proposition \ref{prop:disjoint} that there exists $\mu > 0$ such that 
\begin{equation}\label{eq:complementinclusion}
\left( \rr {2d} \setminus \Omega_{\rho,\ep} \right)_{\rho,\mu}
\subseteq \rr {2d} \setminus \Omega_{\rho,\delta}
\end{equation}
with reference to \eqref{eq:anisoneighb}. 
If $u$ is defined by \eqref{eq:distributionindicator} then
\begin{equation}\label{eq:STFTindicator}
|V_\fy u (w)| \lesssim \int_{\Omega} |V_\fy \fy (z-w)| \, \dd z 
\end{equation}
and Lemma \ref{lem:distance} gives if $z \in \Omega$ and $w \in \rr {2d} \setminus \Omega_{\rho,\delta}$
\begin{equation}\label{eq:bracketestimate1}
\eabs{ z } \lesssim \eabs{ w - z }^{\frac1\rho \max \left( \sigma, \frac{1}{\sigma} \right)}.
\end{equation}

Combining \eqref{eq:STFTindicator}, \eqref{eq:bracketestimate1}, \eqref{eq:Peetre} and \eqref{eq:STFTschwartz}
we obtain if $w \in \rr {2d} \setminus \Omega_{\rho,\delta}$ for any $N \geqs 0$
\begin{align*}
\eabs{ w }^N |V_\fy u (w)| 
& \lesssim \int_{\Omega} \eabs{z}^N \eabs{ w - z }^N \, |V_\fy \fy (z-w)| \, \dd z \\
& \lesssim \int_{\Omega} \eabs{ w - z }^{N \left( 1 + \frac1\rho \max \left( \sigma, \frac{1}{\sigma} \right) \right)} \, |V_\fy \fy (z-w)| \, \dd z < \infty.
\end{align*}

Theorem \ref{thm:characsing} and \eqref{eq:complementinclusion} now shows that $u$ is smooth in $\rr {2d} \setminus \Omega_{\rho,\ep}$, that is $\Omega_{\rho,\ep} \in \fF_\rho(u)$.
\end{proof}

\begin{rem}\label{rem:TemperedNotSchwartz}
Let $\emptyset \neq \Omega \subseteq T^* \rr d$ be Lebesgue measurable, 
and let $u \in \cS'(\rr d)$ be defined by 
\eqref{eq:distributionindicator}. 
If $\Omega$ is bounded then $u \in \cS(\rr d)$. This is an immediate consequence of \eqref{eq:STFTindicator}, 
\eqref{eq:Peetre} and \eqref{eq:STFTschwartz}.
If on the other hand $u \in \cS'(\rr d) \setminus \cS(\rr d)$ then $\Omega$ hence must be unbounded. 
By Remark \ref{rem:Schwartzfilter} there exists 
$\Gamma \subseteq \rr {2d}$ such that $\Gamma \notin \fF_\rho (u)$. 
From Proposition \ref{prop:distributionindicator}  and property (2) of a filter (cf. Definition \ref{def:generalfilter}) 
it follows that $\Omega_{\rho,\ep} \setminus \Gamma \neq \emptyset$ for any $0 < \ep < 1$.
\end{rem}

\subsection{Metaplectic properties of filters of anisotropic singularities}\label{subsec:metaplectic}

By the proof of \cite[Lemma~4.7]{Wahlberg2} we have for $u \in \cS'(\rr d)$, $\fy \in \cS(\rr d) \setminus 0$ and $\chi \in \Sp(d,\ro)$
\begin{equation}\label{eq:STFTmetaplectic}
\left| V_{ \mu(\chi) \fy} ( \mu(\chi) u )( \chi z)\right| = \left| V_\fy u ( z)\right|, \quad z \in \rr {2d}, \quad u \in \cS'(\rr d). 
\end{equation}
where $\chi \mapsto \mu(\chi)$ is the metaplectic representation (cf. Section \ref{subsec:psidiocalc}), 
and $\mu(\chi)$ is a homeomorphism both on $\cS$ and on $\cS'$ for each $\chi \in \Sp(d,\ro)$.

For the Gabor wave front set, where $\sigma = 1$, this leads to the symplectic invariance \cite[Proposition~2.2]{Hormander2}
\begin{equation}\label{eq:WFgmetaplectic}
\WFg ( \mu(\chi) u) = \chi \WFg(u), \quad \chi \in \Sp(d,\ro), \quad u \in \cS'(\rr d).
\end{equation}

When the anisotropy parameter satisfies $0 < \sigma \neq 1$ the metaplectic invariance \eqref{eq:WFgmetaplectic} is no longer valid. 
It is nevertheless true for certain symplectic matrices $\chi \in \Sp(d,\ro)$ \cite[Proposition~4.3]{Rodino4}. 
In fact according to \cite[Proposition~4.10]{Folland1} each matrix $\chi \in \Sp(d,\ro)$ is a finite product of matrices in $\Sp(d,\ro)$ of the form
\begin{equation*}
\J, \quad 
\chi_A = \left(
  \begin{array}{cc}
  A^{-1} & 0 \\
  0 & A^{T}
  \end{array}
\right), 
\quad
\left(
  \begin{array}{cc}
  I & 0 \\
  B & I
  \end{array}
\right), 
\end{equation*}
for $A \in \GL(d,\ro)$ and $B \in \rr {d \times d}$ symmetric, where 
\begin{equation}\label{eq:Jdef}
\J =
\left(
\begin{array}{cc}
0 & I_d \\
- I_d & 0
\end{array}
\right) \in \rr {2d \times 2d}.
\end{equation}

The corresponding metaplectic operators are 
$\mu(\J) = \cF$, $\mu(\chi_A) = \mu_A$ with 
\begin{equation*}
\mu_A f(x) 
= |A|^{\frac12} f(Ax), 
\end{equation*}
if $A \in \GL(d,\ro)$, and 
\begin{equation*}
\mu 
\left(
  \begin{array}{cc}
  I & 0 \\
  B & I
  \end{array}
\right) f(x)
= e^{\frac{i}{2} \la B x, x \ra} f(x)
\end{equation*}
if $B \in \rr {d \times d}$ is symmetric. 

If $u \in \cS'(\rr d)$ and $\sigma > 0$ it follows from \cite[Proposition~4.3]{Rodino4} that
\begin{equation}\label{eq:WFgsmetaplectic1}
\WFgs (\wh u) = \J \WF_{\rm g}^{\frac1\sigma} (u), 
\end{equation}
and if $A \in \GL(d,\ro)$ 
then 
\begin{equation}\label{eq:WFgsmetaplectic2}
\WFgs ( \mu_A u) = \chi_A \WFgs (u). 
\end{equation}

The following result is a version of \eqref{eq:WFgsmetaplectic1} and \eqref{eq:WFgsmetaplectic2} for filters of anisotropic singularities. 
In the sequel we use the notation for $\Psi \subseteq \rP(\rr {2d})$
and $F: \rr {2d} \to \rr {2d}$
\begin{equation*}
F \Psi = \{ F \Omega \subseteq \rr {2d}: \, \Omega \in \Psi \} \subseteq \rP(\rr {2d}). 
\end{equation*}

\begin{prop}\label{prop:metaplectic}
Let $\sigma \in \qo_+$, $0 < \rho \leqs 1$, 
$u \in \cS'(\rr d)$ and $A \in \GL(d,\ro)$. 
Then
\begin{equation}\label{eq:filtermetaplectic1}
\fF_{\frac1\sigma,\rho} (\wh u) = \J \fF_{\sigma,\rho} (u)
\end{equation}
and 
\begin{equation}\label{eq:filtermetaplectic2}
\fF_{\sigma,\rho} (\mu_A u) = \chi_A \fF_{\sigma,\rho} (u). 
\end{equation}
\end{prop}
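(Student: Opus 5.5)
The most direct route uses the symplectic covariance \eqref{symplecticoperator} of the Weyl quantization. Smoothness in a set is then transported by conjugation with the metaplectic operators $\mu(\J)=\cF$ and $\mu(\chi_A)=\mu_A$. By the definition of the filter it suffices to prove the following for every $\Omega \subseteq \rr {2d}$: if $u$ is $(\sigma,\rho)$-smooth in $\Omega$, then $\wh u$ is $(\frac1\sigma,\rho)$-smooth in $\J\Omega$, and $\mu_A u$ is $(\sigma,\rho)$-smooth in $\chi_A \Omega$. This gives the inclusions $\J \fF_{\sigma,\rho}(u) \subseteq \fF_{\frac1\sigma,\rho}(\wh u)$ and $\chi_A \fF_{\sigma,\rho}(u)\subseteq \fF_{\sigma,\rho}(\mu_A u)$, because $\J(\rr{2d}\setminus\Omega)=\rr{2d}\setminus \J\Omega$ when $\J$ is a bijection, and likewise for $\chi_A$. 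The reverse inclusions follow by applying the same statement to $\wh u$ with $\frac1\sigma$ in place of $\sigma$, using $\J^{-1}=-\J$, $\wh{\wh u}=\check u$, and the symmetry $(x,\xi)\mapsto(-x,-\xi)$. For \eqref{filtermetaplectic2} we use $A^{-1}$ together with $\mu_{A^{-1}}\mu_A=\mathrm{id}$ and $\chi_A^{-1}=\chi_{A^{-1}}$. Since $\frac1\sigma\in\qo_+$, all filters involved are well defined.

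For the Fourier transform, let $a\in G_\rho^{r,\sigma}$ be elliptic in $\Omega$ with $a^w u\in\cS$. By \eqref{symplecticoperator} with $\chi=\J^{-1}$ we get $(a\circ \J^{-1})^w \cF = \cF a^w$ up to a unimodular constant, so $(a\circ\J^{-1})^w \wh u = \cF(a^w u)\in \cS$. It remains to check the symbol class. Set $b(x,\xi)=a(\J^{-1}(x,\xi))=a(-\xi,x)$. The key identity is $\theta_{1/\sigma}(x,\xi)^{\sigma} \asymp \theta_\sigma(-\xi,x)$, since $(1+|x|^\sigma+|\xi|)\asymp(1+|x|+|\xi|^{1/\sigma})^\sigma$ after swapping roles, which follows from \eqref{eq:quasitriangleineq}. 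A derivative $\partial_x^\alpha\partial_\xi^\beta b$ corresponds to $\xi$-derivatives of order $\alpha$ and $x$-derivatives of order $\beta$ of $a$. These are bounded by $\theta_\sigma^{r-\rho(|\beta|+\sigma|\alpha|)} \asymp \theta_{1/\sigma}^{\sigma r - \rho\sigma(|\alpha| + \frac1\sigma|\beta|)}$, so $b\in G_\rho^{\sigma r,1/\sigma}$. The same weight equivalence shows that the lower bound $|a|\gtrsim \theta_\sigma^r$ on $\Omega\setminus\rB_R$ becomes $|b|\gtrsim\theta_{1/\sigma}^{\sigma r}$ on $\J\Omega\setminus \rB_R$, since $\J$ is an isometry. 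Hence $b$ is elliptic in $\J\Omega$.

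For $\mu_A$ the argument is the same with $\chi=\chi_A$: $b=a\circ\chi_A^{-1}$, $b(x,\xi)=a(Ax,A^{-T}\xi)$. Here $\theta_\sigma(Ax,A^{-T}\xi)\asymp\theta_\sigma(x,\xi)$ by the linearity and invertibility of $A$, and the chain rule only produces linear combinations of derivatives of the same orders. Therefore $b\in G_\rho^{r,\sigma}$, $b$ is elliptic in $\chi_A\Omega$, and $b^w\mu_A u=\mu_A a^w u\in\cS$, because $\mu_A$ preserves $\cS$. One has to keep track of the direction of $\chi$ versus $\chi^{-1}$ in \eqref{symplecticoperator}, but any consistent choice gives $\chi\Omega$ as the image set.

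The main obstacle I expect is the weight bookkeeping in the Fourier case. One must verify that the exponent $\rho$ is preserved, which it is, since $\rho\sigma|\alpha|+\rho|\beta| = \rho\sigma(|\alpha|+\frac1\sigma|\beta|)$. The anisotropy then switches from $\sigma$ to $\frac1\sigma$ and the order becomes $\sigma r$. Beyond this, the argument is purely formal.
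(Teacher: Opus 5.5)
Your argument takes a genuinely different route from the paper, and its structure is sound, but the weight computation in the Fourier case is wrong as written and needs correcting.

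\textbf{Comparison of routes.} The paper never transforms the symbol. It converts smoothness in $\Omega$ into super-polynomial decay of $V_\fy u$ on some $\Omega_{\rho,\ep}$, using Proposition \ref{prop:ellipticlarger} and Theorem \ref{thm:characsing}. It transports that decay with the STFT identity \eqref{eq:STFTmetaplectic}, which holds for every $\chi\in\Sp(d,\ro)$. It then proves the geometric inclusions $(\J\Omega)^{1/\sigma}_{\rho,c\ep}\subseteq \J(\Omega^\sigma_{\rho,\ep})$ and $(\chi_A\Omega)_{\rho,\delta}\subseteq\chi_A(\Omega_{\rho,\ep})$, and returns to smoothness via the converse half of Theorem \ref{thm:characsing}.

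You instead stay with Definition \ref{def:smooth} and push the elliptic symbol forward by the exact covariance \eqref{symplecticoperator}. Only the symbol class and the ellipticity of $a\circ\chi^{-1}$ then need checking. Your route is shorter and avoids the excision symbols and the STFT characterization entirely. The implication ``smooth in $\Omega$ $\Rightarrow$ smooth in $\chi\Omega$'' also does not need $\sigma$ rational in your version. The paper's route instead isolates the $\chi$-dependence in a neighborhood inclusion. Both rest on the same fact: $\J$ intertwines the $\sigma$- and $\frac1\sigma$-weights, and $\chi_A$ preserves $\theta_\sigma$ up to constants. Your reverse inclusions, via $\wh{\wh u}=\check u$ with $-\J^{-1}=\J$ and via $A^{-1}$, are fine.

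\textbf{The error.} The weight bookkeeping you flagged as the main obstacle is where it goes wrong. Your key identity has the power on the wrong side. Your own (correct) equivalence $\theta_\sigma(y,\eta)^\sigma\asymp 1+|y|^\sigma+|\eta|$, with $(y,\eta)=(-\xi,x)$, gives $\theta_\sigma(-\xi,x)^\sigma\asymp\theta_{1/\sigma}(x,\xi)$. Equivalently, $\theta_\sigma(-\xi,x)\asymp\theta_{1/\sigma}(x,\xi)^{1/\sigma}$. Your version fails already for $\sigma=2$ and $(x,\xi)=(t,0)$, where it compares $(1+t)^2$ with $1+t^{1/2}$.

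Moreover, $G_\rho^{r',1/\sigma}$ has derivative loss $\rho(|\alpha|+\frac1\sigma|\beta|)$, not $\rho\sigma(|\alpha|+\frac1\sigma|\beta|)$. The correct computation is
\begin{equation*}
|\partial_x^\alpha\partial_\xi^\beta b(x,\xi)| \lesssim \theta_\sigma(-\xi,x)^{r-\rho(|\beta|+\sigma|\alpha|)} \asymp \theta_{1/\sigma}(x,\xi)^{\frac{r}{\sigma}-\rho\left(|\alpha|+\frac1\sigma|\beta|\right)} .
\end{equation*}
Hence $b\in G_\rho^{r/\sigma,1/\sigma}$, with $\rho$ indeed preserved, and $|b|\gtrsim\theta_{1/\sigma}^{r/\sigma}$ on $\J\Omega\setminus\rB_R$.

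Your claims $b\in G_\rho^{\sigma r,1/\sigma}$ and ellipticity of order $\sigma r$ cannot both hold on an unbounded $\J\Omega$ unless $r=0$ or $\sigma=1$. The order in Definition \ref{def:smooth} is only required to exist, so replacing $\sigma r$ by $r/\sigma$ repairs the argument. With that correction the proof goes through.
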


\begin{proof}
Let $\fy \in \cS(\rr d) \setminus 0$. 
Suppose that $u \in \cS'(\rr d)$ is $(\sigma,\rho)$-smooth in $\emptyset \neq \Omega \subseteq T^* \rr d$. 
Then it follows from \eqref{eq:STFTmetaplectic},
Proposition \ref{prop:ellipticlarger} and Theorem \ref{thm:characsing}
that there exists $\ep > 0$ such that 
\begin{equation}\label{eq:STFTdecaymetaplectic}
\sup_{z \in \chi (\Omega_{\rho,\ep} )} \eabs{z}^N \left| V_{ \mu(\chi) \fy} ( \mu(\chi) u )(z)\right| < \infty \quad \forall N \geqs 0
\end{equation}
for any $\chi \in \Sp(d,\ro)$.  

Let $(x,\xi) \in \left( \J \Omega \right)_{\rho, C_\sigma^{-2 \max(1,\sigma)} \ep}^\frac1\sigma$ defined by \eqref{eq:anisoneighb}
where $C_\sigma \geqs 1$ is defined by \eqref{eq:qtrieqconstant}. 
By \eqref{eq:anisoneighb} there exists $(y,\eta) \in \J \Omega$ such that 
\begin{equation*}
| x - y | < \ep C_\sigma^{-2 \max(1,\sigma)} \theta_\frac1\sigma^\rho (y,\eta) 
\quad \mbox{ and } \quad 
|\xi - \eta| < \ep C_\sigma^{-2 \max(1,\sigma)} \theta_\frac1\sigma^{ \frac\rho\sigma} (y,\eta).
\end{equation*}
Using \eqref{eq:quasitriangleineq} and $0 < \rho \leqs 1$ we obtain
\begin{align*}
& | x - y | < \ep \left( 1 + |y|^\frac1\sigma + |\eta| \right)^{\rho \sigma}
= \ep \theta_{\sigma} ( \J^{-1} (y,\eta) )^{\rho \sigma}, \\
& |\xi - \eta| < \ep \left( 1 + |y|^\frac1\sigma + |\eta| \right)^{\rho}
= \ep \theta_{\sigma} ( \J^{-1}(y,\eta) )^{\rho}.
\end{align*}
Taking into account $\J^{-1} (y,\eta) = (-\eta, y) \in \Omega$ 
it follows from \eqref{eq:anisoneighb} that $\J^{-1} (x,\xi) = (-\xi, x) \in \Omega_{\rho, \ep}^\sigma$. 

We have now shown
\begin{equation*}
\left( \J \Omega \right)_{\rho, C_{\sigma}^{-2 \max(1,\sigma)} \ep}^\frac1\sigma
\subseteq \J \left( \Omega_{\rho,\ep}^\sigma \right)
\end{equation*}
so it follows from $\mu(\J) = \cF$, \eqref{eq:STFTdecaymetaplectic} and Theorem \ref{thm:characsing} that $\wh u$ is $(\frac1\sigma,\rho)$-smooth in $\J \Omega$. 
By the same argument and $\mu(\J^{-1}) = \cF^{-1}$ the opposite implication is true: If  
$\wh u$ is $(\frac1\sigma,\rho)$-smooth in $\J \Omega$ then 
$u$ is $(\sigma,\rho)$-smooth in $\Omega$.
We have proved \eqref{eq:filtermetaplectic1}.

Secondly it follows from \eqref{eq:anisoneighb} that if $\ep,\delta > 0$ satisfy 
$\delta \alpha^{1 + \max(\sigma,\frac1\sigma)} \leqs \ep$ where $\alpha = \max \left( \| A^{-1} \|, \| A \| \right)$, then
\begin{equation*}
( \chi_A \Omega )_{\rho,\delta} 
\subseteq
\chi_A ( \Omega_{\rho,\ep} ).
\end{equation*}
It follows from Theorem \ref{thm:characsing} and \eqref{eq:STFTdecaymetaplectic}
\begin{equation}\label{eq:filtermatrixinclusion}
\chi_A  \fF_{\sigma,\rho} ( u ) \subseteq \fF_{\sigma,\rho} ( \mu_A u).
\end{equation}
From this we get 
\begin{equation*}
\chi_{A^{-1}}  \fF_{\sigma,\rho} ( \mu_A u ) \subseteq \fF_{\sigma,\rho} ( u)
\end{equation*}
due to $\mu_{A^{-1}} \mu_A u = u$,
and the fact that $\chi_{A^{-1}} = \chi_A^{-1}$ proves the inclusion opposite to \eqref{eq:filtermatrixinclusion}. 
We have proved \eqref{eq:filtermetaplectic2}.
\end{proof}

\subsection{Examples of filters of anisotropic singularities}\label{subsec:examples}

The following result concerns sufficient conditions for subsets of $\rr {2d}$ to belong, or not belong, to the anisotropic filter of singularities of $\delta_0$ and $1$ respectively. 

\begin{prop}\label{prop:filterdeltaandone}
Suppose $\sigma \in \qo_+$ and $0 < \rho \leqs 1$. 

\begin{enumerate}[\rm (i)]

\item
If 
\begin{equation}\label{eq:Omegaassumption1}
\Omega \subseteq \{ (y,\eta) \in \rr {2d}: \ \eabs{\eta} \leqs C \eabs{y}^\sigma \} 
\end{equation}
with $C > 0$
then $\rr {2d} \setminus \Omega \in \fF_{\sigma,\rho} (\delta_0)$, and 
there exists $\Gamma \subseteq \rr {2d}$ such that $\Gamma \notin \fF_{\sigma,\rho} (\delta_0)$ and 
$\Omega \cup \Gamma \subsetneq \rr {2d}$.

\item
If 
\begin{equation}\label{eq:Omegaassumption2}
\Omega \subseteq \{ (y,\eta) \in \rr {2d}: \ \eabs{y}^\sigma \leqs C \eabs{\eta} \} 
\end{equation}
with $C > 0$
then $\rr {2d} \setminus \Omega \in \fF_{\sigma,\rho} (1)$, and 
there exists $\Gamma \subseteq \rr {2d}$ such that $\Gamma \notin \fF_{\sigma,\rho} (1)$ and 
$\Omega \cup \Gamma \subsetneq \rr {2d}$.

\end{enumerate}

\end{prop}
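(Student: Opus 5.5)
We work with the STFT characterization, Corollary \ref{cor:filtercharacterization}, using a Gaussian window $\fy(x) = \pi^{-\frac d4} e^{-\frac12|x|^2}$. For $\delta_0$ we have $|V_\fy \delta_0(y,\eta)| = (2\pi)^{-\frac d2}|\fy(-y)| \asymp e^{-\frac12|y|^2}$, which is independent of $\eta$. For $1$ we have $|V_\fy 1(y,\eta)| = |\wh{\fy}(-\eta)| \asymp e^{-\frac12|\eta|^2}$, independent of $y$. By \eqref{eq:filtermetaplectic1}, (ii) with parameter $\sigma$ follows from (i) with parameter $\frac1\sigma$, since $\wh{\delta_0} = (2\pi)^{-\frac d2}\cdot 1$ and $\J$ maps $\{\eabs{\eta}\leqs C\eabs{y}^{1/\sigma}\}$ onto a set of the form \eqref{eq:Omegaassumption2}. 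Alternatively we can repeat the direct argument with the roles of $y$ and $\eta$ swapped. So we concentrate on (i).

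\emph{Membership.} By Corollary \ref{cor:filtercharacterization} it suffices to find $\ep>0$ such that $\eabs{z}^N e^{-\frac12|y|^2}$ is bounded on $\Omega_{\rho,\ep}$, where $z=(y,\eta)$. Take $(x,\xi)\in\Omega_{\rho,\ep}$ with reference point $(y,\eta)\in\Omega$, and fix $\ep<b_\sigma$.
\begin{itemize}
\item From $\eabs\eta\leqs C\eabs y^\sigma$ we get $\theta_\sigma(y,\eta)\lesssim \eabs y$.
\item Hence $|\xi|\leqs |\eta|+\ep\theta_\sigma^{\rho\sigma}(y,\eta)\lesssim\eabs y^\sigma$.
\item Also $|x-y|<\ep\theta_\sigma^\rho(y,\eta)\leqs \ep C'\eabs y$.
\end{itemize}
Choosing $\ep$ small so that $\ep C'<\frac12$, we get $|x|\geqs \frac12|y|-\ep C'$, so $\eabs y\lesssim\eabs x$. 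Then $\eabs\xi\lesssim\eabs x^\sigma$, and therefore $\eabs{(x,\xi)}\lesssim\eabs x^{\max(1,\sigma)}$. This gives superpolynomial decay of $e^{-\frac12|x|^2}$ in $\eabs{(x,\xi)}$ on $\Omega_{\rho,\ep}$. The one obstacle is bounding $\eabs y$ by $\eabs x$ uniformly, and the small choice of $\ep$ handles it.

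\emph{Non-membership.} Take
\[
\Gamma=\{(x,\xi):|x|\leqs 1,\ \eabs\xi> 2C\eabs x^\sigma+1\}\setminus\{(0,\xi):|\xi|\leqs 1\},
\]
or more simply $\Gamma=\{(x,\xi):|x|<1,\ |\xi|>R\}$ with $R$ large, minus a small piece. We need two things.
\begin{itemize}
\item $\Omega\cup\Gamma\ne\rr{2d}$. If $R$ is so large that $\eabs\xi>C2^{\sigma/2}$ whenever $|\xi|>R$, then $\Gamma\cap\Omega=\emptyset$. The point $(x,\xi)=(2e_1,0)$ can still lie in $\Omega$, so instead we take $\Gamma=\{|x|<1,\ |\xi|>R\}\cup\{(x,0): |x|\leqs 1\}^c\cap\{\eabs\xi> C\eabs x^\sigma\}$. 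It is simpler to note that the complement of $\Omega\cup\Gamma$ contains the points $(0,\xi)$ with $1<|\xi|\leqs R$ when $R>1$ is chosen large and $C\geqs 1$ is enlarged if needed (adjusting $\Gamma$ to exclude them). So we just take $\Gamma=\{|x|<1,\ |\xi|>R\}$, and then $(0,\xi)$ with $\eabs\xi>C$ and $|\xi|\leqs R$ lies outside $\Omega\cup\Gamma$.
\item $\Gamma\notin\fF_{\sigma,\rho}(\delta_0)$. Suppose it were in the filter. By Corollary \ref{cor:filtercharacterization} there is $\ep>0$ with decay of $V_\fy\delta_0$ on $(\rr{2d}\setminus\Gamma)_{\rho,\ep}$. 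For large $|\xi|$ the point $(0,\xi)$ lies in this neighborhood, because the point $(1,\xi)\notin\Gamma$ has $\theta_\sigma^\rho(1,\xi)\gtrsim|\xi|^{\rho/\sigma}>\ep^{-1}$. But $|V_\fy\delta_0(0,\xi)|$ is a positive constant, which contradicts the required decay.
\end{itemize}
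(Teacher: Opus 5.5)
Your proposal is correct. For the membership half of (i) and for (ii) it follows the paper's argument:
\begin{itemize}
\item On $\Omega_{\rho,\ep}$ with $\ep$ small you get $\eabs{y}\lesssim\eabs{x}$ and $\eabs{(x,\xi)}\lesssim\eabs{x}^{\max(1,\sigma)}$. The Gaussian decay of $V_\fy\delta_0$ in $x$ then gives the bound required in Corollary \ref{cor:filtercharacterization}. The paper instead splits into $|y|\leqs 1$ and $|y|\geqs 1$ and uses an arbitrary window, which is immaterial.
\item (ii) is transferred from (i) with parameter $\frac1\sigma$ via $\wh{\delta_0}=(2\pi)^{-\frac d2}$ and \eqref{eq:filtermetaplectic1}.
\end{itemize}

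Where you genuinely differ is the existence of $\Gamma$. The paper constructs nothing. Since $\delta_0\notin\cS$, Remark \ref{rem:Schwartzfilter} gives some $\Gamma\notin\fF_{\sigma,\rho}(\delta_0)$. Then $\Omega\cup\Gamma=\rr{2d}$ is impossible: it would force $\Gamma\supseteq\rr{2d}\setminus\Omega\in\fF_{\sigma,\rho}(\delta_0)$, hence $\Gamma\in\fF_{\sigma,\rho}(\delta_0)$ by property (2) of a filter. So \emph{every} $\Gamma\notin\fF_{\sigma,\rho}(\delta_0)$ works, even $\Gamma=\emptyset$ by Lemma \ref{lem:charschwartz}, and this half of the statement is purely formal.

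Your explicit choice $\Gamma=\{|x|<1,\ |\xi|>R\}$ is also correct. The witness $(0,\xi)$ with $C<\eabs{\xi}$ and $|\xi|\leqs R$ lies outside $\Omega\cup\Gamma$, and $V_\fy\delta_0$ fails to decay along $(e_1,\xi)$ as $|\xi|\to\infty$. This route costs an STFT computation but yields more. For $R\geqs C2^{\frac\sigma2}$ one has $\Gamma\subseteq\rr{2d}\setminus\Omega$, so it shows concretely that the filter set $\rr{2d}\setminus\Omega$ cannot be shrunk to this tube around the frequency axis. The paper's argument gives no such localization information.

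Two small remarks:
\begin{itemize}
\item The neighborhood argument in your non-membership step is unnecessary. Already $(e_1,\xi)\in\rr{2d}\setminus\Gamma\subseteq(\rr{2d}\setminus\Gamma)_{\rho,\ep}$, and $|V_\fy\delta_0(e_1,\xi)|$ is a positive constant for the Gaussian window.
\item In a write-up you should drop the abandoned candidates for $\Gamma$. The second one is actually wrong: it contains $\{\eabs\xi>C\eabs x^\sigma\}$ up to points with $\xi=0$. It therefore gives $\Omega\cup\Gamma=\rr{2d}$ when $\Omega$ is the full set in \eqref{eq:Omegaassumption1} and $C\geqs 1$.
\end{itemize}
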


\begin{proof}
$\rm{(i)}$
If $\fy \in \cS(\rr d) \setminus 0$ then
\begin{equation}\label{eq:STFTdelta}
V_\fy \delta_0(x,\xi) = (2 \pi)^{- \frac{d}{2}} \overline{\fy (-x)}. 
\end{equation}

Let $\ep > 0$ and let $(x,\xi) \in \Omega_{\rho,\ep}$. 
By \eqref{eq:anisoneighb} there exists $(y,\eta) \in \Omega$ such that 
\begin{equation}\label{eq:anisoneighb3}
|x-y| < \ep \theta_\sigma^\rho (y,\eta), \qquad | \xi - \eta| < \ep \theta_\sigma^{\rho \sigma} (y,\eta). 
\end{equation}
If $|y| \leqs 1$ then $|\eta| \leqs C 2^{\frac{\sigma}{2}}$ by assumption \eqref{eq:Omegaassumption1}, 
and \eqref{eq:anisoneighb3} gives
\begin{equation}\label{eq:phasevarbound1}
\eabs{(x,\xi)} \lesssim 1 \leqs \eabs{x}.
\end{equation}

If $|y| \geqs 1$ then by 
\eqref{eq:Omegaassumption1}, \eqref{eq:anisoneighb3} and $\rho \leqs 1$ 
we have 
\begin{equation}\label{eq:lowerboundx}
\begin{aligned}
|x| & \geqs |y| - \ep \left( 1 + |y| + |\eta|^{\frac1\sigma} \right)^\rho \\
& \geqs |y| - \ep |y|^\rho (2 + 2^\frac12 C^\frac1\sigma  )^\rho \\
& \geqs |y| \left( 1 - \ep (2 + 2^\frac12 C^\frac1\sigma  )^{\rho} \right) \geqs \frac12 |y|
\end{aligned}
\end{equation}
provided $\ep > 0$ is sufficiently small.
We also obtain from \eqref{eq:Omegaassumption1}, \eqref{eq:anisoneighb3} and $\rho \leqs 1$
\begin{equation}\label{eq:upperboundxi}
\begin{aligned}
|\xi| & \leqs |\eta| + \ep \left( 1 + |y| + |\eta|^{\frac1\sigma} \right)^{\rho \sigma} \\
& \leqs \eabs{y}^{\sigma} \left( C + \ep (2 + 2^\frac12 C^\frac1\sigma )^{\rho \sigma} \right). 
\end{aligned}
\end{equation}
Combining \eqref{eq:lowerboundx} and \eqref{eq:upperboundxi} we get $|\xi| \lesssim \eabs{x}^\sigma$ and 
consequently
\begin{equation}\label{eq:phasevarbound2}
\eabs{(x,\xi)} \lesssim \eabs{x}^{\max(\sigma,\frac1\sigma ) }.
\end{equation}

Finally we obtain from \eqref{eq:phasevarbound1}, \eqref{eq:phasevarbound2}, \eqref{eq:STFTdelta}
and $\fy \in \cS(\rr d)$
\begin{equation*}
\sup_{(x,\xi) \in \Omega_{\rho,\ep}} \eabs{(x,\xi)}^N |V_\fy \delta_0(x,\xi)| 
\lesssim \sup_{\rr d} \eabs{x}^{N \max(\sigma,\frac1\sigma ) } |\fy(-x)| < \infty
\end{equation*}
for any $N \geqs 0$. 
Corollary \ref{cor:filtercharacterization} now shows $\rr {2d} \setminus \Omega \in \fF_{\sigma,\rho} (\delta_0)$. 
Since $\delta_0 \in \cS' \setminus \cS$, by Remark \ref{rem:TemperedNotSchwartz}
 there exists $\Gamma \subseteq \rr {2d}$ such that 
 $\Gamma \notin \fF_{\sigma,\rho} (\delta_0)$ and 
$\Omega \cup \Gamma \subsetneq \rr {2d}$.
We have shown claim (i).

Claim (ii) follows from $\wh \delta_0 = (2 \pi)^{- \frac{d}2}$, claim (i), and Eq. \eqref{eq:filtermetaplectic1} in Proposition \ref{prop:metaplectic}. 
\end{proof}

\subsection{Sufficient conditions for $C^\infty$}\label{subsec:suffsmooth}

Here we give a filter version of \cite[Proposition~3]{Wahlberg3} which treats the (isotropic) Gabor wave front set. 

\begin{prop}\label{prop:suffsmooth}
Suppose $\sigma \in \qo_+$, $0 < \rho \leqs 1$ and $u \in \cS'(\rr d)$.
If $C > 0$, 
\begin{equation}\label{eq:Omegaassumption3}
\{ (y,\eta) \in T^* \rr d: \ \eabs{y}^\sigma \leqs C \eabs{\eta} \} 
\subseteq \Omega
\end{equation}
and $\rr {2d} \setminus \Omega \in \fF_{\sigma,\rho} (u)$ then $u \in C^\infty(\rr d)$ and there exists $p \geqs 0$ such that 
\begin{equation}\label{eq:smoothupperbound}
| \pd \alpha u (x)|
\leqs C_\alpha \eabs{x}^{p + \sigma |\alpha|}, \quad x \in \rr d, \quad \alpha \in \nn d, \quad C_\alpha > 0.
\end{equation}
\end{prop}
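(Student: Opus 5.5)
The plan is to use the STFT characterization of the filter and then reconstruct $u$ and its derivatives from the inversion formula \eqref{eq:STFTinverse}.

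\emph{Step 1: decay of the STFT.} The hypothesis $\rr {2d} \setminus \Omega \in \fF_{\sigma,\rho}(u)$ means that $u$ is smooth in $\Omega$. Set $\Lambda = \{ (y,\eta): \eabs{y}^\sigma \leqs C \eabs{\eta} \} \subseteq \Omega$. Then $u$ is also smooth in $\Lambda$, since ellipticity in $\Omega$ implies ellipticity in the subset $\Lambda$. By Corollary \ref{cor:filtercharacterization}, applied with the set $\Lambda$ and a Gaussian window $\fy$, there is $\ep > 0$ such that
\begin{equation*}
\sup_{z \in \Lambda_{\rho,\ep}} \eabs{z}^N |V_\fy u(z)| < \infty, \quad N \geqs 0.
\end{equation*}
We only need the weaker fact $\Lambda \subseteq \Lambda_{\rho,\ep}$. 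So for every $N$ we have $|V_\fy u(x,\xi)| \lesssim \eabs{(x,\xi)}^{-N}$ on $\Lambda$. Off $\Lambda$ we have $\eabs{\xi} < C^{-1}\eabs{x}^\sigma$. There we use \eqref{eq:STFTtempered}, which gives $|V_\fy u(x,\xi)| \lesssim \eabs{(x,\xi)}^{r} \lesssim \eabs{x}^{r\max(1,\sigma)}$.

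\emph{Step 2: differentiating the inversion formula.} Write $u = c\iint V_\fy u(x,\xi)\, M_\xi T_x\fy\, \dd x\, \dd \xi$ and differentiate formally under the integral in $y$. This gives
\begin{equation*}
\pd \alpha u(y) = c \sum_{\gamma \leqs \alpha} \binom{\alpha}{\gamma} \iint V_\fy u(x,\xi)\, (i\xi)^{\gamma} e^{i\la y,\xi\ra}\, \pd {\alpha-\gamma}\fy(y-x) \, \dd x \, \dd \xi.
\end{equation*}
The integral converges absolutely, uniformly on compacts in $y$, provided that for every $M$ we have
\begin{equation*}
\iint |V_\fy u(x,\xi)| \eabs{\xi}^{M} \eabs{y-x}^{-K} \, \dd x \, \dd \xi < \infty.
\end{equation*}
We split the $(x,\xi)$ integral over $\Lambda$ and its complement. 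On $\Lambda$ the rapid decay of $V_\fy u$ absorbs any power of $\eabs{\xi}$ and of $\eabs{x}$. On the complement, $\xi$ lies in the ball $\eabs{\xi} \lesssim \eabs{x}^\sigma$. So the $\xi$-integral contributes at most $\eabs{x}^{\sigma(|\gamma| + d)}$, and $V_\fy u$ contributes $\eabs{x}^{r\max(1,\sigma)}$.

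\emph{Step 3: the $x$-integral.} We must control
\begin{equation*}
\int \eabs{x}^{p_0 + \sigma|\alpha|} |\pd {\alpha-\gamma}\fy(y-x)| \, \dd x \lesssim \eabs{y}^{p_0+\sigma|\alpha|},
\end{equation*}
which follows from Peetre's inequality \eqref{eq:Peetre}. Here $p_0 = r\max(1,\sigma) + \sigma d$ does not depend on $\alpha$. The derivative count is sharp because $|\xi|^{|\gamma|} \lesssim \eabs{x}^{\sigma|\gamma|}$ in that region. This yields \eqref{eq:smoothupperbound} with $p = p_0$, and also gives $u \in C^\infty$, because the differentiated integrals converge locally uniformly and so identify the distributional derivatives.

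The main obstacle I expect is justifying that the weakly defined inverse formula \eqref{eq:moyal} equals the pointwise absolutely convergent integral. I would handle this by testing against $f \in \cS$ and applying Fubini, which is allowed because the absolute bounds from Step 2 hold. The other point to check carefully is that $\Lambda$ really inherits smoothness from $\Omega$ with the correct $\rho$-neighborhood. Since only decay on $\Lambda$ itself is needed, this is harmless.
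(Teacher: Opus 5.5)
Your proposal is correct and follows essentially the same route as the paper: differentiate the STFT inversion formula, then split phase space into two regions. On the region where $u$ is smooth, $V_\fy u$ decays rapidly by Corollary \ref{cor:filtercharacterization}; on its complement one has $\eabs{\eta}\lesssim\eabs{y}^\sigma$, \eqref{eq:STFTtempered} applies, and Peetre's inequality finishes the estimate. The differences are minor: you split along $\Lambda$ rather than $\Omega$, and you integrate directly over the bounded $\xi$-ball instead of inserting the paper's factor $\eabs{(y,\eta)}^{-2d-1}$, which gives the smaller exponent $p=r\max(1,\sigma)+\sigma d$ in place of $(r+2d+1)(1+\sigma)$; your Fubini argument also supplies the justification that the paper leaves ``formal''.
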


\begin{proof}
Assumption \eqref{eq:Omegaassumption3} implies
\begin{equation}\label{eq:Omegacompassumption1}
\rr {2d} \setminus \Omega 
\subseteq 
\{ (y,\eta) \in T^* \rr d: \ \eabs{\eta} \leqs C^{-1} \eabs{y}^\sigma \}.
\end{equation}
Let $\fy \in \cS(\rr d)$ satisfy $\| \fy \|_{L^2} = 1$. 
We use \eqref{eq:STFTinverse}
and show that the integral for $\partial^\alpha u$ is absolutely convergent for any $\alpha \in \nn d$. 
Thus we write formally
\begin{equation}\label{eq:STFTreconstruction}
\partial^\alpha u (x) = (2\pi)^{-\frac{d}{2}} \sum_{\beta \leqslant \alpha} \binom{\alpha}{\beta} \int_{\rr {2d}} V_\fy u(y,\eta) \, (i \eta )^\beta e^{i \langle \eta,x \rangle} \partial^{\alpha-\beta} \fy( x - y ) \, \dd y \, \dd \eta. 
\end{equation}

We split the domain of the integral \eqref{eq:STFTreconstruction} in two pieces.
For the first integral over $\rr {2d} \setminus \Omega$ we obtain using 
\eqref{eq:Peetre}, \eqref{eq:STFTtempered} for some $r \geqs 0$, $\fy \in \cS$, and 
\eqref{eq:Omegacompassumption1}, for any $n \geqs 0$
\begin{equation}\label{eq:integralpiece1}
\begin{aligned}
& \left| \int_{\rr {2d} \setminus \Omega} V_\fy u(y,\eta) \, (i \eta)^\beta e^{i \langle \eta,x \rangle} \partial^{\alpha-\beta} \fy(x - y) \, \dd y \, \dd \eta \right| \\
& \leqs \int_{\rr {2d} \setminus \Omega} \eabs{(y,\eta)}^{r + 2 d + 1} \, \eabs{\eta}^{|\alpha|} \, 
|\partial^{\alpha-\beta} \fy(x - y)| \,
\eabs{(y,\eta)}^{ - 2 d - 1} \, 
\dd y \, \dd \eta \\
& \leqs \int_{\rr {2d} \setminus \Omega} \eabs{y}^{(r + 2 d + 1)(1+\sigma) + \sigma |\alpha|} \,  
\eabs{x-y}^{-n} \, \eabs{(y,\eta)}^{ - 2 d - 1} \, \dd y \, \dd \eta \\
& \lesssim \eabs{x}^{(r + 2 d + 1)(1+\sigma) + \sigma |\alpha|} 
\end{aligned}
\end{equation}
provided $n = (r + 2 d + 1)(1+\sigma) + \sigma |\alpha|$. 

For the remaining part of the integral we use the assumption that $u$ is smooth in $\Omega$
and Corollary \ref{cor:filtercharacterization}. 
Thus for any $N \geqs 0$
\begin{equation}\label{eq:integralpiece2}
\begin{aligned}
\left| \int_{\Omega} V_\fy u(y,\eta) \, (i \eta)^\beta e^{i \langle \eta,x \rangle} \partial^{\alpha-\beta} \fy(x-y) \, \dd y \, \dd \eta \right| 
& \lesssim \int_{\Omega} \eabs{(y,\eta)}^{-N} \, \eabs{\eta}^{|\alpha|} \, |\partial^{\alpha-\beta} \psi(x-y)| \, \dd y \, \dd \eta \\
& \lesssim \int_{\rr {2d}} \eabs{(y,\eta)}^{-N + |\alpha|}  \, \dd y \, \dd \eta 
\lesssim 1
\end{aligned}
\end{equation}
provided $N > |\alpha| + 2d+1$. 
Combining \eqref{eq:integralpiece1} and \eqref{eq:integralpiece2} shows in view of \eqref{eq:STFTreconstruction} that $u \in C^\infty(\rr d)$ and the estimate \eqref{eq:smoothupperbound} follows with $p = (r + 2 d + 1)(1+\sigma)$. 
\end{proof}

\section{Microlocality and microellipticity}\label{sec:microlocmicroell}

First we show a microlocal result for the filter of anisotropic singularities and 
pseudodifferential operators with anisotropic Shubin symbols. 

\begin{thm}\label{thm:microlocal}
If $\sigma \in \qo_+$, $0 < \rho \leqs 1$, $r \in \ro$, $a \in G_\rho^{r,\sigma}$ and $u \in \cS'(\rr d)$ then 
\begin{equation*}
\fF_\rho(u) \subseteq \fF_\rho( a^w(x,D) u). 
\end{equation*}
\end{thm}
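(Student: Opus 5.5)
Let $\Omega \in \fF_\rho(u)$. The goal is to show that $a^w u$ is smooth in $\rr{2d}\setminus\Omega$. The case $\Omega = \rr{2d}$ is trivial by Remark \ref{rem:filtersing1}, so assume $\Gamma := \rr{2d}\setminus\Omega \neq \emptyset$.

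First I enlarge the region of smoothness. By Remark \ref{rem:smooth} (6), i.e.\ Proposition \ref{prop:ellipticlarger}, $u$ is smooth in $\Gamma_{\rho,\lambda}$ for some $0<\lambda<1$. I choose numbers
\begin{equation*}
0<\ep_2<\delta_2<\ep_1<\delta_1<\min(\lambda,b_\sigma).
\end{equation*}
For $j=1,2$ put $q_j = q_{\ep_j,\delta_j,\rho,\Gamma}\in G_\rho^{0,\sigma}$ as in \eqref{eq:cutoffsymboldef}. Proposition \ref{prop:smoothorderzero} then gives $q_1^w u \in \cS$. Moreover $q_2$ is elliptic in $\Gamma_{\rho,\ep_2}\supseteq\Gamma$, since $q_2\equiv 1$ there. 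By Remark \ref{rem:supportintersection},
\begin{equation*}
\supp q_2\cap\supp(1-q_1)=\emptyset .
\end{equation*}

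The candidate operator for testing smoothness of $a^w u$ is $q_2^w$. I write
\begin{equation*}
q_2^w a^w u = q_2^w a^w q_1^w u + q_2^w a^w (1-q_1)^w u .
\end{equation*}
The first term lies in $\cS$ by \eqref{eq:SchwartzCont}, since $q_1^w u\in\cS$. For the second term I need $q_2\wpr a\wpr(1-q_1)\in\cS(\rr{2d})$, after which regularity follows because operators with Schwartz symbols map $\cS'$ into $\cS$. Since $q_2 \wpr a\in G_\rho^{r,\sigma}$ has support essentially contained in $\supp q_2$, the claim reduces to the disjoint-support property of the Weyl calculus. Concretely, by the asymptotic expansion \eqref{eq:calculuscomposition1}, every term of $(q_2\wpr a)\wpr(1-q_1)$ vanishes identically, because derivatives of $q_2\wpr a$ modulo lower order terms are supported in $\supp q_2$ and derivatives of $1-q_1$ are supported in $\supp(1-q_1)$.

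The main obstacle is that disjointness of the supports alone does not make the remainder Schwartz. One needs the supports to be separated at a rate growing with $|z|$, so that the oscillatory-integral remainder decays faster than any power of $\theta_\sigma$. This separation is exactly what Lemma \ref{lem:distance} supplies: points of $\Gamma_{\rho,\delta_2}$ and of $\rr{2d}\setminus\Gamma_{\rho,\ep_1}$ are at distance $\gtrsim |z|^{\rho\min(\sigma,1/\sigma)}$. To avoid handling $q_2 \wpr a$ directly, I would first apply the calculus to $a\wpr(1-q_1)$ and insert an intermediate cutoff $q_3$ with parameters between $\delta_2$ and $\ep_1$. Alternatively, I would argue as in the proof of Proposition \ref{prop:smoothorderzero}, where this fact was already used for symbols in $G_\rho$ classes ("the calculus then implies $q\wpr c\wpr(b-a_1)\in\cS$"), and justify it via the standard Weyl–Hörmander calculus of Remark \ref{rem:WeylHormander}. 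The key input there is that the distance between the supports, measured in the metric $g$, tends to infinity.

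Granting this, $q_2^w a^w u\in\cS$ with $q_2\in G_\rho^{0,\sigma}$ elliptic in $\Gamma$. Hence $a^w u$ is smooth in $\rr{2d}\setminus\Omega$, that is, $\Omega\in\fF_\rho(a^w u)$.
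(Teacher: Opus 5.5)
Your proposal is correct and is essentially the paper's proof. Both use nested cutoffs $q_1,q_2$ from Proposition~\ref{prop:smoothorderzero} with $\supp q_2\cap\supp(1-q_1)=\emptyset$ (Remark~\ref{rem:supportintersection}), the splitting $q_2^w a^w u=q_2^w a^w q_1^w u+\bigl(q_2\wpr a\wpr(1-q_1)\bigr)^w u$ with the last symbol in $\cS(\rr{2d})$ ``by the calculus,'' and ellipticity of $q_2$ on $\Gamma_{\rho,\ep_2}\supseteq\Gamma$. Your preliminary enlargement to $\Gamma_{\rho,\lambda}$ via Proposition~\ref{prop:ellipticlarger} makes explicit a step the paper leaves implicit.

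Your `main obstacle' is not actually one, for two reasons. First, mere disjointness of supports already suffices. In \eqref{eq:calculuscomposition1} the remainder after $N$ terms gains $\theta_\sigma^{-N\rho(1+\sigma)}$ for arbitrary symbols in the classes. Apply this to the expansion terms of $q_2\wpr a$, which are supported in $\supp q_2$, and then to their products with $1-q_1$, all of whose expansion terms vanish identically. This already places $q_2\wpr a\wpr(1-q_1)$ in $\bigcap_N G_\rho^{-N,\sigma}=\cS(\rr{2d})$. Second, your claim that the $g$-distance between $\Gamma_{\rho,\delta_2}$ and $\rr{2d}\setminus\Gamma_{\rho,\ep_1}$ tends to infinity is false. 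It is only bounded below by a multiple of $\ep_1-\delta_2$; what diverges is the distance in $g^{\rm symp}=h^{-2}g$.
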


\begin{proof}
We first observe that $\rr {2d} \in \fF_\rho(u) \cap \fF_\rho(a^w u)$ 
by Properties (1) and (2) of a filter, cf. Definition \ref{def:generalfilter}.
Next we show that if $u$ is smooth in $\emptyset \neq \Omega \subseteq \rr {2d}$, then $a^w u$ is also 
smooth in $\Omega$. 
Taking the complement $\rr {2d} \setminus \Omega$ proves the statement. 

Thus we assume that $u$ is smooth in $\emptyset \neq \Omega \subseteq \rr {2d}$.
By Proposition \ref{prop:smoothorderzero} there exists $0 < \ep_1 < \delta_1 < 1$ and $q_1 = q_{\ep_1,\delta_1,\rho,\Omega} \in G_\rho^{0,\sigma}$
defined by \eqref{eq:cutoffsymboldef} such that $q_1^w u \in \cS(\rr d)$. 
Let $0 < \ep_2 < \delta_2 < \ep_1$ and set $q_2 = q_{\ep_2,\delta_2,\rho,\Omega} \in G_\rho^{0,\sigma}$
again defined by \eqref{eq:cutoffsymboldef}. 
By Remark \ref{rem:supportintersection}
we have
\begin{equation*}
\supp q_2 \cap \supp(1-q_1) = \emptyset
\end{equation*}
if $\delta_2$ is sufficiently small.
The calculus then gives $r : = q_2 \wpr a \wpr ( 1 - q_1) \in \cS(\rr {2d})$ and 
\begin{equation*}
q_2 \wpr a = q_2 \wpr a \wpr q_1 + r. 
\end{equation*}

From $q_1^w u \in \cS$, \eqref{eq:SchwartzCont} and the regularization $r^w: \cS' \to \cS$, 
we conclude 
\begin{equation*}
q_2^w a^w u = q_2^w a^w q_1^w u + r^w u \in \cS(\rr d). 
\end{equation*}
Since $q_2$ is elliptic on $\Omega_{\rho,\ep_2} \supseteq \Omega$ we have shown that $a^w u$ is smooth in $\Omega$. 
\end{proof}

Theorem \ref{thm:microlocal} combined with Proposition \ref{prop:filterWF} gives a short proof of a particular case of the microlocal inclusion \cite[Proposition~5.1]{Rodino4}, cf. \eqref{eq:microlocalityWFs}: 

\begin{cor}\label{cor:microlocal}
Let $\sigma \in \qo_+$ and $0 < \rho \leqs 1$. 
If $r \in \ro$ and $a \in G_\rho^{r,\sigma}$ then 
\begin{equation}\label{eq:microlocal}
\WFgs ( a^w(x,D) u) \subseteq \WFgs ( u).  
\end{equation}
\end{cor}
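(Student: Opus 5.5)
The plan is to derive Corollary \ref{cor:microlocal} from Theorem \ref{thm:microlocal} and Proposition \ref{prop:filterWF}, by comparing the index sets of the two intersections that Proposition \ref{prop:filterWF} provides. We apply Proposition \ref{prop:filterWF} once to $u$ and once to $a^w(x,D) u \in \cS'(\rr d)$. This gives
\begin{equation*}
\WFgs(u) = \bigcap_{\Omega \in \mathcal{A}(u)} \Omega, \qquad \WFgs(a^w u) = \bigcap_{\Omega \in \mathcal{A}(a^w u)} \Omega .
\end{equation*}
Here $\mathcal{A}(v)$ denotes the family of sets $\Omega \in \fF_\rho(v) \cap \rP(\rr {2d} \setminus 0)$ that are $\sigma$-conic and closed in $\rr {2d} \setminus 0$.

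The key step is the inclusion $\mathcal{A}(u) \subseteq \mathcal{A}(a^w u)$. The conditions ``$\sigma$-conic'', ``closed in $\rr {2d} \setminus 0$'' and ``contained in $\rr {2d}\setminus 0$'' are properties of the set $\Omega$ alone and do not depend on the distribution. Membership in the filter is transferred by Theorem \ref{thm:microlocal}: if $\Omega \in \fF_\rho(u)$ then $\Omega \in \fF_\rho(a^w u)$. Hence every $\Omega \in \mathcal{A}(u)$ also lies in $\mathcal{A}(a^w u)$.

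An intersection over a larger family is smaller. Therefore
\begin{equation*}
\WFgs(a^w u) = \bigcap_{\Omega \in \mathcal{A}(a^w u)} \Omega \subseteq \bigcap_{\Omega \in \mathcal{A}(u)} \Omega = \WFgs(u),
\end{equation*}
which is \eqref{eq:microlocal}.

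The degenerate case of an empty index family needs a check. If $\mathcal{A}(u) = \emptyset$, the intersection over it should be read as $\rr {2d} \setminus 0$, and the inclusion is then trivial. This reading is consistent with Proposition \ref{prop:filterWF}, since that proposition treats the case $\WFgs(u) = \rr {2d}\setminus 0$ separately.

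I do not expect any real obstacle. The hypotheses $\sigma \in \qo_+$ and $0<\rho\leqs 1$ are exactly those under which Proposition \ref{prop:filterWF} and Theorem \ref{thm:microlocal} are stated, so both apply directly. The only care needed is the bookkeeping of the empty-index convention above.
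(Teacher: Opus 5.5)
Your proof is correct and is essentially the paper's argument. The paper takes $0\neq z\notin\WFgs(u)$ and uses Proposition \ref{prop:filterWF} to pick a $\sigma$-conic $\Omega\in\fF_\rho(u)$, closed in $\rr{2d}\setminus 0$, with $z\notin\Omega$; it transfers $\Omega$ to $\fF_\rho(a^w u)$ by Theorem \ref{thm:microlocal} and concludes $z\notin\WFgs(a^w u)$ by Proposition \ref{prop:filterWF} again, which is the pointwise form of your inclusion of index families. Your empty-family convention is never needed, since $\rr{2d}\setminus 0$ always lies in the family: it contains $\rr{2d}\setminus\rB_R\in\fF_\rho(u)$, cf. Remark \ref{rem:filtersing2}.
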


\begin{proof}
Suppose $0 \neq z \notin \WFgs ( u)$. 
By Proposition \ref{prop:filterWF} there exists $\Omega \in \fF_\rho(u) \cap \rP(\rr {2d} \setminus 0)$ which is closed in $\rr {2d} \setminus 0$, 
$\sigma$-conic, and $z \notin \Omega$. 
By Theorem \ref{thm:microlocal} we get $\Omega \in \fF_\rho( a^w u) \cap \rP(\rr {2d} \setminus 0)$
which by means of another appeal to Proposition \ref{prop:filterWF} yields $z \notin \WFgs ( a^w u)$. 
We have shown \eqref{eq:microlocal}.
\end{proof}

The next result is a version of \cite[Lemma~6.3]{Rodino4} adapted from $\sigma$-conic to general subsets of phase space, 
cf. the proof of Proposition \ref{prop:smoothorderzero}.
We omit the proof since it is conceptually identical to the proof of \cite[Lemma~6.3]{Rodino4}. 

\begin{lem}\label{lem:microellipticparametrix}
Let $\sigma \in \ro_+$, $0 < \rho \leqs 1$, $r \in \ro$, $a \in G_\rho^{r,\sigma}$, 
let $\emptyset \neq \Omega \subseteq \rr {2d}$ be open, 
and suppose $a$ is elliptic in $\Omega$. 
The there exists $R > 0$ such that 
for any $\chi \in G_\rho^{0,\sigma}$
such that 
$\supp \chi \subseteq \Omega \setminus \rB_R$ 
there exists $b \in G_\rho^{-r,\sigma}$ such that 
\begin{equation*}
b \wpr a - \chi \in \cS(\rr {2d}).
\end{equation*}
\end{lem}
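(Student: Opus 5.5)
The plan is to construct the parametrix by the standard Neumann-series argument, localized to the region where $a$ is elliptic. First choose $R>0$ such that $|a(z)| \geqs C\theta_\sigma(z)^r$ on $\Omega\setminus \rB_R$; this fixes the $R$ of the statement. Enlarging $R$ if necessary, we may assume $a$ does not vanish there.

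Fix $\chi \in G_\rho^{0,\sigma}$ with $\supp \chi \subseteq \Omega\setminus\rB_R$. We cannot simply write $\chi/a$ as a global symbol, since $1/a$ only makes sense on $\Omega\setminus \rB_R$. The remedy is to work with symbols supported in $\supp\chi$ and never divide by $a$ outside $\Omega\setminus\rB_R$. Set
\[
b_0 = \chi / a \quad \text{on } \supp\chi,
\]
extended by zero elsewhere. An induction with Fa\`a di Bruno's formula \eqref{eq:faadibruno1} and the ellipticity bound gives
\[
|\pdd x \alpha \pdd \xi \beta (1/a)| \lesssim \theta_\sigma^{-r-\rho(|\alpha|+\sigma|\beta|)} \quad \text{on } \Omega\setminus \rB_R .
\]
By Leibniz' rule it follows that $b_0\in G_\rho^{-r,\sigma}$, and $b_0$ is smooth because $\chi$ vanishes near the boundary of its support, where $1/a$ stays smooth.

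By the expansion \eqref{eq:calculuscomposition1}, $b_0\wpr a = \chi + e_1$. Here
\[
e_1 \sim \sum_{|\alpha+\beta|\geqs 1}(\cdots)\,D_x^\beta\pdd\xi\alpha b_0\, D_x^\alpha\pdd\xi\beta a ,
\]
so every term of the expansion of $e_1$ carries a derivative of $b_0$ and is supported in $\supp\chi$. This gives $e_1\in G_\rho^{-\rho(1+\sigma),\sigma}$, but $e_1$ itself is supported in $\supp\chi$ only modulo $\cS(\rr{2d})$. To keep exact supports I would replace $e_1$ by the asymptotic sum of its expansion terms, each of which is supported in $\supp \chi$; the asymptotic-sum construction after Definition \ref{def:symbol} can be carried out so that supports stay inside $\supp\chi$, for instance by multiplying with cutoffs in $\theta_\sigma$ as in \cite[Lemma~3.2]{Rodino4}.

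The recursion is then $b_{j} = -e_{j}/a$ on $\supp\chi$, extended by zero elsewhere. This gives $b_j\in G_\rho^{-r-j\rho(1+\sigma),\sigma}$, and $b_j\wpr a = -e_j + e_{j+1}$ with $e_{j+1}\in G_\rho^{-(j+1)\rho(1+\sigma),\sigma}$, again supported in $\supp\chi$ up to Schwartz terms. Take $b\sim\sum_j b_j$. The sum telescopes, and
\[
b\wpr a - \chi \in \bigcap_n G_\rho^{-n\rho(1+\sigma),\sigma} = \cS(\rr{2d})
\]
by \eqref{eq:symbolintersection}.

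The main obstacle is the support bookkeeping. The Weyl product expansion is only asymptotic, so the remainders $e_j$ are supported in $\supp\chi$ only modulo $\cS$, which would force division by $a$ where $a$ may vanish. I would handle this by defining each $e_{j}$ as the truncated expansion terms, which are exactly supported in $\supp\chi$, plus an error of arbitrarily low order. The errors of arbitrarily low order are then absorbed into the final $\cS$ remainder by a diagonal choice of truncation lengths.

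A point to check separately: the factor $\chi$ does not decay, so only derivatives of $b_0$ and $a$ give gains. This is fine, because every term with $|\alpha+\beta|\geqs1$ gains $\rho(1+\sigma)$ in order regardless.
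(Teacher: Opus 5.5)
Your argument is correct. It is the standard iterative parametrix construction that the paper defers to through the proof of \cite[Lemma~6.3]{Rodino4}, and dividing by $a$ only on $\supp\chi$ is exactly the adaptation needed for general $\Omega$. Of your two remedies for the support bookkeeping, the first one suffices on its own and needs no diagonal argument: replace each remainder by a support-preserving asymptotic sum of its expansion terms, which differs from it by an element of $\cS(\rr{2d})$. The support issue also disappears entirely if you define the $b_j$ directly by the formal recursion $b_j=-a^{-1}\sum_{k<j}\sum_{|\alpha+\beta|=j-k}\frac{(-1)^{|\beta|}}{\alpha!\beta!}2^{-|\alpha+\beta|}D_x^\beta \pdd \xi \alpha b_k \, D_x^\alpha \pdd \xi \beta a$ on $\supp\chi$, extended by zero, which cancels the expansion of $b \wpr a - \chi$ order by order.
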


The following result is a microelliptic inclusion for the filter of anisotropic singularities, cf. \eqref{eq:microellipticityWFs}. 

\begin{thm}\label{thm:microelliptic}
Let $\sigma \in \qo_+$, $0 < \rho \leqs 1$ and $r \in \ro$. 
If $a \in G_\rho^{r,\sigma}$ and $u \in \cS'(\rr d)$ then 
\begin{equation*}
\left\{ \Omega \in \fF_\rho( a^w(x,D) u): \ a \mbox{ is elliptic on } \ \rr {2d} \setminus \Omega \right\}
\subseteq \fF_\rho(u). 
\end{equation*}
\end{thm}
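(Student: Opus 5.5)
Fix $\Omega \in \fF_\rho(a^w u)$ such that $a$ is elliptic on $\Gamma := \rr {2d} \setminus \Omega$; we must show that $u$ is smooth in $\Gamma$. If $\Gamma = \emptyset$ there is nothing to prove, by Remark \ref{rem:smooth} (2), so assume $\Gamma \neq \emptyset$. Since $a^w u$ is smooth in $\Gamma$, Remark \ref{rem:smooth} (6) gives some $\lambda > 0$ such that $a^w u$ is smooth in $\Gamma_{\rho,\lambda}$. Proposition \ref{prop:ellipticlarger}, after possibly shrinking $\lambda$, shows that $a$ is elliptic in $\Gamma_{\rho,\lambda}$. By Proposition \ref{prop:smoothorderzero} we then obtain a cutoff $q_1 = q_{\ep_1,\delta_1,\rho,\Gamma} \in G_\rho^{0,\sigma}$, with $0 < \ep_1 < \delta_1 < \min(1,\lambda,b_\sigma)$, such that $q_1^w a^w u \in \cS$ and $q_1 \equiv 1$ on $\Gamma_{\rho,\ep_1}$.

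Next choose $0 < \ep_2 < \delta_2 < \ep_1$ and put $q_2 = q_{\ep_2,\delta_2,\rho,\Gamma}$. Then $\supp q_2 \subseteq \overline{\Gamma_{\rho,\delta_2}} \subseteq \Gamma_{\rho,\ep_1}$ by Lemma \ref{lem:closureinclusion}, and $\supp q_2 \cap \supp(1-q_1) = \emptyset$ by Remark \ref{rem:supportintersection}. Since $\supp q_2$ lies inside the open set $\Gamma_{\rho,\lambda}$ where $a$ is elliptic, we want a left parametrix localized there. Lemma \ref{lem:microellipticparametrix} requires $\supp \chi \subseteq \Gamma_{\rho,\lambda} \setminus \rB_R$, so take $\chi = q_2 (1-\phi)$ with $\phi \in C_c^\infty(\rr {2d})$ equal to $1$ on $\rB_{2R}$. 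Then $\chi \in G_\rho^{0,\sigma}$, $\chi - q_2 \in C_c^\infty \subseteq \cS$, and Lemma \ref{lem:microellipticparametrix} yields $b \in G_\rho^{-r,\sigma}$ with $b \wpr a - q_2 \in \cS(\rr {2d})$.

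Now write
\begin{equation*}
b \wpr a = b \wpr a \wpr q_1 + b \wpr a \wpr (1-q_1).
\end{equation*}
The operator $a^w$ does not commute with $q_1^w$, and this is where I expect the main obstacle. The cleanest route is to replace $b$ by $b' = q_3 \wpr b$ for a slightly larger cutoff $q_3 = q_{\ep_3,\delta_3,\rho,\Gamma}$ with $\delta_2 < \ep_3 < \delta_3 < \ep_1$. Then $b' \wpr a - q_3 \wpr q_2 \in \cS$ and $q_3 \wpr q_2 - q_2 \in \cS$, the latter because $\supp q_2 \cap \supp(1-q_3) = \emptyset$. Moreover $b'$ has all its asymptotic terms supported in $\supp q_3$, which is disjoint from $\supp(1-q_1)$. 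However, the commutator $[a^w, q_1^w]$ is supported in the asymptotic sense only where $\nabla q_1 \neq 0$, that is in $\rr {2d} \setminus \Gamma_{\rho,\ep_1}$. Hence $q_3 \wpr b \wpr (a \wpr q_1 - q_1 \wpr a) \in \cS$, because $\supp q_3$ is disjoint from the supports of all terms of that commutator expansion. This step relies on the calculus statement, already used in Theorem \ref{thm:microlocal}, that Weyl products of symbols with disjoint supports are in $\cS$. Applying it termwise, with the expansion \eqref{eq:calculuscomposition1} and remainders of arbitrarily low order, gives $q_3 \wpr b \wpr a \equiv q_3 \wpr b \wpr q_1 \wpr a$ modulo $\cS(\rr {2d})$.

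Therefore $q_2^w u = (q_3 \wpr b)^w q_1^w a^w u + s^w u$ with $s \in \cS(\rr {2d})$. This lies in $\cS$ by \eqref{eq:SchwartzCont} and the regularizing property of $s^w$. Since $q_2$ is elliptic on $\Gamma_{\rho,\ep_2} \supseteq \Gamma$, the distribution $u$ is smooth in $\Gamma$, i.e. $\Omega \in \fF_\rho(u)$.
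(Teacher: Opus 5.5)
Your proof is correct, but it takes a longer route than the paper.

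\textbf{The paper's route.} It never reduces to an order-zero cutoff and never builds a parametrix for $a$ alone.
\begin{itemize}
\item It takes the symbol $b \in G_\rho^{t,\sigma}$ straight from Definition \ref{def:smooth}, with $b^w a^w u \in \cS$ and $b$ elliptic on the complement of the filter set.
\item It enlarges the ellipticity region of both $a$ and $b$ by Proposition \ref{prop:ellipticlarger}.
\item It notes that $b \wpr a$ is elliptic there, since its principal part is $ba$.
\item It applies Lemma \ref{lem:microellipticparametrix} once, to $b \wpr a$ and a cutoff $q$, which gives $q^w u = b_2^w b^w a^w u - r^w u \in \cS$.
\end{itemize}
No disjoint-support argument is needed outside the lemma itself.

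\textbf{Your route.} You first invoke Proposition \ref{prop:smoothorderzero}, which is itself a parametrix argument, to obtain $q_1$. You then construct a microlocal parametrix of $a$ alone. Finally you must move $q_1$ next to $a^w u$, and that costs further uses of the principle that symbols with disjoint supports compose into $\cS(\rr{2d})$. In exchange you get a left parametrix for $a^w$ near $\Gamma$ that does not depend on $u$. You also handle the hypothesis $\supp \chi \subseteq \Omega \setminus \rB_R$ of Lemma \ref{lem:microellipticparametrix} explicitly via $\chi = q_2(1-\phi)$, a point the paper passes over.

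\textbf{The commutator step is unnecessary.} It comes from writing $b \wpr a \wpr q_1$ where you want $b \wpr q_1 \wpr a$. Since $q_3 \wpr b$ is, modulo $\cS$, supported in $\supp q_3$, and $\supp q_3$ is disjoint from $\supp(1-q_1)$, you get $(q_3 \wpr b) \wpr (1-q_1) \in \cS(\rr{2d})$ directly. Hence $(q_3 \wpr b)\wpr a \equiv (q_3 \wpr b) \wpr q_1 \wpr a$ modulo $\cS$, with no commutator at all.

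As written, your claim that $a \wpr q_1 - q_1 \wpr a$ is ``supported where $\nabla q_1 \neq 0$'' is only heuristic. The remainders in \eqref{eq:calculuscomposition1} are not localized. To make it rigorous you would write $a \wpr q_1 - q_1 \wpr a = (1-q_1)\wpr a - a \wpr (1-q_1)$ and reduce again to disjoint supports, which is the shorter argument anyway.
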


\begin{proof}
Take $\rr {2d} \setminus \Omega \in \fF_\rho( a^w(x,D) u)$ such that $a$ is elliptic in $\Omega$. 
By Definition \ref{def:smooth} there exists $b \in G_\rho^{t,\sigma}$ such that $b^w a^w u \in \cS$
and $b$ is elliptic in $\Omega$. According to Proposition \ref{prop:ellipticlarger} we may assume that both $a$ and $b$ are 
elliptic in $\Omega_{\rho,\mu}$ defined by \eqref{eq:anisoneighb} for some $\mu > 0$. 
By the calculus the symbol $b_1 = b \wpr a \in G_\rho^{r + t,\sigma}$ is elliptic in $\Omega_{\rho,\mu}$. 
Let $0 < \ep < \delta < \min(\mu,b_\sigma)$ with $b_\sigma$ defined by \eqref{eq:bsigma}, 
and pick $q = q_{\ep,\delta,\rho,\Omega} \in G_\rho^{0,\sigma}$ as in 
Proposition 
\ref{prop:cutoffsymbol}
so that $q$ is elliptic in $\Omega_{\rho,\ep}$, 
and $\supp q \subseteq \overline{\left(\Omega_{\rho,\delta}\right)} \subseteq \Omega_{\rho,\mu}$
by Lemma \ref{lem:closureinclusion}. 

We may now use Lemma \ref{lem:microellipticparametrix} applied to $b_1$ and $q$. 
Thus there exists $b_2 \in G_\rho^{- r - t,\sigma}$ such that $r = b_2 \wpr b_1 - q \in \cS(\rr {2d})$.
Hence 
\begin{equation*}
q^w u 
= b_2^w b_1^w u - r^w u 
= b_2^w b^w a^w u - r^w u \in \cS
\end{equation*}
due to $b^w a^w u \in \cS$, 
\eqref{eq:SchwartzCont}
and the regularization $r^w: \cS' \to \cS$. 
It follows that $u$ is smooth in $\Omega \subseteq \Omega_{\rho,\ep}$
which shows that $\rr {2d} \setminus \Omega \in \fF_\rho(u)$. 
\end{proof}

\begin{cor}\label{cor:microelliptic}
Suppose $\sigma \in \qo_+$, $0 < \rho \leqs 1$, 
$a \in G_\rho^{r,\sigma}$, $a$ is elliptic, and $u \in \cS'(\rr d)$. 
Then 
\begin{equation*}
\fF_\rho (u) = \fF_\rho( a^w(x,D) u). 
\end{equation*}
\end{cor}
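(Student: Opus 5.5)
The equality will follow from the two inclusions already established, Theorem \ref{thm:microlocal} and Theorem \ref{thm:microelliptic}, with global ellipticity of $a$ making the restriction in the second one vacuous.

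First, Theorem \ref{thm:microlocal} applied to $a \in G_\rho^{r,\sigma}$ and $u$ gives $\fF_\rho(u) \subseteq \fF_\rho(a^w(x,D) u)$ at once.

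For the reverse inclusion, let $\Gamma \in \fF_\rho(a^w(x,D)u)$. Since $a$ is elliptic, there are $C,R>0$ with $|a(x,\xi)| \geqs C\theta_\sigma(x,\xi)^r$ on $\rr{2d}\setminus \rB_R$. In particular $a$ is elliptic on $\rr{2d}\setminus\Gamma$ with the same constants, whenever $\rr{2d}\setminus\Gamma$ is nonempty. If instead $\Gamma = \rr{2d}$, then $\rr{2d} \in \fF_\rho(u)$ holds trivially by Remark \ref{rem:filtersing1}, and it is also covered by the convention that any symbol is elliptic in $\emptyset$. So every $\Gamma \in \fF_\rho(a^w u)$ belongs to the set on the left-hand side of Theorem \ref{thm:microelliptic}, and that theorem gives $\Gamma \in \fF_\rho(u)$.

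Combining the two inclusions yields $\fF_\rho(u) = \fF_\rho(a^w(x,D)u)$. No step here is a real obstacle. The only point needing care is bookkeeping: in the proof of Theorem \ref{thm:microelliptic} the ellipticity hypothesis is applied to the complement $\rr{2d}\setminus\Gamma$. Global ellipticity of $a$ restricts to ellipticity on any subset, and the constants $C,R$ are unchanged.
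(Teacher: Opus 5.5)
Your proposal is correct and matches the paper's proof: Theorem \ref{thm:microlocal} gives $\fF_\rho(u) \subseteq \fF_\rho(a^w(x,D)u)$, and Theorem \ref{thm:microelliptic} gives the reverse inclusion. The latter applies because global ellipticity of $a$ restricts to ellipticity on every complement $\rr{2d}\setminus\Gamma$. Your separate treatment of the trivial case $\Gamma = \rr{2d}$ is a harmless extra bit of care that the paper leaves implicit.
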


\begin{proof}
The inclusion $\fF_\rho (u) \subseteq \fF_\rho ( a^w(x,D) u)$ 
is a consequence of Theorem \ref{thm:microlocal}, and conversely 
$\fF_\rho ( a^w(x,D) u) \subseteq \fF_\rho (u)$ follows from 
Theorem \ref{thm:microelliptic}.
\end{proof}

\section{Application to evolution equations of anisotropic Schr\"odinger type}\label{sec:schrodinger}

For $T > 0$ we consider the initial value Cauchy problem
\begin{equation}\label{eq:anharmonicCP}
\begin{cases} 
\partial_t u + i a^w(x,D) u = 0, \qquad x \in \rr d, \qquad t \in [-T,T] \setminus 0,  \\ 
u(0,x) = u_0(x)
\end{cases}. 
\end{equation}	
As a tool we will use the corresponding inhomogeneous equation
\begin{equation}\label{eq:anharmonicCPinhom}
\begin{cases} 
\partial_t u + i a^w(x,D) u = f(t,x), \qquad x \in \rr d, \qquad t \in [-T,T] \setminus 0,  \\ 
u(0,x) = u_0(x)
\end{cases}.
\end{equation}	
The Weyl symbol $a$ of the hamiltonian will be assumed to belong to an anisotropic Shubin class $G_\rho^{r,\sigma}$. 

\begin{rem}\label{rem:cutoffsymbol}
In order to get rid of the possible lack of smoothness at the origin of symbols, 
we will use a cutoff function $\psi_\mu(x,\xi) = \fy(|x|^2 + |\xi|^2) \in C^\infty(\rr {2d})$ where $\fy \in C^{\infty}(\ro)$,
$0 \leqs \fy \leqs 1$, $\fy(t) = 0$ for $t \leqs \frac{\mu^2}{4}$ and $\fy(t) = 1$ for $t \geqs \mu^2$
for a given $\mu > 0$. 
Thus $\psi_\mu \big|_{\rB_{\frac{\mu}{2}}} \equiv 0$ and $\psi_\mu \big|_{\rr {2d} \setminus \rB_\mu} \equiv 1$.
\end{rem} 

We show results on propagation of the filter of anisotropic singularities from the initial datum ($t = 0$) to $t \in [-T,T]$, expressed with the hamiltonian flow corresponding to the real-valued principal symbol of $a$. 
Thus our results are anchored in many other results on propagation of singularities of different forms for evolution equations of various types \cite{Cappiello6,Cappiello7,Hormander1,Ito1,Nicola2,PravdaStarov1,Wahlberg4}.

For a smooth function $a: \rr {2d} \to \ro$ the corresponding hamiltonian flow is the solution to 
Hamilton's system of equations
\begin{equation} \label{eq:Hamiltoneq}
\begin{cases} 
x'(t)  = \nabla_\xi a( x(t), \xi(t) ) \\ 
\xi'(t)  = -\nabla_x a( x(t), \xi(t) )  \\ 
x(0)=x  \\ 
\xi(0)=\xi
\end{cases}
\end{equation}
where $(x,\xi) \in T^* \rr d$ is initial datum and $t \in \ro$. The Hamilton flow is denoted $( x(t), \xi(t) ) = \chi_t (x,\xi)$ for $t \in \ro$. 

The following result is a generalization of \cite[Theorem~8.3]{Cappiello6}.

\begin{thm}\label{thm:propagationfilter1}
Let $\sigma \in \qo_+$, $0 < \rho \leqs 1$, 
and suppose that $a \in G^{1 + \sigma,\sigma}$, 
$a \sim \sum_{j = 0}^{\infty} a_j$, 
where 
$a_0 \in C^\infty(\rr {2d} \setminus 0)$ is real-valued, 
\begin{equation}\label{eq:a0homogeneity}
a_0( \lambda x, \lambda^\sigma \xi)  = \lambda^{1+\sigma } a_0(x,\xi), \quad \lambda > 0, \quad (x,\xi) \in T^* \rr d \setminus 0,  
\end{equation}
and $a_j \in G^{(1+\sigma) (1-2 j ), \sigma}$ for $j \geqs 1$. 
The hamiltonian flow $\chi_t: T^* \rr d \setminus 0 \to T^* \rr d \setminus 0$
corresponding to $a_0$ is then defined for $-T \leqs t \leqs T$ for some $T > 0$.
If $u_0 \in \cS'(\rr d)$ then there exists a unique solution $u=u(t,\cdot) = \cK_t u_0 \in \cS'(\rr d)$ to \eqref{eq:anharmonicCP} for all $t \in \ro$, and we have
\begin{equation}\label{eq:propcritical}
\fF_\rho ( u(t,\cdot) ) = \chi_t \left( \fF_\rho ( u_0 ) \right), \quad t \in [-T,T]. 
\end{equation}
\end{thm}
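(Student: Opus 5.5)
The plan is to prove one inclusion, $\chi_t\left(\fF_\rho(u_0)\right) \subseteq \fF_\rho(u(t,\cdot))$, through an anisotropic Egorov-type argument built on order-zero excision symbols. The reverse inclusion then follows by running the same argument backwards from time $t$ to time $0$, using $\cK_{-t}\cK_t = I$ and $\chi_{-t} = \chi_t^{-1}$.

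\emph{Preliminaries on the flow and the propagator.} The homogeneity \eqref{eq:a0homogeneity} gives that $\nabla_\xi a_0$ is $\sigma$-homogeneous of degree $1$ and $\nabla_x a_0$ of degree $\sigma$. Hence the flow commutes with the anisotropic dilations,
\[
\chi_t(\lambda x,\lambda^\sigma\xi) = \left(\lambda x(t), \lambda^\sigma \xi(t)\right),
\]
with no time rescaling. Compactness of $\{|x|+|\xi|^{\frac1\sigma}=1\}$ then gives existence for $|t|\leqs T$. It also gives the bounds
\[
\theta_\sigma(\chi_t z)\asymp\theta_\sigma(z), \qquad |\partial_x^\alpha\partial_\xi^\beta (\chi_t)_x| \lesssim \theta_\sigma^{1-|\alpha|-\sigma|\beta|}, \qquad |\partial_x^\alpha\partial_\xi^\beta (\chi_t)_\xi| \lesssim \theta_\sigma^{\sigma-|\alpha|-\sigma|\beta|}
\]
for $|z|\geqs 1$, uniformly in $|t|\leqs T$. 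From these I derive two facts. First, $q\circ\chi_{-t}\,\psi_\mu \in G_\rho^{0,\sigma}$ whenever $q\in G_\rho^{0,\sigma}$, using Fa\`a di Bruno as in the proof of Proposition \ref{prop:cutoffsymbol}. Second, by the mean value theorem, for each $\ep>0$ there exists $\ep'>0$ with $(\chi_t W)_{\rho,\ep'}\setminus \rB_R \subseteq \chi_t(W_{\rho,\ep})$. Existence and uniqueness of $\cK_t$ on $\cS'$, together with $\cK_t:\cS\to\cS$, I take from \cite{Cappiello6}; the problem there is the same since $a\in G^{1+\sigma,\sigma}$ with real principal part.

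\emph{Egorov step.} Let $\Omega\in\fF_\rho(u_0)$ and $W=\rr{2d}\setminus\Omega$. By Proposition \ref{prop:smoothorderzero} and Remark \ref{rem:smooth} (6) there is $q=q_{\ep,\delta,\rho,W}\in G_\rho^{0,\sigma}$ with $q^w u_0\in\cS$ and $q$ elliptic on $W_{\rho,\ep}$. I construct $Q(t)=b_t^w$ with $b_t \sim \sum_{j\geqs0} b_{t,j}$, $b_{t,j}\in G_\rho^{-j\rho(1+\sigma),\sigma}$, such that $b_0=q$ and
\[
\partial_t Q(t) + i[a^w,Q(t)] = R(t), \qquad R(t) \text{ with symbol in } \cS(\rr{2d}) \text{ uniformly in } t.
\]
The principal term solves the transport equation $\partial_t b_{t,0} = \{a_0, b_{t,0}\}$, so $b_{t,0} = q\circ\chi_{-t}$ (cut off near the origin by $\psi_\mu$). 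The higher terms $b_{t,j}$ solve inhomogeneous transport equations whose sources come from the odd terms of \eqref{eq:calculuscomposition1} of order at least $3$, and from $a - a_0 \in G^{-(1+\sigma),\sigma}$ composed with $b_{t,j}$. Since $a\in G^{1+\sigma,\sigma}$ has $\rho=1$, each such term drops the order by at least $\rho(1+\sigma)$.

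Granting this, Duhamel for \eqref{eq:anharmonicCPinhom} gives
\[
\frac{d}{dt}\left(Q(t)u(t)\right) + i a^w Q(t)u(t) = R(t)u(t) \in \cS.
\]
Together with $Q(0)u(0)=q^w u_0\in\cS$, this yields $Q(t)u(t,\cdot)\in\cS$. The principal symbol $q\circ\chi_{-t}$ is bounded below on $\chi_t(W_{\rho,\ep})\supseteq (\chi_tW)_{\rho,\ep'}$ outside a ball, and the lower order terms are negligible for large $|z|$. Hence $b_t$ is elliptic on $\chi_t W$, so $u(t,\cdot)$ is smooth in $\chi_t W = \rr{2d}\setminus\chi_t\Omega$, that is, $\chi_t\Omega\in\fF_\rho(u(t,\cdot))$.

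\emph{Main obstacle.} The hardest step is keeping the Egorov construction inside $G_\rho^{0,\sigma}$ with genuinely decreasing orders when $\rho<1$. Concretely one must check that $\{a_0,\cdot\}$ composed with the flow preserves the class, even though the individual term $\{a_0,b\}$ has order $(1+\sigma)(1-\rho)>0$. This is why the full principal part is absorbed exactly by transport along $\chi_t$ rather than estimated. The other delicate point is ensuring that the remainders $R(t)$ are Schwartz uniformly in $t$, so that the Duhamel integral stays in $\cS$.
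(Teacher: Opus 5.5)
Your outline is the same as the paper's proof. Both use an order-zero excision symbol from Proposition \ref{prop:smoothorderzero} and an Egorov-type symbol with principal part $\psi_\mu\,(q\circ\chi_{-t})$ and remainder in $L^\infty([-T,T],\cS(\rr{2d}))$; the paper takes this from the proof of \cite[Proposition~8.2]{Cappiello6}. Both then use well-posedness of the inhomogeneous problem in $\cS$, ellipticity on the transported set, and the reverse inclusion via $\cK_{-t}$.

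\textbf{The failing step.} The gap is the first of your two derived facts, $\psi_\mu\,(q\circ\chi_{-t})\in G_\rho^{0,\sigma}$ for all $q\in G_\rho^{0,\sigma}$. This is false in general when $\rho<1$ and $\sigma\neq1$, and the bounds you list on $\chi_t$ are exactly what break it.
\begin{itemize}
\item By the chain rule, $\partial_\xi(q\circ\chi_{-t})$ contains $(\partial_x q)\circ\chi_{-t}\,\partial_\xi(\chi_{-t})_x$, of size $\theta_\sigma^{-\rho}\theta_\sigma^{1-\sigma}$. One has $\theta_\sigma^{1-\sigma-\rho}\lesssim\theta_\sigma^{-\rho\sigma}$ iff $(1-\rho)(1-\sigma)\leqs0$.
\item Symmetrically, $(\partial_\xi q)\circ\chi_{-t}\,\partial_x(\chi_{-t})_\xi$ has size $\theta_\sigma^{\sigma-1-\rho\sigma}$, which is $O(\theta_\sigma^{-\rho})$ iff $(\sigma-1)(1-\rho)\leqs0$.
\item Geometrically, $D\chi_t$ has off-diagonal blocks of size $\theta_\sigma^{1-\sigma}$ and $\theta_\sigma^{\sigma-1}$. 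These preserve boxes $\theta_\sigma^{\rho}\times\theta_\sigma^{\rho\sigma}$ only if $\rho=1$ or $\sigma=1$.
\end{itemize}
Concretely, take $d=1$, $\sigma=\rho=\frac12$ and $a=\xi^3$ (Example \ref{ex:homogensymbol}~(1) with $m=3$), so $\chi_{-t}(x,\xi)=(x-3t\xi^2,\xi)$. Let $q(x,\xi)=\phi(x/\eabs{\xi})$ with $\phi\in C_c^\infty(\ro)$ equal to $1$ near $0$.
\begin{itemize}
\item On $\supp\phi'(x/\eabs{\xi})$ one has $\theta_\sigma\asymp\eabs{\xi}^2$, and every derivative of $q$ gains $\eabs{\xi}^{-1}\asymp\theta_\sigma^{-1/2}$. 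Hence $q\in G_{1/2}^{0,1/2}$.
\item However, $\partial_\xi(q\circ\chi_{-t})=-6t\,\xi\eabs{\xi}^{-1}\phi'(\cdot)+O(\eabs{\xi}^{-1})$ stays of size $|t|$ along the parabola $x\approx3t\xi^2$. It should be $O(\theta_\sigma^{-1/4})$ there.
\end{itemize}
The same shear defeats your second fact, $(\chi_tW)_{\rho,\ep'}\setminus\rB_R\subseteq\chi_t(W_{\rho,\ep})$. That one is dispensable, though: smoothness in $\chi_tW$ only needs ellipticity on $\chi_tW\subseteq\chi_t(W_{\rho,\ep})$.

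\textbf{Relation to the paper.} The paper's proof makes the same assertion: $q(t)\in C([-T,T],G_\rho^{0,\sigma})$ with $q_0(t)=\psi_\mu\,(q_0\circ\chi_{-t})$. It justifies this as a straightforward modification of \cite[Proposition~8.2]{Cappiello6}, which is written for $\rho=1$. So this is not a departure from the paper but a gap in the common argument. Your proof, like the paper's, is complete when $\rho=1$ or $\sigma=1$, where the class is flow invariant and the Egorov corrections do drop in order. It is not complete otherwise.

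\textbf{The conclusion itself appears to fail for $\rho<1$, $\sigma\neq1$.} Continue the example above.
\begin{itemize}
\item Let $W=\{0\}\times\ro$ and pick $\eta_n\to\infty$ fast. Put $z_n=\big(3t\eta_n^2,\eta_n+\frac1{2n}\eabs{\eta_n}^{1/2}\big)\in(\chi_tW)_{\rho,1/n}$ and $u_0=\sum_n\Pi(\chi_{-t}z_n)g$ with $g$ a Gaussian.
\item The centres $\chi_{-t}z_n$ have $|x|\asymp|t|\,n^{-1}|\eta_n|^{3/2}\gg\eabs{\eta_n}$. Hence $V_gu_0$ decays rapidly on $W_{\rho,\ep}$, and $\rr2\setminus W\in\fF_{\frac12,\frac12}(u_0)$.
\item On the other hand, $e^{-itD^3}\Pi(\chi_{-t}z_n)g$ is a chirped packet centred at $z_n$ with $|V_g u(t,\cdot)(z_n)|\gtrsim|t\eta_n|^{-1/2}$, while $\eabs{z_n}\asymp\eta_n^2$.
\item Corollary \ref{cor:filtercharacterization} then gives $\chi_t(\rr2\setminus W)\notin\fF_{\frac12,\frac12}(u(t,\cdot))$.
\end{itemize}
Covering $\rho<1$, $\sigma\neq1$ would require flow-compatible classes instead of $G_\rho^{0,\sigma}=G_{\rho,\rho\sigma}^{0,\sigma}$. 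One candidate is $G_{\rho_1,\rho_2}^{0,\sigma}$ from Remark \ref{rem:WeylHormander} with $\rho_1-\rho_2=1-\sigma$, together with correspondingly reshaped neighbourhoods.
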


\begin{proof}
By \cite[Proposition~6.2]{Cappiello6} there exists $T > 0$ such that 
the hamiltonian flow $\chi_t: T^* \rr d \setminus 0 \to T^* \rr d \setminus 0$
corresponding to the hamiltonian $a_0$ is well defined for $-T \leqs t \leqs T$.
According to \cite[Remark~6.3]{Cappiello6} we may extend 
$\chi_t: T^* \rr d  \to T^* \rr d$ by $\chi_t (0,0) = (0,0)$ maintaining its continuity and bijectivity, possibly losing its smoothness. 

If $u_0 \in \cS$ then by \cite[Corollary~7.11]{Cappiello6} we have a unique solution $u \in C( [-T,T], \cS)$
to \eqref{eq:anharmonicCP}. 
By Remark \ref{rem:Schwartzfilter} it follows that $\fF_\rho ( u(t,\cdot) ) = \fF_\rho ( u_0 ) = \rP(\rr {2d})$, so then 
both sides of \eqref{eq:propcritical} are equal to $\rP(\rr {2d})$. 
Thus we may assume that $u_0 \in \cS' \setminus \cS$ and $\fF_\rho ( u_0 ) \neq \rP(\rr {2d})$.
By \eqref{eq:SSSchwartz} there exists $s \in \ro$ such that $u_0 \in M_{\sigma,s}$. 
From \cite[Corollary~7.10]{Cappiello6} we obtain the existence of a unique solution 
$u(t) = \cK_t u_0 \in C( [-T,T], M_{\sigma,s})$ 
to \eqref{eq:anharmonicCP}. 

Let $\emptyset \neq \Omega \subseteq \rr {2d}$ satisfy
$\rr {2d} \setminus \Omega \in \fF_\rho ( u_0 )$. 
By Proposition \ref{prop:smoothorderzero}
there exists $q_0 = q_{\ep,\delta,\rho,\Omega} \in G_\rho^{0,\sigma}$ with $0 < \ep < \delta < 1$
such that $q_0^w u_0 \in \cS$ and 
\begin{equation}\label{eq:q0one}
q_0 |_{\Omega} \equiv 1.
\end{equation}

The proof of \cite[Proposition~8.2]{Cappiello6}, 
may be modified straight-forwardly from the assumption $\rho = 1$ into $0 < \rho \leqs 1$.  
Thus for any $\mu > 0$ there exists $q(t) \in C([-T,T], G_\rho^{0,\sigma})$ such that 
$q(t) \sim \sum_{j = 0}^{\infty} q_j (t)$ with $q_j (t) \in C([-T,T], G_\rho^{- 2 j \rho(1+\sigma),\sigma})$ for $j \geqs 0$, 
$q_0(t) (x,\xi) = \psi_\mu (x,\xi) q_0( \chi_{-t} (x,\xi))$, 
$q_j(0) = 0$ for all $j \geqs 1$,
and $r(t) \in L^\infty([-T,T], \cS(\rr {2d}))$ where 
\begin{equation*}
r(t)^w = q(t)^w \left( \partial_t + i a^w \right)
- \left( \partial_t +i a^w \right) q(t)^w. 
\end{equation*}

This gives 
\begin{equation*}
0 = q(t)^w \left( \partial_t + i a^w \right) u(t) 
= \left( \partial_t +i a^w \right) q(t)^w u(t) + r(t)^w u(t)
\end{equation*}
that is 
\begin{equation*}
\left( \partial_t +i a^w \right) q(t)^w u(t)  = - r(t)^w u(t) := f(t). 
\end{equation*}
By \eqref{eq:symbolintersection} and \eqref{eq:ShubinSobolevCont}
we have for any $r \in \ro$
\begin{equation*}
\sup_{|t| \leqs T} \| f(t) \|_{M_{\sigma,r+s}} 
\lesssim \sup_{|t| \leqs T} \| u(t) \|_{M_{\sigma,s}} < \infty
\end{equation*}
which by \eqref{eq:SSSchwartz} and the phrase following it implies that $f \in L^\infty ( [-T,T], \cS(\rr d)) \subseteq L^1 ( [-T,T], \cS(\rr d))$. 
Thus $q(t)^w u(t)$ solves the inhomogeneous equation \eqref{eq:anharmonicCPinhom}, 
and for the initial value we have
\begin{equation*}
q(0)^w u(0) 
= q_0(0)^w u(0) = \left( \psi_\mu q_0 \right)^w u_0 
= q_0^w u_0 + \left( (\psi_\mu-1) q_0 \right)^w u_0  \in \cS
\end{equation*}
due to $q_0^w u_0 \in \cS$ and $\psi_\mu-1 \in C_c^\infty(\rr {2d})$. 

At this point we may apply 
\cite[Corollary~7.11]{Cappiello6} which gives 
$q(t)^w u(t) \in \cS(\rr d)$ for all $t \in [-T,T]$. 
We note that $q_0 (t) (\chi_t(x,\xi)) = q_0(x,\xi)$ if $|\chi_t(x,\xi)| \geqs \mu$
which due to \eqref{eq:q0one} means that $q_0(t)|_{\left( \chi_t \Omega \right) \setminus \rB_\mu} \equiv 1$.
Hence $q(t) \in G_\rho^{0,\sigma}$ is elliptic in $\chi_t \Omega$, 
since the lower order terms $\{ q_j(t) \}_{j \geqs 1}$ in $q(t)$ decay in $\rr {2d}$. 
We have now shown that $u(t,\cdot)$ is smooth in $\chi_t \Omega$, that is, 
$\rr {2d} \setminus \chi_t \Omega = \chi_t ( \rr {2d} \setminus \Omega) \in \fF_\rho( u(t,\cdot) )$. 

From the argument we may conclude that $\chi_t \left( \fF_\rho(u_0) \right) \subseteq \fF_\rho( u(t,\cdot) )$. 
The opposite inclusion follows from $\cK_t^{-1} =  \cK_{-t}$ and $\chi_t^{-1} =  \chi_{-t}$. 
\end{proof}

\begin{rem}\label{rem:propsinganiso1}
We may deduce \cite[Theorem~8.3]{Cappiello6} as a consequence of 
Theorem \ref{thm:propagationfilter1}, 
Proposition \ref{prop:filterWF} and \cite[Proposition~6.2]{Cappiello6}. 
Indeed the latter result implies in particular 
that the hamiltonian flow, of a real-valued symbol $a_0$ that satisfies \eqref{eq:a0homogeneity}, commutes with the anisotropic dilation \eqref{eq:anisodilation}.
\end{rem}

\begin{example}\label{ex:homogensymbol}
We give two examples of principal symbols that satisfy the criterion \eqref{eq:a0homogeneity}. 

\begin{enumerate}

\item Let $m \geqs 2$, $a(x,\xi) = p_m(\xi)$ where 
\begin{equation*}
p_m(\xi) = \sum_{|\alpha| = m} c_\alpha \xi^\alpha, \quad c_\alpha \in \ro,
\end{equation*}
is a homogeneous polynomial with real coefficients of degree $m$. Then $a \in C^\infty(\rr {2d})$ satisfies 
\eqref{eq:a0homogeneity} with $\sigma = \frac1{m-1}$. 
The operator is $a^w(x,D) = p_m(D)$ is a homogeneous differential operator of order $m$ with constant real coefficients.

\item If $\alpha_j, \beta_j \in \ro$ for $1 \leqs j \leqs d$, $k, m \in \no \setminus 0$, and 
\begin{equation*}
a(x, \xi) = \left( \left( \sum_{j=1}^d \alpha_j x_j^2 \right)^k + \left( \sum_{j=1}^d \beta_j \xi_j^2 \right)^m \right)^{\frac12 \left( \frac1k + \frac1m \right)}
\end{equation*}
then $a \in C^\infty(\rr {2d} \setminus 0)$ satisfies 
\eqref{eq:a0homogeneity} with $\sigma = \frac{k}{m}$. 

\end{enumerate}

\end{example}

Next we show that a hamiltonian Weyl symbol of order smaller than $\rho(1+\sigma)$ and uniformly bounded imaginary part gives invariance of the filter of anisotropic singularities. 

\begin{thm}\label{thm:nonpropagationfilter1}
Let $\sigma \in \qo_+$,  $0 < \rho \leqs 1$,
$a \in G_1^{r,\sigma}$ with $r < \rho(1+\sigma)$, 
and suppose that
\begin{equation}\label{eq:boundedimag}
\sup_{(x,\xi) \in \rr {2d}} |\im \, a(x,\xi)| \leqs C
\end{equation}
for some $C > 0$.
Consider the Cauchy problem \eqref{eq:anharmonicCP}. 
If $u_0 \in \cS'(\rr d)$ then there exists a unique solution $u=u(t,\cdot) \in \cS'(\rr d)$ to \eqref{eq:anharmonicCP} with $T=+\infty$, 
and  
\begin{equation}\label{eq:propsubcritical}
\fF_\rho ( u (t,\cdot) ) = \fF_\rho ( u_0 ), \quad t \in \ro. 
\end{equation}
\end{thm}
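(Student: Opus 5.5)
The plan is to imitate the proof of Theorem \ref{thm:propagationfilter1}, replacing the transported cutoff by the time-independent cutoff itself. The order condition $r < \rho(1+\sigma)$ is what makes the commutator $[q^w, a^w]$ an operator of order at most zero. Throughout I use that $G_1^{r,\sigma} \subseteq G_\rho^{r,\sigma}$.

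\textbf{Step 1: the evolution.} First I establish existence, uniqueness and the $M_{\sigma,s}$-estimates for all $t \in \ro$, for the homogeneous and inhomogeneous problems \eqref{eq:anharmonicCP}, \eqref{eq:anharmonicCPinhom}. Write $a = a_1 + i a_2$ with $a_1, a_2$ real, so that $a_1^w$ is symmetric. The bound \eqref{eq:boundedimag} combined with the Weyl calculus should give that $a_2^w$ is bounded on $L^2$; more precisely I would invoke the sharp Gårding inequality for $C - a_2 \geqs 0$, or the Calderón--Vaillancourt theorem for symbols bounded together with their derivatives, since $a_2 \in G^{r,\sigma}$ has derivatives of lower order. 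I then run energy estimates in $M_{\sigma,s}$ by conjugating with an elliptic order-$s$ operator $E_s = w_{k,m}^{s/k,w}$. The commutator $[E_s, a^w]$ has order $s + r - (1+\sigma) \leqs s$ because $r < 1+\sigma$, so Gronwall gives
\begin{equation*}
\| u(t) \|_{M_{\sigma,s}} \leqs e^{C_s |t|} \| u_0 \|_{M_{\sigma,s}}
\end{equation*}
for every $t$. This gives global well-posedness in each $M_{\sigma,s}$, hence in $\cS$ and $\cS'$ via \eqref{eq:SSSchwartz}, together with a Duhamel estimate for sources $f \in L^1_{\rm loc}(\ro, \cS)$. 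Alternatively I would cite the corresponding statements of \cite{Cappiello6} if they cover this range of orders.

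\textbf{Step 2: showing $\fF_\rho(u_0) \subseteq \fF_\rho(u(t))$.} Let $\rr{2d}\setminus\Omega \in \fF_\rho(u_0)$. By Proposition \ref{prop:smoothorderzero} pick $q_1 = q_{\ep_1,\delta_1,\rho,\Omega}$ with $q_1^w u_0 \in \cS$, and then choose a nested family $q_1, q_2, \dots$ as in Remark \ref{rem:supportintersection}, with $\supp q_{j+1} \cap \supp(1-q_j) = \emptyset$. In the Weyl expansion \eqref{eq:calculuscomposition1}, the commutator symbol of $[q_j^w, a^w]$ is a sum of terms involving derivatives of $q_j$. These are supported where $q_j$ is non-constant, that is away from $\Omega_{\rho,\ep_j}$, and they have order $r - \rho(1+\sigma) < 0$. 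Hence
\begin{equation*}
[q_j^w, a^w] = c_j^w \, (1-q_{j-1})^w\text{-localized} + \cS,
\end{equation*}
with $c_j$ of negative order $r - \rho(1+\sigma)$. I would then set up a bootstrap, or equivalently a Neumann-type iteration. Put $v_j = q_j^w u(t)$; it satisfies
\begin{equation*}
(\partial_t + i a^w) v_j = -i[q_j^w, a^w] u ,
\end{equation*}
and the right-hand side is microlocalized where a previous cutoff is identically $1$. Starting from $u \in C(\ro, M_{\sigma,s})$, each step improves the regularity of $v_j$ by $\rho(1+\sigma) - r > 0$ in the $M_{\sigma,\cdot}$ scale, using Step 1 with source. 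After $N$ steps $q_N^w u(t) \in M_{\sigma,s+N(\rho(1+\sigma)-r)}$ for all $t$. Since the exponent grows unboundedly, I would instead construct in advance a symbol $q \sim \sum_j q^{(j)}$, elliptic on $\Omega$ and with decreasing supports, for which $[q^w, a^w] \in \cS$ modulo terms killed inductively. Concretely I would use an asymptotic sum of cutoffs $q_{\ep_j,\delta_j}$ with $\ep_j \downarrow \ep_\infty > 0$, which keeps a uniform neighbourhood $\Omega_{\rho,\ep_\infty}$ where all cutoffs equal $1$. The conclusion is that $u(t)$ is smooth in $\Omega$ for every $t$.

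\textbf{Step 3: the reverse inclusion and the main obstacle.} Reversibility in time ($\cK_{-t}$ is the inverse of $\cK_t$, and the hypotheses are symmetric in $t$) gives the opposite inclusion, hence \eqref{eq:propsubcritical}. The main obstacle is the bootstrap in Step 2. Infinitely many nested cutoffs must shrink in a controlled way so that the regularity gains accumulate on a fixed set, and the support separation of Remark \ref{rem:supportintersection} needs $\delta_2 < \ep_1$ at each stage, so I must choose $\ep_j, \delta_j$ converging to a positive limit. If this becomes delicate, I would instead work with the STFT characterization (Corollary \ref{cor:filtercharacterization}). There the gain can be phrased directly as decay of $V_\fy u(t)$ on $\Omega_{\rho,\ep}$, improved by a fixed power at each step on slightly smaller neighbourhoods $\Omega_{\rho,\ep'}$ with $\ep' \downarrow$ a positive limit.
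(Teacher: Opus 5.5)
Your strategy is the paper's: well-posedness in every $M_{\sigma,s}$ (the paper simply cites \cite[Corollary~7.10]{Cappiello6}), and a nested family $q_k = q_{\ep_k,\delta_k,\rho,\Omega}$ with $\supp q_k \cap \supp(1-q_{k-1}) = \emptyset$. The inhomogeneous problem for $q_k^w u$ then has source $[ia^w,q_k^w]u = b_k^w q_{k-1}^w u + b_k^w(1-q_{k-1})^w u$, where the second term is regularizing and $b_k \in G_\rho^{-\alpha,\sigma}$ with $\alpha = \rho(1+\sigma)-r$. This gives a gain of $\alpha$ per step, with $\ep_k$ bounded below by a fixed positive number, and time reversibility gives the converse inclusion. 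For the initial data you also need $q_k^w u_0 = q_k^w q_1^w u_0 + (q_k \wpr (1-q_1))^w u_0 \in \cS$, which follows the same way. The closing step, however, is flawed.

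\textbf{The gap: how you conclude.} The device you propose, an asymptotic sum $q \sim \sum_j q^{(j)}$ of cutoffs with $[q^w,a^w] \in \cS$, does not work. First, the $q_{\ep_j,\delta_j}$ all have order $0$, so they cannot be summed asymptotically, since the orders must tend to $-\infty$. Second, the commutator of $a^w$ with a cutoff has principal symbol a multiple of $\{a,q\}$. This lies in $G_\rho^{r-\rho(1+\sigma),\sigma}$ but is in general not rapidly decreasing where $q$ varies.

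Nothing of this kind is needed. The issue you flag, that each level $s+k\alpha$ is attached to a different cutoff $q_k$, is resolved by one fixed cutoff lying inside all the sets $\{q_k = 1\}$. If $\ep_k > 2\nu$ for all $k$, take $q = q_{\ep,\nu,\rho,\Omega}$ with $0<\ep<\nu<b_\sigma$. By Proposition \ref{prop:cutoffsymbol} and Lemma \ref{lem:closureinclusion}, $\supp q \subseteq \overline{\Omega_{\rho,\nu}} \subseteq \Omega_{\rho,2\nu} \subseteq \Omega_{\rho,\ep_k}$, while $\supp(1-q_k) \subseteq \rr{2d} \setminus \Omega_{\rho,\ep_k}$. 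Hence $r_k = q \wpr (1-q_k) \in \cS(\rr{2d})$ and
\[
q^w u(t) = q^w q_k^w u(t) + r_k^w u(t) \in M_{\sigma,s+k\alpha} \quad \mbox{for every } k .
\]
Therefore $q^w u(t) \in \bigcap_{s} M_{\sigma,s} = \cS$, and since $q \equiv 1$ on $\Omega$ this is the required smoothness of $u(t)$ in $\Omega$. This is exactly how the paper ends. With this replacement your proof is complete, and the STFT fallback is unnecessary.
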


\begin{proof}
By \eqref{eq:SSSchwartz} there exists $s \in \ro$ such that $u_0 \in M_{\sigma,s}$, and then 
\cite[Corollary~7.10]{Cappiello6} shows that the equation \eqref{eq:anharmonicCP} has a unique solution $u \in C(\ro, M_{\sigma,s})$.

As in the proof of Theorem \ref{thm:propagationfilter1} we may assume that $u_0 \notin \cS$ 
and $\fF ( u_0 ) \neq \rP(\rr {2d})$.
Let $\emptyset \neq \Omega \subseteq \rr {2d}$ satisfy
$\rr {2d} \setminus \Omega \in \fF_\rho ( u_0 )$. 
By Proposition \ref{prop:smoothorderzero} there exist $0 < \ep_1 < \delta_1 < 1$
and $q_1 = q_{\ep_1,\delta_1,\rho,\Omega} \in G_\rho^{0,\sigma}$ 
such that $q_1^w u_0 \in \cS$ and $q_1 |_{\Omega} \equiv 1$.
From $u \in C(\ro, M_{\sigma,s})$ and 
\eqref{eq:ShubinSobolevCont}
we obtain
\begin{equation}\label{eq:modspace2}
q_1^w u \in C( \ro , M_{\sigma,s}). 
\end{equation} 

Eq.~\eqref{eq:modspace2} is the starting point for an iterative procedure. 
First we deduce from \eqref{eq:modspace2} the improved regularity
\begin{equation}\label{eq:regularization1}
q_1^w u \in C( \ro , M_{\sigma,s + \alpha}) 
\end{equation} 
where $\alpha = \rho(1+\sigma) - r > 0$ by assumption. 

In fact, cf. \cite[Theorem 23.1.4]{Hormander2} and \cite{Nicola2}, we may 
regard $u_1 = q_1^w u$ as the solution of the non-homogeneous Cauchy problem 
\begin{equation}\label{eq:SchrodEqIter1}
\begin{cases} 
P u_1 = f_1 \\ 
u_1(0,\cdot) = u_{1,0}
\end{cases}
\end{equation}
where $P = \partial_t + i a^w(x,D)$
and $u_{1,0} = q_1^w u_0 \in \cS \subseteq M_{\sigma, s + \alpha}$. 
To express $f_1$ we write, using $P u = 0$,
\begin{equation*}
f_1 = P u_1 = P q_1^w u = P q_1^w u - q_1^w Pu = [ P, q_1^w] u = [ i a^w, q_1^w ] u.
\end{equation*}
By the proof of \cite[Proposition~8.2]{Cappiello6} the commutator $b_1^w = [ i a^w, q_1^w ]$
has symbol $b_1 \in G_\rho^{r - \rho(1+\sigma),\sigma} = G_\rho^{- \alpha,\sigma}$ of negative order $-\alpha$. 

From $u \in C( \ro , M_{\sigma,s})$ and 
\eqref{eq:ShubinSobolevCont}
we obtain
\begin{equation*}
f_1  = P u_1 = b_1^w u \in C( \ro , M_{\sigma,s + \alpha} ). 
\end{equation*}
Now \eqref{eq:SchrodEqIter1}, $u_{1,0} \in M_{\sigma, s + \alpha}$ and 
\cite[Corollary~7.10]{Cappiello6}
show that \eqref{eq:regularization1} holds. 

In the next step we take $0 < \ep_2 < \delta_2 < \ep_1$
with $\delta_2 < b_\sigma$ where $b_\sigma$ is defined by \eqref{eq:bsigma},
and define $q_2 = q_{\ep_2, \delta_2,\rho,\Omega} \in G_\rho^{0,\sigma}$ as in 
Proposition \ref{prop:cutoffsymbol}.
By Remark \ref{rem:supportintersection}
we have $\supp q_2 \cap \supp (1 - q_1 ) = \emptyset$ 
which by the calculus yields $r = q_2 \wpr ( 1 - q_1 ) \in \cS(\rr {2d})$. 
Combined with $q_1^w u_0 \in \cS$
and 
\eqref{eq:ShubinSobolevCont}
we find
\begin{equation}\label{eq:schwartzmodspace3}
q_2^w u_0 = q_2^w q_1^w u_0 + r^w u_0 \in \cS
\end{equation}
since $r^w: \cS' \to \cS$ is regularizing. 

Set $u_2 = q_2^w u$ and consider the non-homogeneous Cauchy problem 
\begin{equation}\label{eq:SchrodEqIter2}
\begin{cases} 
P u_2 = f_2 \\ 
u_2(0,\cdot) = u_{2,0}
\end{cases} 
\end{equation}
where $u_{2,0} = q_2^w u_0 \in M_{\sigma, s + 2 \alpha}$ by 
\eqref{eq:SSSchwartz}. 
As above we obtain
\begin{equation*}
f_2 = P q_2^w u - q_2^w Pu = 
b_2^w u
= b_2^w q_1^w u + b_2^w (1 - q_1)^w u
\end{equation*}
with $b_2^w = [P, q_2^w] = [i a^w, q_2^w] \in G_\rho^{-\alpha,\sigma}$ as above. 

Since $\supp b_2 \subseteq \supp q_2$ we may conclude that $\supp b_2 \cap \supp (1 - q_1 ) = \emptyset$,
which implies that $b_2^w (1 - q _1)^w:  \cS' \to \cS$ is regularizing.  
Now \eqref{eq:regularization1} and again \eqref{eq:ShubinSobolevCont}
give $f_2 \in C( \ro , M_{\sigma,s + 2 \alpha} )$. 
From
\eqref{eq:SchrodEqIter2}, $u_{2,0} \in M_{\sigma, s + 2 \alpha}$ and \cite[Corollary~7.10]{Cappiello6}
we obtain
$u_2 = q_2^w u \in C( \ro , M_{\sigma,s + 2 \alpha} )$. 

This bootstrap argument shows that we have
\begin{equation}\label{eq:regularityk}
q_k^w u \in C( \ro , M_{\sigma,s + k \alpha} )
\end{equation}
for any $k \in \no \setminus 0$, with $q_k = q_{\ep_k, \delta_k,\rho,\Omega}$, 
in each step decreasing $\ep_k$ and $\delta_k$ such that $0 < \ep_k < \delta_k < \ep_{k-1}$
where $\ep_{k-1}$ has been chosen in the preceding step, taking $\delta_k < b_\sigma$ for each $k$. 
We do this keeping $\ep_k > 2 \mu > 0$ for all $k \in \no$ for a fixed $\mu > 0$. 
Let $0 < \ep < \mu$ and set $q = q_{\ep, \mu,\rho,\Omega} \in G_\rho^{0,\sigma}$ as in Proposition \ref{prop:cutoffsymbol}.
Then $0 < \ep < \mu < 2 \mu < \ep_k < \delta_k$ for all $k \in \no \setminus 0$. 

By Lemma \ref{lem:closureinclusion} we have for any $k \in \no \setminus 0$
\begin{equation*}
\supp q \cap \supp (1 - q_k ) 
\subseteq \overline{ \left( \Omega_{\rho,\mu} \right)} \cap \left( \rr {2d} \setminus \Omega_{\rho,\ep_k} \right)
\subseteq \Omega_{\rho,2 \mu } \cap \left( \rr {2d} \setminus \Omega_{\rho,\ep_k} \right)
= \emptyset
\end{equation*}
which yields $r_k := q \wpr (1-q_k) \in \cS(\rr {2d})$.
Hence for any $t \in \ro$ and any $k \in \no \setminus 0$ we get from \eqref{eq:regularityk}
\begin{equation*}
q^w u (t) = q^w q_k^w u (t) + r_k^w u (t) \in M_{\sigma,s + k \alpha}
\end{equation*}
which implies
\begin{equation*}
q^w u(t,\cdot) \in \bigcap_{s \in \ro} M_{\sigma,s} = \cS(\rr d). 
\end{equation*}
Since $q|_{\Omega} \equiv 1$ we have shown that $\rr {2d} \setminus \Omega \in \fF_\rho( u(t,\cdot) )$. 
It follows $\fF_\rho (u_0) \subseteq \fF_\rho ( u(t,\cdot) )$. 
The opposite inclusion follows from $\cK_t^{-1} =  \cK_{-t}$. 
\end{proof}

\begin{rem}\label{rem:propsinganiso2}
Again we may deduce the conclusion \cite[Theorem~1.4~Eq.~(1.11)]{Cappiello7} as a consequence of 
Theorem \ref{thm:nonpropagationfilter1} and
Proposition \ref{prop:filterWF}. 
\end{rem}

\begin{example}\label{ex:nonpropagation}
We give two examples of symbols that satisfy the 
assumptions of Theorem \ref{thm:nonpropagationfilter1}
(cf. Example \ref{ex:homogensymbol}).

\begin{enumerate}

\item Let $m \geqs 2$, $\sigma \in \qo_+$, $b_1(x,\xi) = p(\xi)$ where 
\begin{equation*}
p(\xi) = \sum_{|\alpha| \leqs m} c_\alpha \xi^\alpha, \quad c_\alpha \in \ro,
\end{equation*}
is a polynomial with real coefficients of degree $m$. It follows that $b_1 \in G_1^{m \sigma, \sigma}$. 

If $a = b_1 + i b_2$ where $b_2 \in G_1^{0,\sigma}$ is real-valued, then it follows that 
$a$ satisfies the assumptions of Theorem \ref{thm:nonpropagationfilter1}
provided $\sigma < \frac{\rho}{m - \rho} \leqs \frac1{m-1}$. 
The corresponding operator is $a^w(x,D) = p(D) + i b_2^w(x,D)$.

\item If $\alpha_j, \beta_j \in \ro$ for $1 \leqs j \leqs d$, $k, m \in \no \setminus 0$, $\sigma = \frac{k}{m}$, $p > 0$ and 
\begin{equation*}
b_1(x, \xi) = \left( \left( \sum_{j=1}^d \alpha_j x_j^2 \right)^k + \left( \sum_{j=1}^d \beta_j \xi_j^2 \right)^m \right)^p
\end{equation*}
then $\psi_\mu b_1 \in G_1^{2 k p, \sigma}$ if $\mu > 0$.
If $a = b_1 + i b_2$ where $b_2 \in G_1^{0,\sigma}$ is real-valued, then it follows that 
$a$ satisfies the assumptions of Theorem \ref{thm:nonpropagationfilter1} provided $p < \rho p_c$
where the \emph{critical exponent} 
\cite{Cappiello7} is defined as
\begin{equation}\label{eq:pcrit}
p_c = \frac12 \left( \frac1k + \frac1m \right). 
\end{equation}
\end{enumerate}
\end{example}

Finally we devote our interest to Weyl symbols of a hamiltonian of the form 
\begin{equation}\label{eq:hamiltoniansymbolpower}
a(x,\xi) = \psi_\mu(x,\xi) \left( x^{2k} + \xi^{2m} \right)^p, \quad x, \xi \in \ro, 
\end{equation}
where $k,m \in \no \setminus 0$, $p \in \ro \setminus 0$ and $\mu > 0$, cf. Remark \ref{rem:cutoffsymbol}. Note that we work in dimension $d = 1$. 
As usual $\sigma = \frac{k}{m}$.
By the proof of \cite[Lemma~3.6]{Cappiello6} we have $a \in G^{2 k p,\sigma} = G_1^{2 k p,\sigma}$.

\begin{rem}\label{rem:symbolpoweroperator}
Consider the symbol $a_p = a$ defined by \eqref{eq:hamiltoniansymbolpower} as a function of $p > 0$. 
A natural question concerns relations between the Weyl quantization $a_p^w(x,D)$ and $\left(a_1^w(x,D) \right)^p$
where the latter is a power, in the operator theoretic sense, of the generalized anharmonic oscillator $a_1^w(x,D) = x^{2k} + (- \partial^2)^m$. 
Using the ellipticity of $a_1$, $a_1^w(x,D) \geqs 0$, \eqref{eq:sobolevweightestimate1}
and \cite[Theorem~4.3.6]{Nicola1}, 
we reach the following conclusion. 
We have
\begin{equation}\label{eq:comparisonpowers}
\overline{a_1^w(x,D)}^{ \, p} = 
\overline{ \left( a_1^p \right)^w (x,D)} + b^w(x,D)
\end{equation}
where $\overline A$ denotes operator closure, $b \in G_{\rho}^{r,\sigma}$, and
$\rho = \min \left( \sigma, \frac1\sigma \right)$ and $r = 2 k p - (1+\rho) \min(1,\sigma)$. 
Since the orders of the operators 
$\overline{a_1^w(x,D)}^{ \, p}$ and 
$\overline{ \left( a_1^p \right)^w (x,D)}$
are both equal to $2 k p$, \eqref{eq:comparisonpowers} 
says that they are identical modulo a term of lower order, and $\rho \leqs 1$. 
With operator closure understood we thus conclude that $\left( x^{2k} + (- \partial^2)^m \right)^p$ equals 
the Weyl quantization of 
\begin{equation}\label{eq:hamiltoniansymbolpower2}
a(x,\xi) = \psi_\mu (x,\xi)^p  \left( x^{2k} + \xi^{2m} \right)^p
\end{equation}
plus a remainder of lower order. 
\end{rem}

First we show that the evolution equation \eqref{eq:anharmonicCPinhom} is well-posed in the anisotropic modulation spaces \eqref{eq:SSmodnorm}. 

\begin{prop}\label{prop:WellPosedPower}
Let $k,m \in \no \setminus 0$, $\sigma = \frac{k}{m}$, $p \in \ro \setminus 0$, $\mu > 0$ and let $a$ be defined by 
\eqref{eq:hamiltoniansymbolpower}.
If $s \in \ro$, $u_0 \in M_{\sigma,s}(\ro)$ 
and $f \in L^\infty(\ro, M_{\sigma,s})$
then
the equation \eqref{eq:anharmonicCPinhom} has a unique solution $u(t,\cdot) = \cK_t u_0 \in (C \cap L^\infty)(\ro, M_{\sigma,s})$ such that 
$\partial_t u(t,\cdot) \in L^\infty(\ro, M_{\sigma,s - 2 k p})$.
\end{prop}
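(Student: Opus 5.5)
We plan to reduce the statement to the abstract well-posedness result \cite[Corollary~7.10]{Cappiello6}, which is used repeatedly above. That result needs a hamiltonian symbol in an anisotropic Shubin class with real principal part and a sufficiently well-behaved imaginary part.

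The first step is to record the structure of $a$. By the proof of \cite[Lemma~3.6]{Cappiello6}, $a \in G^{2kp,\sigma}$. Since $\psi_\mu$ is real-valued and $x^{2k} + \xi^{2m} > 0$ on $\supp \psi_\mu$, the symbol $a$ is real-valued. Hence $a^w(x,D)$ is formally selfadjoint, since $a^w(x,D)^* = \overline a^w(x,D) = a^w(x,D)$. In particular the bounded imaginary part condition \eqref{eq:boundedimag} holds trivially with $\im a \equiv 0$.

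The second step is the energy estimate in $M_{\sigma,s}$. Following the strategy of \cite[Section~7]{Cappiello6}, we take the elliptic order-$s$ symbol $w_{k,m}^{s/k} \in G^{s,\sigma}$, cf. \eqref{eq:weightequivalence}. From it we build an operator $E_s$ with the following properties: it is an isomorphism $M_{\sigma,t+s} \to M_{\sigma,t}$ for every $t$, it has a parametrix, and $\|E_s u\|_{L^2} \asymp \|u\|_{M_{\sigma,s}}$. We then conjugate the equation by $E_s$. This gives an equation of the same type,
\begin{equation*}
\partial_t v + i a^w v + [E_s, i a^w] E_s^{-1} v = E_s f.
\end{equation*}
By \eqref{eq:calculuscomposition1}, the commutator has symbol of order $2kp - (1+\sigma)$, because the order-zero term cancels in $E_s \wpr a - a \wpr E_s$.

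The main obstacle is this perturbation when $2kp > 1+\sigma$, that is when $p > p_c$, since then it is unbounded on $L^2$. I would first try to choose $E_s$ as a function of $a$ itself, namely $E_s = (1 + (a^w)^2)^{s/(4kp)}$ defined via the spectral theorem for the selfadjoint closure of $a^w$ (for $p < 0$, use instead the elliptic symbol $1+a^2$ appropriately). Then $E_s$ commutes exactly with $a^w$. Its membership in the pseudodifferential calculus, and hence the norm equivalence with $M_{\sigma,s}$, would follow from functional calculus for elliptic selfadjoint anisotropic Shubin operators, e.g. \cite[Theorem~4.3.6]{Nicola1} adapted as in Remark~\ref{rem:symbolpoweroperator}. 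If $p < 0$ or ellipticity fails, I would fall back on the commutator bound combined with Gronwall, which suffices whenever $2kp \leqs 1+\sigma$.

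With $E_s$ commuting with $a^w$, the conjugated problem is $\partial_t v + i a^w v = E_s f$ with $E_s f \in L^\infty(\ro, L^2)$. Since $a^w$ is selfadjoint, Stone's theorem gives a unitary group $e^{-ita^w}$ on $L^2$. Duhamel's formula then yields the solution
\begin{equation*}
v(t) = e^{-ita^w} v_0 + \int_0^t e^{-i(t-\tau)a^w} E_s f(\tau) \, \dd \tau \in C(\ro, L^2).
\end{equation*}
This solution is bounded on bounded time intervals, which is the sense of $L^\infty$ here, as in \cite{Cappiello6}. Uniqueness follows by unitarity, since the difference of two solutions solves the homogeneous problem with zero data. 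Transferring back via $E_s^{-1}$ gives $u \in (C\cap L^\infty)(\ro,M_{\sigma,s})$. Finally, $\partial_t u = f - i a^w u \in L^\infty(\ro, M_{\sigma,s-2kp})$ by \eqref{eq:ShubinSobolevCont}.
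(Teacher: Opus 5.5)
Your final argument is correct and is essentially the paper's. Both rely on the spectral calculus of the selfadjoint closure of $a^w$ and on identifying the $M_{\sigma,s}$ norm with a spectral norm of $a^w$ (the paper's \eqref{eq:ModSpaceNorm}, your $\|E_s u\|_{L^2}$); where you invoke Stone's theorem and Duhamel, the paper writes the solution as the eigenfunction series \eqref{eq:SeriesSol}. Your Gronwall fallback for $p<0$, where the order is negative and the appeal to eigenvalues diverging to $+\infty$ does not literally apply, is sound, and so is your reading of $L^\infty$ as local in time: the forcing $f(t) = e^{-i\lambda_1 t}\fy_1$, with $(\lambda_1,\fy_1)$ an eigenpair of $a^w$, produces a solution growing linearly in $t$.
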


\begin{proof}
The symbol classes $G^{r,\sigma}$ do not quite fit into the symbols used in \cite{Nicola1}. 
But as explained in \cite[Remark~2.2]{Cappiello7} we may still use the calculus in \cite{Nicola1}
for aspects which do not require the precise anisotropic decay behavior of the derivatives of symbols. 

The symbol \eqref{eq:hamiltoniansymbolpower} is thus elliptic in the symbol class $G^{2 k p, \sigma}$
according to \cite[Definition~1.3.1]{Nicola1}.
By \cite[Theorem~4.2.9]{Nicola1} the closure of $a^w(x,D)$ as an unbounded operator in $L^2(\ro)$ has as spectrum a sequence of real eigenvalues $(\lambda_j)_{j=1}^\infty \subseteq \ro$ diverging to $+\infty$, 
and an associated sequence of eigenfunctions $(\fy_j)_{j=1}^\infty \subseteq \cS(\ro)$ which constitutes 
an orthonormal basis in $L^2(\ro)$. 

The operator $a^w(x,D)$ is not guaranteed to be non-negative, but it does satisfy $a^w(x,D) + C \geqs 0$, 
that is $\left( (a^w(x,D) + C) f, f \right) \geqs 0$ for all $f \in \cS(\ro)$, for some $C > 0$ by \cite[Lemma~4.2.8]{Nicola1}. 
The Weyl symbol of $b^w(x,D) = a^w(x,D) + C$ is $b = a + C$, 
and the eigenvalues of $b^w(x,D)$ are $(\lambda_j + C)_{j=1}^\infty \subseteq \ro_+$ 
whereas its sequence of eigenfunctions remains $(\fy_j)_{j=1}^\infty \subseteq \cS(\ro)$.

As in the proof of \cite[Proposition~3.1]{Cappiello7} it follows that we may express the modulation space norm as
\begin{equation}\label{eq:ModSpaceNorm}
\| u \|_{M_{\sigma,s}}^2
\asymp \sum_{j = 1}^\infty ( \lambda_j + C)^{\frac{s}{k p}} | (u, \fy_j)|^2
\end{equation}
for all $s \in \ro$.

Then we construct as in the proof of 
\cite[Theorem~3.6]{Cappiello7} the series
\begin{equation}\label{eq:SeriesSol}
u(t,x) = \sum_{j=1}^\infty e^{ - i\lambda_j t } \left( c_j +  \int_0^t f_j(\tau) e^{i\lambda_j \tau} \, \dd \tau \right) \fy_j(x)
\end{equation}
where $c_j = ( u_0, \fy_j)$ and $f_j (t) = ( f(t,\cdot), \fy_j ) \in L^\infty(\ro, \co)$ for $j \geqs 1$.
Then $u(0,\cdot) = u_0$, and  
as in the proof of \cite[Theorem~3.6]{Cappiello7} it follows that 
$u(t,\cdot) \in (C \cap L^\infty)(\ro, M_{\sigma,s})$, 
$\partial_t u(t,\cdot) \in L^\infty(\ro, M_{\sigma,s - 2 k p})$, 
and $u(t,\cdot)$ is the unique solution to \eqref{eq:anharmonicCPinhom}. 
\end{proof}

From \eqref{eq:SSSchwartz} we get the following consequence.   

\begin{cor}\label{cor:WellPosedPower}
Let $k,m \in \no \setminus 0$, $p \in \ro \setminus 0$, $\mu > 0$ and let $a$ be defined by 
\eqref{eq:hamiltoniansymbolpower}.
If $u_0 \in \cS'(\ro)$ then there exists a unique solution $u=u(t,\cdot) = \cK_t u_0 \in \cS'(\ro)$ to \eqref{eq:anharmonicCP} for all $t \in \ro$. 
\end{cor}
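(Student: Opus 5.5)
The plan is to derive Corollary \ref{cor:WellPosedPower} from Proposition \ref{prop:WellPosedPower} with $f = 0$, combined with the decomposition \eqref{eq:SSSchwartz} of $\cS'$ into the spaces $M_{\sigma,s}$.

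\emph{Existence.} Given $u_0 \in \cS'(\ro)$, we first use \eqref{eq:SSSchwartz} to pick $s \in \ro$ with $u_0 \in M_{\sigma,s}(\ro)$. We then apply Proposition \ref{prop:WellPosedPower} with $f \equiv 0 \in L^\infty(\ro, M_{\sigma,s})$. This gives a solution
\[
u(t,\cdot) = \cK_t u_0 \in (C \cap L^\infty)(\ro, M_{\sigma,s}) \subseteq C(\ro, \cS'(\ro)),
\]
which is given explicitly by the series \eqref{eq:SeriesSol} with $f_j = 0$, that is
\[
u(t,\cdot) = \sum_{j} e^{-i \lambda_j t} (u_0,\fy_j) \fy_j .
\]
This formula shows that the solution does not depend on the choice of $s$.

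\emph{Uniqueness.} Suppose $u, v \in \cS'$ both solve \eqref{eq:anharmonicCP}; more precisely, assume each lies in $C(\ro,\cS')$ with $\partial_t$ taken in $\cS'$. The difference $w = u - v$ then solves \eqref{eq:anharmonicCP} with zero initial datum. The aim is to place $w$ in a single space $L^\infty([-T,T], M_{\sigma,s'})$ for each compact time interval, and then invoke the uniqueness part of Proposition \ref{prop:WellPosedPower}.

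The key step, and the main obstacle, is this uniform membership. I would first try a Banach--Steinhaus type argument: a continuous map from a compact interval into $\cS'$ has bounded range. Since $\cS'$ is the inductive limit of the $M_{\sigma,s}$, a bounded set of $\cS'$ is contained in, and bounded in, some single $M_{\sigma,s'}$.

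If that route is awkward, I would fall back on the eigenfunction expansion directly. Pair the equation with each $\fy_j \in \cS$. Using $a^w = (a^w)^*$ on $\cS$ (since $a$ is real), this gives the scalar ODE
\[
\tfrac{d}{dt}(w(t),\fy_j) = -i\lambda_j (w(t),\fy_j), \qquad (w(0),\fy_j) = 0,
\]
so $(w(t),\fy_j) = 0$ for all $j$. Since $\{\fy_j\}$ is an orthonormal basis whose span is dense in $\cS$ (by \eqref{eq:ModSpaceNorm}), it follows that $w(t) = 0$. This fallback avoids the bounded-range issue entirely, and it is the argument I would ultimately write down.
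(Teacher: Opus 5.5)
Your existence argument is the paper's: its whole proof is the remark that by \eqref{eq:SSSchwartz} one has $u_0 \in M_{\sigma,s}$ for some $s$, so Proposition \ref{prop:WellPosedPower} with $f = 0$ applies. Uniqueness is where you genuinely differ.

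The paper simply inherits uniqueness from Proposition \ref{prop:WellPosedPower}. So ``unique in $\cS'$'' is implicitly meant within $\bigcup_{s} (C \cap L^\infty)(\ro, M_{\sigma,s})$: two such solutions both belong to the class with the smaller $s$, where the proposition gives uniqueness.

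Your eigenfunction argument proves more. It gives uniqueness among all $w \in C(\ro,\cS')$ with $\partial_t w$ taken in $\cS'$, without any a priori time-uniform modulation space bound, and it is correct. The identity $(a^w w, \fy_j) = (w, a^w \fy_j) = \lambda_j (w,\fy_j)$ uses that $a$ and $\lambda_j$ are real. Density of $\operatorname{span}\{\fy_j\}$ in $\cS$ follows by applying \eqref{eq:ModSpaceNorm} to $\phi - \sum_{j \leqs N} (\phi,\fy_j) \fy_j$, together with the fact that the norms $\| \cdot \|_{M_{\sigma,s}}$, $s \geqs 0$, define the topology of $\cS$. More directly, you can apply \eqref{eq:ModSpaceNorm} to $w(t) \in M_{\sigma,s'}$ and get $\| w(t) \|_{M_{\sigma,s'}} = 0$.

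The price is that you use the spectral machinery inside the proof of Proposition \ref{prop:WellPosedPower}, rather than only its statement. You were right to drop the Banach--Steinhaus route: it only yields bounds on compact time intervals, whereas Proposition \ref{prop:WellPosedPower} asserts uniqueness in a class with $L^\infty$ bounds on all of $\ro$.

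One caveat is shared with the paper and is not a defect of your proposal relative to it. The spectral facts used (eigenvalues tending to $+\infty$ and \eqref{eq:ModSpaceNorm}) are only justified for $p > 0$. For $p < 0$ the operator $a^w(x,D)$ has negative order and is bounded on $L^2$.
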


In our final propagation result 
we use the critical exponent \eqref{eq:pcrit} and we restrict to $p > p_c$. 
In fact the case $p = p_c$ means that $a \in G^{2 k p_c,\sigma} = G^{1+\sigma,\sigma}$ satisfies 
the anisotropic homogeneity \eqref{eq:a0homogeneity} so it is covered by Theorem \ref{thm:propagationfilter1}. 
The case $p < p_c$ implies 
by \cite[Lemma~3.6]{Cappiello6}
that $a \in G^{2 k p,\sigma}$ with order $2 k p < 1 + \sigma$. 
It is covered by Theorem \ref{thm:nonpropagationfilter1}.

In \cite[Theorem~4.7]{Cappiello7} we calculate the hamiltonian flow $\chi_t: \rr 2 \setminus 0 \to \rr 2 \setminus 0$ with $t \in \ro$ corresponding to \eqref{eq:hamiltoniansymbolpower} for $p \neq 0$. 
It is smooth in $\rr 2 \setminus 0$, periodic with respect to $t \in \ro$, and involves incomplete beta functions and their inverses. 
The period $T = T_{k,m,p}(x,\xi)$ depends on the initial datum $(x,\xi) \in \rr 2 \setminus 0$ as 
\begin{equation}\label{eq:periodhamiltonflow}
T = 
\frac{\Gamma \left( \frac1{2k}\right) \Gamma \left( \frac1{2m}\right)}{k m p \,\Gamma \left( \frac1{2k} + \frac1{2m} \right)}\left( x^{2k} + \xi^{2m} \right)^{p_c-p}
\end{equation}
cf. \cite[Remark~4.8]{Cappiello7}. 

We will use the assumption 
\begin{equation}\label{eq:pinterval}
p_c < p \leqs p_c + \frac{1}{4} \min \left( \frac1k, \frac1m \right). 
\end{equation}
This implies that $p \leqs \frac{5}{4}$. If $k = m = 1$ 
then any $1 < p \leqs \frac{5}{4}$ is allowed. 

\begin{lem}\label{lem:symbolflow}
Suppose $k, m \in \no \setminus 0$, $\sigma = \frac{k}{m}$, 
$\mu > 0$,
and let $p$ satisfy \eqref{eq:pinterval}. 
If $a$ is defined by \eqref{eq:hamiltoniansymbolpower} and $\chi_t$ for $t \in \ro$ is the corresponding hamiltonian flow,
then for any $\rho$ such that 
\begin{equation*}
0 < \rho \leqs 1 - 2 \max(k,m) (p-p_c)
\end{equation*}
and any $q \in G_1^{0,\sigma}$ we have $q(t,\cdot) = \psi_\mu (q \circ \chi_{-t} ) \in G_{\rho}^{0,\sigma}$ uniformly for $t \in \ro$.
\end{lem}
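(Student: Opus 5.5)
We need derivative estimates of $q\circ\chi_{-t}$ in the anisotropic Shubin class with loss $\rho$. The plan has four steps: conjugate the flow to a homogeneous one, bound the Jacobian of the flow, apply Fa\`a di Bruno, and handle the cutoff.

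\textbf{Step 1 (reduction to the critical flow).} Write $E(x,\xi)=x^{2k}+\xi^{2m}$, which is conserved by the flow. Since $a=\psi_\mu E^p$ and $\psi_\mu\equiv 1$ outside $\rB_\mu$, on $|(x,\xi)|\geqs\mu$ (after discarding a compact set where $\psi_\mu(q\circ\chi_{-t})$ is trivially in $\cS$ uniformly in $t$, using $E$-invariance to stay away from the origin) the Hamilton vector field of $a$ equals $\frac{p}{p_c}E^{p-p_c}$ times that of $E^{p_c}$. As $E$ is constant along trajectories,
\begin{equation*}
\chi_t(z)=\chi^{c}_{s(z,t)}(z),\qquad s(z,t)=\frac{p}{p_c}\,t\,E(z)^{p-p_c},
\end{equation*}
where $\chi^c$ is the flow of the critical symbol $E^{p_c}$. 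The function $E^{p_c}$ satisfies the homogeneity \eqref{eq:a0homogeneity}. Its flow $\chi^c_s$ therefore commutes with the anisotropic dilations, is periodic in $s$ with a fixed period, and is smooth on $\rr 2\setminus 0$. Hence $\chi^c_s$ is ``$\sigma$-homogeneous of degree $(1,\sigma)$'': its components satisfy
\begin{equation*}
|\partial^{\alpha,\beta}(\chi^c_s)_1|\lesssim\theta_\sigma^{1-|\alpha|-\sigma|\beta|},\qquad |\partial^{\alpha,\beta}(\chi^c_s)_2|\lesssim\theta_\sigma^{\sigma-|\alpha|-\sigma|\beta|},
\end{equation*}
uniformly in $s$ by compactness of the period interval. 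Derivatives in $s$ are also bounded by $\theta_\sigma$ times the homogeneity of $E^{p_c}$, i.e.\ of order $\theta_\sigma^{1+\sigma}$ relative gains $\theta_\sigma^{-1}$, $\theta_\sigma^{-\sigma}$ in the respective slots.

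\textbf{Step 2 (derivatives of $\chi_{-t}$).} The function $s(z,t)$ has unbounded size, since $t$ ranges over $\ro$. Periodicity of $\chi^c$ in $s$ lets us replace $s$ by $s$ mod period only when differentiating in $\chi^c$'s argument; derivatives falling on $s$ itself cannot be reduced. We have
\begin{equation*}
\partial_x s\sim t\,E^{p-p_c}\theta_\sigma^{-1},\qquad \partial_\xi s\sim t\,E^{p-p_c}\theta_\sigma^{-\sigma}.
\end{equation*}
Each such derivative multiplies $\partial_s\chi^c$, which has size $\theta_\sigma^{1+\sigma}$ times the base gain. Consequently one $x$-derivative of $(\chi_{-t})_1$ costs
\begin{equation*}
\theta_\sigma^{1}\cdot\theta_\sigma^{-1}\cdot |t|\,E^{p-p_c}\theta_\sigma^{\sigma}.
\end{equation*}
This is unbounded in $t$, which is the main obstacle. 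I would first try to exploit that $q\circ\chi^c_s$ with $s=s(z,t)$ is periodic in $t$, with period $T(z)\asymp E^{p_c-p}$ by \eqref{eq:periodhamiltonflow}. Thus we may replace $t$ by $t$ mod $T(z)$, i.e.\ assume $|t|\lesssim E^{p_c-p}$. Locally this is legitimate, since the reduced $t'$ is piecewise of the form $t-nT(z)$ and derivatives of $nT(z)$ give the same type of terms, bounded by $|t|\,E^{p_c-p}$-type quantities. Then $|t|\,E^{p-p_c}\lesssim 1$, but derivatives of $T(z)$ reintroduce factors $n\,\partial T$. I would instead argue directly: choose the $z$-dependent integer $n$ only to bound sizes, and note that $\chi_{-t}=\chi_{-t+nT(z)}$ identically on level sets, so differentiating the identity gives controlled terms. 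Each derivative ultimately loses a factor $E^{(p-p_c)}$ relative to the Shubin gain. With $E\asymp\theta_\sigma^{2k}$, an $x$-derivative gains only $\theta_\sigma^{-1+2k(p-p_c)}$ and a $\xi$-derivative gains $\theta_\sigma^{-\sigma+2k(p-p_c)}$. Requiring these to be at most $\theta_\sigma^{-\rho}$ and $\theta_\sigma^{-\rho\sigma}$ respectively gives
\begin{equation*}
\rho\leqs 1-2k(p-p_c)\quad\text{and}\quad \rho\leqs 1-\tfrac{2k}{\sigma}(p-p_c)=1-2m(p-p_c),
\end{equation*}
which matches the hypothesis $\rho\leqs 1-2\max(k,m)(p-p_c)$. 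The condition \eqref{eq:pinterval} ensures $\rho>0$ is possible, in fact $\rho\geqs\frac12$, which is needed to absorb higher-order products.

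\textbf{Step 3 (Fa\`a di Bruno).} Apply \eqref{eq:faadibruno1} to $q\circ\chi_{-t}$. Use that $q\in G_1^{0,\sigma}$, together with $\theta_\sigma(\chi_{-t}z)\asymp\theta_\sigma(z)$ from conservation of $E$ and \eqref{eq:weightequivalence}. Each factor $\partial^{\alpha_\ell,\beta_\ell}$ of a component of $\chi_{-t}$ is bounded by the component size times $\theta_\sigma^{-\rho(|\alpha_\ell|+\sigma|\beta_\ell|)}$, by induction on the order from Step 2 (higher derivatives produce products of first-order losses, each controlled as above). The component sizes cancel against the derivative gains of $q$, yielding $\theta_\sigma^{-\rho(|\alpha|+\sigma|\beta|)}$ uniformly in $t$.

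\textbf{Step 4 (cutoff).} Multiplication by $\psi_\mu$ only affects a compact set and preserves the class, which concludes the proof.
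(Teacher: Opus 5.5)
\textbf{Comparison with the paper.} Your reparametrization $\chi_t(z)=\chi^c_{s(z,t)}(z)$ with $s=\frac{p}{p_c}\,t\,E^{p-p_c}$ is correct. The chain-rule count it leads to, one loss of size $|t|E^{p-p_c}\asymp |t|\,\theta_\sigma^{2k(p-p_c)}$ per derivative, is a self-contained route to the bounds \eqref{eq:compderivative1}. The paper does not derive these; it cites them from \cite[Proposition~4.14]{Cappiello7}. Your final exponent comparison is exactly the paper's deduction of the range of $\rho$ from \eqref{eq:compderivative1}.

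\textbf{The gap: uniformity in $t\in\ro$.} Your reduction of $t$ modulo the period fails because $T(z)\asymp E(z)^{p_c-p}$ depends on $z$. Differentiating $\chi_{-t}(z)=\chi_{-t+nT(z)}(z)$ produces the term $n\,\partial_s\chi\,\nabla T(z)$ with $n\approx |t|/T(z)$. This is again of relative size $|t|E^{p-p_c}$, so the identity gives nothing new.

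No argument can give it, because the uniform claim is false. Take $k=m=1$. For $|z|=r>\mu$ the flow is the rotation \eqref{eq:hamiltonflowharmosc} with angular speed $2pr^{2(p-1)}$. Let $q=\psi_\mu\, g(\phi)$, where $g$ is a nonconstant smooth function of the polar angle $\phi$; then $q\in G_1^{0,1}$. For $r>\mu$ one gets $q(t,z)=g(\phi+2pr^{2p-2}t)$, hence
$\partial_r q(t,z)=4p(p-1)\,t\,r^{2p-3}\,g'(\phi+2pr^{2p-2}t)$.
For each fixed $z$ this is unbounded along a sequence $t_n\to\infty$. So no $G_\rho^{0,\sigma}$ seminorm of $q(t,\cdot)$ is bounded over $t\in\ro$, for any $\rho$. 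The lemma, and \eqref{eq:compderivative1} as quoted ``for $t\in\ro$'', can only hold with constants depending on $t$, growing like $(1+|t|)^{\alpha+\beta}$.

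\textbf{How to repair it.} Drop the periodicity detour. For $|t|\leqs T$ you have directly $|t|E^{p-p_c}\lesssim \theta_\sigma^{2k(p-p_c)}$. Your Steps 1--3 then prove $q(t,\cdot)\in G_\rho^{0,\sigma}$ uniformly for $|t|\leqs T$. The flow-invariant compact set where the reparametrization does not apply is handled by smoothness and compactness in $(t,z)$, again only for bounded $t$. This local uniformity is what the proof of Theorem \ref{thm:propagationfilter2} can actually use, working on $[-T,T]$ for each $T$.

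\textbf{Two smaller corrections.}
\begin{enumerate}
\item $\partial_s\chi^c_s=\J\nabla E^{p_c}(\chi^c_s)$ has components of the same sizes $\theta_\sigma$, $\theta_\sigma^\sigma$ as $\chi^c_s$ itself. The extra factor $\theta_\sigma^{\sigma}$ in your displayed cost of an $x$-derivative is therefore spurious; the correct relative loss is $|t|E^{p-p_c}$, as you later use.
\item The condition $\rho\geqs\frac12$ plays no role in this lemma. In Fa\`a di Bruno each derivative carries at most one such loss, so every $0<\rho\leqs 1-2\max(k,m)(p-p_c)$ works.
\end{enumerate}
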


\begin{proof}
The assumption \eqref{eq:pinterval} means that 
\begin{equation}\label{eq:lowerboundrho}
\frac{1}{2} \leqs 1 - 2 \max(k,m) (p-p_c) < 1. 
\end{equation}
From \cite[Proposition~4.14]{Cappiello7}
we obtain the estimates for $\alpha, \beta \in \no$ and $t \in \ro$
\begin{equation}\label{eq:compderivative1}
|\pdd x \alpha \pdd \xi \beta q(t,x,\xi)| 
\lesssim \theta_\sigma(x,\xi)^{2 k \max(p - p_c,0) (\alpha+\beta) - \left( \alpha + \sigma \beta \right)}. 
\end{equation}
They imply that $q(t, \cdot) \in G_{\rho}^{0,\sigma}$
for any $\rho$ that satisfies 
\begin{equation*}
0 < \rho \leqs 1 - 2 \max(k,m) (p-p_c). 
\end{equation*}
\end{proof}

\begin{thm}\label{thm:propagationfilter2}
Let $k, m \in \no \setminus 0$, $\sigma = \frac{k}{m}$, 
$\mu > 0$,
and suppose that $a$ is defined by \eqref{eq:hamiltoniansymbolpower}
with $p$ restricted as in \eqref{eq:pinterval}. 
Let $\chi_t: \rr 2 \setminus 0 \to \rr 2 \setminus 0$ denote the hamiltonian flow corresponding to $a$. 
If $u_0 \in \cS'(\ro)$ then there exists a unique solution $u=u(t,\cdot) = \cK_t u_0 \in \cS'(\ro)$ to \eqref{eq:anharmonicCP} for all $t \in \ro$. For any $\rho$ such that 
\begin{equation}\label{eq:rhointerval}
\frac{1}{2} \leqs \rho \leqs 1 - 2 \max(k,m) (p-p_c)
\end{equation}
we have
\begin{align}
\chi_t \left( \fF_{1} ( u_0  ) \right) & \subseteq \fF_{\rho} ( u(t,\cdot) ), \label{eq:propsupercritical1} \quad \mbox{{\rm and}} \\
\fF_{1} ( u(t,\cdot) ) & \subseteq  \chi_t \left( \fF_{\rho} ( u_0  ) \right) , \quad t \in \ro. \label{eq:propsupercritical2}
\end{align}
\end{thm}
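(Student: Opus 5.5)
Existence and uniqueness of $u(t,\cdot)=\cK_t u_0$ follow from Corollary \ref{cor:WellPosedPower}, so the work is in \eqref{eq:propsupercritical1}; then \eqref{eq:propsupercritical2} follows from it by time reversal.

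For \eqref{eq:propsupercritical1} I would argue as in the proof of Theorem \ref{thm:propagationfilter1}. Let $\rr 2\setminus\Omega\in\fF_1(u_0)$ with $\Omega\neq\emptyset$; the case $u_0\in\cS$ is trivial by Remark \ref{rem:Schwartzfilter}. By Proposition \ref{prop:smoothorderzero} with $\rho=1$ there is $q_0=q_{\ep,\delta,1,\Omega}\in G_1^{0,\sigma}$ with $q_0^w u_0\in\cS$ and $q_0|_\Omega\equiv 1$.

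Set $q(t)=\psi_\mu\,(q_0\circ\chi_{-t})$. By Lemma \ref{lem:symbolflow}, $q(t)\in G_\rho^{0,\sigma}$ uniformly in $t\in\ro$. Since $q_0\circ\chi_{-t}$ is transported by the flow of $a$, the Poisson bracket term in the commutator vanishes:
\begin{equation*}
\partial_t q(t)+\{a,q(t)\}=0
\end{equation*}
away from a compact set. The Weyl calculus then gives
\begin{equation*}
\bigl(\partial_t+ia^w\bigr)q(t)^w-q(t)^w\bigl(\partial_t+ia^w\bigr)=r_1(t)^w .
\end{equation*}
Here $r_1(t)$ comes from the third-order terms of \eqref{eq:calculuscomposition1}, since the Weyl product expansion has only odd terms in the commutator. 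In the $(G_1,G_\rho)$ mixed calculus these terms have order $2kp-3\rho(1+\sigma)+\ldots$, and I expect this to be strictly less than the order $2kp-(1+\sigma)$ of the Poisson bracket. Rather than constructing a full asymptotic correction $q\sim\sum q_j$, which would require solving transport equations with symbols that lose $\rho$, I would bootstrap as in Theorem \ref{thm:nonpropagationfilter1}.

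Put $u_1=q(t)^w u$. This solves $Pu_1=r_1(t)^w u$ with $u_1(0)=(\psi_\mu q_0)^w u_0\in\cS$. Since $u\in L^\infty(\ro,M_{\sigma,s})$, Proposition \ref{prop:WellPosedPower} gives $u_1\in L^\infty(\ro,M_{\sigma,s+\alpha})$, where $\alpha>0$ is the order gain. The gain is $\alpha=3\rho(1+\sigma)-2kp$ minus the derivative loss. Using \eqref{eq:pinterval} and $\rho\geqs\frac12$ this should be positive: $2kp\le 1+\sigma+\frac{k}{2}\min(\frac1k,\frac1m)$, which is at most $\frac32(1+\sigma)$ roughly.

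Next I would iterate with nested cutoffs $q_{\ep_j,\delta_j,1,\Omega}$ and flowed symbols. The supports of successive flowed cutoffs are separated because $\chi_t$ is a diffeomorphism mapping $\Omega_{1,\ep}$-type neighbourhoods to sets that remain separated; I would check this using the bounds \eqref{eq:compderivative1} on derivatives of $\chi_t$. As in the proof of Theorem \ref{thm:nonpropagationfilter1}, this yields $q^w u(t)\in\cS$ for a cutoff $q=q(t)$ that equals $1$ on $(\chi_t\Omega)\setminus\rB_\mu$. Hence $q(t)$ is elliptic in $\chi_t\Omega$ within $G_\rho^{0,\sigma}$, so $\chi_t(\rr2\setminus\Omega)\in\fF_\rho(u(t))$. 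This proves \eqref{eq:propsupercritical1}.

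For \eqref{eq:propsupercritical2}, apply \eqref{eq:propsupercritical1} with $u(t)$ as initial datum and $-t$ as time. Since $\cK_{-t}\cK_t=I$ and $\chi_{-t}=\chi_t^{-1}$, this gives $\chi_{-t}\fF_1(u(t))\subseteq\fF_\rho(u_0)$. Applying $\chi_t$ yields \eqref{eq:propsupercritical2}.

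The main obstacle is the remainder estimate. I must verify that the commutator remainder built from $a\in G_1^{2kp,\sigma}$ and $q(t)\in G_\rho^{0,\sigma}$ really has order strictly less than $0$ relative to $M_{\sigma,s}$ gains. I must also verify that the bootstrap's support separation survives the flow uniformly in $t$. I would first try to obtain both from the explicit bounds of \cite[Proposition~4.14]{Cappiello7} together with the restriction $\rho\geqs\frac12$ in \eqref{eq:rhointerval}.
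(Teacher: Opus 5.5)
Your proposal is correct and follows essentially the paper's proof. It uses flowed cutoffs $\psi_\mu(q\circ\chi_{-t})\in G_\rho^{0,\sigma}$ from Lemma \ref{lem:symbolflow}, cancels the Poisson bracket by the transport identity, obtains a commutator remainder in $G_\rho^{2kp-3\rho(1+\sigma),\sigma}$, bootstraps with nested cutoffs via Proposition \ref{prop:WellPosedPower}, and gets \eqref{eq:propsupercritical2} by time reversal.

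The two obstacles you flag are settled as in the paper:
\begin{enumerate}
\item Since $a\in G_1^{2kp,\sigma}\subseteq G_\rho^{2kp,\sigma}$ and $\{a,b\}_j\in G_\rho^{r+s-j\rho(1+\sigma),\sigma}$, there is no extra derivative loss, so the gain is exactly $\alpha=3\rho(1+\sigma)-2kp$, which is positive by your own computation.
\item The bootstrap needs only disjointness of the supports for each fixed $t$ outside the compact set $\overline{\rB_\mu}$. This is immediate because $\chi_t$ is a bijection. Disjoint supports kill every term of the Weyl expansion, uniformly in $t$ thanks to the uniform seminorms from Lemma \ref{lem:symbolflow}, so no quantitative separation or derivative bounds on $\chi_t$ are required.
\end{enumerate}
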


\begin{proof}
By \eqref{eq:SSSchwartz} there exists $s \in \ro$ such that $u_0 \in M_{\sigma,s}$, so the equation \eqref{eq:anharmonicCP} has by Proposition \ref{prop:WellPosedPower} a unique solution $u \in (C \cap L^\infty)(\ro, M_{\sigma,s})$. 

If $\rho \geqs \frac{1}{2}$ then
it follows from \eqref{eq:pcrit}
and \eqref{eq:pinterval} that 
\begin{equation}\label{eq:positiveparameter}
\begin{aligned}
\alpha & := 3 \rho (1+\sigma ) - 2 k p \\
& \geqs 3 \rho (1+\sigma ) - 2 k \left( p_c + \frac{1}{4} \min \left( \frac1k, \frac1m \right) \right) \\
& = (1+\sigma ) \left( 3 \rho - 1 \right) - \frac{1}{2} \min \left( 1, \sigma \right) \\
& > (1+\sigma )  \left( 3 \rho - 1 - \frac{1}{2} \right) 
= 3 (1+\sigma )  \left( \rho - \frac{1}{2} \right) 
\geqs 0.
\end{aligned}
\end{equation}

As in the proof of Theorem \ref{thm:propagationfilter1} we may assume that $u_0 \notin \cS$ and $\fF_{1} ( u_0 ) \neq \rP(\rr 2)$.
Let $\emptyset \neq \Omega \subseteq \rr 2$ satisfy
$\rr 2 \setminus \Omega \in \fF_{1} ( u_0 )$. 
By Proposition \ref{prop:smoothorderzero} there exist $0 < \ep_1 < \delta_1 < 1$
and $q_1 = q_{\ep_1,\delta_1,1,\Omega} \in G_1^{0,\sigma}$
such that $q_1^w u_0 \in \cS$ and 
\begin{equation}\label{eq:q1one}
q_1 |_{\Omega} \equiv 1.
\end{equation}
If $\rho$ satisfies \eqref{eq:rhointerval} then 
by Lemma \ref{lem:symbolflow} we have $q_1(t) = q_1(t,\cdot) = \psi_\mu ( q_1 \circ \chi_{-t} ) \in G_{\rho}^{0,\sigma}$ 
uniformly for all $t \in \ro$.

From $u \in (C \cap L^\infty)(\ro, M_{\sigma,s})$ and \eqref{eq:ShubinSobolevCont}
we obtain
\begin{equation}\label{eq:modspace3}
q_1(t)^w u \in L^\infty( \ro , M_{\sigma,s}). 
\end{equation} 
Again we use \eqref{eq:modspace3} as the starting point for an iteration. 
First we deduce from \eqref{eq:modspace3} the improved regularity
\begin{equation}\label{eq:regularization2}
q_1(t) ^w u \in C( \ro, M_{\sigma,s + \alpha}) 
\end{equation} 
where $\alpha > 0$ is defined by \eqref{eq:positiveparameter}.

Indeed we may 
regard $u_1 = q_1(t)^w u$ as the solution of the non-homogeneous Cauchy problem 
\begin{equation}\label{eq:SchrodEqIter3}
\begin{cases} 
P u_1 = f_1 \\ 
u_1(0,\cdot) = u_{1,0}
\end{cases}
\end{equation}
where $P = \partial_t + i a^w(x,D)$
and $u_{1,0} = q_1(0)^w u_0 = (\psi_\mu q_1)^w u_0 =  q_1^w u_0 + ((\psi_\mu -1) q_1)^w u_0\in \cS \subseteq M_{\sigma, s + \alpha}$ due to $q_1^w u_0 \in \cS$ and the compact support of $\psi_\mu -1$ which gives a regularizing operator $((\psi_\mu -1) q_1)^w: \cS' \to \cS$. 
To express $f_1$ we write, using $P u = 0$,
\begin{equation*}
f_1 = P u_1 = P q_1(t) ^w u = P q_1(t)^w u - q_1(t)^w P u 
= \big( \left( \partial_t q_1(t) \right)^w + i [ a^w, q_1(t) ^w ] \big) u.
\end{equation*}
The right hand side operator $b_1(t)^w = \left( \partial_t q_1(t) \right)^w + i [ a^w, q_1(t) ^w ]$
has Weyl symbol 
\begin{equation}\label{eq:b1symbol}
b_1(t) = \partial_t q_1(t) + i \left( a \wpr q_1 (t) - q_1(t) \wpr a \right)
\end{equation}
which we need to investigate. 
We rewrite Hamilton's system of equations \eqref{eq:Hamiltoneq} in the form (cf. \cite[Section~6]{Cappiello6})
\begin{equation}\label{eq:hamiltoncompact}
\begin{aligned}
\left( 
\begin{array}{l}
x' (t)  \\
\xi'(t)
\end{array}
\right)
& = \J \nabla_{x,\xi} a \left( x(t), \xi(t) \right), \quad x(0) = x, \quad \xi(0) = \xi, 
\end{aligned}
\end{equation}
where $\J$ is defined in \eqref{eq:Jdef} with $d = 1$. 

Writing $q_1 (t) ( \chi_t (x, \xi) ) =   \psi_\mu ( \chi_t (x, \xi) ) q_1 ( x,\xi )$, 
differentiation with respect to $t$,
$\partial_t \chi_t(x,\xi) = \J \nabla a ( \chi_t(x,\xi) )$ 
(cf. \eqref{eq:hamiltoncompact})
and the Chain Rule
give for $(x,\xi) \in \rr 2 \setminus 0$
\begin{equation*}
\begin{aligned}
 \{ a, \psi_\mu \} ( \chi_{t} (x,\xi) ) q_1 ( x,\xi )
& = \la \nabla  \psi_\mu ( \chi_{t} (x,\xi) ),  \J \nabla a ( \chi_t(x,\xi) ) \ra q_1 ( x,\xi ) \\
& = \la \nabla  \psi_\mu ( \chi_{t} (x,\xi) ),  \partial_t \chi_t(x,\xi) \ra q_1 ( x,\xi ) \\
& = \left( \partial_t q_1(t) \right) ( \chi_t (x, \xi) ) + \la \nabla q_1 (t) ( \chi_{t} (x,\xi) ), \J \nabla a ( \chi_{t} (x,\xi) ) \ra \\
& = \left( \partial_t q_1(t) \right) ( \chi_t (x, \xi) ) + \{ a , q_1(t) \} ( \chi_{t} (x,\xi) )
\end{aligned}
\end{equation*}
where we use the Poisson bracket notation
\begin{equation*}
\{ a, q_1 (t) \} = \la \nabla_\xi a, \nabla_x  q_1(t) \ra - \la \nabla_x a, \nabla_\xi  q_1(t) \ra = \la \J \nabla_{x,\xi} a, \nabla_{x,\xi} q_1(t) \ra. 
\end{equation*}
Thus for all $(x,\xi) \in \rr 2$
\begin{equation}\label{eq:symbolcompsupp}
\partial_t q_1 (t)(x, \xi)  + \{ a, q_1(t) \}(x,\xi) = \{ a, \psi_\mu \} (x,\xi) q_1 ( \chi_{-t} (x,\xi) ) \in C_c^\infty(\rr 2).
\end{equation}
In fact the right hand side has support in $\overline{\rB}_\mu \subseteq \rr 2$.

By \cite[Theorem~18.5.4]{Hormander1} (cf. \cite[Section~8]{Cappiello6}) we have
\begin{equation*}
i \left( a \wpr q_1(t)  - q_1(t) \wpr a \right) 
\sim \sum_{j=0}^{\infty} \frac{(-1)^{j}}{(2j + 1)! 2^{ 2 j }} \, \{ a,q_1(t) \}_{2j+1}
\end{equation*}
where for $j \geqs 1$
\begin{equation*}
\{ f,g \}_{j} (x,\xi) 
= (-1)^j \left( \la \partial_x, \partial_\eta \ra - \la \partial_y, \partial_\xi \ra  \right)^{j} 
f(x,\xi) g (y,\eta) \Big|_{(y,\eta) = (x,\xi)}
\end{equation*}
are bilinear differential operators that extend the Poisson bracket $\{ f,g \} = \{ f,g \}_1$.  

For $j \geqs 0$ we have (cf. \cite[Eq.~(8.9)]{Cappiello6} with modified assumption $0 < \rho \leqs 1$)
\begin{equation*}
a \in G_\rho^{r,\sigma}, \quad b \in G_\rho^{s,\sigma} \quad \Longrightarrow \quad \{ a, b \}_j \in G_\rho^{r + s - j \rho (1+\sigma).\sigma}. 
\end{equation*}
By \cite[Lemma~3.6]{Cappiello6} we have $a \in G^{2 k p, \sigma} \subseteq G_\rho^{2 k p, \sigma}$ so this gives
\begin{equation}\label{eq:symbolnegorder}
\sum_{j=1}^{\infty} \frac{(-1)^{j}}{(2j + 1)! 2^{ 2 j }} \, \{ a,q_1(t) \}_{2j+1}
\in G_\rho^{ 2 k p - 3 \rho (1+\sigma), \sigma}
= G_\rho^{ - \alpha, \sigma}
\end{equation}
uniformly for $t \in \ro$,
using \eqref{eq:positiveparameter}.

Combining \eqref{eq:b1symbol} , \eqref{eq:symbolcompsupp} and \eqref{eq:symbolnegorder}
we find that 
\begin{align*}
b_1 (t) & = \partial_t q_1(t) + i \left( a \wpr q_1 (t) - q_1(t) \wpr a \right) \\
& \sim \partial_t q_1(t) + \{ a, q_1 (t) \} + \sum_{j=1}^{\infty} \frac{(-1)^{j}}{(2j + 1)! 2^{ 2 j }} \, \{ a,q_1(t) \}_{2j+1} 
\in G_\rho^{ - \alpha, \sigma}
\end{align*}
uniformly for $t \in \ro$,

From \eqref{eq:modspace3} we have
$u_1 \in L^\infty( \ro , M_{\sigma,s})$ and by \eqref{eq:ShubinSobolevCont} we obtain
\begin{equation*}
f_1  = P u_1 = b_1(t)^w u \in L^\infty( \ro , M_{\sigma,s + \alpha} ). 
\end{equation*}
Now \eqref{eq:SchrodEqIter3}, $u_{1,0} \in M_{\sigma, s + \alpha}$ and 
Proposition \ref{prop:WellPosedPower}
show that \eqref{eq:regularization2} holds as claimed. 

In the next step we take $0 < \ep_2 < \delta_2 < \ep_1$
such that $\delta_2 < b_\sigma$ defined in \eqref{eq:bsigma},
and define $q_2 = q_{\ep_2, \delta_2,1,\Omega} \in G_1^{0,\sigma}$ as in 
Proposition \ref{prop:cutoffsymbol}.
By Lemma \ref{lem:symbolflow} we have $q_2(t) = q_2(t,\cdot) = \psi_\mu (q_2 \circ \chi_{-t}) \in G_{\rho}^{0,\sigma}$ uniformly for $t \in \ro$.

For fixed $t$ we have by Remark \ref{rem:supportintersection} $\supp q_2 (t) \cap \supp (1 - q_1 (t) ) = \emptyset$ 
which by the calculus yields $r (t) := q_2 (t) \wpr ( 1 - q_1 (t) ) \in \cS(\rr 2)$. 
Combined with $q_1(0)^w u_0 \in \cS$
and \eqref{eq:ShubinSobolevCont} we find
\begin{equation}\label{eq:schwartzmodspace4}
q_2(0)^w u_0 = q_2(0)^w q_1(0)^w u_0 + r(0)^w u_0 \in \cS
\end{equation}
since $r(0)^w: \cS' \to \cS$ is regularizing. 

Set $u_2 = q_2(t)^w u$ and consider the non-homogeneous Cauchy problem 
\begin{equation}\label{eq:SchrodEqIter4}
\begin{cases} 
P u_2 = f_2 \\ 
u_2(0,\cdot) = u_{2,0}
\end{cases} 
\end{equation}
where $u_{2,0} = q_2(0)^w u_0 \in M_{\sigma, s + 2 \alpha}$ by 
\eqref{eq:SSSchwartz}. 
As above we obtain
\begin{equation*}
f_2 = P q_2(t)^w u - q_2(t)^w Pu = 
b_2(t) ^w u
= b_2(t)^w q_1(t)^w u + b_2(t)^w (1 - q_1(t))^w u
\end{equation*}
with $b_2(t) ^w = [P, q_2(t)^w] = \left( \partial_t q_2(t) \right)^w +  i [a^w, q_2(t)^w]$, 
and $b_2(t) \in G_\rho^{-\alpha,\sigma}$ uniformly for $t \in \ro$ as above. 

From $\supp b_2 (t) \subseteq \supp q_2(t)$
we may conclude that $\supp b_2 (t)\cap \supp (1 - q_1 (t)) = \emptyset$.
This implies that  
$r_2(t) := b_2 (t) \wpr (1 - q _1(t)) \in \cS(\rr 2)$ and $r_2(t)^w:  \cS' \to \cS$ is regularizing. 
We get for $t \in \ro$
\begin{equation*}
f_2 = b_2(t)^w u = b_2(t)^w q_1(t)^w u + r_2(t)^w u \in L^\infty(\ro,M_{\sigma,s + 2 \alpha})
\end{equation*}
due to \eqref{eq:regularization2} and \eqref{eq:ShubinSobolevCont}.
From
\eqref{eq:SchrodEqIter4}, $u_{2,0} \in M_{\sigma, s_0 + 2 \alpha}$ and 
Proposition \ref{prop:WellPosedPower}
we obtain
$u_2 = q_2(t)^w u \in C( \ro , M_{\sigma,s + 2 \alpha} )$. 

This argument shows that
\begin{equation}\label{eq:regularityk2}
q_k(t)^w u \in C( \ro , M_{\sigma,s + k \alpha} )
\end{equation}
for any $k \in \no$, with $q_k = q_{\ep_k, \delta_k,1,\Omega} \in G_1^{0,\sigma}$ and 
$q_k(t) = q_k(t,\cdot) = \psi_\mu (q_k \circ \chi_{-t}) \in G_{\rho}^{0,\sigma}$ for all $t \in \ro$, 
in each step decreasing $\ep_k$ and $\delta_k$ such that $0 < \ep_k < \delta_k < \ep_{k-1}$
and $\delta_k < b_\sigma$ defined by \eqref{eq:bsigma},
where $\ep_{k-1}$ has been chosen in the preceding step. 
We do this keeping $\ep_k > 2 \nu > 0$ for all $k \in \no$ for a fixed $\nu > 0$. 
Let $0 < \ep < \nu$ and set $q = q_{\ep, \nu,1,\Omega} \in G_1^{0,\sigma}$ as in Proposition \ref{prop:cutoffsymbol}.
Then $0 < \ep < \nu < 2 \nu < \ep_k < \delta_k$ for all $k \in \no \setminus 0$. 

By Lemma \ref{lem:closureinclusion} we have for any $k \in \no \setminus 0$
\begin{equation}\label{eq:supportintersectionempty}
\supp q \cap \supp (1 - q_k ) 
\subseteq \overline{ \left( \Omega_{1,\nu} \right)} \cap \left( \rr 2 \setminus \Omega_{1,\ep_k} \right)
\subseteq \Omega_{1,2 \nu } \cap \left( \rr 2 \setminus \Omega_{1,\ep_k} \right)
= \emptyset. 
\end{equation}
We have $q(t) = q(t,\cdot) = \psi_\mu (q \circ \chi_{-t} ) \in G_{\rho}^{0,\sigma}$ by Lemma \ref{lem:symbolflow}
uniformly for all $t \in \ro$, and \eqref{eq:supportintersectionempty} implies $\supp q(t) \cap \supp (1 - q_k (t))  = \emptyset$
for each fixed $t \in \ro$. 

For any $t \in \ro$ this yields $r_k (t) := q (t) \wpr (1-q_k (t) ) \in \cS(\rr 2)$.
Hence for any $t \in \ro$ and any $k \in \no \setminus 0$ we get from \eqref{eq:regularityk2}
and \eqref{eq:ShubinSobolevCont}
\begin{equation*}
q(t)^w u (t) = q(t)^w q_k(t)^w u (t) + r_k(t)^w u (t) \in M_{\sigma,s + k \alpha}
\end{equation*}
which implies
\begin{equation*}
q(t)^w u(t,\cdot) \in \bigcap_{s \in \ro} M_{\sigma,s} = \cS(\ro). 
\end{equation*}

From \eqref{eq:q1one} it follows that 
$q(t)|_{\chi_t \Omega \setminus \rB_\mu} \equiv 1$ so we have shown that $\chi_t \left( \rr {2} \setminus \Omega \right) \in \fF_{\rho}( u(t,\cdot) )$. 
It follows that $\chi_t \left( \fF_{1} (u_0) \right) \subseteq \fF_\rho ( u(t,\cdot) )$ for any $t \in \ro$ so
we have proven \eqref{eq:propsupercritical1}.
The remaining inclusion \eqref{eq:propsupercritical2} is a consequence of \eqref{eq:propsupercritical1},
$\cK_t^{-1} =  \cK_{-t}$ and $\chi_t^{-1} = \chi_t$. 
\end{proof}

\begin{rem}\label{rem:parametergap}
By Remark \ref{rem:filtersing4} we have $\fF_{1} (u) \subseteq \fF_{\rho} (u)$ when $\rho \leqs 1$
but we cannot obtain a filter propagation equality from \eqref{eq:propsupercritical1} and \eqref{eq:propsupercritical2}
for some parameter $0 < \wt \rho \leqs 1$.
\end{rem}

\subsection{Examples}\label{subsec:examples}

\begin{example}\label{ex:harmoscpower}
Let $k = m = 1$ and consider the hamiltonian Weyl symbol \eqref{eq:hamiltoniansymbolpower}, 
with $p_c = 1 < p \leqs \frac{5}{4}$, which is the interval \eqref{eq:pinterval}. 
The propagation result Theorem \ref{thm:propagationfilter2} then holds. 

The solution $( x(t), \xi(t) ) = \chi_t(x,\xi)$ to Hamilton's system of equations \eqref{eq:Hamiltoneq} is
\begin{equation}\label{eq:hamiltonflowharmosc}
\left( 
\begin{array}{l}
x(t) \\
\xi(t) 
\end{array}
\right)
=
\left( 
\begin{array}{ll}
\cos \left( 2 p (x^2 + \xi^2)^{p-1} t \right) & \sin \left( 2 p (x^2 + \xi^2)^{p-1} t \right) \\
- \sin \left( 2 p (x^2 + \xi^2)^{p-1} t \right) & \cos \left( 2 p (x^2 + \xi^2)^{p-1} t \right)
\end{array}
\right)
\left( 
\begin{array}{l}
x \\
\xi
\end{array}
\right)
\end{equation}
provided $(x,\xi) \in \rr 2 \setminus \overline{\rB_\mu}$.
The hamiltonian flow is thus a circular clockwise rotation with period that depend on $x^2 + \xi^2$ as
$T = \frac{\pi}{p} (x^2 + \xi^2)^{1-p}$. 
In fact this is a particular case of \eqref{eq:periodhamiltonflow}. 

The anisotropy parameter is $\sigma = 1$, but the hamiltonian flow does not preserve 
conic subsets of phase space, due to \eqref{eq:periodhamiltonflow} and $p \neq 1$.  
Alternatively this can be seen from the homogeneity $a(\lambda x, \lambda \xi) = \lambda^{2p} a(x,\xi)$, 
$2 p - 1 > 1$ and \cite[Propositions~6.2 and 6.4]{Cappiello6}.

\begin{figure}
\begin{subfigure}{.5\textwidth}
  \centering
  \includegraphics[width=.8\linewidth]{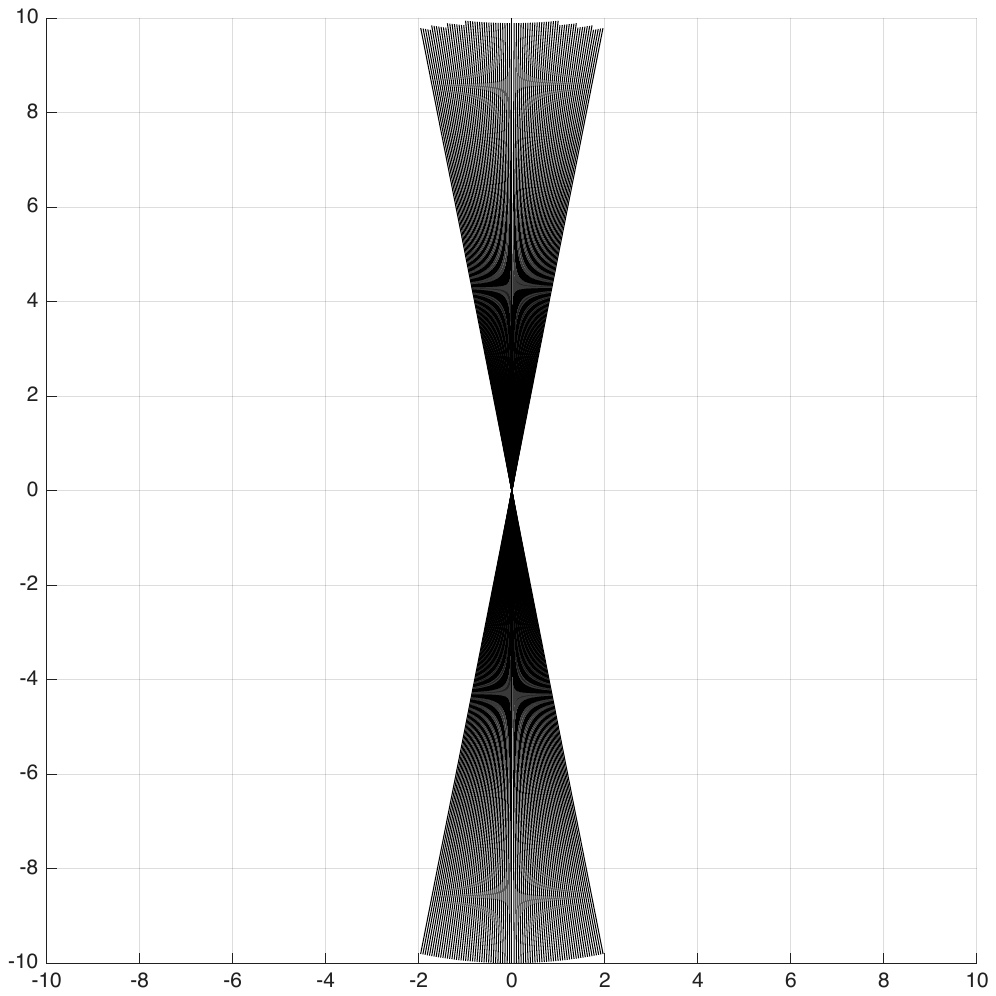}
  \caption{$\Omega_C \subseteq \rr 2$}
  \label{fig:sub1a}
\end{subfigure}%
\begin{subfigure}{.5\textwidth}
  \centering
  \includegraphics[width=.8\linewidth]{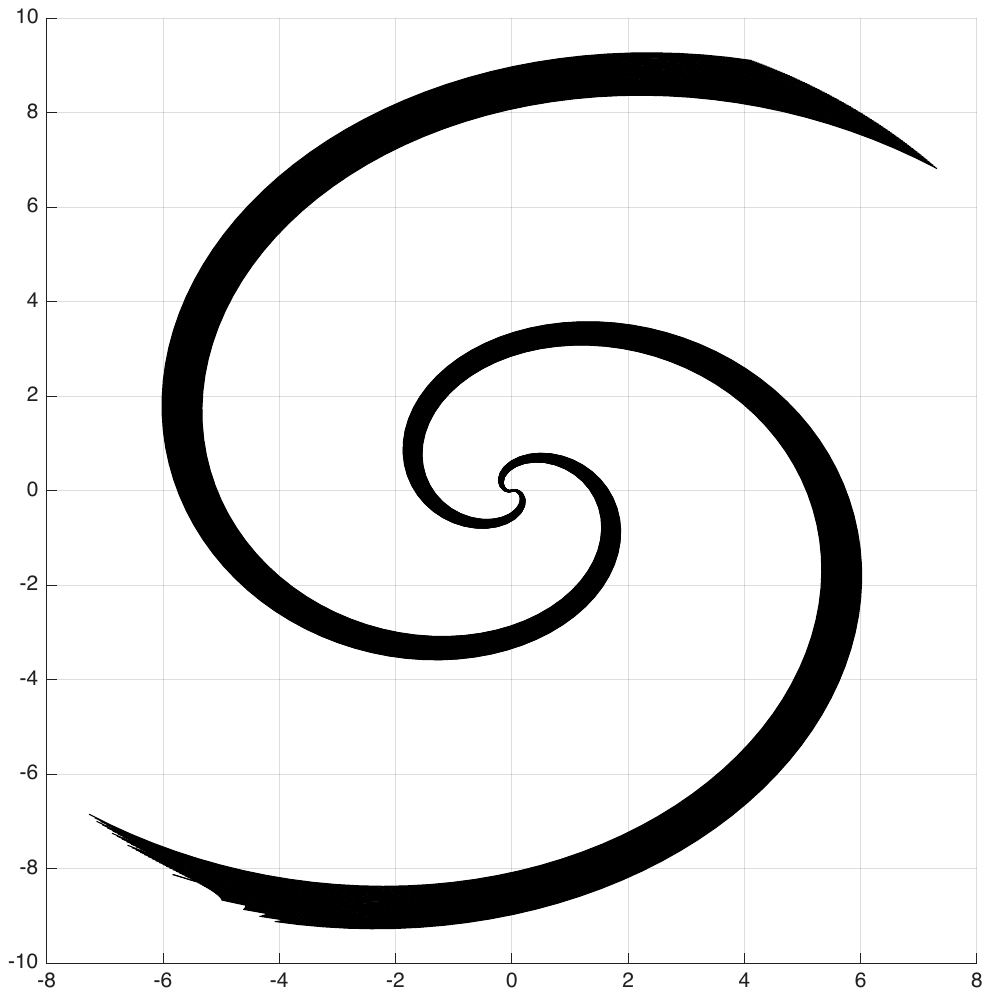}
  \caption{$\chi_t( \Omega_C) \subseteq \rr 2$}
  \label{fig:sub2a}
\end{subfigure}
\caption{Example \ref{ex:harmoscpower} where $\sigma = 1$. (A) A set $\Omega_C$ as in \eqref{eq:neigbhfreqaxisiso}, and (B) $\chi_t( \Omega_C)$ for fixed $t > 0$ with $\chi_t$ defined by \eqref{eq:hamiltonflowharmosc}.}
 \label{fig:harmoscpower}
\end{figure}

By Proposition \ref{prop:filterdeltaandone} we have $\Omega_C \in \fF_{1,1}(\delta_0)$ for any $C > 0$, 
where 
\begin{equation}\label{eq:neigbhfreqaxisiso}
\Omega_C = \{ (x,\xi): C |x| \leqs |\xi| \} \subseteq \rr 2
\end{equation}
is a conic neighborhood of the frequency axis $\{ 0 \} \times \ro \subseteq \rr 2$.
Such a set, shown in Figure \ref{fig:harmoscpower} (A), 
is deformed by the hamilton flow \eqref{eq:hamiltonflowharmosc}, 
for fixed $t > 0$,
into a set of the form shown in Figure \ref{fig:harmoscpower} (B).
Here the parameters are $C=5$, $p = \frac{6}{5}$ and $t = \frac{2}{p}$.
By Theorem \ref{thm:propagationfilter2} we have $\chi_t (\Omega_C) \in \fF_{1,\rho} ( \cK_t \delta_0 )$
provided $\frac12 \leqs \rho \leqs 3 - 2 p$ for any $t \in \ro$.
\end{example}

\begin{example}\label{ex:anharmoscpower}
Let $k = 2$, $m = 1$ and consider the hamiltonian \eqref{eq:hamiltoniansymbolpower}, 
with $p_c = \frac{3}{4} < p \leqs \frac{7}{8}$, which is the interval \eqref{eq:pinterval}. 
Again the propagation result Theorem \ref{thm:propagationfilter2} holds. 
The hamiltonian flow $\chi_t(x,\xi)$ is given by \cite[Theorem~4.7]{Cappiello7}. 
It is periodic with period 
$T = \frac{\Gamma\left( \frac14 \right)^2}{ 2 p \sqrt{2 \pi}} (x^{4} + \xi^2)^{\frac34-p}$, which is again 
a particular case of \eqref{eq:periodhamiltonflow}. 

The anisotropy parameter is $\sigma = 2$, but the hamiltonian flow does not preserve 
$\sigma$-conic subsets of phase space, due to \eqref{eq:periodhamiltonflow} and $p \neq \frac34$.  
Alternatively this can be seen from the anisotropic homogeneity $a(\lambda x, \lambda^2 \xi) = \lambda^{4p} a(x,\xi)$, 
$4 p - 1 > 2$ and \cite[Propositions~6.2 and 6.4]{Cappiello6}.

\begin{figure}
\begin{subfigure}{.5\textwidth}
  \centering
  \includegraphics[width=.8\linewidth]{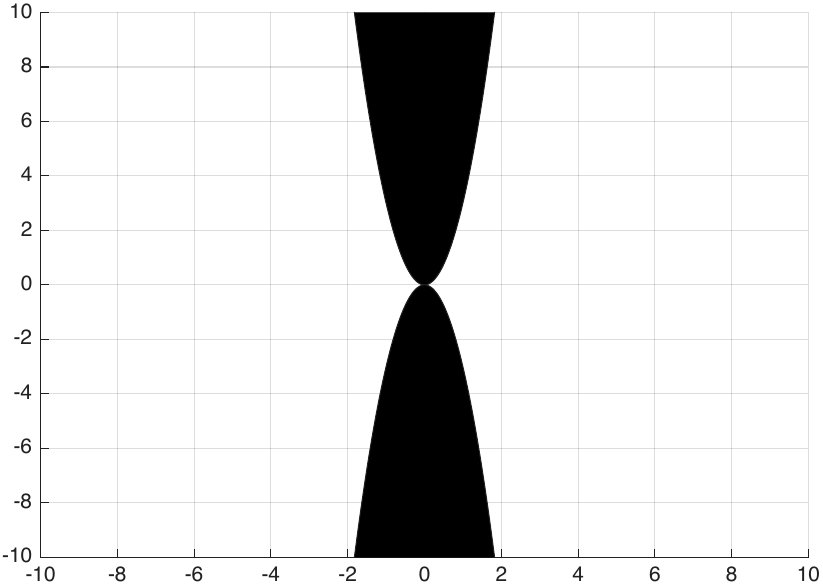}
  \caption{$\Omega_C \subseteq \rr 2$}
  \label{fig:sub1b}
\end{subfigure}%
\begin{subfigure}{.5\textwidth}
  \centering
  \includegraphics[width=.8\linewidth]{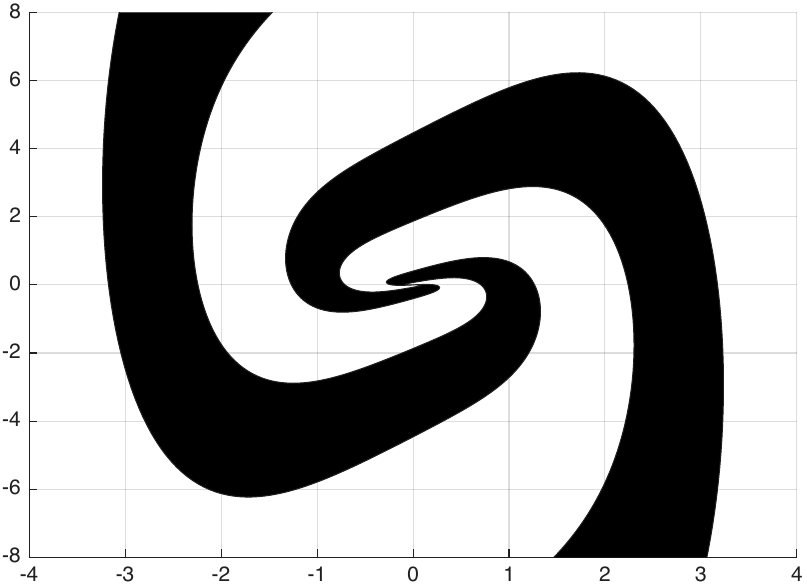}
  \caption{$\chi_t( \Omega_C) \subseteq \rr 2$}  
  \label{fig:sub2b}
\end{subfigure}
\caption{Example \ref{ex:anharmoscpower} where $\sigma = 2$. (A) A set $\Omega_C$ as in \eqref{eq:neigbhfreqaxisaniso}, and (B) $\chi_t( \Omega_C)$ for fixed $t > 0$ with hamiltonian flow $\chi_t$.}
 \label{fig:anharmoscpower}
\end{figure}

By Proposition \ref{prop:filterdeltaandone} we have $\Omega_C \in \fF_{\sigma,1}(\delta_0)$ for any $C > 0$, 
where 
\begin{equation}\label{eq:neigbhfreqaxisaniso}
\Omega_C = \{ (x,\xi): C|x|^2 \leqs |\xi| \} \subseteq \rr 2
\end{equation}
is a $\sigma$-conic neighborhood of the frequency axis $\{ 0 \} \times \ro \subseteq \rr 2$.
Such a set, shown in Figure \ref{fig:anharmoscpower} (A), 
is deformed by the hamiltonian flow $\chi_t$
for fixed $t > 0$,
into a set of the form shown in Figure \ref{fig:anharmoscpower} (B).
Here we approximate the solution $\chi_t$ to \eqref{eq:Hamiltoneq} by means of a numerical solution using MATLAB.  
The parameters are $C=3$, $p = \frac{7}{8}$ and $t = \frac{2}{p}$.
By Theorem \ref{thm:propagationfilter2} we have $\chi_t (\Omega_C) \in \fF_{\sigma,\frac12} ( \cK_t \delta_0 )$
for any $t \in \ro$.
\end{example}

\begin{rem}\label{rem:exponentone}
A natural question is whether the preceding propagation results 
extend to the case of the anharmonic oscillator
\begin{equation}\label{eq:AnHarmCP}
\begin{cases} 
\partial_t u + i \left( x^4 - \partial_x^2 \right) u = 0, \qquad x \in \rr d, \qquad t \in \ro \setminus 0,  \\ 
u(0,x) = u_0(x)
\end{cases}. 
\end{equation}	
In fact, as observed in Example \ref{ex:anharmoscpower}, the validity of Theorem \ref{thm:propagationfilter2}
is limited to powers $\frac34 < p \leqs \frac78$ and \eqref{eq:AnHarmCP} where $p = 1$ is thus excluded. 
For $p=1$ we nevertheless have the propagation result \cite[Theorem~1.5]{Cappiello7}
where the filter sets are restricted to be anisotropically annular as in \eqref{eq:AnisoAnnular}. 
It is an open problem to formulate 
propagation results when $p = 1$ for more general filters of singularities. 
\end{rem}

\section*{Acknowledgments}
The second author is a member of Gruppo Nazionale per l’Analisi Matematica, la Probabilit\`a e le loro Applicazioni (GNAMPA) -- Istituto Nazionale di Alta Matematica
(INdAM).







\begin{thebibliography}{2000}

%
\bibitem{Bambusi1}
D.~Bambusi and B.~Langella, 
\textit{Globally integrable quantum systems and their perturbations}, 
Singularities, asymptotics, and limiting models, Springer INdAM Ser. \textbf{64}, pp. 1--34, Springer, Singapore, 2025.
%
\bibitem{Cappiello5}
M.~Cappiello, R.~Schulz and P.~Wahlberg, \textit{Shubin type Fourier integral operators and evolution equations}, 
J. Pseudo-Differ. Oper. Appl. \textbf{11} (1) (2020) 119--139.
%
\bibitem{Cappiello6}
M.~Cappiello, L.~Rodino and P.~Wahlberg, 
\textit{Propagation of anisotropic Gabor singularities for Schr\"odinger type equations}, 
J. Evol. Equ. \textbf{24} (2) Paper No. 36 (2024), 46 pp.
%
\bibitem{Cappiello7}
M.~Cappiello, L.~Rodino and P.~Wahlberg, 
\textit{Propagation of singularities for anharmonic Schr\"odinger equations}, 
J. Math. Phys. \textbf{66} (4) Paper No. 041503 (2025), 33 pp.
%
\bibitem{Chatzakou1}
M.~Chatzakou, J.~Delgado and M.~Ruzhansky, \textit{On a class of anharmonic oscillators}, 
J. Math. Pures Appl. \textbf{153} (2021) 1--29.
%
\bibitem{Folland1}
G.~B.~Folland, \textit{Harmonic Analysis in Phase Space}, Princeton University Press, 1989.
%
\bibitem{Garello1}
G.~Garello and A.~Morando, \textit{$m$-Microlocal elliptic pseudodifferential operators acting on $L_{\rm loc}^p (\Omega)$}, 
Math. Nachr. \textbf{289} (2016) 1820--1837. 
%
\bibitem{Gramchev1}
T.~Gramchev and P.~Popivanov, 
\emph{Partial differential equations. Approximate solutions in scales of functional spaces}, 
Math. Res. \textbf{108}, Wiley-VCH Verlag Berlin GmbH, Berlin, 2000. 
%
\bibitem{Grochenig1}
K.~Gr\" ochenig, \textit{Foundations of Time-Frequency Analysis}, Birkh\" auser, Boston, 2001.
%
\bibitem{Hormander1}
L.~H\"ormander,
\textit{The Analysis of Linear Partial Differential Operators}, Vol. I, III, IV,
Springer, Berlin, 1990.
%
\bibitem{Hormander2}
L.~H\"ormander, \textit{Quadratic hyperbolic operators}, Microlocal Analysis and Applications, Lecture Notes in Math. \textbf{1495}, Eds. L. Cattabriga, L. Rodino, pp. 118--160, Springer, 1991.
%
\bibitem{Ito1}
S.~Ito and K.~Kato, \textit{Wave front set of solutions to Schr\"odinger equations with perturbed harmonic oscillators}, 
J. Math. Anal. Appl. \textbf{507}  (2) Paper No. 125821 (2022), 17 pp.
%
\bibitem{Lascar1} 
R.~Lascar, \textit{Propagation des singularit\'es d'\'equations pseudodiff\'erentielles quasi homog\`enes}, 
Ann. Inst. Fourier Grenoble \textbf{27} (1977), 79--123. 
%
\bibitem{Lerner1}
N.~Lerner, \textit{Metrics on the Phase Space and Non-Selfadjoint Pseudodifferential Operators}, Birkh\"auser, Basel, 2010.
%
\bibitem{Nicola1}
F.~Nicola and L.~Rodino, \textit{Global Pseudo-Differential Calculus on Euclidean Spaces}, Birkh\"auser, Basel, 2010.
%
\bibitem{Nicola2}
F.~Nicola and L.~Rodino, \textit{Propagation of Gabor singularities for semilinear Schr\"odinger equations}, 
Nonlinear Differential Equations Appl. \textbf{22} (6) (2015) 1715--1732. 
%
\bibitem{PravdaStarov1}
K.~Pravda-Starov, L.~Rodino and P.~Wahlberg, \textit{Propagation of Gabor singularities for Schr\"odinger equations with quadratic Hamiltonians}, Math. Nachr. \textbf{291} (1) (2018) 128--159. 
%
\bibitem{Rodino1}
L. Rodino, 
\textit{Microlocal analyis for spatially inhomogeneous pseudodifferential operators}, 
Ann. Scuola Norm. Sup. Pisa, Ser IV \textbf{9} (1982) 211--253.
%
\bibitem{Rodino2}
L.~Rodino and P.~Wahlberg, \textit{The Gabor wave front set}, Monaths. Math. \textbf{173} (4) (2014) 625--655. 
%
\bibitem{Rodino3}
L.~Rodino and P.~Wahlberg, \textit{Microlocal analysis of Gelfand--Shilov spaces}, 
Ann. Mat. Pura Appl. \textbf{202} (5) (2023) 2379--2420. 
%
\bibitem{Rodino4}
L.~Rodino and P.~Wahlberg, \textit{Anisotropic global microlocal analysis for tempered distributions}, 
Monatsh. Math. \textbf{202} (2) (2023) 397--434. 
%
\bibitem{Schaefer1}
H.~H.~Schaefer and M.~P.~Wolff, \textit{Topological Vector Spaces}, 
Springer Verlag, New York, 1999. 
%
\bibitem{Shubin1}
M.~A.~Shubin, \textit{Pseudodifferential Operators and Spectral Theory}, Springer, 2001.
%
\bibitem{Simmons1}
G.~F.~Simmons, \textit{Introduction to Topology and Modern Analysis}, 
McGraw-Hill, Auckland, 1963. 
%
\bibitem{Turbiner1}
A.~V.~Turbiner, J.~C.~del~Valle~Rosales, \textit{Quantum Anharmonic Oscillator},
World Scientific, Singapore, 2023.
%
\bibitem{Wahlberg2}
P.~Wahlberg, \textit{Propagation of polynomial phase space singularities for 
Schr\"odinger equations with quadratic Hamiltonians}, 
Math. Scand. \textbf{122} (1) (2018) 107--140. 
%
\bibitem{Wahlberg3}
P.~Wahlberg, \textit{The Gabor wave front set of compactly supported distributions}, 
Advances in Microlocal and Time-Frequency Analysis, Appl. Numer. Harmon. Anal., pp. 507--520, Birkh\"auser/Springer, Cham, 2020.
%
\bibitem{Wahlberg4}
P.~Wahlberg, \textit{Propagation of anisotropic Gabor wave front sets}, 
Proc. Edinb. Math. Soc. \textbf{67} (3) (2024) 674--698.
%
\end{thebibliography}
\end{document}